\providecommand{\tabularnewline}{\\}
\theoremstyle{plain}
\newtheorem{thm}{Theorem}[section]
  \theoremstyle{plain}
  \newtheorem{lem}[thm]{\protect\lemmaname}
  \theoremstyle{remark}
  \newtheorem{conclusion}[thm]{\protect\conclusionname}
  \theoremstyle{plain}
  \newtheorem{prop}[thm]{\protect\propositionname}
  \theoremstyle{plain}
  \newtheorem{cor}[thm]{\protect\corollaryname}
  \theoremstyle{remark}
  \newtheorem{rem}[thm]{\protect\remarkname}
  \theoremstyle{remark}
  \newtheorem{defi}[thm]{\protect\defname}
   \theoremstyle{remark}
  \newtheorem{example}[thm]{\protect\exname}
  \numberwithin{equation}{section}
\newcommand{\hide}[1]{}
\DeclareMathOperator{\supp}{supp}
\DeclareMathOperator{\ldim}{\underline{dim}}
\DeclareMathOperator{\edim}{dim_e}
\DeclareMathOperator{\RP}{\mathbf{P}}
\DeclareMathOperator{\RW}{\mbox{\tiny{RW}}}
\def\Prob{{\mathbb{P}}}
\def\conv{\mbox{\LARGE{$.$}}}
\def\ov{\overline}
\def\weakstar{weak$^{{\textstyle *}}$\ }
\def\R{{\mathbb{R}}}
\def\RR{{\mathbb{R}}}
\def\N{{\mathbb{N}}}
\def\C{{\mathbb{C}}}
\def\Gam{\Gamma}
\def\lam{\lambda}
\def\gam{\gamma}
\def\eps{\varepsilon}
\def\Dk{{\mathcal{D}}}
\def\E{{\mathbb{E}}}
\def\Ek{{\mathcal{E}}}
\def\Fk{{\mathcal{F}}}
\def\Q{{\mathbb Q}}
\def\Ak{{\mathcal{A}}}
\def\Bk{{\mathcal{B}}}
\def\Pk{{\mathcal{P}}}
\def\be{\begin{equation}}
\def\ee{\end{equation}}
\def\es{\emptyset}
\def\seq{\subseteq}
  \providecommand{\conclusionname}{Conclusion}
  \providecommand{\corollaryname}{Corollary}
  \providecommand{\lemmaname}{Lemma}
  \providecommand{\propositionname}{Proposition}
  \providecommand{\remarkname}{Remark}
\providecommand{\defname}{Definition}
\providecommand{\exname}{Example}
\def\blfootnote{\xdef\@thefnmark{}\@footnotetext}
\begin{document}

\title{On the dimension of Furstenberg measure for $SL_{2}(\mathbb{R})$
random matrix products}

\author{Michael Hochman and Boris Solomyak}
\maketitle
\begin{abstract}
Let\blfootnote{M.H. supported by ERC grant 306494, B.S. supported by the Israel Science Foundation (grant 396/15)}\blfootnote{AMS 2010 subject classification 37F35} $\mu$ be a measure on $SL_{2}(\mathbb{R})$ generating a non-compact and totally irreducible subgroup, and let $\nu$ be the associated stationary (Furstenberg) measure for the action on the projective line. We prove
that if $\mu$ is supported on finitely many matrices with algebraic
entries, then 
\[
\dim\nu=\min\{1,\frac{h_{\RW}(\mu)}{2\chi(\mu)}\}
\]
where $h_{\RW}(\mu)$ is the random walk entropy of $\mu$, $\chi(\mu)$ is the Lyapunov exponent for the random matrix product associated with $\mu$, and $\dim$
denotes pointwise dimension. In particular, for every $\delta>0$,
there is a neighborhood $U$ of the identity in $SL_{2}(\mathbb{R})$
such that if a measure $\mu\in\mathcal{P}(U)$ is supported on algebraic matrices with all atoms of size at least $\delta$, and generates a group which is non-compact and totally irreducible, then its stationary measure $\nu$ satisfies  $\dim\nu=1$. 
\end{abstract}

\tableofcontents

\section{Introduction}

Let $\mu$ be a probability measure on the group $G=SL_2(\R)$. The 
linear action of $G$ on $\mathbb{R}^{2}$ induces
an action on the projective space $\RP$, and this action
admits stationary measures, that is, probability measures $\nu$ on $\RP$ satisfying $\mu\conv\nu=\nu$. Here and throughout, $\mu\conv\nu$ denotes the push-forward
of $\mu\times\nu$ by the action map $(A,x)\mapsto Ax$. Let $G_\mu$ denote the subgroup of $G$ generated by\footnote{Usually $G_\mu$ is defined as the smallest closed subgroup supporting $\mu$, and one assumes it is non-compact. Our definition was chosen so that when $\mu$ is finitely supported, $G_\mu$  is the countable group generated by its atoms, rather than the closure of this group. The usual non-compactness assumption then translates to unboundedness of $G_\mu$.} $\supp\mu$; assuming that $G_\mu$ is unbounded and totally irreducible (i.e. does not preserve any finite set in $\RP$), there exists a unique stationary measure, called the  Furstenberg
measure of $\mu$, which is central in the study of the asymptotic behavior of random matrix products, and is important in many other problems. For instance, in special cases $\nu$ is related to the density of states measure in the Anderson-Bernoulli model of random Schr\"{o}dinger operators (see Bourgain \cite{Bourgain2012,Bourgain2014}), and it has recently become a central ingredient in the dimension theory of self-affine sets and measures \cite{brny2015,FalconerKempton2016,Rapaport2015}.

An important question in applications is to determine how ``large'' the stationary measure is. One version of this question asks when it is absolutely continuous. This happens trivially when $\mu$ itself is absolutely continuous, but can also happen in non-trivial ways, e.g. it may occur even when $\mu$ is supported on finitely many points. That such examples exist was shown by \cite{BaranyPollicottSimon2012}, using a transversality
argument, and explicit examples were recently constructed by Bourgain \cite{Bourgain2012b,Bourgain2014} (and even more recently by Benoit and Quint
in higher dimensions \cite{BenoistQuint2016}). In general, it remains
a difficult problem to determine the analytical and smoothness properties
of $\nu$, but doing so is  important in many applications. 

In this paper we focus on the problem of determining  the dimension of $\nu$, where we say that $\nu$ has (exact) dimension $\alpha$  if $\nu(B_{r}(x))=r^{\alpha+o(1)}$ as $r\to 0$,
for $\nu$-a.e.\ $x$. This problem is complementary to the one on absolute continuity in the sense that it is interesting when absolute continuity fails (since when it holds, the dimension is one). There is a classical upper bound for the dimension, explained below, and our main result  is that, under
explicit and rather mild conditions, this upper bound is attained.

In order to state our result we begin with some notation. Fix $\mu\in\mathcal{P}(G)$ with stationary measure $\nu$, and let $\chi=\chi(\mu)$ denote the Lyapunov exponent of $\mu$,
which is the almost sure value of the limit 
\[
\chi=\chi(\mu)=\lim_{n\rightarrow\infty}\frac{1}{n}\log\left\Vert X_{n}\ldots X_{1}\right\Vert, 
\]
where $(X_{n})$ is a sequence of i.i.d.\ random variables with distribution
$\mu$, and the logarithm, here and throughout the paper, is in base $2$. We note that if $G_\mu$ is unbounded and totally irreducible, then $\chi>0$. Set
\[
\lambda=\lambda(\mu)=2^{\chi(\mu)},
\]
so that $\left\Vert X_{n}\ldots X_{1}\right\Vert = \lambda^{n+o(n)}=2^{(\chi+o(1))n}$ a.s. Then $\lambda$ is the maximal asymptotic expansion rate (or Lipschitz constant) of the matrix $X_{n}\ldots X_{1}$ acting on $\mathbb{R}^{2}$, but when the matrices act on $\RP$, the essential rate of expansion is asymptotically $\lambda^{2n}=2^{2\chi n}$, see Section \ref{subsec:expansion-estimates}
below. 

Let $H(\cdot)$ denote the Shannon entropy of a discrete measure or
random variable. Let $\mu^{\star n}$ denote the $n$-th self convolution
of $\mu$ in $G$, i.e. the distribution at time
$n$ of the random walk on $G$ started at time zero at $1_{G}$ and
driven by $\mu$. The random walk entropy of a discrete measure $\mu$ is then defined
as
\[
h_{\RW}(\mu)=\lim_{n\rightarrow\infty}\frac{1}{n}H(\mu^{\star n}).
\]
The limit is known to exist, see e.g.\ \cite{KaimanovichVershik1983},
and quantifies the degree of non-freeness of the semigroup $G_\mu^+$ generated\footnote{Again, here we mean the countable semigroup generated by $\supp\mu$, we do not take the closure. }
by $\supp\mu$: we always have $h_{\RW}(\mu)\leq H(\mu)$, with equality
if and only if the semigroup generated by $\supp\mu$ is generated
freely by it. 

Fix a left-invariant Riemannian metric  $d(\cdot,\cdot)$  on $G$.
We say that $\Ak\subset SL_2(\R)$ is \emph{Diophantine }if there is a constant $c>0$
such that every  pair of sequences $A_{1},\ldots,A_{n}$ and
$A'_{1},\ldots,A'_{n}$ with $A_{i},A'_{i}\in \Ak$ satisfies
$$
A_{1}\ldots A_{n}\neq A'_{1}\ldots A'_{n} \ \Longrightarrow\ d(A_{1}\ldots A_{n},A'_{1}\ldots A'_{n})>c^{n}.
$$
(this property of $\mathcal{A}$ is independent of the metric chosen, see Section \ref{subsec:Left-invariant-metric-on-G}).

\begin{thm}
\label{thm:main}Let $\mu$ be a finitely supported measure on $G$ with $G_\mu$ unbounded and totally irreducible. Then the unique $\mu$-stationary measure $\nu$
is exact dimensional, and if $\supp \mu$ is Diophantine, then
\begin{equation}
\dim\nu=\min\Bigl\{1,\frac{h_{\RW}(\mu)}{2\chi(\mu)}\Bigr\}. \label{eq:dim-formula}
\end{equation}
If, in addition,  $\supp\mu$ generates a free semigroup,
then
\[
\dim\nu=\min\Bigl\{1,\frac{H(\mu)}{2\chi(\mu)}\Bigr\}.
\]
\end{thm}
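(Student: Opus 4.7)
The plan is to split the theorem into (a) exact dimensionality of $\nu$ and (b) the computation of $\dim\nu$ as $\min\{1,h_{\RW}(\mu)/(2\chi(\mu))\}$ under the Diophantine hypothesis. The classical upper bound $\dim\nu\leq\min\{1,h_{\RW}(\mu)/(2\chi(\mu))\}$ comes from the Furstenberg coding map $(G^{\NN},\mu^{\NN})\to(\RP,\nu)$ combined with the fact that the essential expansion rate of the $G$-action on $\RP$ is $2\chi(\mu)$ (as already remarked in the introduction), while $\dim\nu\leq 1$ is trivial. For (a), I would adapt the Ledrappier--Young-style framework for random dynamical systems with a single non-zero Lyapunov exponent, reducing exact dimensionality to almost-sure convergence of a dyadic-entropy functional along the random-walk orbit; since only one expanding direction is present in $\RP$, the usual two-dimensional technicalities do not arise.

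The main task is the lower bound on $\dim\nu$, which I would attack by transplanting Hochman's entropy-method for self-similar measures to the M\"obius setting. The starting point is that although the $G$-action on $\RP$ is nonlinear, a random word $A = X_{n}\cdots X_{1}$ with $\|A\|\approx \lambda^{n}$ has derivative of magnitude $\approx \lambda^{-2n}$ at $\nu$-typical points of $\RP$, so $A$ behaves like an approximate similarity with contraction ratio $\lambda^{-2n}=2^{-2\chi n}$ on $\RP$-neighborhoods of that scale. I would then study the scaled entropies
\[
h_n(x) \;=\; \frac{1}{2\chi n}\,H\bigl(\mu^{\star n}\conv\delta_{x},\,\Dk_{\lfloor 2\chi n\rfloor}\bigr),
\]
where $\Dk_k$ denotes a dyadic partition of $\RP$ at scale $2^{-k}$, and aim to prove that $\liminf_{n} h_n(x)\geq \min\{1, h_{\RW}(\mu)/(2\chi(\mu))\}$ for $\nu$-typical $x$. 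Combined with (a) and the trivial upper bound $h_n(x)\leq 1+o(1)$, this produces the dimension formula.

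The entropy lower bound is the crux, and I would obtain it by a contradiction argument in the spirit of the inverse theorem for entropy of convolutions. If $h_n(x)$ were to remain below $\min\{1, h_{\RW}(\mu)/(2\chi(\mu))\}$ along a positive density of scales $n$, then on those scales the discrete measure $\mu^{\star n}\conv\delta_x$ would have to concentrate significantly more mass than $2^{-n h_{\RW}(\mu)+o(n)}$ in balls of radius $2^{-2\chi n}$. Undoing the linearization, this would produce exponentially many pairs of distinct words $W\neq W'$ of length $n$ whose images $W\cdot x$ and $W'\cdot x$ lie within $2^{-2\chi n}$ of each other in $\RP$. The derivative-cocycle estimate then converts this projective proximity into a bound $d(W,W')\leq c_{0}^{n}$ for some $c_{0}<1$ in the group metric of $G$, in direct conflict with the Diophantine hypothesis on $\supp\mu$.

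The principal technical obstacle is making the linearization uniformly valid over many compositions of random matrices. One must control both the multiplicative error in replacing a M\"obius map with its linear approximation on the relevant small neighborhood, and the distortion produced by fluctuations of the contraction rate around its typical value $\lambda^{-2n}$, which are governed by a large-deviations principle for the Lyapunov cocycle. This will require conditioning on favorable expansion behavior for forward and backward random products, careful selection of scales on which the approximately-affine regime is reliably in force, and uniform bounds on the derivative cocycle of the $G$-action on $\RP$. Once this linearization infrastructure is established, Hochman's inverse-theorem arguments for entropy of convolutions carry over with only cosmetic modifications, and the free-semigroup assertion then follows at once from $h_{\RW}(\mu)=H(\mu)$ in that case.
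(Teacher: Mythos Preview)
Your overall architecture --- exact dimensionality via a Ledrappier-type argument, the trivial upper bound, and a lower bound by contradiction using Hochman's entropy method --- matches the paper. However, the mechanism you give for producing the contradiction contains a genuine gap.

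You write that concentration of $\mu^{\star n}\conv\delta_x$ at scale $2^{-2\chi n}$ yields many pairs $W\neq W'$ with $Wx$ close to $W'x$ in $\RP$, and that ``the derivative-cocycle estimate then converts this projective proximity into a bound $d(W,W')\leq c_0^n$'' in $G$. This step is invalid: the evaluation map $g\mapsto gx$ collapses a $3$-dimensional group to a $1$-dimensional space, so proximity of $Wx$ and $W'x$ at a \emph{single} point $x$ says essentially nothing about $d(W,W')$. Two elements of $G$ can agree at $x$ (even at two points) and still be far apart in $G$. The paper addresses exactly this obstruction via Lemma~2.4: one needs \emph{three} well-separated points $x_1,x_2,x_3$ to make $g\mapsto(gx_1,gx_2,gx_3)$ locally bi-Lipschitz, and even then the bi-Lipschitz constant scales with $\|g\|^{-2}$.

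More importantly, the paper's contradiction does not run through ``many projective coincidences $\Rightarrow$ group proximity $\Rightarrow$ Diophantine fails.'' Instead, the Diophantine hypothesis is used \emph{once}, purely to say that $H(\mu^{\star n},\mathcal{D}^G_{cn})=H(\mu^{\star n})+O(1)$ at a fixed fine scale $cn$ in $G$. Separately, Proposition~4.6 (using the three-point embedding) shows $H(\mu^{\star n},\mathcal{D}^G_1)=2\chi\alpha\, n+o(n)$ where $\alpha=\dim\nu$. If $\alpha<h_{\RW}/(2\chi)$, the entropy of $\mu^{\star n}$ between scales $1$ and $cn$ in $G$ is therefore linear in $n$, i.e.\ the level-$1$ components $(\mu^{\star n})_{g,1}$ carry substantial entropy. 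The contradiction then comes not from Diophantine separation but from an \emph{entropy growth theorem for the action convolution} (Theorem~5.12): these entropic components, when acted on $\nu$, must increase the small-scale entropy of $\nu$ by a definite amount --- which is impossible since $\nu=\mathbb{E}_{i=1}((\mu^{\star n})_{g,i}\conv\nu)$.

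Proving that entropy-growth theorem is where the real work lies, and it is not a cosmetic transplant of the Euclidean inverse theorem. One must establish entropy porosity of $\nu$ (Proposition~5.2), prove multiscale entropy formulas for the nonlinear convolution $\theta\conv\eta$ (Lemma~5.3), linearize the action so that $\theta_{g,i}\conv\eta_{x,i}$ is approximated by a Euclidean convolution $(\theta_{g,i}\conv x)*(S_{\log\widehat{g}'(x)}\eta_{x,i})$ (Corollary~5.8), and --- crucially --- use the three-point separation again (Lemma~5.9) to show that entropy of $\theta_{g,i}$ in $G$ transfers to entropy of the one-dimensional measure $\theta_{g,i}\conv x$ for most $x$. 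Your proposal identifies linearization as an obstacle but does not mention porosity, the multiscale decomposition of $\theta\conv\eta$, or the three-point mechanism; without these, the inverse theorem has no grip on the problem.
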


\begin{rem}
  \begin{enumerate}

  \item Exact dimensionality holds with no assumption on $\mu$ besides positivity
of the Lyapunov exponent, a fact which is known to the experts (and is related to the exact dimensionality of self-similar measures established by Feng and Hu
\cite{FengHu2009}), but has apparently not appeared in the literature. We provide a proof in Theorem \ref{thm:-exact-dimensionality} below.

\item The right-hand side of (\ref{eq:dim-formula}) is always an upper
  bound for $\dim\nu$.
  
  \item In \cite{Ledrappier1983}, Ledrappier showed\footnote{In \cite{Ledrappier1983} a different notion of dimension was
used for $\nu$, but since the dimension of $\nu$ is exact, it coincides
with the usual one.} that for any $\mu$ one has the formula 
\[
\dim\nu=\min\Bigl\{1,\frac{h_{F}(\mu)}{2\chi(\mu)}\Bigr\},
\]
where $h_{F}(\mu)=\int\int\log\frac{dA\nu}{d\nu}(x)\,dA\nu(x)d\mu(A)$
is the Furstenberg entropy of $\nu$. This is a more general result,
since it requires no assumptions whatsoever on  $\mu$, but it is of
less practical use than (\ref{eq:dim-formula}), because $h_{F}(\mu)$
is in general difficult to compute. One consequence of Theorem
\ref{thm:main} is that when $\supp \mu$ is Diophantine and $h_{\RW}(\mu)/2\chi(\mu)\leq1$,
we obtain its value, $h_{F}(\mu)=h_{\RW}(\mu)$.

\item Even when $\dim\nu=1$, Theorem \ref{thm:main} does not imply absolute
continuity. It is possible that this can be
proved, at least in some cases, by a refinement of our methods and those of Varj\'{u} \cite{Varju2016}, but this requires further investigation.
  \end{enumerate}
\end{rem}

In general, the Lyapunov exponent of $\mu$ is hard to calculate, but
when $\mu$ is supported close to the identity of $G$ the
exponent is clearly close to zero, and if the random walk entropy
of $\mu$ is not too small, the right-hand side in Theorem \ref{thm:main}
will be $1$. In bounding the random walk entropy we can rely on uniform
expansion bounds in $G$, due to Breuillard and Gelander
\cite{BreuillardGelander2008}, and associated spectral gap derived
in \cite{Breuillard2011}. Specifically, it follows from these results
that if $G_\mu$ 
is non-amenable, and if $\mu$ is finitely supported with $\mu(g)>\delta_{0}$
for every $g\in \supp\mu$, then there is a constant $c=c(\delta_{0})$ such
that $h_{\RW}(\mu)\geq c$ (see Section \ref{section:appl}). Moreover, unboundedness and total irreducibility of $G_\mu$ imply that it is non-amenable (see Section \ref{section:appl}). Combining these
facts with Theorem \ref{thm:main} we obtain
\begin{thm} \label{thm:appl1}
For every $\delta_{0}>0$ there exists an identity neighborhood $U\subseteq G$ such that if $\mu$  is a Diophantine measure supported on $U$ with $G_\mu$ unbounded and totally irreducible, and all of $\mu$'s atoms have mass at least $\delta_0$, then the stationary measure $\nu$ satisfies $\dim\nu=1$.
\end{thm}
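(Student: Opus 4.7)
The plan is to apply Theorem~\ref{thm:main} and arrange that the ratio $h_{\RW}(\mu)/(2\chi(\mu))$ exceeds $1$. Since $\mu$ is assumed Diophantine, finitely supported (as $U$ will be chosen so that every atom has mass $\geq\delta_0$, forcing $|\supp\mu|\leq\delta_0^{-1}$), and $G_\mu$ is unbounded and totally irreducible, Theorem~\ref{thm:main} will then immediately give $\dim\nu=\min\{1,h_{\RW}(\mu)/2\chi(\mu)\}=1$. So everything comes down to producing two quantitative bounds: an upper bound on $\chi(\mu)$ that shrinks with the diameter of $U$, and a lower bound on $h_{\RW}(\mu)$ that depends only on $\delta_0$.

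For the Lyapunov exponent, I would use the fact that $\chi(\mu)\leq\int\log\|A\|\,d\mu(A)$ by Kingman's subadditive ergodic theorem (or simply by $\log\|A_n\cdots A_1\|\leq\sum\log\|A_i\|$). Since the map $A\mapsto\log\|A\|$ is continuous on $G$ and vanishes at the identity, one has $\sup_{A\in U}\log\|A\|\to 0$ as $U$ shrinks to $\{1_G\}$, so we can make $\chi(\mu)$ as small as desired by choosing $U$ small.

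For the random walk entropy, I would invoke the paragraph of the introduction just before the theorem statement: the Breuillard--Gelander uniform Tits alternative \cite{BreuillardGelander2008} together with Breuillard's uniform spectral gap \cite{Breuillard2011} give a constant $c=c(\delta_0)>0$ such that whenever $\mu$ is finitely supported with all atoms of mass $\geq\delta_0$ and $G_\mu$ is non-amenable, $h_{\RW}(\mu)\geq c$. To deploy this I need to verify that under our hypotheses $G_\mu$ is non-amenable. This is standard for $SL_2(\R)$: a connected amenable subgroup is solvable and therefore either fixes a point of $\RP$ or a pair of points, contradicting total irreducibility, unless the group is contained in a compact subgroup, contradicting unboundedness. (The paper indicates this argument is given in Section~\ref{section:appl}, so I would simply cite it.)

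Combining the two bounds, pick $U$ so small that $\sup_{A\in U}\log\|A\|<c(\delta_0)/3$; then for any $\mu$ satisfying the hypotheses of the theorem and supported in $U$,
\[
\frac{h_{\RW}(\mu)}{2\chi(\mu)}\geq\frac{c(\delta_0)}{2\cdot c(\delta_0)/3}=\frac{3}{2}>1,
\]
so Theorem~\ref{thm:main} yields $\dim\nu=1$. The only subtlety, and the step I would take most care with, is checking that the Breuillard spectral gap statement really does apply uniformly to all $\mu$ in the stated class (i.e.\ with the constant depending only on $\delta_0$, not on the specific support), but since this is explicitly asserted in the paragraph preceding the theorem, the argument is essentially a bookkeeping assembly of already-stated ingredients.
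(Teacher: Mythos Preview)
Your proposal is correct and follows essentially the same route as the paper: bound $\chi(\mu)$ above by $\sup_{A\in U}\log\|A\|$, bound $h_{\RW}(\mu)$ below uniformly in terms of $\delta_0$ via Breuillard's spectral gap (non-amenability coming from unboundedness plus total irreducibility), and then invoke Theorem~\ref{thm:main}. The paper's Section~\ref{section:appl} spells out the entropy lower bound through two short lemmas and derives non-amenability via the invariant-measure argument rather than structure theory, but the overall architecture is identical to yours.
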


One may wonder whether the assumption that $\mu$ is Diophantine is unnecessary, and the only assumptions needed are positive Lyapunov exponent and non-atomic stationary measure. There is some analogy between this and the conjecture that there is a left neighborhood of $1$ in which the corresponding Bernoulli convolutions are absolutely continuous (see \cite{PeresSchlagSolomyak2000} for background). In the Bernoulli convolutions setting, Varj\'{u} recently established that for every $h>0$ there is a left neighborhood of $1$ in which the corresponding Bernoulli convolution is absolute continuous if the parameter is algebraic and has height less than $h$ \cite{Varju2016}. Theorem \ref{thm:appl1} is in the same spirit but avoids the height assumption, although, instead of absolute continuity, it gives only  full dimension.

Finally, we mention that using similar methods to the ones in this paper, the following result for infinitely supported
measures can be proved.

\begin{thm} \label{thm:infsupp}
Let $\mu\in\mathcal{P}(G)$ be supported on a compact
set, have positive Lyapunov exponent and  non-atomic stationary measure $\nu$. 
If $\dim\mu>0$ then $\dim\nu=1$.
\end{thm}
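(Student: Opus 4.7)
The strategy is to adapt the entropy-growth machinery underlying the proof of Theorem \ref{thm:main} to the infinitely supported setting, with $\dim\mu>0$ playing the role of positive random walk entropy and an averaged consequence of $\dim\mu>0$ playing the role of the Diophantine hypothesis. By Theorem \ref{thm:-exact-dimensionality}, exact dimensionality of $\nu$ holds under positivity of $\chi(\mu)$ alone, so it suffices to derive a contradiction from the assumption $\alpha:=\dim\nu<1$.

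Fix $\beta\in(0,\dim\mu)$. From the definition of pointwise dimension, on a set of near-full $\mu$-measure balls of radius $2^{-n}$ in $G$ have $\mu$-mass at most $2^{-\beta n}$, so the Shannon entropy of $\mu$ with respect to a scale-$2^{-n}$ dyadic partition $\Dk_{n}$ of $G$ satisfies $H(\mu,\Dk_{n})\geq\beta n-O(1)$. Sub-additivity of entropy under convolution, together with the bounded diameter of $\supp\mu$, yields $H(\mu^{\star n},\Dk_{n})\geq \beta n-O(1)$. This is the substitute, at every dyadic scale, for the $h_{\RW}(\mu)\cdot n$ entropy input used in Theorem \ref{thm:main}.

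The geometric input from Section \ref{subsec:expansion-estimates} is unchanged: positivity of $\chi$ implies that the random product $A_{1}\ldots A_{n}$ acts on $\RP$ as a strong contraction of asymptotic rate $2^{-2\chi n}$ to the Furstenberg direction. Using the identity $\nu=\mu^{\star n}\conv\nu$ at scale $2^{-2\chi n}$, the entropy-increase argument of Theorem \ref{thm:main} shows that $\alpha<1$ forces
\[
H(\nu,\Dk_{2\chi n})\geq 2\chi\alpha n+\Omega(n),
\]
contradicting exact dimensionality, which gives $H(\nu,\Dk_{2\chi n})=2\chi\alpha n+o(n)$.

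The genuine adaptation required is that the Diophantine hypothesis of Theorem \ref{thm:main}, which prevents super-exponentially many $n$-step words from collapsing into a $c^{n}$-neighborhood of the identity, must be replaced by an averaged statement. Discretizing $\mu$ at scale $2^{-n}$ to an atomic measure $\widetilde{\mu}_{n}$ supported on $\gtrsim 2^{\beta n}$ atoms, two distinct $n$-step products drawn from $\widetilde{\mu}_{n}^{\star n}$ that differ in at least one position have, generically, pairwise distance $\gtrsim\lambda^{-Cn}\cdot 2^{-n}$, so $\widetilde{\mu}_{n}^{\star n}$ is ``Diophantine on average'' at an exponential rate, which suffices for the entropy-increase step to run. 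The main obstacle I anticipate is the bookkeeping of accumulated discretization error: each step of the entropy-increase argument converts a $2^{-n}$ error in $G$ into a $2^{-2\chi n}$ error in $\RP$, and these errors must be controlled so as to remain subdominant to the $\Omega(n)$ entropy gain across the multiple scales on which the argument iterates. This is the technically most delicate point, and is presumably what the authors mean by ``similar methods.''
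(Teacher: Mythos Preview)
The paper does not prove this theorem; it explicitly defers to \cite{Hochman2016}. So there is no ``paper's proof'' to compare against, but the intended argument, which the paper alludes to as ``similar methods,'' is considerably simpler than what you propose.

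Your route through $\mu^{\star n}$, discretization to $\widetilde{\mu}_n$, and an ``averaged Diophantine'' property is a detour, and the last of these is a genuine gap: the claim that two distinct $n$-step products of discretized atoms are generically separated by $\gtrsim\lambda^{-Cn}2^{-n}$ has no justification and is not true in general. You correctly identify this as the delicate point, but it is not merely delicate---it is the wrong mechanism.

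The point you are missing is that the Diophantine hypothesis in Theorem \ref{thm:main} serves exactly one purpose: to guarantee that $\mu^{\star n}$ retains positive normalized entropy at scales finer than $2^{-1}$ (this is the first inequality in Proposition \ref{prop:reduction}). When $\mu$ is finitely supported this requires a separation condition, because atoms have no entropy below their own scale. But when $\dim\mu>0$, the measure $\mu$ \emph{itself} already satisfies $\tfrac{1}{N}H(\mu,\mathcal{D}_N^G)\geq\beta>0$ for all large $N$, with no iteration needed. One can therefore apply the entropy-growth Theorem \ref{thm:entropy-growth-under-action} directly to $\theta=\mu$ and $\eta=\nu$: take $\varepsilon=\min(\beta,1-\alpha)/2$; the measure $\nu$ is $\alpha$-entropy porous by Proposition \ref{prop:approx}, hence $(1-\varepsilon)$-entropy porous; $\mu$ is compactly supported so $\ell=2\log\|g_0\|=O(1)$; and the conclusion gives
\[
\frac{1}{N}H(\mu\conv\nu,\mathcal{D}_{N+O(1)})>\frac{1}{N}H(\nu,\mathcal{D}_N)+\delta-O(\delta^2/N).
\]
Since $\mu\conv\nu=\nu$ and $H(\nu,\mathcal{D}_{N+O(1)})=H(\nu,\mathcal{D}_N)+O(1)$, this is a contradiction for large $N$. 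No convolution powers, no discretization, no Diophantine substitute.
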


We do not include a proof of this; a closely related result has appeared recently in \cite{Hochman2016}, and the reader is referred there for details. It remains a challenge to determine whether the same holds when $\mu$ is continuous (i.e. has no atoms), but of dimension zero. For some related questions see \cite{Hochman2016}.

All of the results above are valid more generally when $\mu$ is a measure on the group $\widetilde{G}$ of $2\times 2$ matrices of determinant $\pm 1$. In fact it can be derived from the $G$ case: indeed the action of $\widetilde{G}$ factors through that of $G$, and the fibers of the factor map  $\widetilde{G}\to G$ has two points. Thus if we start with a measure on $\widetilde{G}$ and project it to $G$, then neither the random walk entropy, nor amenability of $G_\mu$,  is affected; and the results for $G$ may be lifted to $\widetilde{G}$.

The proof of Theorem \ref{thm:main} is an outgrowth of methods from
\cite{Hochman2014,Hochman2015}, see also \cite{BreuillardVarju2015}, which dealt with the dimension of
self-similar measures in Euclidean space. The stationary measure $\nu$
is in many respects like a self-similar measure: when $\mu$ is finitely
supported, stationarity implies that $\nu$  decomposes into ``copies''
of itself via $\nu=\sum_{A\in\supp\mu}\mu(A)\cdot A\nu$. Two key
differences compared to the self-similar case are that, first, $A$
does not contract $\RP$, but only most of it; and the action of $A$
on $\RP$ is not linear. The latter in particular makes it impossible
to view $\nu$ at small scales as a convolution of a scaled copy of
$\mu$ with another measure, which was crucial to the argument in
\cite{Hochman2014}. However, the linearization method used in \cite{Hochman2015}
in the study of multidimensional self-similar measures can be applied
in the present setting to show that locally there is an approximate
convolution structure to $\nu$. This allows us to apply theorems
about convolutions of measures on $\mathbb{R}$, and carry through
the argument from the self-similar case. Much of our work will go
towards controlling the linearization, and we hope this paper can
serve as an exposition of the method. 

\subsection*{Organization}

The first two sections are devoted to developing basic properties of
the action. Specifically, in Section \ref{sec:Geometry-of-the-action} we
examine the metric properties of the action, estimating the amount
of expansion/contraction when an element of $g$ acts on $\RP$ or
an element of $\RP$ acts on $G$ by evaluation, and associated quantities;
and in Section \ref{sec:Furstenberg-measure} we recall the theorems
of Furstenberg and Oseledets, along with some variants, and prove
exact dimensionality of the stationary measure.

In Section \ref{sec:Entropy-and-dimension} we begin to discuss entropy,
introducing suitable partitions of $\RP$ and $G$, reviewing entropy-related
methods, establishing convergence of some entropies to the dimension
of $\nu$, and proving a reduction from Theorem \ref{thm:main} to
a statement about entropy growth under convolutions. In Section
\ref{sec:Inverse-theorem,-linearization-etc}, we recall from \cite{Hochman2014}
the inverse theorem for convolutions on $\mathbb{R}$ and develop
the linearization argument which allows us to transfer it to the present
setting and complete the proof of the main theorem. Finally, Section \ref{section:appl} contains the proof Theorem~\ref{thm:appl1} and several examples, as well as a discussion of the Diophantine property and other assumptions of our theorems.

\subsection*{Acknowledgment} We are grateful Emanuel Breuillard for drawing our attention to the results on uniform expansion in Lie groups. Also thanks to ICERM for their hospitality and their support during the spring 2016 semester program on dimension and dynamics.

\subsection*{Notation}

We summarize here our main notation and conventions.

\noindent \vfill{}
\begin{tabular}{ll}
\hline 
& \tabularnewline
$G$, $d(\cdot,\cdot)$ & $G=SL_{2}(\mathbb{R})$ with a left-invariant Riemannian metric $d(\cdot,\cdot)$ \tabularnewline
$\RP$, $d_{\RP}(\cdot,\cdot)$ & Real projective line with rotation-invariant measure $d_{\RP}(\cdot,\cdot)$ of diameter $1$.\tabularnewline
$\mu$ & Probability measure on $G$.\tabularnewline
$\nu$ & The stationary measure on $\RP$ associated with $\mu$ \tabularnewline
$\chi$ & The Lyapunov exponent of $\mu$ \tabularnewline
$\lambda$ & $=2^\chi$ (except in Section  \ref{sec:Geometry-of-the-action}) \tabularnewline
$G^+_\mu$, $G_\mu$ & The semigroup and group generated by $\supp\mu$.\tabularnewline
$B_{r}(x)$  & Open ball of radius $r$\tabularnewline
$\mathcal{P}(X)$ & Space of probability measures on $X$.\tabularnewline
$\overline{x}$ & The line (i.e. point in $\RP$) determined by $x\in\mathbb{R}^{2}\setminus\{0\}$. 
 \tabularnewline
$\angle_{u}(x)$ & Normalized angle between $x$ and $u$ (by default $u=(1,0)$).\tabularnewline
$\theta\conv \eta$  & Push forward of $\theta\times\eta$ through the action map $G\times\RP\rightarrow\RP$ \tabularnewline
$\theta\star\theta'$, $\theta^{\star n}$ & Convolution in $G$\tabularnewline
$\eta\ast\eta'$, $\eta^{\ast n}$ & Convolution in $\mathbb{R}$ (also applied to measures on $\RP\cong[0,1)$).\tabularnewline
$A^{*}$  & Transpose of a matrix $A$ (all our matrices are real)\tabularnewline
$\lambda_{A}^{+},\lambda_{A}^{-}$ & Singular values of $A$ (eigenvalues of $(A^{*}A)^{1/2}$), with $\lambda_{A}^{+}\geq\lambda_{A}^{-}$\tabularnewline
$u_{A}^{+},u_{A}^{-}$ & Singular vectors of $A$ (eigenvectors of $A^{*}A$ corresponding
to $\lambda_{A}^{+},\lambda_{A}^{-}$)\tabularnewline
$v_{A}^{+},v_{A}^{-}$ & $v_{A}^{+}=Au_{A}^{+}/\lambda_{A}^{+}$, $v_{A}^{-}=Au_{A}^{-}/\lambda_{A}^{-}$\tabularnewline
$\mathcal{D}_{n}$, $\mathcal{D}_{n}^{G}$ & level-$n$ dyadic partition of $\RP$ (or $\mathbb{R}$) and $G$
(see Section \ref{subsec:Dyadic-partitions-and-entropy})\tabularnewline
$S_{t}$ & Scaling map: $S_{t}(x)=2^{t}x$\tabularnewline
$T_{s}$ & Translation map: $T_{s}(x)=x+s$\tabularnewline
$\mu_{x,n}$ & Component measures, Section \ref{subsec:Component-measures}\tabularnewline
$\mu_A$ & Conditional measure, $\mu_A=\frac{1}{\mu(A)} \mu|_{A}$ \tabularnewline
$\mathbb{P}_{i\in I}$, $\mathbb{E}_{i\in I}$ & See Section \ref{subsec:Component-measures}\tabularnewline
$H(\mu,\mathcal{B})$ & Shannon entropy \tabularnewline
 $H(\mu,\mathcal{B}|\mathcal{C})$ & Conditional Shannon entropy \tabularnewline
$\dim\eta$ & Exact dimension of a measure $\eta$ (if exists).\tabularnewline
 & \tabularnewline
\hline 
\end{tabular}

\section{\label{sec:Geometry-of-the-action}Geometry of the action}

In this section we develop some elementary (geo)metric properties
of the $G$-action on $\RP$. In particular we estimate the contraction
properties of the maps $\overline{x}\mapsto g\overline{x}$ for $g\in G$,
and of the evaluation maps $g\mapsto g\overline{x}$ for $\overline{x}\in\RP$,
and variations on them.

\subsection{\label{subsec:Projective-space}Projective space and induced action}

Let $\RP=\mathbb{RP}^{1}$ denote the 1-dimensional projective space,
i.e. $(\mathbb{R}^{2}\setminus\{0\})/\sim$ where $\sim$ is the relation
of colinearity,  $x\sim y$ if and only if $x=cy$ for some $c\in\mathbb{R}$.
For $x\in\mathbb{R}^{2}\setminus\{0\}$ we write $\overline{x}\in\RP$
for its equivalence class, and generally denote elements of $\RP$
by $\overline{x}$, with $x$ an implicit representative. Later on we shall not distinguish notationally between elements of $\RR^2\setminus\{0\}$ and $\RP$, implicitly converting a vector $0\neq x\in\RR^2$ to the point $\ov{x}\in\RP$, and elements of $\RP$ with unit-vector reprsentatives. But for clarity the distinction is maintained in the following few sections.

Denote by $d_{\RP}(\cdot,\cdot)$ the rotation-invariant metric
on $\RP{}$ given for $\overline{x},\overline{y}\in\RP$
by 
\[
d_{\RP}(\overline{x},\overline{y})=\frac{1}{\pi}\left|\arcsin\left(\left(1-(\frac{\left\langle x,y\right\rangle }{\left\Vert x\right\Vert \left\Vert y\right\Vert })^{2}\right)^{1/2}\right)\right|.
\]
With this metric $\RP$ is isometric to $S^{1}$ with a metric proportional
to arc-length, normalized so that the total circumference is $1$.

For a unit vector $u$ let $u^{\perp}$ denote its rotation by $\pi/2$.
We obtain linear coordinates for $\RP$ by taking the (normalized)
angle that $\overline{x}$ forms with $u$; in fact for any representative
$x=(x_{1},x_{2})\in\mathbb{R}^{2}$ we define 
\[
\angle_{u}(x)=\frac{1}{\pi}\arctan\frac{\left\langle x,u\right\rangle }{\left\langle x,u^{\perp}\right\rangle }.
\]
This does not depend on the representative so we may write $\angle_{u}(\overline{x})$.
This map is discontinuous on the line $\mathbb{R}u^{\perp}$ (or at
${u}^{\perp}$ in $\RP$), but becomes continuous if we identify
the points $0,1$ in the range, i.e. take the range to be $\mathbb{R}/\mathbb{Z}$
instead of $[0,1)$. Note that for any $u$ the distance in $\RP$
is given by 
\[
d_{\RP}(\overline{x},\overline{y})=|\angle_{u}(x)-\angle_{u}(y)|,
\]
assuming the distance is less than $1/2$.
We write $\angle(w)=\angle_{(1,0)}(w)$ for the angle formed with
the $x$-axis. A section (partial inverse) map to $\angle_{u}$ is
given by the map $\gamma_{u}:[0,1)\rightarrow S^{1}$,
\[
\gamma_{u}(\theta)=\cos\pi\theta\cdot u+\sin\pi\theta\cdot u^{\perp}.
\]
The map is not continuous as a map $[0,1)\to S^{1}$ but is continuous and
well-defined as a map from $\mathbb{R}/\mathbb{Z}\rightarrow\RP$.
These maps provide a system of charts for $\RP{}$ with
connecting maps given by translation. 

Now suppose that $A$ is an invertible $2\times 2$ matrix and $\overline{x}\in\RP$. Then we can define $A\overline{x}=\overline{Ax}$, and this is independent of the representative $x$ because if $\overline{x}=\overline{y}$ then $x=ty$ for some $0\neq t\in\RR$, hence $Ay=Atx=t(Ax)$, so $\overline{Ay}=\overline{Ax}$. One similarly checks that if $B$ is another matrix then $B(A\overline{x})=(BA)\overline{x}$, and we obtain a well defined action of $SL_2(\RR)$ on $\RP$. The action is easily seen to be continuous. It is not faithful, since $A$ and $-A$ act in the same way, but the stabilizer is the two-point group $\{-1,1\}$, so locally in $G$, the action is faithful.

We do not distinguish notationally between a matrix $A$ and the induced map of $\RP$, denoting the latter also by $A$. In cases where there may be ambiguity we shall introduce suitable notation locally. For the next few sections this will not be a problem, as we are distinguishing explicitly between vectors and elements of $\RP$.

\subsection{\label{subsec:Singular-values-and-vectors}Some linear algebra: singular
values and singular vectors}

Let $A$ be a $d\times d$ real matrix and denote its transpose by
$A^{*}$. Then $A^{*}A$ is symmetric and positive-definite, so we
may list its eigenvalues, with multiplicities, in decreasing order,
writing them as $\lambda_{1}^{2}\geq\ldots\geq\lambda_{d}^{2}$. The
numbers $\lambda_{1}\geq\ldots\geq\lambda_{d}$ are called the \emph{singular
values }of $A$ (these are the square roots of the eigenvalues). Again
using the symmetry of $A^{*}A$, we can find an orthonormal basis
$u_{1},\ldots,u_{d}$ of eigenvectors of $A^{*}A$, with $u_{i}$
corresponding to $\lambda_{i}^{2}$, and assuming the singular values
are distinct, this basis is unique up to multiplication of the vectors
by $-1$. We refer to $u_{1},\ldots,u_{d}$ as the
\emph{singular vectors}\footnote{They are sometimes called the \emph{right singular vectors }of $A$. }\emph{
}of $A$. Note that
\[
\left\langle Au_{i},Au_{j}\right\rangle =\left\langle u_{i},A^{*}Au_{j}\right\rangle =\left\langle u_{i},\lambda_{j}^{2}u_{j}\right\rangle =\lambda_{j}^{2}\delta_{i,j},
\]
so $\{Au_{i}\}_{i=1}^{d}$ forms an orthogonal basis of $\mathbb{R}^{d}$,
with $\left\Vert Au_{i}\right\Vert =\lambda_{i}$. 

The geometric interpretation of these numbers is well-known: denoting
by $B_{1}(0)$ the Euclidean unit ball of $\mathbb{R}^{d}$, and $v_{i}=Au_{i}/\lambda_{i}$,
we find that
\begin{eqnarray*}
A(B_{1}(0)) & = & A\left\{ \sum a_{i}u_{i}\,:\,\sum a_{i}^{2}\leq1\right\} \\
 & = & \left\{ \sum a_{i}Au_{i}\,:\,\sum a_{i}^{2}\leq1\right\} \\
 & = & \left\{ \sum b_{i}v_{i}\,:\,\sum(b_{i}/\lambda_{i})^{2}\leq1\right\}.
\end{eqnarray*}
Since $\{v_{i}\}$ is an orthonormal basis of $\mathbb{R}^{d}$, this
shows that the image of the unit ball is an ellipsoid with principal
axes of lengths $\lambda_{1},\ldots,\lambda_{d}$ in directions $v_{1},\ldots,v_{d}$.

For a $2\times2$ matrix $A\in SL_{2}(\mathbb{R})$  we write $\lambda_{A}^{+}=\lambda_{1}=\left\Vert A\right\Vert $
and $\lambda_{A}^{-}=\lambda_{2}$, and similarly $u_{A}^{+}=u_{1}$,
$u_{A}^{-}=u_{2}$ and $v_{A}^{+}=v_{1}$, $v_{A}^{+}=v_{2}$. We
sometimes drop the subscript $A$ when it is clear from the context.
We note that $\lambda_{A}^{+}\cdot\lambda_{A}^{-}=\sqrt{\det A^{*}A}=\det A$,
so for $A\in SL_{2}(\mathbb{R})$ we have $\lambda_{A}^{+}\lambda_{A}^{-}=1$. 

The singular values of $A$ are the same as those of $A^{*}$.

The singular vectors of $A$ and $A^{*}$ do not have such
a simple relation, but when the singular values are large, the singular vectors are related by the following.
\begin{lem}
\label{lem:singular-vectors-of-transpose}Let $A\in SL_{2}(\mathbb{R})$
and write $\lambda=\lambda_{A}$, $u^{\pm}=u_{A}^{\pm}$. Then the
singular vectors $w^{\pm}=u_{A^{*}}^{\pm}$ of $A^{*}$ satisfy 
\begin{eqnarray*}
d_{\RP}(\overline{A^{*}w^{+}},{u^{+}}) & = & O(\lambda^{-2}),\\
d_{\RP}(\overline{A^{*}w^{-}},{u^{-}}) & = & O(\lambda^{-2}).
\end{eqnarray*}
\end{lem}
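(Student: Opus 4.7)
The plan is to reduce everything to the singular value decomposition and verify the claim by direct computation. Write $A = V \Sigma U^T$ where $U = [u^+ \mid u^-]$ and $V = [v^+ \mid v^-]$ are the orthogonal matrices of right and left singular vectors of $A$, and $\Sigma = \diag(\lambda^+, \lambda^-)$. Taking the transpose gives $A^* = U \Sigma V^T$, which is itself an SVD of $A^*$. The right singular vectors of $A^*$---defined as an orthonormal eigenbasis of $(A^*)^* A^* = A A^*$ ordered by decreasing eigenvalue---are therefore, up to sign, the columns of $V$. That is, $w^\pm = \pm v^\pm$.

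Next I compute $A^* w^\pm$ directly. Using $v^+ = A u^+/\lambda^+$ from the definition of $v_A^+$, together with $A^* A u^+ = (\lambda^+)^2 u^+$, I get
\[
A^* w^+ = \pm A^T v^+ = \pm \frac{A^T A u^+}{\lambda^+} = \pm \lambda^+ u^+,
\]
and the same computation with $+$ replaced by $-$ yields $A^* w^- = \pm \lambda^- u^-$. Both are nonzero scalar multiples of the corresponding $u^\pm$, so in $\RP$ we have $\overline{A^* w^\pm} = \overline{u^\pm}$ exactly, and the two projective distances in the statement are in fact zero---trivially satisfying the $O(\lambda^{-2})$ bound.

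The one scenario not covered by the above (where the singular vectors fail to be uniquely determined up to sign) is the degenerate case $\lambda^+ = \lambda^-$. But in $SL_2(\R)$ the constraint $\lambda^+ \lambda^- = 1$ then forces $\lambda^+ = 1$, so $A$ is orthogonal and the bound $O(\lambda^{-2}) = O(1)$ already exceeds the diameter of $\RP$; the stated estimate holds vacuously regardless of how $u^\pm, w^\pm$ are chosen. Thus I foresee no real obstacle: the lemma reduces to a one-line computation from the SVD of $A^*$, and the $O(\lambda^{-2})$ slack in the statement appears to be left in for flexibility in later applications where the singular vectors will be handled only approximately.
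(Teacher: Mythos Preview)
Your argument is correct and in fact sharper than the paper's. You identify $w^{\pm}$ explicitly as $\pm v^{\pm}$ via the SVD $A=V\Sigma U^{T}$, and then the computation $A^{*}v^{+}=A^{T}Au^{+}/\lambda^{+}=\lambda^{+}u^{+}$ shows the projective distances are exactly zero, not merely $O(\lambda^{-2})$.

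The paper proceeds differently: without identifying $w^{+}$ with $v^{+}$, it bounds the ``bad'' coefficient by Cauchy--Schwarz,
\[
|\langle A^{*}w^{+},u^{-}\rangle|=|\langle w^{+},Au^{-}\rangle|\le\|Au^{-}\|=\lambda^{-},
\]
and then normalizes by $\|A^{*}w^{+}\|=\lambda$ to get $O(\lambda^{-2})$. This is a cruder estimate that does not exploit the orthogonality $\langle v^{+},v^{-}\rangle=0$ (which is what makes your inner product vanish exactly). Your route is more direct and yields a stronger conclusion; the paper's argument has the mild advantage of being robust to situations where $w^{+}$ is only known approximately, but for the lemma as stated your proof is cleaner.
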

\begin{proof}
We have
\[
|\left\langle A^{*}w^{+},u^{-}\right\rangle |=|\left\langle w^{+},Au^{-}\right\rangle |\leq\left\Vert w^{+}\right\Vert \left\Vert Au^{-}\right\Vert =\lambda^{-}\left\Vert w^{+}\right\Vert  = \lambda^{-}.
\]
Therefore, writing $A^{*}w^{+}=a^{+}u^{+}+a^{-}u^{-}$, we conclude
that $|a^{-}|\leq\lambda^{-1}$, and using $\left\Vert A^{*}w^{+}\right\Vert =\lambda$
we have
\[
\frac{A^{*}w^{+}}{\left\Vert A^{*}w^{+}\right\Vert }=O(\lambda^{-2})u^{-}+\sqrt{1-O(\lambda^{-4})}\cdot u^{+},
\]
implying the claim. The distance for $\overline{A^{*}w^{-}},{u^{-}}$ follows by orthogonality.
\end{proof}

\noindent {\em Note.} Above  $u^{\pm}$ denote both unit singular vectors and the corresponding elements of $\RP$. Observe that a singular vector of  an $SL_2(\R)$-matrix with $\lambda>1$ is defined uniquely up to sign, so the corresponding element of $\RP$ is defined uniquely.

\subsection{\label{subsec:Left-invariant-metric-on-G}Left invariant metric on
$G$}

Let $G=SL_{2}(\mathbb{R})$, the group of $2\times2$ matrices of determinant
$1$. It is a $3$-dimensional real Lie group.

We endow $G$ with a left-invariant Riemannian metric $d(\cdot,\cdot)$,
so that 
\[
d(hg_{1},hg_{2})=d(g_{1},g_{2}),
\]
hence
\begin{eqnarray*}
B_{r}(h) & = & h\cdot B_{r}(1_{G}).
\end{eqnarray*}
Let us compare $d$ to the norm metric. Suppose that $g,g'\in G$.
Write $h=g^{-1}g'$, then
\[
g-g'=g(1_G-h).
\]
Now, there is an $r_{0}$ such that the metrics $d$ and the norm-metric
are bi-Lipschitz equivalent on $B_{r_{0}}(1_{G})$ (actually, this is true for any $r_0>0$), and in particular there is
a constant $\alpha$ such that $\|1_{G}-u\|\leq\alpha\cdot d(1_{G},u)$
for all $u\in B_{r_{0}}(1_{G})$. It follows that if $d(g,g')<r_{0}$,
then 
\[
\left\Vert g-g'\right\Vert \leq\left\Vert g\right\Vert \left\Vert 1_{G}-h\right\Vert \leq\left\Vert g\right\Vert \cdot\alpha\cdot d(1_{G},h)=\left\Vert g\right\Vert \cdot\alpha\cdot d(g,g').
\]
A similar calculation gives $\left\Vert g-g'\right\Vert \geq\left\Vert g^{-1}\right\Vert^{-1}\cdot\alpha^{-1}\cdot d(g,g')$. Using the fact that $\|g^{-1}\| = \|g\|$ for $g\in SL_2(\R)$,
we obtain
\begin{conclusion} \label{concl}
There exist an $r_{0}>0$ and $\alpha>0$ such that if $g,g'\in G$
and $d(g,g')<r_{0}$ then 
\[
\alpha^{-1}\left\Vert g\right\Vert ^{-1}\leq\frac{\left\Vert g-g'\right\Vert }{d(g,g')}\leq\alpha\left\Vert g\right\Vert. 
\]
In particular, a set $\mathcal{A}\subseteq G$ is Diophantine in the sense given in the  introduction if and only if there exists a constant $c>0$ such that for every pair of sequences $A_1,\ldots,A_n$ and $A'_1,\ldots,A'_n$ in $\mathcal{A}$, if $A_1\ldots A_n\neq A'_1\ldots A'_n$ then $\Vert A_1\ldots A_n- A'_1\ldots A'_n\Vert >c^n$. This also shows that the property of being Diophantine is independent of the left-invariant metric $d$ we choose.
\end{conclusion}

\subsection{\label{subsec:expansion-estimates}Expansion estimates and linearization}

Let $g\in G$, write $\lambda=\left\Vert g\right\Vert $, and let $u^{\pm}=u_{g}^{\pm}$
and $v^{\pm}=v_{g}^{\pm}$ be the elements of $\RP$ corresponding to the singular vectors of $g$ and their images under  the action of $g$ (see notation in Section \ref{subsec:Singular-values-and-vectors}).
Consider the map $\widehat{g}:\RP\rightarrow\RP$ with the coordinates
$\angle_{u^{+}}$ in the domain and the coordinates $\angle_{v^{+}}$
in the range, i.e. the map
\begin{eqnarray*}
\widehat{g}:\mathbb{R}/\mathbb{Z} & \rightarrow & \mathbb{R}/\mathbb{Z};\\
\theta & \mapsto & \angle_{v^{+}}\circ g\circ\gamma_{u^{+}}(\theta).
\end{eqnarray*}
We have
\begin{eqnarray*}
g(\gamma_{u^{+}}(\theta)) & = & g(\cos\pi\theta\cdot u^{+}+\sin\pi\theta\cdot u^{-})\\
 & = & (\lambda\cos\pi\theta\cdot v^{+},\lambda^{-1}\sin\pi\theta\cdot v^{-}),
\end{eqnarray*}
so
\begin{eqnarray*}
\widehat{g}(\theta) & = & \angle_{v^{+}}(\lambda\cos\pi\theta\cdot v^{+},\lambda^{-1}\sin\pi\theta\cdot v^{-})\\
 & = & \frac{1}{\pi}\arctan\frac{\lambda^{-1}\sin\pi\theta}{\lambda\cos\pi\theta}\\
 & = & \frac{1}{\pi}\arctan\frac{1}{\lambda^{2}}\tan\pi\theta.
\end{eqnarray*}
Calculating the derivative,
\[
\frac{d}{d\theta}\widehat{g}(\theta)=\frac{1}{\lambda^{2}\cos^{2}\pi\theta+\lambda^{-2}\sin^{2}\pi\theta}\,.
\]
This shows immediately that
\begin{lem}
\label{lem:g-contracts-on-all-of-RP}For $g\in G$ the induced map
$\widehat{g}:\RP\rightarrow\RP$ expands by at most $\lambda_{g}^{2}$
and contracts by at most $\lambda_{g}^{-2}$. 
\end{lem}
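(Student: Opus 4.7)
The derivative of $\widehat{g}$ has already been explicitly computed in the preceding calculation, so my plan is to deduce the stated expansion/contraction bounds directly from this derivative formula together with two elementary observations.

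First, I would note that because $g\in SL_2(\mathbb{R})$ satisfies $\lambda_g^+\cdot\lambda_g^-=1$ and $\lambda=\lambda_g^+=\|g\|\geq 1$, we have $\lambda^{-2}\leq\lambda^2$. Then the denominator $D(\theta)=\lambda^2\cos^2\pi\theta+\lambda^{-2}\sin^2\pi\theta$ appearing in
\[
\frac{d}{d\theta}\widehat{g}(\theta)=\frac{1}{D(\theta)}
\]
can be bounded by using $\cos^2\pi\theta+\sin^2\pi\theta=1$: we have $D(\theta)\leq\lambda^2(\cos^2\pi\theta+\sin^2\pi\theta)=\lambda^2$ and $D(\theta)\geq\lambda^{-2}(\cos^2\pi\theta+\sin^2\pi\theta)=\lambda^{-2}$. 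Inverting, this yields
\[
\lambda^{-2}\;\leq\;\frac{d}{d\theta}\widehat{g}(\theta)\;\leq\;\lambda^{2}
\]
for every $\theta$, with the upper bound realized at $\theta=1/2$ (near $u^-$) and the lower bound at $\theta=0$ (near $u^+$).

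Finally, I would convert these pointwise derivative bounds into metric bounds. Since the charts $\angle_{u^+}$ and $\angle_{v^+}$ are isometries from a neighborhood of any point in $\RP$ to an interval in $\mathbb{R}/\mathbb{Z}$ (recall $d_\RP(\overline{x},\overline{y})=|\angle_u(x)-\angle_u(y)|$ when the distance is less than $1/2$), the mean value theorem applied on $\mathbb{R}/\mathbb{Z}$ gives, for any two points close enough on $\RP$,
\[
\lambda^{-2}\cdot d_{\RP}(\overline{x},\overline{y})\;\leq\;d_{\RP}(g\overline{x},g\overline{y})\;\leq\;\lambda^{2}\cdot d_{\RP}(\overline{x},\overline{y}),
\]
which is exactly the claim. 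The only subtlety — handling the circular (rather than linear) nature of the metric — is immediate since the derivative bound is uniform in $\theta$, so the inequality persists along any path connecting $\overline{x}$ and $\overline{y}$ in $\RP$, and one may chain short arcs to cover arbitrary pairs. There is no real obstacle in the argument; the entire content of the lemma is already encoded in the derivative formula, and the proof is essentially a one-line estimate of the denominator $D(\theta)$.
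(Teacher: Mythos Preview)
Your proposal is correct and takes exactly the same approach as the paper: the paper simply says ``This shows immediately that'' after computing the derivative formula, and your argument spells out the obvious bound $\lambda^{-2}\leq D(\theta)\leq\lambda^2$ on the denominator together with the passage from derivative bounds to metric bounds.
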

The upper and lower bound in the lemma are very far from each other,
but a much tighter estimate can be obtained if we exclude a small
part of $\RP$. Indeed, given $\varepsilon>0$, the ratio of $\cos\pi\theta$
and $\sin\pi\theta$ is bounded away from $0$ for $\theta\not\in(\frac{1}{2}-\eps, \frac{1}{2}+\eps)$, so 
\[
\bigl|\frac{d}{d\theta}\widehat{g}(\theta)\bigr|=\Theta_{\varepsilon}\bigl(\frac{1}{\lambda^{2}}\bigr)\qquad\mbox{for } \theta\not\in\bigl(\frac{1}{2}-\eps, \frac{1}{2}+\eps\bigr).
\]
Differentiating further, a similar calculation shows that 
\[
\bigl|\frac{d^{2}}{d\theta^{2}}\widehat{g}(\theta)\bigr|=O_{\varepsilon}\bigl(\frac{1}{\lambda^{2}}\bigr)\qquad\mbox{for } \theta\not\in\bigl(\frac{1}{2}-\eps, \frac{1}{2}+\eps\bigr).
\]

Let us say that a map $f$ between metric spaces scales by $a>0$ with distortion $b>0$ if $$b^{-1}<\frac{d(f(x),f(y))}{ad(x,y)}<b$$ for all $x,y$ in its domain. 

Recalling that distance in $\RP$ is given by the difference of angles,
and using elementary calculus, we obtain, noting that $\theta=\frac{1}{2}$ corresponds to $u^-=u^-_g$:
\begin{lem}
\label{lem:g-contracts-on-most-of-RP}For $g\in G$ the induced map
$\widehat{g}:\RP\rightarrow\RP$ scales $\RP\setminus B_{\varepsilon}(u_{g}^{-})$
by $\lambda_{g}^{-2}$ with distortion $\Theta_{\varepsilon}(1)$,
and furthermore, assuming $x,x_{0}\in\RP\setminus B_{\varepsilon}(u_{g}^{-})$, and using the coordinates given by the angle,
\[
\widehat{g}(x)=\widehat{g}(x_{0})+\widehat{g}'(x_{0})\cdot(x-x_{0})+O_{\varepsilon}\Bigl(\frac{(x-x_{0})^{2}}{\lambda_g^{2}}\Bigr).
\]
\end{lem}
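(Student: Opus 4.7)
The plan is to work in the angle coordinates $\theta = \angle_{u^+_g}$ on the source and $\phi = \angle_{v^+_g}$ on the target, in which $\widehat{g}(\theta) = \pi^{-1}\arctan(\lambda^{-2}\tan\pi\theta)$. The singular point $u_g^-$ corresponds to $\theta = \pm\tfrac{1}{2}$, so $\RP\setminus B_\varepsilon(u_g^-)$ lifts to the open interval $I_\varepsilon = (-\tfrac{1}{2}+\varepsilon, \tfrac{1}{2}-\varepsilon)$, on which $\widehat{g}$ is smooth and satisfies the uniform estimates $|\widehat{g}'(\theta)| = \Theta_\varepsilon(\lambda^{-2})$ and $|\widehat{g}''(\theta)| = O_\varepsilon(\lambda^{-2})$ already derived. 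Both parts of the lemma then reduce to mean value and Taylor arguments on $I_\varepsilon$.

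For the scaling claim, let $x,y\in\RP\setminus B_\varepsilon(u_g^-)$ have lifts $\theta_1,\theta_2\in I_\varepsilon$. The mean value theorem gives
$$|\widehat{g}(\theta_1)-\widehat{g}(\theta_2)| = |\widehat{g}'(\xi)|\cdot|\theta_1-\theta_2|, \qquad \xi\in I_\varepsilon,$$
and since the image $\widehat{g}(I_\varepsilon)$ has total length $\Theta_\varepsilon(\lambda^{-2})\ll 1$ for $\lambda$ large, the left side coincides with $d_\RP(\widehat{g}(x),\widehat{g}(y))$. The only wrinkle is translating between $|\theta_1-\theta_2|$ and $d_\RP(x,y) = \min(|\theta_1-\theta_2|,\,1-|\theta_1-\theta_2|)$: these two coincide unless the short arc between $x$ and $y$ passes through $u_g^-$, in which case $d_\RP(x,y)\geq 2\varepsilon$ and the ratio $|\theta_1-\theta_2|/d_\RP(x,y)$ lies in $[1,\,(1-2\varepsilon)/(2\varepsilon)] = \Theta_\varepsilon(1)$. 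Combining these estimates yields $d_\RP(\widehat{g}(x),\widehat{g}(y)) = \Theta_\varepsilon(\lambda^{-2})\cdot d_\RP(x,y)$ with distortion $\Theta_\varepsilon(1)$.

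For the Taylor expansion, apply Taylor's theorem with Lagrange remainder at $x_0\in I_\varepsilon$:
$$\widehat{g}(x) = \widehat{g}(x_0) + \widehat{g}'(x_0)(x-x_0) + \tfrac{1}{2}\widehat{g}''(\xi)(x-x_0)^2, \qquad \xi\in I_\varepsilon.$$
The established bound $|\widehat{g}''(\xi)| = O_\varepsilon(\lambda^{-2})$ converts the quadratic remainder into $O_\varepsilon((x-x_0)^2/\lambda^2)$, which is precisely the claimed expansion.

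The only genuine subtlety in the argument is the arc-versus-chord bookkeeping on the circle $\RP$, which is the reason the distortion constant depends on $\varepsilon$ (and blows up as $\varepsilon\to 0$). Once one passes to the interval $I_\varepsilon$, on which $\widehat{g}$ is smooth and its image has small diameter, both statements are essentially direct calculus consequences of the derivative estimates already in hand.
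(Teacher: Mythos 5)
Your proof is correct in outline and fills in exactly the "elementary calculus" step that the paper leaves implicit: mean value theorem for the scaling claim, Taylor with Lagrange remainder for the expansion, using the derivative bounds $|\widehat{g}'| = \Theta_\varepsilon(\lambda^{-2})$ and $|\widehat{g}''| = O_\varepsilon(\lambda^{-2})$ on $I_\varepsilon$ that precede the lemma. Your treatment of the arc-versus-chord bookkeeping on the source circle is also correct.

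The one place where you wave your hands is the target side: you assert that $\widehat{g}(I_\varepsilon)$ has length $\ll 1$ "for $\lambda$ large", which is true, but the lemma must hold for all $g\in G$, and in particular for $\lambda_g$ bounded (e.g. $\lambda_g$ near $1$). In that regime $\widehat{g}(I_\varepsilon)$ has length close to $1-2\varepsilon$, and the image arc $|\widehat{g}(\theta_1)-\widehat{g}(\theta_2)|$ can exceed $1/2$ while the source arc does not, so the angle-difference on the target need not equal $d_\RP(\widehat g(x),\widehat g(y))$. This does not derail the argument — for $\lambda_g = O_\varepsilon(1)$ both $\lambda_g^{-2}$ and the mean-value ratio are bounded above and below by constants depending only on $\varepsilon$, so the distortion is trivially $\Theta_\varepsilon(1)$ — but you should say so explicitly rather than leaving the case dangling. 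Equivalently, one can note that the dangerous configuration $|\theta_1-\theta_2|\leq\frac12<|\widehat{g}(\theta_1)-\widehat{g}(\theta_2)|$ forces $\lambda_g < 1/\sin\pi\varepsilon = O_\varepsilon(1)$, so it falls into the bounded-$\lambda$ regime. With that one sentence added, your argument is complete and coincides with what the paper intends.
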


\subsection{Separating $G$ by the action on $\RP$}

For a $k$-tuple ${x}=({x}_{1},\ldots,{x}_{k})\in\RP{}^{k}$
write $f_{{x}}:G\rightarrow\RP{}^{k}$ for the
map $g\mapsto(g{x}_{1},\ldots,g{x}_{k})$. Endow
the range with the supremum product metric and let matrices act pointwise
on $k$-tuples. Observe that for any $g,h\in G$ and ${x}\in\RP{}^{k}$,
\[
f_{{x}}(hg)=hf_{{x}}(g).
\]
For a $3$-tuple ${x}$ of distinct points, $f_{{x}}$
is a smooth injection and is bi-Lipschitz to its image in any sufficiently small compact
neighborhood of the identity in $G$. By another compactness argument,
for every $\varepsilon>0$, the bi-Lipschitz constants of the map can
be bounded independently of ${x}$ (but depending on $\varepsilon$)
as long as the coordinates of ${x}$ are $\varepsilon$-separated,
i.e. $d_{\RP}({x}_{i},{x}_{j})\geq\varepsilon$ for
$i\neq j$. 

Now for fixed $g_{0}\in G$ and $r>0$ consider $f_{{x}}$
restricted to $B_{r}(g_{0})=g_{0}\cdot B_{r}(1_{G})$. Assume that
the $3$-tuple ${x}=({x}_{1},{x}_{2},{x}_{3})$
is $\varepsilon$-separated and that ${x}_{i}\in\RP{}\setminus B_{\varepsilon}(u_{g_{0}}^{-})$.
Assume further that $r$ is small enough that $B_r(1_G)$ satisfies the conclusion of the previous paragraph and no $h\in B_{r}(1_G)$
moves points in $\RP{}$ by more than $\varepsilon/4$,
so $h{x}_{i}\in\RP{}\setminus B_{\varepsilon/2}(u^{-}_{g_{0}})$.
This implies that if $h\in B_{r}(1_G)$ and $g=g_{0}h$, then $g_{0}$
acts by contracting the coordinates of ${x}$ and $h{x}$
by $\Theta_{\varepsilon}(\left\Vert g_{0}\right\Vert ^{2})$, hence
$g_{0}^{-1}$  acts on $g_{0}{x}=f_{{x}}(g_{0})$
and $g{x}=g_{0}h{x}=f_{{x}}(g)$  by
expanding by $\Theta_\eps(\left\Vert g_{0}\right\Vert ^{2})$. Thus, for
$g\in B_{r}(g_{0})$ and $h=g_{0}^{-1}g\in B_{r}(1_{G})$, we have
\begin{eqnarray*}
d_{\RP}(f_{{x}}(g),f_{{x}}(g_{0})) & = & \Theta_{\varepsilon}(\left\Vert g_{0}\right\Vert ^{-2})\cdot d_{\RP}(g_{0}^{-1}f_{{x}}(g),g_{0}^{-1}f_{{x}}(g_{0}))\\
 & = & \Theta_{\varepsilon}(\left\Vert g_{0}\right\Vert ^{-2})\cdot d_{\RP}(f_{{x}}(h),f_{{x}}(1_{G}))\\
 & = & \Theta_{\varepsilon}(\left\Vert g_{0}\right\Vert ^{-2})\cdot d(h,1_{G})\mbox{ (because \ensuremath{f_{{x}}} is \ensuremath{\Theta(1)}-bi-Lip on \ensuremath{B_{r}(1_{G})})}\\
 & = & \Theta_{\varepsilon}(\left\Vert g_{0}\right\Vert ^{-2})\cdot d(g_{0}h,g_{0}1_{G})\mbox{ (because \ensuremath{d} is left-invariant)}\\
 & = & \Theta_{\varepsilon}(\left\Vert g_{0}\right\Vert ^{-2})\cdot d(g,g_{0}).
\end{eqnarray*}
We have proved:
\begin{lem}
\label{lem:separation-of-G-by-3-orbits}For any $\varepsilon>0$ and
$0<r<r(\varepsilon)$, if $g_{0}\in G$ and ${x}=({x}_{1},{x}_{2},{x}_{3})\in\RP^{3}$
is $\varepsilon$-separated and ${x}_{i}\notin B_{\varepsilon}(u_{g_{0}}^{-})$,
then the map $g\mapsto g{x}$ scales by $\left\Vert g_{0}\right\Vert ^{-2}$
with distortion $O_{\varepsilon}(1)$ on $B_{r}(g_{0})$.
\end{lem}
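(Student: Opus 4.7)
The plan is to exploit left-invariance of $d$ to transport the estimate on a ball around $g_0$ to a ball around the identity, and then separately handle the action of $g_0$ on the resulting triples of projective points via Lemma~\ref{lem:g-contracts-on-most-of-RP}. The two ingredients are (i) bi-Lipschitz control of $f_x$ near $1_G$ with constants depending only on $\varepsilon$ (established in the paragraph preceding the lemma from compactness of the space of $\varepsilon$-separated triples combined with the fact that for three distinct points the orbit map $g\mapsto gx$ is an immersion on the $3$-dimensional group $G$), and (ii) the $\|g_0\|^{-2}$-contraction of $g_0$ on the complement of $B_\varepsilon(u_{g_0}^-)\subset\RP$.

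First, choose $r=r(\varepsilon)>0$ small enough so that the following three conditions hold simultaneously:
\begin{enumerate}
\item $B_r(1_G)$ lies in a neighborhood where the bi-Lipschitz control of $f_y$ from the preceding paragraph applies uniformly for all $\varepsilon/2$-separated triples $y$;
\item every $h\in B_r(1_G)$ acts on $\RP$ moving points by at most $\varepsilon/4$, so that if $x$ is $\varepsilon$-separated then $hx$ is $\varepsilon/2$-separated, and if $x_i\notin B_\varepsilon(u_{g_0}^-)$ then $hx_i\notin B_{\varepsilon/2}(u_{g_0}^-)$;
\item $r<r_0$ for the bi-Lipschitz constant of Conclusion~\ref{concl}, if needed for comparison of metrics.
\end{enumerate}
Now fix $g,g'\in B_r(g_0)$ and write $g=g_0h$, $g'=g_0h'$ with $h,h'\in B_r(1_G)$, using left-invariance. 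Then
\[
f_x(g)=g_0(hx)=g_0\cdot f_x(h),\qquad f_x(g')=g_0\cdot f_x(h').
\]

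The hypotheses from (2) together with Lemma~\ref{lem:g-contracts-on-most-of-RP} imply that $g_0$ acts on each coordinate of $f_x(h)$ and $f_x(h')$ as a map that scales by $\|g_0\|^{-2}$ with distortion $\Theta_\varepsilon(1)$. Therefore
\[
d_\RP\bigl(f_x(g),f_x(g')\bigr)=\Theta_\varepsilon(\|g_0\|^{-2})\cdot d_\RP\bigl(f_x(h),f_x(h')\bigr).
\]
Applying the uniform bi-Lipschitz bound (1) to the $\varepsilon/2$-separated triple $hx$ (or to $x$ itself, using that $f_x$ is $\Theta(1)$-bi-Lipschitz on $B_r(1_G)$ with constants depending only on $\varepsilon$),
\[
d_\RP\bigl(f_x(h),f_x(h')\bigr)=\Theta_\varepsilon(1)\cdot d(h,h')=\Theta_\varepsilon(1)\cdot d(g_0h,g_0h')=\Theta_\varepsilon(1)\cdot d(g,g'),
\]
the middle equality being left-invariance of $d$. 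Combining the two displays yields the desired conclusion.

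The only nontrivial step is the uniform bi-Lipschitz control of $f_x$ near $1_G$ with constants independent of the particular $\varepsilon$-separated triple $x$. This is already recorded in the paragraph preceding the lemma, and is the sole place where the hypothesis that $x$ consists of three (rather than fewer) distinct points is essential: the orbit map $g\mapsto gx$ is injective precisely when $x$ has three distinct coordinates (since the stabilizer of two points in $\RP$ under $G$ is a one-parameter subgroup), and then a dimension count together with compactness of the $\varepsilon$-separated locus in $(\RP)^3$ provides uniform constants. Everything else is routine calculation and careful bookkeeping of the scaling factors.
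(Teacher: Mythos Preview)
Your proof is correct and follows essentially the same approach as the paper's: write $g=g_0h$, use the uniform bi-Lipschitz control of $f_x$ near $1_G$ (from the compactness argument in the preceding paragraph), apply Lemma~\ref{lem:g-contracts-on-most-of-RP} to $g_0$ on the perturbed triple $hx$, and conclude via left-invariance of $d$. The only difference is cosmetic: the paper carries out the chain of equalities for the pair $(g,g_0)$ while you do it for a general pair $(g,g')\in B_r(g_0)^2$, which is in fact what the definition of ``scales with distortion'' requires, so your version is marginally more complete.
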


\section{\label{sec:Furstenberg-measure}Furstenberg measure}

In this section we review basic results from the theory of random
matrix products and Furstenberg measure, and set up some notation
that will be used later. We then prove that the Furstenberg measure
is exact dimensional. 

\subsection{\label{subsec:The-theorems-of-Furst-and-Oseledets}The theorems of
Furstenberg and Oseledets}

We review some classical results on random matrix products, and prove some quantitative variants.

\begin{thm}
[Furstenberg] \label{thm:Furstenberg}If $\mu\in\mathcal{P}(G)$
with $G_{\mu}$ unbounded and totally irreducible, then there
exists a unique $\mu$-stationary measure $\nu$ on $\RP{}$,
and it is non-atomic. Furthermore, if $X_{1},X_{2},\ldots$ denotes an
i.i.d. sequence of matrices with marginal $\mu$, then with probability
one $X_{1}X_{2}\ldots X_{n}\nu$ converges weakly to a random Dirac
mass $\delta_{{z}}$, and $\nu=\mathbb{E}(\delta_{{z}})$. 
\end{thm}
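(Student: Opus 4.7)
The plan is to prove the theorem in three steps: (i) existence and non-atomicity of a stationary measure; (ii) a.s.\ convergence of $g_n\nu$ (where $g_n=X_1\cdots X_n$) to a random Dirac mass; and (iii) uniqueness, which will follow readily from (ii). Step (ii) is the heart of the matter.

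For existence, since $\RP$ is compact, $\mathcal{P}(\RP)$ is weak-$*$ compact and the operator $\eta\mapsto\mu\conv\eta$ is continuous; taking any $\nu_0$ and extracting a weak limit of the Ces\`aro averages $\frac{1}{N}\sum_{n<N}\mu^{\star n}\conv\nu_0$ produces a stationary measure. For non-atomicity, suppose $\nu$ is stationary and $M=\max_{\overline{x}\in\RP}\nu(\{\overline{x}\})>0$ is attained on the finite set $F$. Stationarity $\nu(\{\overline{x}\})=\int\nu(\{A^{-1}\overline{x}\})\,d\mu(A)$ combined with maximality forces $A^{-1}F\seq F$ for every $A\in\supp\mu$; thus $F$ is invariant under the semigroup $G_\mu^+$, hence under the group $G_\mu$ by finiteness, contradicting total irreducibility.

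For step (ii), fix a non-atomic stationary $\nu$ and set $\nu_n=g_n\nu$. For every $\varphi\in C(\RP)$ the process $\int\varphi\,d\nu_n$ is a bounded martingale in the filtration generated by $(X_n)$, so combined with separability of $C(\RP)$ one obtains a random probability measure $\nu_\infty$ with $\nu_n\Rightarrow\nu_\infty$ a.s.\ and $\E\nu_\infty=\nu$. Under our standing hypotheses $\chi(\mu)>0$, so $\lambda_{g_n}^+\to\infty$ exponentially almost surely, and by Lemma~\ref{lem:g-contracts-on-most-of-RP} the map $g_n$ sends $\RP\setminus B_\varepsilon(u_{g_n}^-)$ into a ball of radius $O_\varepsilon(\lambda_{g_n}^{-2})$ around $v_{g_n}^+$. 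Consequently $\nu_\infty$ is concentrated at $\lim v_{g_n}^+$ as soon as one establishes two facts: that the directions $v_{g_n}^+$ converge a.s.\ to some random point $z$, and that for every small $\varepsilon>0$ the quantity $\nu(B_\varepsilon(u_{g_n}^-))$ is, with high probability, uniformly small in $n$. The natural route to both is via the reversed walk $Y_n=X_n^*\cdots X_1^*$: by Lemma~\ref{lem:singular-vectors-of-transpose} the singular vectors of $Y_n$ are asymptotically aligned with those of $g_n$ (and $u_{g_n}^-\perp u_{g_n}^+$ lets us transfer control from one to the other), and rerunning part (i) for the $\mu^*$-walk yields a non-atomic stationary measure $\check\nu$ that controls the asymptotic law of $u_{g_n}^-$. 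Non-atomicity of $\nu$ combined with this description delivers the needed decay.

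Once $\nu_\infty=\delta_{z(\omega)}$ has been identified, uniqueness is immediate: $z(\omega)=\lim_n g_n(\omega)\overline{x}_0$ for any fixed non-exceptional $\overline{x}_0$, so $\nu=\E\delta_{z(\omega)}$ depends only on $\mu$. The main obstacle throughout is step (ii): one has to control the random contracting direction $u_{g_n}^-$, and in particular its joint behavior with the fixed measure $\nu$, strongly enough to upgrade expected decay of $\nu(B_\varepsilon(u_{g_n}^-))$ to pathwise decay for a.e.\ realization.
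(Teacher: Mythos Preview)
The paper does not prove Theorem~\ref{thm:Furstenberg}; it is stated as a classical result attributed to Furstenberg, and the paper only proves the quantitative variant Proposition~\ref{prop:quantitative-Furstenberg}. So there is no ``paper's own proof'' to compare against, and your proposal should be judged on its own merits.

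Your steps (i) and (iii) are fine and standard. In step (ii) the martingale argument giving $\nu_n\Rightarrow\nu_\infty$ a.s.\ with $\E\nu_\infty=\nu$ is correct, and so is the contraction picture coming from Lemma~\ref{lem:g-contracts-on-most-of-RP}. But you then set yourself a harder task than necessary, and you correctly flag it as unresolved: you try to control $\nu(B_\varepsilon(u_{g_n}^-))$ \emph{pathwise for all $n$}, via the distribution of $u_{g_n}^-$ under the reversed walk. That distributional information does not by itself give a.s.\ eventual smallness, and your closing paragraph admits as much.

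The gap disappears with a subsequence trick. You already know $\nu_n\to\nu_\infty$ a.s.\ along the \emph{full} sequence, so it suffices to show $\nu_\infty$ is a point mass by looking at \emph{any} subsequence. On the a.s.\ event $\|g_n\|\to\infty$, pass by compactness of $\RP$ to a subsequence $n_k$ with $u_{g_{n_k}}^-\to u^-$. Given $\delta>0$, non-atomicity of $\nu$ lets you pick $\varepsilon$ with $\nu(B_{2\varepsilon}(u^-))<\delta$; for large $k$ one has $B_\varepsilon(u_{g_{n_k}}^-)\subseteq B_{2\varepsilon}(u^-)$, so $\nu(B_\varepsilon(u_{g_{n_k}}^-))<\delta$. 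Lemma~\ref{lem:g-contracts-on-most-of-RP} then gives that $\nu_{n_k}$ puts mass $\geq 1-\delta$ on a ball of radius $O_\varepsilon(\lambda_{g_{n_k}}^{-2})\to 0$, hence $\nu_\infty$ has an atom of mass $\geq 1-\delta$; letting $\delta\to 0$ finishes. This avoids entirely the pathwise control of $u_{g_n}^-$ and the detour through $\check\nu$. Note also that you are silently invoking $\chi(\mu)>0$ (equivalently $\|g_n\|\to\infty$ a.s.), which is itself a nontrivial theorem of Furstenberg and is packaged in the paper as part of Theorem~\ref{thm:Oseledets}; that dependence should be made explicit.
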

The unique stationary measure is called the \emph{Furstenberg measure}
of $\mu$. 

\medskip

For our application, the weak convergence in the theorem is too coarse.
What we will actually need is information on how far $X_{1}\ldots X_{n}\nu$
typically is from $\delta_{{z}}$, and more generally, for
fixed ${u}\in\RP$, how far $X_{1}X_{2}\ldots X_{n}{u}$
is from ${z}$. We will derive this information from another
classical result:
\begin{thm}
[Furstenberg, Oseledets] \label{thm:Oseledets} Let $\mu\in\mathcal{P}(G)$
with $G_{\mu}$ unbounded and totally irreducible, and let $X_{1},X_{2},\ldots$
be i.i.d. matrices with marginal $\mu$. Then the almost sure limit
$\chi=\lim_{n\rightarrow\infty}\frac{1}{n}\log\left\Vert X_{n}^{*}\ldots X_{1}^{*}\right\Vert $
exists and is a.s.\ constant and is positive,
and, writing $\lambda=2^\chi$, we have

\begin{enumerate}
\item[{\rm (i)}] With probability one, there exists a (random) orthonormal pair $u^{+},u^{-}\in\mathbb{R}^{2}$
such that $\left\Vert X_{n}^{*}\ldots X_{1}^{*}u^{+}\right\Vert =\lambda^{n(1+o(1))}$
and $\left\Vert X_{n}^{*}\ldots X_{1}^{*}u^{-}\right\Vert =\lambda^{-n(1+o(1))}$. 
\item[{\rm (ii)}] Writing $Y_{n}=X_{n}^{*}\cdot\ldots\cdot X_{1}^{*}$ and ${u}_{n}^{+}={u}_{Y_{n}}^{+}$,
${u}_{n}^{-}={u}_{Y_{n}}^{-}$, with probability
one we have ${u}_{n}^{+}\rightarrow{u}^{+}$ and
${u}_{n}^{-}\rightarrow{u}^{-}$ in $\RP$ and $d_{\RP}({u}_{n}^{+},{u}^{+})=d_{\RP}({u}_{n}^{-},{u}^{-})=\lambda^{-2n(1+o(1))}$.
\item[{\rm (iii)}] Writing $Z_{n}=Y_{n}^{*}=X_{1}X_{2}\ldots X_{n}$ and $w_{n}^{+}=Z_{n}u_{Z_{n}}^{+}$,
$w_{n}^{-}=Z_{n}u_{Z_n}^{-}$, with probability one we have $w_{n}^{+}\rightarrow u^{+}$
and $w_{n}^{-}\rightarrow u^{-}$ in $\RP$, and $d_{\RP}({w}_{n}^{+},{u}^{+})=d_{\RP}({w}_{n}^{-},{u}^{-})=\lambda^{-2n(1+o(1))}$.
\end{enumerate}
\end{thm}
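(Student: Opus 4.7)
The existence of $\chi$ as an a.s.\ constant follows from Kingman's subadditive ergodic theorem applied to $n\mapsto\log\|X_n^*\cdots X_1^*\|$, and its positivity under our assumptions on $G_\mu$ is Furstenberg's theorem. For part (i), I would apply the Oseledets multiplicative ergodic theorem to the cocycle $Y_n=X_n^*\cdots X_1^*$: in dimension two, $\det Y_n=1$ forces the Lyapunov spectrum to be $\{\chi,-\chi\}$, yielding a (random) splitting $\R^2=E^+\oplus E^-$ into one-dimensional Oseledets subspaces. Take $u^-\in E^-$ to be a unit vector and $u^+:=(u^-)^\perp$; then $\|Y_n u^-\|=\lambda^{-n(1+o(1))}$ is immediate, while $u^+\notin E^-$ (as $E^-$ is one-dimensional and $u^+\perp u^-$), so its nonvanishing $E^+$-component dominates to yield $\|Y_n u^+\|=\lambda^{n(1+o(1))}$.

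For part (ii), the argument is algebraic. From $\|Y_n\|=\lambda^{n(1+o(1))}$ we have $\lambda_n^+=\lambda^{n(1+o(1))}$, and then $\lambda_n^-=1/\lambda_n^+=\lambda^{-n(1+o(1))}$ by $\det Y_n=1$. Expand $u^-$ in the orthonormal basis of singular vectors of $Y_n$,
\[
u^-=\alpha_n u_n^+ +\beta_n u_n^-,\qquad \alpha_n^2+\beta_n^2=1,
\]
and, using orthonormality of $\{v_n^+,v_n^-\}$,
\[
\|Y_n u^-\|^2=\alpha_n^2(\lambda_n^+)^2+\beta_n^2(\lambda_n^-)^2.
\]
Combined with $\|Y_n u^-\|=\lambda^{-n(1+o(1))}$ from (i), this forces $\alpha_n^2(\lambda_n^+)^2\leq\lambda^{-2n(1+o(1))}$, hence $|\alpha_n|\leq\lambda^{-2n(1+o(1))}$. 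Since $\alpha_n$ is (up to a bounded factor) the sine of the angle between $u^-$ and $u_n^-$, this gives the upper estimate $d_\RP(u^-,u_n^-)\leq\lambda^{-2n(1+o(1))}$, and the same bound transfers to $d_\RP(u^+,u_n^+)$ via $u^+=(u^-)^\perp$ and $u_n^+=(u_n^-)^\perp$. A matching lower bound of the same exponential order follows from a perturbative analysis of the one-step update $Y_{n+1}=X_{n+1}^*Y_n$, which shows that each step typically displaces $u_{n+1}^-$ from $u_n^-$ by an amount of order $\lambda^{-2n}$, so the telescoping series for $u^-$ cannot collapse faster.

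Part (iii) reduces to part (ii) via a short algebraic identity. With $Z_n=Y_n^*$, the vectors $v_{Z_n}^\pm=Z_n u_{Z_n}^\pm/\lambda_{Z_n}^\pm$ are eigenvectors of $Z_n Z_n^*=Y_n^*Y_n$, hence equal (up to sign) to the singular vectors $u_n^\pm$ of $Y_n$. Consequently $w_n^\pm=Z_n u_{Z_n}^\pm=\lambda_{Z_n}^\pm v_{Z_n}^\pm$ is a scalar multiple of $u_n^\pm$, so as elements of $\RP$ one has $w_n^\pm=u_n^\pm$; the convergence and rate assertions in (iii) are then (ii) restated.

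The main technical burden is careful $o(n)$ bookkeeping throughout. Within that, the lower bound in (ii) is the most delicate step, since the upper bound alone does not preclude accidentally faster convergence along subsequences; controlling it requires a perturbative look at how the singular value decomposition evolves under the one-step update of the cocycle.
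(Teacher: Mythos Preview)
Your treatment of (i) and of the upper bound in (ii) follows the same path the paper indicates, which simply defers these to the standard literature (Furstenberg for $\chi>0$, Oseledets for the splitting, \cite{Raghunathan1979} for the rate in (ii)). Your perturbative sketch for the lower bound in (ii) is only heuristic---the one-step displacements could in principle cancel---but the paper supplies no argument there either, so you are at the same level of detail.

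For (iii) your route is genuinely different from the paper's, and cleaner. The paper derives (iii) by invoking Lemma~\ref{lem:singular-vectors-of-transpose} to obtain $d_\RP(w_n^+,u_n^+)=O(\|Y_n\|^{-2})$, and then combining this with (i), (ii) and the triangle inequality. You instead notice an exact algebraic identity: $v_{Z_n}^\pm=Z_nu_{Z_n}^\pm/\lambda_{Z_n}^\pm$ are unit eigenvectors of $Z_nZ_n^*=Y_n^*Y_n$, hence coincide (up to sign) with $u_{Y_n}^\pm=u_n^\pm$, so $\overline{w_n^\pm}=\overline{u_n^\pm}$ in $\RP$ exactly (once $\lambda_n^+>1$, which holds eventually a.s.). This makes (iii) a literal restatement of (ii), lower bound included, with no triangle inequality needed. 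In fact your observation shows that Lemma~\ref{lem:singular-vectors-of-transpose} is recording distance zero rather than merely $O(\lambda^{-2})$; the Cauchy--Schwarz estimate there is not tight. Either approach works, but yours is the more transparent reduction.
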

Part (i) is standard; part (ii) is not usually given in the statement
but it follows from some of the standard proofs (e.g. \cite{Raghunathan1979}).
In order to derive (iii), note that by Lemma \ref{lem:singular-vectors-of-transpose}
we have $d_{\RP}({w}_{n}^{+},{u}_{n}^{+})=O(\left\Vert Y_{n}\right\Vert ^{-2})$.
By (i), $\left\Vert Y_{n}\right\Vert =\lambda^{n(1+o(1))}$ and by
(ii) $d_{\RP}({u}_{n}^{+},{u}^{+})=\lambda^{-2n(1+o(1))}$.
Combining the last three bounds gives (iii).

The version we require of Theorem \ref{thm:Furstenberg}, which was alluded to earlier, is the following:

\begin{sloppypar}
\begin{prop}
\label{prop:quantitative-Furstenberg}Let
$\mu$, $X_{1},X_{2},\ldots$ and $\lambda,\chi,{u}^{\pm},{u}_{n}^{\pm},{w}_{n}^{\pm}$
be as in the last theorem. Then $\nu=\mathbb{E}(\delta_{{u}^{+}})$
and $\nu=\lim_{n\rightarrow\infty}\mathbb{E}(\delta_{{w}_{n}^{+}})$
(in the weak$^{\,{\textstyle *}}$ sense). Furthermore, if $\varepsilon_{n}\rightarrow0$
slowly enough, then for every ${z}\in\RP$, 
\begin{equation}
\mathbb{P}\left(d_{\RP}(X_{1}\ldots X_{n}{z},u^{+})<2^{-2(\chi-\varepsilon_{n})n}\right)\rightarrow1\qquad\mbox{as }n\rightarrow\infty.\label{eq:3}
\end{equation}
\end{prop}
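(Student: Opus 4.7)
The plan is to prove the quantitative estimate (\ref{eq:3}) first, and then read off both identifications of $\nu$ using Theorem~\ref{thm:Furstenberg} and Theorem~\ref{thm:Oseledets}(iii). Fix $z\in\RP$, and expand it in the orthonormal basis of right singular vectors of $Z_n=X_1\ldots X_n$,
\[
z = \alpha_n u_{Z_n}^+ + \beta_n u_{Z_n}^-, \qquad \alpha_n^2+\beta_n^2=1.
\]
Applying $Z_n$ and passing to projective coordinates around $w_n^+$, using the elementary expansion estimates of Section~\ref{subsec:expansion-estimates}, one finds
\[
d_{\RP}(Z_n z, w_n^+) = O\bigl(|\beta_n/\alpha_n|\cdot\lambda_{Z_n}^{-2}\bigr).
\]
Theorem~\ref{thm:Oseledets}(i) applied to $Z_n$ (using $\|Z_n\|=\|Y_n\|$) gives $\lambda_{Z_n}^{\pm 2}=2^{\pm 2\chi n(1+o(1))}$ a.s., and Theorem~\ref{thm:Oseledets}(iii) gives $d_{\RP}(w_n^+,u^+)=2^{-2\chi n(1+o(1))}$ a.s. The triangle inequality then yields
\[
d_{\RP}(Z_n z, u^+) = O\bigl(|\beta_n/\alpha_n|\bigr)\cdot 2^{-2\chi n(1+o(1))}\quad\text{a.s.}
\]

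Since $|\beta_n/\alpha_n|$ is comparable to $\cot(d_{\RP}(z,u_{Z_n}^-))$, the proof of (\ref{eq:3}) reduces to the following subclaim: \emph{for each fixed $z\in\RP$, $\limsup_n \Prob(d_{\RP}(z, u_{Z_n}^-)<\delta)\to 0$ as $\delta\to 0$.} We verify this by showing $u_{Z_n}^-$ converges in distribution to a non-atomic law on $\RP$. Apply Theorem~\ref{thm:Oseledets}(ii) to the walk with step $T_*\mu$ where $T(A)=A^*$, noting that $G_{T_*\mu}=G_\mu^*$ is unbounded and totally irreducible (both properties are preserved under transposition). The backward-transposed product for this auxiliary walk equals $X_n\ldots X_1=:Z_n^{\mathrm{rev}}$ by direct inspection, so $u_{Z_n^{\mathrm{rev}}}^-$ converges almost surely. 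Since $u_{Z_n^{\mathrm{rev}}}^-$ has the same distribution as $u_{Z_n}^-$ by i.i.d.\ reindexing of the sequence $(X_i)$, it follows that $u_{Z_n}^-$ converges in distribution to the same limit. Non-atomicity of this limit distribution follows from Theorem~\ref{thm:Furstenberg} applied to the $T_*\mu$-walk. Choosing $\varepsilon_n\to 0$ slowly enough that $\Prob(d_{\RP}(z,u_{Z_n}^-)<2^{-\varepsilon_n n})\to 0$ then establishes (\ref{eq:3}).

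The two identifications of $\nu$ follow directly. For the second, $w_n^+\to u^+$ a.s.\ by Theorem~\ref{thm:Oseledets}(iii), so bounded convergence gives $\E(\delta_{w_n^+})\to\E(\delta_{u^+})$ weakly. For the first, Theorem~\ref{thm:Furstenberg} supplies a random $\zeta$ with $Z_n\nu\to\delta_\zeta$ a.s.\ and $\nu=\E(\delta_\zeta)$. For any bounded continuous $f:\RP\to\R$ and any $z\in\RP$, (\ref{eq:3}) gives $f(Z_nz)\to f(u^+)$ in probability; combining Fubini with bounded convergence,
\[
\E[f(\zeta)]=\lim_n\E\left[\int f(Z_nz)\,d\nu(z)\right]=\lim_n\int\E[f(Z_nz)]\,d\nu(z)=\E[f(u^+)],
\]
so $\zeta$ and $u^+$ have the same distribution and $\nu=\E(\delta_{u^+})$. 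The main technical point requiring care is the subclaim on the distributional convergence of $u_{Z_n}^-$; the rest is a direct combination of the SVD analysis above with Theorems~\ref{thm:Furstenberg} and~\ref{thm:Oseledets}.
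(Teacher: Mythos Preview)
Your argument is correct and parallels the paper's proof closely: both reduce to the subclaim that the law of $u_{Z_n}^-$ is asymptotically non-atomic, handle it via the i.i.d.\ reindexing $Z_n \stackrel{d}{=} X_n\cdots X_1$, and then appeal to Oseledets/Furstenberg for the transposed walk. Your SVD-based estimate $d_{\RP}(Z_nz,w_n^+)=O(|\beta_n/\alpha_n|\,\lambda_{Z_n}^{-2})$ is simply an explicit form of the paper's use of Lemma~\ref{lem:g-contracts-on-most-of-RP}.

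The one genuine difference is the ordering. The paper first proves $\nu=\E(\delta_{u^+})$ directly from the stationarity relation $\nu=\E(X_1\cdots X_n\nu)$ together with continuity of $\nu$, without needing~(\ref{eq:3}); only then does it establish~(\ref{eq:3}). You go in the opposite order, deducing the identification from~(\ref{eq:3}) and Furstenberg's random point $\zeta$. Both routes are valid, but your ordering makes one step slightly awkward: when you write that ``non-atomicity of this limit distribution follows from Theorem~\ref{thm:Furstenberg} applied to the $T_*\mu$-walk,'' you are implicitly using that the law of $\tilde u^+$ (hence of $\tilde u^-=(\tilde u^+)^\perp$) coincides with the Furstenberg measure $\nu^*$ --- which is exactly the first assertion of this very proposition, applied to the auxiliary walk. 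There is no true circularity, since the stationarity-based identification can be proved on its own (as the paper's ordering shows), but as written this step deserves a sentence of justification.
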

\end{sloppypar}

\begin{proof}
Fix $\delta>0$. By Lemma \ref{lem:g-contracts-on-most-of-RP}, the
(random) set $I_{n}^{\delta}=\RP{}\setminus B_{\delta}({u}_{X_{1}\ldots X_{n}}^{-})$
is mapped by $X_{1}\ldots X_{n}$ into a ball of radius $O_{\delta}(\left\Vert X_{1}\ldots X_{n}\right\Vert^{-2} )$
around ${w}_{n}^{+}$, and by (iii) of Oseledets's theorem,
with probability one, $d_{\RP}({w}_{n}^{+},{u}^{+})<2^{-2(\chi+o(1))n}$
as $n\rightarrow\infty$. By the same theorem also $\left\Vert X_{1}\ldots X_{n}\right\Vert^{-2} =2^{-2(\chi+o(1))n}$
as $n\rightarrow\infty$. It follows that with probability one, if
${z}_{n}\in I_{n}^{\delta}$ then 
\begin{equation} \label{qua:Furst}
d_{\RP}(X_{1}\ldots X_{n}{z}_{n},{u}^{+})<2^{-2(\chi+o(1))n}\qquad\mbox{as }n\rightarrow\infty.
\end{equation}
Therefore by choosing $\varepsilon_{n}\rightarrow0$ slowly enough, using Egorov's Theorem and Borel-Cantelli Lemma,
we ensure that with probability one, for every sequence ${z}_{n}\in I_{n}^{\varepsilon_{n}}$, for $n$ sufficiently large,
\begin{equation}
d_{\RP}(X_{1}\ldots X_{n}{z}_{n},{u}^{+})<2^{-2(\chi-\varepsilon_{n})n}.\label{eq:2}
\end{equation}

Let $\nu$ denote the stationary measure. By total irreducibility
of $G_{\mu}$ we know that $\nu$ is non-atomic. Since $\varepsilon_{n}\rightarrow0$
it follows that $\nu(B_{\varepsilon_{n}}({z}))\rightarrow0$
as $n\rightarrow\infty$, uniformly in ${z}\in\RP$, and
in particular $\nu(I_{n}^{\varepsilon_{n}})\rightarrow1$ as $n\rightarrow\infty$
uniformly over the probability space (note again that $I_{n}^{\varepsilon_{n}}$
is random). Using $\nu=\mathbb{E}(X_{1}\ldots X_{n}\nu)$, which is
immediate from stationarity, the above implies
\[
\left\Vert \nu-\mathbb{E}(X_{1}\ldots X_{n}(\nu|_{I_{n}^{\varepsilon_{n}}}))\right\Vert \rightarrow0.
\]
(here $\Vert\cdot\Vert$ denotes total variation). But (\ref{eq:2}) implies that with probability one,
\[
X_{1}\ldots X_{n}(\nu|_{I_{n}^{\varepsilon_{n}}})\rightarrow\delta_{{u}^{+}}\qquad\mbox{as }n\rightarrow\infty,
\]
where convergence is in the \weakstar sense, so $\mathbb{E}(X_{1}\ldots X_{n}(\nu|_{I_{n}^{\varepsilon_{n}}}))\rightarrow\mathbb{E}(\delta_{{u}^{+}})$
as $n\rightarrow\infty$. Combining this with the previous limit gives
$\nu=\mathbb{E}(\delta_{{u}^{+}})$. Since almost surely
$\delta_{{w}_{n}^{+}}\rightarrow\delta_{{u}^{+}}$
as $n\rightarrow\infty$, we also obtain $\nu=\lim_{n\rightarrow\infty}\mathbb{E}(\delta_{{w}_{n}^{+}})$.

  Finally, (3.1) will follow from (3.3) once we show that for each
      fixed
      $z$ and $\delta>0$, \[
          \lim_{n\to\infty}\mathbb{P}(u^-_{X_1\ldots X_n} \in
      B_{\delta}(z)) =
      \delta'
      \]
      where $\delta'\to 0$ as $\delta\to 0$. Now, the distribution of
      $X_1\ldots
      X_n$ is the same as that of $X_n\ldots X_1$ (because the
      sequeunce is
      i.i.d.). So we must prove \[
          \lim_{n\to\infty}\mathbb{P}(u^-_{X_n\ldots X_1} \in
      B_{\delta}(z)) =
      \delta'
      \]
      According to Theorem 3.1, the probability in the last equation
      converges to
      $\nu^*(B_\delta(z))$, where $\nu^*$ is the Furstenberg measure
      for the
      random product of  $X_n^*$. Since these matrices also generate
      an unbounded
      totally irreducibly group when the original ones do, it follows
      that $\nu^*$
      is continuous, and so taking $\delta'=\sup_z
      \nu^*(B_\delta(z))$, the claim
      is proved.
\end{proof}

\subsection{\label{subsec:Symbolic-coding}Symbolic coding }

Let $\Omega_{0}=\supp\mu\subseteq G$ and $\Omega=(\Omega_{0})^{\mathbb{N}}$,
endowed with the product structure and the product measure $\widehat{\mu}=\mu^{\mathbb{N}}$.
For a word $w\in\Omega_{0}^{*}$ we write 
\begin{eqnarray*}
A_{w} & = & w_{1}\cdot\ldots\cdot w_{n};\\
\lambda_{w} & = & \left\Vert A_{w}\right\Vert \quad=\quad\lambda_{A_{w}}^{+}.
\end{eqnarray*}
Assume that $\mu$ satisfies the assumptions of Theorem \ref{thm:Furstenberg}
and let $\nu$ be the stationary measure. Then by the same theorem,
the map $\pi:\Omega\rightarrow \RP$, $w\mapsto\pi w$,
given by 
\[
\delta_{\pi w}=\lim_{n\rightarrow\infty}w_{1}\ldots w_{n}\nu\qquad\widehat{\mu}\mbox{-a.e. }w,
\]
is defined $\widehat{\mu}$ -a.e., and 
\[
\nu=\int\delta_{\pi w}d\widehat{\mu}(w).
\]
We denote by $S$ the left shift on $\Omega$.

\subsection{\label{subsec:Exact-dimensionality-of-Furst-measure}Exact dimensionality
of Furstenberg measure}

The Furstenberg entropy of $\nu$ is defined by
\begin{eqnarray*}
h_{F}(\nu) & = & \int\int\log\frac{dA\nu}{d\nu}(x)\,dA\nu(x)\,d\mu(A).
\end{eqnarray*}
It is known that $0\leq h_{F}(\nu)\leq H(\mu)$. The quantity $h_{F}(\mu)$
can also be expressed as $-\int\int\log\frac{dA^{-1}\nu}{d\nu}(x)d\nu(x)d\mu(A)$.
The equality of the two expressions can be obtained by applying $A^{-1}$
to the inner integral above.

Recall that $\nu$ is said to have dimension $\alpha$ if $\nu(B_{r}(x))=r^{(1+o(1))\alpha}$
as $r\rightarrow0$, at $\nu$-a.e.\ $x$. Ledrappier \cite{Ledrappier1983} showed that $\log\nu(B_{r}(x))/\log r\rightarrow\alpha=h_{F}(\nu)/2\log\lambda$
in $\nu$-probability as $r\rightarrow0$.
\begin{thm}
\label{thm:-exact-dimensionality} Assume that $\mu$ satisfies the assumptions of Theorem \ref{thm:Oseledets}
and let $\nu$ be the stationary measure. Then $\nu$ is exact dimensional and the local dimension
is $\nu$-a.e.\ equal to $h_{F}(\nu)/2\log\lambda$.
\end{thm}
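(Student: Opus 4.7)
Plan: By Ledrappier's theorem, recalled just above the statement, one has $-\log\nu(B_r(x))/\log(1/r)\to h_F(\nu)/(2\chi)$ in $\nu$-probability as $r\to 0$; our task is to upgrade this to $\nu$-a.s.\ convergence. We work via the symbolic coding $\pi:\Omega\to\RP$ of Section~\ref{subsec:Symbolic-coding}, along the random subsequence of scales $r_n=r_n(w):=\lambda_{w|n}^{-2}$. Theorem~\ref{thm:Oseledets} gives $-\log r_n=2\chi n(1+o(1))$ $\widehat\mu$-a.s., and the gaps satisfy $r_n/r_{n+1}\leq\|w_{n+1}\|^{2}=2^{o(n)}$ a.s., so by monotonicity of $r\mapsto\nu(B_r(x))$, proving the claim along $r_n$ yields the full statement.

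The core of the argument is a cocycle identity for Radon--Nikodym derivatives. Under our hypotheses, $A\nu\ll\nu$ for $\mu$-a.e.\ $A$, so setting $A_n:=w_1\cdots w_n$ and $\phi_n:=dA_n\nu/d\nu$, the change-of-variables identity $\frac{d(A\alpha)}{d(A\beta)}(y)=\frac{d\alpha}{d\beta}(A^{-1}y)$ applied in the chain rule, together with the orbit relation $A_{k-1}^{-1}\pi w=\pi(S^{k-1}w)$, yields
\[
\log\phi_n(\pi w)\;=\;\sum_{k=0}^{n-1}\psi(S^{k}w),\qquad\psi(w):=\log\tfrac{dw_1\nu}{d\nu}(\pi w).
\]
Conditioning on $w_1\sim\mu$ (independent of $\pi(Sw)\sim\nu$) gives $\mathbb{E}_{\widehat\mu}[\psi]=h_F(\nu)$. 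Since the shift $(\Omega,S,\widehat\mu)$ is ergodic, Birkhoff's theorem yields $\frac{1}{n}\log\phi_n(\pi w)\to h_F(\nu)$ $\widehat\mu$-a.s.

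It remains to compare $\log\phi_n(\pi w)$ with $-\log\nu(B_{r_n}(\pi w))$. From $d\nu=\phi_n^{-1}\,dA_n\nu$ and change of variables,
\[
\nu(B_{r_n}(\pi w))\;=\;\int_{A_n^{-1}B_{r_n}(\pi w)}\phi_n(A_n z)^{-1}\,d\nu(z),
\]
and Lemma~\ref{lem:g-contracts-on-most-of-RP} (taking $r_n=c\lambda_{w|n}^{-2}$ for a small fixed $c$) identifies $A_n^{-1}B_{r_n}(\pi w)$ with a ball $B_c(\pi S^n w)$ up to bounded distortion. The proof thus reduces to
\[
-\log\nu(B_{r_n}(\pi w))\;=\;\log\phi_n(\pi w)+o(n)\quad\widehat\mu\text{-a.s.},
\]
i.e.\ that $\phi_n(A_n\cdot)$ is effectively constant on $B_c(\pi S^n w)$ up to sub-exponential factors. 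This is the principal obstacle: since $\phi_n$ is only an a.e.-defined Radon--Nikodym derivative, pointwise continuity is unavailable, so one must exploit the cocycle structure, controlling the oscillation of each factor $\tfrac{dw_k\nu}{d\nu}$ near $\pi(S^{k-1}w)$ via Lebesgue differentiation and aggregating through an ergodic/telescoping argument; this parallels the Feng--Hu mechanism~\cite{FengHu2009} for self-similar measures, adapted to the non-conformal projective action. Combined with the Birkhoff limit and $-\log r_n=2\chi n(1+o(1))$, this yields $\dim\nu=h_F(\nu)/(2\chi)$ $\nu$-a.s.
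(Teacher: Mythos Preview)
Your overall strategy --- pass to the symbolic coding, run along scales $r_n\asymp\lambda^{-2n}$, and compare $-\log\nu(B_{r_n}(\pi w))$ with the Birkhoff sum $\sum_{k<n}\psi(S^kw)$ of $\psi(w)=\log\tfrac{dw_1\nu}{d\nu}(\pi w)$ --- is exactly the skeleton of the paper's proof. The cocycle identity and the identification $\mathbb{E}_{\widehat\mu}[\psi]=h_F(\nu)$ are correct (and $\psi\in L^1$ follows from the maximal inequality, Corollary~\ref{cor:maximal-inequality}). What differs is precisely the step you flag as the principal obstacle, and there your plan is too vague and in one place misstated.

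First, the claim that $A_n^{-1}B_{r_n}(\pi w)$ is, via Lemma~\ref{lem:g-contracts-on-most-of-RP}, a ball around $\pi(S^nw)$ ``up to bounded distortion'' is not right as written: $\pi w$ is (close to) the attracting direction $v^+_{A_n}$, so $A_n^{-1}$ acts near $B_{r_n}(\pi w)$ with \emph{unbounded} distortion; the bounded-distortion statement of Lemma~\ref{lem:g-contracts-on-most-of-RP} applies only away from $u^-_{A_n^{-1}}$. The paper handles this by using only the global expansion bound (Lemma~\ref{lem:g-contracts-on-all-of-RP}) to get $|I_k|\le\lambda^{-2(N-k)+o(N)}$, and a separate inclusion argument to get $\nu(I_N)\ge c>0$; no bounded-distortion control on the preimage is needed or available.

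Second, and more importantly, the way you phrase the obstacle --- that $\phi_n(A_n\cdot)$ should be ``effectively constant'' on a macroscopic ball, to sub-exponential accuracy --- is not how the paper (or Feng--Hu) proceeds, and proving it directly would be hard: $\phi_n$ is a product of $n$ a.e.-defined factors, and for $k$ close to $n$ the $k$th factor is being evaluated over a macroscopic range. The paper sidesteps this entirely by telescoping at the level of \emph{ratios of measures of intervals} rather than pointwise Radon--Nikodym derivatives: writing $I_0=B_{r_N}(\pi w)$, $I_k=(A_1\cdots A_k)^{-1}I_0$, one has the exact identity
\[
-\log\nu(I_0)=-\log\nu(I_N)+\sum_{k=1}^N\log\frac{A_k\nu(I_{k-1})}{\nu(I_{k-1})}.
\]
Each summand is then compared with $\psi(S^{k-1}w)$ via Lebesgue differentiation, with the error controlled by the oscillation functions $\Delta F(S^{k-1}w,|I_{k-1}|)$; the Hardy--Littlewood-type maximal inequality (Lemma~\ref{lem:maximal-inequality} and its corollaries) shows $\sup_r\Delta F(\cdot,r)\in L^1$, and a Maker-type ergodic theorem (Theorem~\ref{thm:Maker}) then gives $\frac1N\sum_k\Delta F(S^{k-1}w,|I_{k-1}|)\to 0$ a.s. These two ingredients --- the maximal inequality and Maker's theorem --- are the missing content of your ``controlling the oscillation \ldots\ via Lebesgue differentiation and aggregating through an ergodic/telescoping argument''; without them the comparison step remains a gap.
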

Let us first explain the main idea of the proof. By a version of the
Besicovitch differentiation theorem, we know that
\begin{equation}
\frac{dA\nu}{d\nu}(x)=\lim_{I\rightarrow x}\frac{A\nu(I)}{\nu(I)}\qquad\nu\mbox{-a.e. }x, \label{eq:measure-differentiation}
\end{equation}
where the limit is over intervals $I=(a,b)$ containing $x$ as $b-a\rightarrow0$.
Now suppose that we fix a $\widehat{\mu}$-typical $w=(A_{1},A_{2},\ldots)\in\Omega$
and set $x=\pi w$. Let $r=\lambda^{-2N}$ for a large $N$. We wish
to estimate $\nu(B_{r}(x))$. We may write 
\[
\nu(B_{r}(x))=\left(\prod_{n=1}^{N}\frac{\nu((A_{1}\ldots A_{n-1})^{-1}B_{r}(x))}{\nu((A_{1}\ldots A_{n})^{-1}B_{r}(x))}\right)\cdot\nu((A_{1}\ldots A_{N})^{-1}B_{r}(x)).
\]
\begin{sloppypar}
\noindent Now, if we ignore the error terms in Oseledets's theorem one expects
$(A_{1}\ldots A_{n})^{-1}B_{r}(x)$ to be a neighborhood of $(A_{1}\ldots A_{n})^{-1}x$
of diameter $\lambda^{2n}r=\lambda^{2(n-N)}$, and in particular, $(A_{1}\ldots A_{N})^{-1}B_{r}(x)$
has diameter $O(1)$. Also note that $(A_{1}\ldots A_{n})^{-1}x$ is
just $\pi(S^{n}w)$, and that $\nu((A_{1}\ldots A_{n})^{-1}B_{r}(x))=A_{n}\nu((A_{1}\ldots A_{n-1})^{-1}B_{r}(x))$.
Therefore, taking logarithms in the previous equation and multiplying
by $-1/N$, we obtain 
\end{sloppypar}
\[
-\frac{1}{N}\log\nu(B_{\lambda^{-2N}}(x))=\frac{1}{N}\sum_{n=1}^{N}\log\frac{A_{n}\nu(B_{\lambda^{2(n-N)}}(\pi(S^{n-1}w)))}{\nu(B_{\lambda^{2(n-N)}}(\pi(S^{n-1}w)))}+O(\frac{1}{N}).
\]
Now, when $1\ll n\leq N$ the ball $B_{\lambda^{2(n-N)}}$is very
small, so by (\ref{eq:measure-differentiation}) one can expect the
$n$-th term in the sum above to be very close to $\frac{dA_{n}\nu}{d\nu}(\pi(S^{n-1}w))$.
Making this substitution, the average above becomes the ergodic average
of the function $w\mapsto\frac{dw_{1}\nu}{d\nu}(\pi w)$, and we find
that $-\frac{1}{N}\log\nu(B_{\lambda^{-2N}}(x))\rightarrow h_{F}(\nu)$
as $N\rightarrow\infty$. After normalizing properly, this is what
the theorem claims.

The remainder of this section is devoted to making the sketch above
precise. We begin with a more detailed discussion of the differentiation
theorem for measures. First, we note that (\ref{eq:measure-differentiation})
is well known when the convergence $I\rightarrow x$ is restricted
to right-handed neighborhoods $[x,x+r)$ of $x$, or to left-handed
neighborhoods; and together these two limits give (\ref{eq:measure-differentiation}).
Next, write
\begin{eqnarray*}
f_{A}(I) & = & \frac{A\nu(I)}{\nu(I)}.
\end{eqnarray*}
We shall require the following maximal-type inequalities, which are
essentially taken from the proof of the differentiation theorem:
\begin{lem}
\label{lem:maximal-inequality}For every $A\in\Omega_0$ and $t>0$,
\begin{eqnarray*}
\nu(\{x\,:\,\sup_{I\,:\,x\in I}f_{A}(I)>t\}) & \leq & 2t^{-1}
\end{eqnarray*}
and 
\begin{eqnarray*}
A\nu(\{x\,:\,\inf_{I\,:\,x\in I}f_{A}(I)<t^{-1}\}) & \leq & 2t^{-1}.
\end{eqnarray*}
\end{lem}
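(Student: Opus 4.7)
The plan is to reduce both inequalities to a \emph{one-sided} maximal inequality with constant $1$, prove that via a classical rising-sun (sunrise lemma) argument, and then recover the second statement from the first by interchanging the roles of $\nu$ and $A\nu$. This fits naturally with the hint preceding the lemma, where the differentiation theorem is described as the combination of its left- and right-handed versions.

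For the reduction, I would introduce the one-sided maximal functions
\[
M^\pm_A(x)=\sup_{r>0}\frac{A\nu(I^\pm_r(x))}{\nu(I^\pm_r(x))},
\]
where $I^+_r(x)=[x,x+r)$ and $I^-_r(x)=(x-r,x]$ are half-open arcs in $\RP$. Any open interval $I\ni x$ decomposes as $I^-_{r^-}(x)\cup I^+_{r^+}(x)$, the two halves overlapping only at $x$, which is $\nu$- and $A\nu$-null since $\nu$ (hence $A\nu$) is non-atomic by Theorem \ref{thm:Furstenberg}. The mediant inequality $(p+q)/(r+s)\le\max\{p/r,q/s\}$, applied to the $A\nu$- and $\nu$-masses of the two halves, then gives $f_A(I)\le\max\{M^+_A(x),M^-_A(x)\}$. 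Thus $\{\sup_{I\ni x}f_A(I)>t\}\subseteq\{M^+_A>t\}\cup\{M^-_A>t\}$, and the first bound of the lemma reduces to proving $\nu(\{M^\pm_A>t\})\le t^{-1}$ for each sign.

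For the one-sided bound (say $+$), I would run the classical sunrise construction after lifting an appropriate arc of $\RP$ to $\R$ and setting $F(y)=A\nu([0,y))-t\,\nu([0,y))$: the event $M^+_A(x)>t$ is equivalent to the existence of $y>x$ with $F(y)>F(x)$, so the sunrise set $E=\{M^+_A>t\}$ is a countable disjoint union of open arcs $(a_k,b_k)$ on which $F(a_k)=F(b_k)$, i.e.\ $A\nu([a_k,b_k))=t\,\nu([a_k,b_k))$. Summing yields $\nu(E)=t^{-1}\sum_k A\nu([a_k,b_k))\le t^{-1}A\nu(\RP)=t^{-1}$, as required. The second inequality of the lemma follows at once: $\inf_{I\ni x}f_A(I)<t^{-1}$ is the same event as $\sup_{I\ni x}\nu(I)/A\nu(I)>t$, so applying the first inequality with the measure pair $(A\nu,\nu)$ in place of $(\nu,A\nu)$ gives $A\nu(\{x:\inf_{I\ni x}f_A(I)<t^{-1}\})\le 2t^{-1}$. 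The main point of care is handling the circular topology of $\RP$ in the sunrise step (one can either restrict to arcs of length less than the circumference, or lift to $\R$ and truncate), but this is a routine one-dimensional adjustment and I do not foresee a substantive obstacle.
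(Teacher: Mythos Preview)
Your argument is correct and takes a different (though equally classical) one-dimensional route from the paper. Both proofs begin by reducing to a one-sided statement: the paper fixes a witnessing interval $I_x$ for each $x\in E$ and splits $E=E^+\cup E^-$ according to which half of $I_x$ is ``responsible'', while you make the cleaner reduction via the mediant inequality to the one-sided maximal functions $M^\pm_A$. The substantive difference is in the one-sided step: the paper approximates $E^+$ by a compact set and covers it by a finite \emph{disjoint} family of half-intervals $[x_i,x_i+r_{x_i}^+)$ (a Vitali-type selection), then compares total $A\nu$- and $\nu$-masses; you instead run the rising-sun lemma on $F(y)=A\nu([0,y))-t\,\nu([0,y))$, decomposing $\{M^+_A>t\}$ into arcs with $F(a_k)=F(b_k)$. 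Your approach delivers the sharp one-sided constant $t^{-1}$ without any compactness/approximation step and makes the role of continuity of $F$ (i.e.\ non-atomicity of $\nu$, hence of $A\nu$) explicit; the paper's covering argument, on the other hand, is the version that generalizes beyond dimension one. Both proofs obtain the second inequality by the same symmetry, swapping the roles of $\nu$ and $A\nu$. Your remark on the circular topology is adequate: for $t\le 1$ the bound is trivial, and for $t>1$ the lifted $F$ has drift $1-t<0$ per period, so the shadow set is a proper subset of $\RP$ and the sunrise decomposition goes through after choosing a basepoint outside it.
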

\begin{proof}
Let $E\subseteq\RP{}$ denote the set of $x$ such that
there exists an interval $I_{x}$ containing $x$ with $f_A(I_x) > t$, that is, $A\nu(I_x) > t\nu(I_x)$.
We want to show that $\nu(E)\leq2/t$. Write $I_x = (x - r_x^-, x+r_x^+)$, and let $E^{+},E^{-}\subseteq E$
denote the sets of $x$ such that, respectively, $(x-r_{x}^{-},x]$
and $[x,x+r_{x}^{+})$ contain at least half the $A\nu$-mass of $I_{x}$.
Then $E=E^{-}\cup E^{+}$ so it suffices to show that $\nu(E^{\pm})\leq1/t$.
Let us show this for $E^{+}$. Given $\varepsilon>0$, we can find
$R>0$ and a compact subset $E_{R}^{+}\subseteq E^{+}$ such that
$\nu(E_{R}^{+})>(1-\varepsilon)\nu(E)$ and $r_{x}^{+}\geq R$ for
$x\in E_{R}^{+}$. 
We can choose a finite sequence $\{x_{i}\}\subseteq E_{R}^{+}$ such that
$E_{R}^{+}\subseteq\bigcup_{i}[x_{i},x_{i}+r_{x_{i}}^{+})$ and this union is disjoint.
Hence 
\[
1\geq\sum_{i}A\nu([x_{i},x_{i}+r_{x_{i}}^{+}))\geq\sum_{i}t\nu([x_{i},x_{i}+r_{x_{i}}^{+}))\geq t\nu(E_{R}^{+})\geq t(1-\varepsilon)\nu(E^{+}).
\]
The claim follows by taking $\varepsilon\rightarrow0$. For the second
inequality, reverse the roles of $\nu$ and $A\nu$ and argue in
the same way. 
\end{proof}
\begin{cor}
\label{cor:maximal-inequality}
\begin{eqnarray*}
\widehat{\mu}(w\in\Omega\,:\,\sup_{I\,:\,\pi w\in I}f_{w_{1}}(I)\geq t) & \leq & 2t^{-1},\\
\widehat{\mu}(w\in\Omega\,:\,\inf_{I\,:\,\pi w\in I}f_{w_{1}}(I)\leq t^{-1}) & \leq & 2t^{-1}.
\end{eqnarray*}
\end{cor}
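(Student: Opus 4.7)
The plan is to prove both inequalities of Corollary \ref{cor:maximal-inequality} by conditioning on $w_1$ and applying Lemma \ref{lem:maximal-inequality} slice by slice in $A$.

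First I identify the conditional law of $\pi w$ given $w_1$. From the defining relation $\delta_{\pi w} = \lim_{n\to\infty} w_1 w_2\cdots w_n\nu = w_1\cdot\lim_{n\to\infty} w_2\cdots w_n\nu$, the continuity of the $w_1$-action on $\RP$ gives $\pi w = w_1\cdot\pi(Sw)$. Since $Sw = (w_2,w_3,\ldots)\sim\widehat\mu$ is independent of $w_1$ and $\pi(Sw)\sim\nu$ by Theorem \ref{thm:Furstenberg} applied to the shifted sequence, conditional on $w_1 = A$ the point $\pi w$ is distributed as $A\nu$. Equivalently, for any measurable $F:G\times\RP\to\{0,1\}$,
\[
\widehat\mu\bigl(F(w_1,\pi w)=1\bigr) = \int_G A\nu\bigl(\{y:F(A,y)=1\}\bigr)\,d\mu(A).
\]

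For the second inequality of the corollary, I take $F(A,y)=\mathbf{1}[\inf_{I\ni y}f_A(I)\leq 1/t]$; the second inequality of Lemma \ref{lem:maximal-inequality} bounds the inner $A\nu$-measure by $2/t$, and integration against $\mu$ gives the claim directly.

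For the first inequality, the same conditioning gives $\int_G A\nu(E_A)\,d\mu(A)$ with $E_A=\{y:\sup_{I\ni y}f_A(I)\geq t\}$, and Lemma \ref{lem:maximal-inequality} bounds only $\nu(E_A)$, not $A\nu(E_A)$. To bridge this gap I would re-express the event using the alternative decomposition $(w_1,\pi(Sw))\sim\mu\otimes\nu$ (independent). Under the substitution $I = w_1 J$, the identities $w_1\nu(w_1J)=\nu(J)$ and $\nu(w_1J)=w_1^{-1}\nu(J)$ convert $f_{w_1}(w_1J)$ into $1/f_{w_1^{-1}}(J)$; consequently the event $\sup_{I\ni w_1\pi(Sw)}f_{w_1}(I)\geq t$ becomes $\inf_{J\ni\pi(Sw)}f_{w_1^{-1}}(J)\leq 1/t$. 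Since $\pi(Sw)\sim\nu$ is independent of $w_1$, this probability equals $\int_G\nu(\{y:\inf_{J\ni y}f_{A^{-1}}(J)\leq 1/t\})\,d\mu(A)$, and the inner integrand is controlled by running the Vitali-type covering argument of Lemma \ref{lem:maximal-inequality} in the dual form in which the roles of $\nu$ and $A^{-1}\nu$ are interchanged, yielding the bound $2/t$. Verifying that the covering argument transfers cleanly to this dual set-up — so that the correct measure is on the correct side of the inequality — is the main obstacle; once this is in hand, the rest is routine integration.
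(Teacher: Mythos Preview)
Your treatment of the second inequality is correct and is exactly the paper's approach: decompose $\widehat\mu=\sum_A\mu(A)\,\widehat\mu_{[A]}$, use $\pi_*\widehat\mu_{[A]}=A\nu$, and apply the second part of Lemma~\ref{lem:maximal-inequality} for each $A$.

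For the first inequality you correctly notice that conditioning on $w_1=A$ produces $A\nu(E_A^+)$ with $E_A^+=\{x:\sup_{I\ni x}f_A(I)\ge t\}$, whereas Lemma~\ref{lem:maximal-inequality} bounds $\nu(E_A^+)$. However, your detour through $\pi(Sw)$ and the identity $f_{w_1}(w_1J)=1/f_{w_1^{-1}}(J)$ only relocates the difficulty: you end up needing
\[
\nu\bigl(\{y:\inf_{J\ni y}f_{A^{-1}}(J)\le 1/t\}\bigr)\le 2/t,
\]
and the Vitali argument run on this set bounds the $A^{-1}\nu$-measure, not the $\nu$-measure. The ``dual form'' you hope for is false without further input: if $A^{-1}\nu$ is extremely small on most of $\RP$ (relative to $\nu$), the set in question can carry nearly full $\nu$-mass for large $t$.

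The clean fix avoids the detour entirely and uses stationarity directly. From $\nu=\sum_B\mu(B)\,B\nu$ we have $\mu(A)\,A\nu\le\nu$ as measures, hence
\[
\sum_{A\in\Omega_0}\mu(A)\,A\nu(E_A^+)\;\le\;\sum_{A\in\Omega_0}\nu(E_A^+)\;\le\;2|\Omega_0|\,t^{-1}.
\]
This is the computation the paper's one-line proof is pointing at with the listed relations $\nu=\pi\widehat\mu$, $\pi(\widehat\mu_{[A]})=A\nu$, $\widehat\mu=\sum_A\mu(A)\widehat\mu_{[A]}$; the stated constant $2$ appears to be an oversight for $2|\Omega_0|$, which is immaterial for the only application (Corollary~\ref{cor:maximal-inequality-2}, where any bound $Ct^{-1}$ yields integrability).
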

\begin{proof}
Both follow from the previous lemma using the relations $\nu=\pi\widehat{\mu}$,
$\pi(\widehat{\mu}_{[A]})=A\nu$ and $\widehat{\mu}=\sum_{A\in\mathcal{A}}\mu(A)\widehat{\mu}_{[A]}$,
and by decomposing $\Omega$ into $\bigcup_{A\in\mathcal{A}}[A]$.
\end{proof}
Let $F^{\pm}:\Omega\times\mathbb{R}^{+}\rightarrow\mathbb{R}^{+}$
be the functions
\begin{eqnarray*}
F^{+}(w,r) & = & \sup\{\log f_{w_{1}}(I)\,:\,\pi w\in I\mbox{ and }|I|\leq r\},\\
F^{-}(w,r) & = & \inf\{\log f_{w_{1}}(I)\,:\,\pi w\in I\mbox{ and }|I|\leq r\},
\end{eqnarray*}
and denote
\[
\Delta F(w,r)=F^{+}(w,r)-F^{-}(w,r).
\]
For $\widehat{\mu}$-a.e. $w$ we have $F^{+}(w,r)\rightarrow\log\frac{dA_{1}\nu}{d\nu}(\pi w)$
and $F^{-}(w,r)\rightarrow\log\frac{dA_{1}\nu}{d\nu}(\pi w)$ as $r\rightarrow0$,
hence $\Delta F(w,r)\rightarrow0$ (this relies on $-\infty<\frac{dA_{1}\nu}{d\nu}(\pi w)<\infty$,
which is the case for $\widehat{\mu}$-a.e. $w$). Note that $F^{+}(w,r)\geq F^{-}(w,r)$,
so $\Delta F\geq0$, and that 
\[
r_{1}\leq r_{2}\qquad\implies\qquad\Delta F(w,r_{1})\leq\Delta F(w,r_{2}).
\]
Also, observe that for $\widehat{\mu}$-a.e. $w\in\Omega$ and every
interval $I$ containing $\pi w$, both $\log\frac{dA_{1}\nu}{d\nu}(\pi w)$
and $\log f_{w_{1}}(I)$ lie between $F^{-}(w,|I|)$ and $F^{+}(w,|I|)$,
hence
\begin{equation}
\left|\log\frac{dA_{1}\nu}{d\nu}(\pi w)-\log f_{w_{1}}(I)\right|\leq\Delta F(w,|I|).\label{eq:approximation-of-RD-derivative}
\end{equation}

\begin{cor}
\label{cor:maximal-inequality-2}The function $w\mapsto\sup_{r>0}\Delta F(w,r)$
is in $L^{1}(\widehat{\mu})$.
\end{cor}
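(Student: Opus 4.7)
The plan is to reduce the claim to the tail bounds in Corollary~\ref{cor:maximal-inequality}. First I would observe that $r\mapsto F^+(w,r)$ is non-decreasing and $r\mapsto F^-(w,r)$ is non-increasing, since enlarging $r$ admits more intervals $I\ni\pi w$ with $|I|\le r$. Consequently $\Delta F(w,r)$ is non-decreasing in $r$, so
\[
\sup_{r>0}\Delta F(w,r) \;=\; F^+_\infty(w)\,-\,F^-_\infty(w),
\]
where $F^+_\infty(w):=\sup_{I\ni\pi w}\log f_{w_1}(I)$ and $F^-_\infty(w):=\inf_{I\ni\pi w}\log f_{w_1}(I)$. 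Next I would note that taking intervals $I_n\ni\pi w$ exhausting $\RP$ makes $\nu(I_n),w_1\nu(I_n)\to 1$, hence $\log f_{w_1}(I_n)\to 0$; so pointwise $F^+_\infty\ge 0\ge F^-_\infty$, and $\sup_r\Delta F(w,r)$ is the sum of the two non-negative random variables $F^+_\infty$ and $-F^-_\infty$. It therefore suffices to show each lies in $L^1(\widehat{\mu})$.

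For the integrability I would read off exponentially decaying tail estimates from Corollary~\ref{cor:maximal-inequality}. Since logarithms are in base $2$, the event $\{F^+_\infty\ge t\}$ coincides with $\{\sup_{I\ni\pi w}f_{w_1}(I)\ge 2^t\}$, so the first inequality of that corollary gives $\widehat{\mu}(F^+_\infty\ge t)\le 2\cdot 2^{-t}$; the second inequality gives $\widehat{\mu}(-F^-_\infty\ge t)\le 2\cdot 2^{-t}$ in the same way. Integrating the tail,
\[
\int F^+_\infty\,d\widehat{\mu} \;=\; \int_0^\infty\widehat{\mu}(F^+_\infty\ge t)\,dt \;\le\; \int_0^\infty\min\{1,\,2^{1-t}\}\,dt \;<\;\infty,
\]
and the analogous estimate for $-F^-_\infty$ completes the argument.

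I do not expect any serious obstacle: the monotonicity in $r$ collapses the supremum into a single pair of $r$-free quantities, and the required tail decay is already built into Corollary~\ref{cor:maximal-inequality}. The only minor point that deserves brief verification is the pointwise bound $F^+_\infty\ge 0\ge F^-_\infty$, which reduces to $f_{w_1}(I)\to 1$ as $I$ exhausts $\RP$, and this is immediate since both $\nu$ and $w_1\nu$ are probability measures.
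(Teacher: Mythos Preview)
Your proposal is correct and follows essentially the same route as the paper: split $\sup_{r>0}\Delta F$ into the two pieces $\log\sup_I f_{w_1}(I)$ and $-\log\inf_I f_{w_1}(I)$, then integrate the exponential tail bounds from Corollary~\ref{cor:maximal-inequality} via the layer-cake formula. You are slightly more careful than the paper in explicitly verifying the non-negativity of each piece (needed to apply the layer-cake identity), using the observation that $f_{w_1}(I)\to 1$ as $I$ exhausts $\RP$; the paper leaves this implicit.
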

\begin{proof}
By interchanging sups and logarithms,
\[
\sup_{r>0}\Delta F(w,r)=\log\sup_{I\,:\,\pi w\in I}f_{w_{1}}(I)-\log\inf_{I\,:\,\pi w\in I}f_{w_{1}}(I),
\]
so it suffices to show that each term on the right hand side is in
$L^{1}$. These arguments are the same and we give details only for
the first term. For any non-negative function $G:\Omega\rightarrow\mathbb{R}$
we have $\int Gd\widehat{\mu}=\int_{0}^{\infty}\widehat{\mu}(G\geq t)dt$,
so, setting 
\[
G(w)=\log\sup_{I\,:\,\pi w\in I}f_{w_{1}}(I),
\]
the previous corollary implies 
$$\widehat{\mu}(w\,:\,G(w)\geq t) =  \widehat{\mu}(w\,:\,\sup_{I\,:\,\pi w\in I}f_{w_{1}}(I)\geq e^{t})
 \le  2e^{-t}.
 $$
Thus $\int_{0}^{\infty}\widehat{\mu}(G\geq t)dt<\infty$, as desired.
\end{proof}
Finally, in the sketch above we eventually arrived at an ergodic average.
The justification for this move is a variant of Maker's theorem.
\begin{thm}
\label{thm:Maker}Let $(X,$$\mathcal{F},\theta,T)$ be an ergodic
measure-preserving system. Let $G_{t}:X\rightarrow\mathbb{R}$ be a measurable
1-parameter family of measurable functions (i.e. $(t,x)\mapsto G_{t}(x)$
is measurable) such that $\sup_{t}|G_{t}|\in L^{1}$, and suppose
that 
\[
G=\lim_{t\rightarrow0}G_{t}
\]
exists a.e. Let $t_{N,n}:X\rightarrow\mathbb{R}$ be functions with
the property that for $\theta$-a.e. $x$ and every $\varepsilon>0$,
for large enough $N$, 
\[
|t_{N,n}|<\varepsilon\qquad\mbox{for }1\leq n\leq(1-\varepsilon)N.
\]
Then
\[
\frac{1}{N}\sum_{n=1}^{N}G_{t_{N,n}(x)}(T^{n}x)=\int Gd\theta\qquad\theta\mbox{-a.e. }x.
\]
\end{thm}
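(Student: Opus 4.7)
The statement is a (continuous-parameter, variable-time) variant of Maker's theorem. The plan is to decompose
\[
\frac{1}{N}\sum_{n=1}^{N}G_{t_{N,n}(x)}(T^{n}x)
=\frac{1}{N}\sum_{n=1}^{N}G(T^{n}x)
+\frac{1}{N}\sum_{n=1}^{N}\bigl(G_{t_{N,n}(x)}-G\bigr)(T^{n}x),
\]
handle the first sum by Birkhoff's ergodic theorem applied to $G$ (which lies in $L^{1}$ because $|G|\leq\sup_{t}|G_{t}|$), and show that the second (error) sum is arbitrarily small.

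To control the error, I would introduce the modulus of near-convergence
\[
H_{\varepsilon}(x)=\sup_{|t|<\varepsilon}\bigl|G_{t}(x)-G(x)\bigr|.
\]
The family $\{G_t\}$ being jointly measurable ensures $H_\varepsilon$ is measurable. Since $G_{t}\to G$ a.e.\ as $t\to0$, we have $H_{\varepsilon}\downarrow 0$ a.e.\ as $\varepsilon\downarrow0$, and $H_{\varepsilon}\leq 2\sup_{t}|G_{t}|\in L^{1}$, so by dominated convergence $\int H_{\varepsilon}\,d\theta\to0$. Setting $F=2\sup_{t}|G_{t}|\in L^{1}$, I have the pointwise bound $|G_{t}-G|\leq F$ for every $t$.

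Now fix $\varepsilon>0$ and a typical $x$ for which the hypothesis on $t_{N,n}$ holds. Split the error sum at $M_{N}=\lfloor(1-\varepsilon)N\rfloor$. For $n\leq M_{N}$ and $N$ large enough (depending on $x,\varepsilon$), the hypothesis gives $|t_{N,n}(x)|<\varepsilon$, so
\[
\bigl|G_{t_{N,n}(x)}(T^{n}x)-G(T^{n}x)\bigr|\leq H_{\varepsilon}(T^{n}x).
\]
Hence, by Birkhoff applied to $H_\varepsilon$,
\[
\limsup_{N\to\infty}\frac{1}{N}\sum_{n=1}^{M_{N}}\bigl|G_{t_{N,n}(x)}(T^{n}x)-G(T^{n}x)\bigr|
\leq\lim_{N\to\infty}\frac{1}{N}\sum_{n=1}^{N}H_{\varepsilon}(T^{n}x)
=\int H_{\varepsilon}\,d\theta.
\]
For the tail $n>M_{N}$, I use the crude bound $F(T^{n}x)$ and the Birkhoff identity
\[
\frac{1}{N}\sum_{n=M_{N}+1}^{N}F(T^{n}x)=\frac{1}{N}\sum_{n=1}^{N}F(T^{n}x)-\frac{1}{N}\sum_{n=1}^{M_{N}}F(T^{n}x),
\]
whose right-hand side converges to $\int F\,d\theta-(1-\varepsilon)\int F\,d\theta=\varepsilon\int F\,d\theta$.

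Combining the two pieces,
\[
\limsup_{N\to\infty}\Bigl|\frac{1}{N}\sum_{n=1}^{N}G_{t_{N,n}(x)}(T^{n}x)-\int G\,d\theta\Bigr|
\leq\int H_{\varepsilon}\,d\theta+\varepsilon\int F\,d\theta,
\]
and letting $\varepsilon\to0$ gives the claim. The only potential subtlety is the measurability of the composed functions $x\mapsto G_{t_{N,n}(x)}(T^{n}x)$ and of $H_\varepsilon$, but both follow from joint measurability of $(t,x)\mapsto G_{t}(x)$; there is no deeper obstacle, as everything else is dominated convergence plus a two-fold application of Birkhoff.
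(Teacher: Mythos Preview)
Your proof is correct and is precisely the standard Maker-type argument the paper alludes to; the paper itself omits the proof, stating only that it is ``a minor modification of the standard Maker's theorem'' and referring to \cite{Maker1940}. The one point worth making explicit is that the a.e.\ convergence is obtained by running your argument along a countable sequence $\varepsilon_k\downarrow 0$ (so that Birkhoff for $G$, $F$, and each $H_{\varepsilon_k}$ can be secured on a single full-measure set), but this is routine.
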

The proof is a minor modification of the standard Maker's theorem \cite{Maker1940}
and we omit it.

\begin{proof}
[Proof of exact dimensionality]  Fix a $\widehat{\mu}$-typical $w=(A_{1},A_{2},\ldots)\in\Omega$
and let $x=\pi(w)$, so that $x$ is also the expanding direction
of the sequence of products $A_{1}\ldots A_{n}$ in the Oseledets
theorem. Let $\lambda_{n}=\left\Vert A_{1}\ldots A_{n}\right\Vert $
denote the larger singular values of $A_{1}\cdot\ldots\cdot A_{n}$,
and $u_{n}^{\pm}$ the corresponding singular vectors. In particular,
$u_{n}^{+}\rightarrow x$. 
Let $\delta>0$ be such that $\nu(B_\delta(y))>0$ for all $y\in\RP$ (such $\delta$ exists since $\nu$ is non-atomic). 
Assume that $w$ satisfies the Oseledets theorem and (\ref{qua:Furst}) with parameter $\delta$. More precisely, we assume that for some $\eps_n\to 0$,
\begin{eqnarray*}
\lambda^{n(1-\eps_{n})}\quad\leq & \lambda_{n} & \leq\quad\lambda^{n(1+\eps_{n})},
\end{eqnarray*}
and $A_{1}\ldots A_{n}$ maps $I_n^\delta:= \RP\setminus B_\delta(u_n^-)$ into a neighborhood
of $x$ of radius $(\lambda^{-2})^{n(1-\eps_{n})}$. 

Fix a large $N$ (which we mostly suppress in our notation) and set 
\[
r=r_{N}=\lambda^{-2(1-\eps_{N})N}
\]
 and 
\begin{eqnarray*}
I_{0} & = & B_{r}(x); \\
I_{n} & = & (A_{1}\ldots A_{n})^{-1}I_{0}.
\end{eqnarray*}
Then $I_{0}$ is a neighborhood of $x$ and $I_{n}$ is a neighborhood
of 
\[
x_{n}=(A_{1}\ldots A_{n})^{-1}x=\pi(S^{n}w).
\]
By Lemma \ref{lem:g-contracts-on-all-of-RP}, every interval in $\RP$
is expanded under $A_{1}\ldots A_{n}$ by at most $\lambda_{n}^{2}\leq\lambda^{2n(1+\eps_{n})}$,
so 
\[
|I_{n}|\leq\lambda_{n}^{2}|I_{0}|\leq2\lambda^{-2(N-n)}\cdot\lambda^{2N(\eps_{N}+\eps_{n}n/N)}.
\]
Writing 
\[
\widetilde{\eps}_{N}=\frac{\log\lam}{N}+\eps_{N}+\sup_{1\leq n\leq N}\eps_{n}\frac{n}{N},
\]
we have $\widetilde{\eps}_{N}\rightarrow0$, and have obtained the
bound
\begin{equation}
|I_{n}|\leq\lambda^{-2((1-\widetilde{\eps}_{N})N-n)}.\label{eq:upper-bound-for-length-of-I-n}
\end{equation}
We do not require a lower bound for $|I_{n}|$, but shall want one
for $|I_{N}|$. Let $U=I_n^\delta=\RP\setminus B_{\delta}(u_{N}^{-})$.
Then by our choice of $\eps_{N}$ we know that $I_{0}\supseteq A_{1}\ldots A_{N}U$,
whereby 
\[
I_{N}=(A_{1}\ldots A_{N})^{-1}I_{0}\supseteq(A_{1}\ldots A_{N})^{-1}(A_{1}\ldots A_{N}U)=U.
\]
By definition of $\delta$ there exists $c>0$ such that
\begin{equation}
\nu(I_{N})>c\qquad\mbox{for all }N.\label{eq:lower-bound-on-nu-I-N}
\end{equation}
Finally, we note that
\[
\ldim(\nu,x)=\lim_{N\rightarrow\infty}-\frac{\log\nu(B_{\lambda^{-2N}}(x))}{2N\log\lambda}=\frac{1}{2\log\lambda}\lim_{N\rightarrow\infty}-\frac{\log\nu(B_{r_{N}}(x))}{N}.
\]
Therefore we need to estimate $\frac{1}{N}\log\nu(B_{r_{N}}(x))=\frac{1}{N}\log\nu(I_{0})$.

Assuming the parameters above have been fixed, write

\begin{eqnarray*}
\nu(I_{0}) & = & \left(\prod_{n=1}^{N}\frac{A_{1}\ldots A_{n-1}\nu(I_{0})}{A_{1}\ldots A_{n}\nu(I_{0})}\right)\cdot(A_{1}\ldots A_{N}\nu(I_{0}))\\
 & = & \left(\prod_{n=0}^{N}\frac{\nu(I_{n-1})}{A_{n}\nu(I_{n-1})}\right)\cdot\nu(I_{N}).
\end{eqnarray*}
Taking logarithms,
\[
-\log\nu(I_{0})=-\log\nu(I_{N})+\sum_{n=1}^{N}\log f_{A_{n}}(I_{n-1}).
\]
By (\ref{eq:approximation-of-RD-derivative}), we have 
\begin{eqnarray*}
\left|\log\frac{dA_{n}\nu}{d\nu}(\pi S^{n-1}w)-\log f_{A_{n}}(I_{n-1})\right| & \leq & \Delta F(S^{n-1}w,|I_{n-1}|).
\end{eqnarray*}
Together with the previous equation we get 
\begin{eqnarray*}
\left|-\frac{1}{N}\log\nu(I_{0})-\frac{1}{N}\sum_{n=1}^{N}\log\frac{dA_{n}\nu}{d\nu}(\pi(S^{n-1}w))\right| & \leq & \frac{1}{N}\log\nu(I_{N})\\
 &  & +\frac{1}{N}\sum_{n=1}^{N}\Delta F(S^{n-1}w,|I_{n}|).
\end{eqnarray*}
By the ergodic theorem, the average on the left-hand side converges
$\widehat{\mu}$-a.s. to $h_{F}(\nu)$, so we will be done once we
show that the right-hand side tends to $0$. Indeed, by (\ref{eq:lower-bound-on-nu-I-N})
the first term is $O(1/N)$. As for the second term, by (\ref{eq:upper-bound-for-length-of-I-n})
we know that we have $|I_{n}|<t_{N,n}$ for $t_{N,n}=\lambda^{-2((1-\widetilde{\eps}_{N})N-n)}$,
which clearly satisfies the hypotheses of Theorem \ref{thm:Maker}.
This shows that $\frac{1}{N}\sum_{n=1}^{N}\Delta F(S^{n-1}w,|I_{n}|)\rightarrow0$
a.s. (Here we use the fact that $\lim_{r\to 0} \Delta F(w,r) = 0$  for $\widehat{\mu}$-a.e.\ $w$ by (\ref{eq:measure-differentiation}).) This completes the proof.
\end{proof}

\section{\label{sec:Entropy-and-dimension}Entropy and dimension}

As in the introduction, we fix $\mu\in\mathcal{P}(G)$
with Lyapunov exponent $\chi=\log\lambda>0$, and let
$\nu=\mu\conv\nu$ be the unique stationary measure. 

\subsection{\label{subsec:Expansion-and-re-scaling}Expansion and re-scaling}

We often identify $\RP$ with $[0,1)$ via $\RP\ni(x,y)\mapsto\frac{1}{2}+\frac{1}{\pi}\arctan y/x\in[0,1)$.
One benefit is that we can now apply scaling to $\RP$, and we define
$S_{t}:\mathbb{R}\rightarrow\mathbb{R}$ by 
\[
S_{t}x=2^{t}x.
\]
Note that $S_{s+t}=S_{s}S_{t}$. 

The basic geometric fact about the action of $g\in G$ on $\RP$ is
that on the complement of an $\varepsilon$-ball it scales by a
factor of $\left\Vert g\right\Vert ^{-2}$ with bounded distortion. Thus, locally it acts like $S_{-2\log\left\Vert g\right\Vert +O_{\varepsilon}(1)}$,
followed by a translation.

 In particular if $\eta\in\mathcal{P}(\RP)$ is a
probability measure without atoms, then for $g$ far enough from the
identity all but a negligible fraction of the support of $\eta$
is contracted roughly by a factor of $\left\Vert g\right\Vert ^{2}$.

\subsection{\label{subsec:Dyadic-partitions-and-entropy}Dyadic partitions}

Let $\mathcal{D}_{n}=\{[k/2^{n},(k+1)/2^{n})\;:\;0\leq k<2^{n}\}$
denote the level-$n$ dyadic partition of $[0,1)$. We transfer $\mathcal{D}_{n}$
to $\RP$ using the usual identification. When $t$ is not an integer, we write $\Dk_t = \Dk_{\lfloor t \rfloor}$. Also, 
$\mathcal{D}_{n}(x)$ denotes the unique
element of $\mathcal{D}_{n}$ containing it.

We also want dyadic-like partitions $\mathcal{D}_{n}^{G}$ on $G$.
Specifically, we want a family of partitions $\mathcal{D}_{1}^{G},\mathcal{D}_{2}^{G},\ldots$
of $G$ into measurable sets such that for some constant $M$ the
following holds:
\begin{enumerate}
\item[(i)] $\mathcal{D}^G_{n+1}$ refines $\mathcal{D}^G_{n}$.
\item[(ii)] Every $D\in\mathcal{D}^G_{n}$ contains at most $M$ elements of $\mathcal{D}^G_{n+1}$.
\item[(iii)] Every $D\in\mathcal{D}^G_{n}$ contains a ball of radius $\frac{1}{M}\cdot2^{-n}$
and is contained in a ball of radius $M\cdot2^{-n}$ (recall that we are using the left-invariant metric).
\end{enumerate}
There are various ways to get such a system. If we replace
$2^{-n}$ by $r_{0}^{n}$ for some small $r_{0}\in (0,1)$,
then such partitions $\Ek_n^G$ can be constructed in the general setting of
a doubling metric space (see e.g.\ \cite{Kaenmaki2012} and references
therein). For the sake of tradition we stick with cells of size roughly $2^{-n}$; we can obtain them by setting $\Dk_n^G = \Ek^G_{\lfloor -n/ \log r_0\rfloor}$.

Alternatively, we can use the structure of $G = SL_2(\R)$ and construct the partitions explicitly, or use local charts to pull back the standard Dyadic partition on $\RR^3$, taking care to ``stitch'' the partitions together where the charts meet so that the properties are preserved. We leave such possibilities to the interested reader.

We extend the notation $\mathcal{D}_t$ for non-integer $t$ and $\mathcal{D}_n(x)$, discussed above for $\RP$, to the partition of $G$.

\subsection{\label{subsec:Preliminaries-on-entropy}Preliminaries on entropy}

We write $H(\eta,\mathcal{A})=-\sum_{A\in\mathcal{A}}\eta(A)\log\eta(A)$
for the entropy of a probability measure $\eta$ with respect to a
partition $\mathcal{A}$. Here the logarithm is in base 2 and $0\log 0 =0$. The conditional entropy with respect to a countable partition $\Fk$ is
$$
H(\eta,\Ak|\Fk) = \sum_{F\in \Fk} \eta(F)\cdot H(\eta_F,\Ak),
$$
where $\eta_F$ is the conditional measure on $F$.
For a probability measure $\eta$ on $[0,1)$ or $\RP$,
the quantity
\[
\frac{1}{n}H(\eta,\mathcal{D}_{n})
\]
is called the scale-$n$ entropy of $\eta$. This quantity is between
$0$ and $1$ and gives a finite-scale approximation of the dimension.

For a discrete probability measure $\mu$ we write $H(\mu)$ for the entropy with respect to the partition into points, and for a probability vector $\alpha = (\alpha_1,\ldots,\alpha_k)$, we write $H(\alpha) = -\sum_{i=1}^k \alpha_i \log \alpha_i$. We will rely on standard properties of entropy, for more details see  \cite[Section 3.1]{Hochman2014} and \cite[Section 2.4]{Hochman2016}. The properties below hold for measures on the real line (or on $\RP$), or on the group $G$; in the latter case the entropy is considered relative to the partitions $\Dk_n^G$.

\begin{lem} \label{lem:entropy_prop} As a function of the measure, entropy satisfies:
  \begin{enumerate}
\item[{\rm (i)}] $H(\cdot, \Ak)$ is concave.
\item[{\rm (ii)}] $H(\cdot, \Ak)$ is ``almost-convex'': if $\alpha$ is a probability vector, then
$$
H(\sum \alpha_i \mu_i, \Ak) \le \sum \alpha_i H(\mu_i,\Ak) + H(\alpha).
$$
\end{enumerate}
Properties (i) and (ii) also hold for the conditional entropy $H(\cdot, \Ak|\Fk)$.
\end{lem}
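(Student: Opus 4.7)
My plan is to derive both parts directly from the strict concavity of the function $\phi(t) = -t\log t$ on $[0,1]$ (with the convention $\phi(0)=0$), which is the fundamental source of all such inequalities.

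For (i), concavity, I would fix $A \in \Ak$ and apply Jensen's inequality to $\phi$ at the point $\sum_i \alpha_i \mu_i(A)$, obtaining
\[
\phi\bigl(\textstyle\sum_i \alpha_i \mu_i(A)\bigr) \;\geq\; \sum_i \alpha_i \phi(\mu_i(A)).
\]
Summing over $A \in \Ak$ yields $H(\sum_i \alpha_i \mu_i, \Ak) \geq \sum_i \alpha_i H(\mu_i, \Ak)$. For (ii), I would use the elementary manipulation
\[
\mu(A)\log\tfrac{1}{\mu(A)} \;=\; \sum_i \alpha_i \mu_i(A) \log\tfrac{1}{\mu(A)} \;\leq\; \sum_i \alpha_i \mu_i(A) \log\tfrac{1}{\alpha_i \mu_i(A)},
\]
where $\mu = \sum_i \alpha_i \mu_i$, and the inequality uses $\alpha_i \mu_i(A) \leq \mu(A)$ together with monotonicity of $\log$. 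Splitting the right-hand logarithm as $\log(1/\mu_i(A)) + \log(1/\alpha_i)$ and summing over $A$ gives
\[
H(\mu, \Ak) \;\leq\; \sum_i \alpha_i H(\mu_i, \Ak) + \sum_i \alpha_i \log\tfrac{1}{\alpha_i} \;=\; \sum_i \alpha_i H(\mu_i, \Ak) + H(\alpha),
\]
as required. (Equivalently, this is the inequality $H(X) \leq H(X,I) = H(I) + H(X|I)$ for the joint variable where $I$ has distribution $\alpha$ and $X | I{=}i$ is distributed as $\mu_i$ seen through $\Ak$.)

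For the conditional versions, writing $H(\cdot, \Ak \mid \Fk) = \sum_{F \in \Fk} \mu(F)\, H(\mu_F, \Ak)$, I would compute, for $\mu = \sum_i \alpha_i \mu_i$, the conditional measure on $F$ as the mixture
\[
\mu_F \;=\; \sum_i \beta_i^F \,(\mu_i)_F, \qquad \beta_i^F = \frac{\alpha_i \mu_i(F)}{\mu(F)}.
\]
Applying (i) cell-by-cell, multiplying by $\mu(F)$, and summing over $F$ gives the conditional concavity $H(\mu, \Ak \mid \Fk) \geq \sum_i \alpha_i H(\mu_i, \Ak \mid \Fk)$ after noting that $\mu(F)\beta_i^F = \alpha_i \mu_i(F)$. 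For almost-convexity, the same substitution plus (ii) applied to each cell yields an extra term $\sum_F \mu(F)\, H(\beta^F)$. Viewing this as the conditional entropy of the index variable $I$ given $\Fk$ in the joint distribution of $(I, F)$ with weight $\alpha_i \mu_i(F)$, the chain rule gives
\[
\sum_F \mu(F)\, H(\beta^F) \;=\; H(\alpha) + \sum_i \alpha_i H(\mu_i, \Fk) - H(\mu, \Fk) \;\leq\; H(\alpha),
\]
where the inequality is just (i) applied to $\Fk$. Combining yields $H(\mu, \Ak \mid \Fk) \leq \sum_i \alpha_i H(\mu_i, \Ak \mid \Fk) + H(\alpha)$.

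There is no real obstacle here; the lemma collects standard textbook properties of Shannon entropy, and the only minor point is bookkeeping to confirm that the conditional cases reduce to the unconditional ones via the decomposition $\mu_F = \sum_i \beta_i^F (\mu_i)_F$ and the chain rule for joint entropy.
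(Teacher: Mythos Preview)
Your argument is correct in all parts; the unconditional inequalities follow exactly as you say from concavity of $t\mapsto -t\log t$, and your bookkeeping for the conditional versions via $\mu_F=\sum_i\beta_i^F(\mu_i)_F$ together with the chain-rule identity $\sum_F\mu(F)H(\beta^F)=H(\alpha)+\sum_i\alpha_i H(\mu_i,\Fk)-H(\mu,\Fk)\leq H(\alpha)$ is clean and correct. Note that the paper does not actually give a proof of this lemma: it simply states the result as standard and refers the reader to \cite[Section 3.1]{Hochman2014} and \cite[Section 2.4]{Hochman2016}, so there is no ``paper's own proof'' to compare against---your write-up is a complete fill-in of what the paper omits.
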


We note how entropy is affected by scaling and translation: for $t,u\in \R$ we have
\begin{equation} \label{ent:scale}
H(S_t \mu, \Dk_{n-t}) = H(\mu,\Dk_n) + O(1)
\end{equation}
and
\[H(T_u\mu,\Dk_n) = H(\mu,\Dk_n) + O(1).\]
If $\mu$ is supported on a set of diameter $2^{-(n+c)}$, then
\begin{equation} \label{ent:supp}
H(\mu,\Dk_n) = O_c(1),
\end{equation}
and hence, for $m>n$,  and if again $\mu$ is supported on a set of diameter $2^{-(n+c)}$, then using the basic identity $H(\mu,\Dk_m|\Dk_n) = H(\mu,\Dk_m) - H(\mu,\Dk_n)$ we have
\begin{equation} \label{ent:supp1}
H(\mu,\Dk_m|\Dk_n) = H(\mu,\Dk_m)-O_c(1).
\end{equation}

Next we collect some useful estimates for the entropy.
\begin{lem} \label{lem:entropy_perturb}
\begin{enumerate}
\item[{\rm (i)}] If each $E\in \Ek$ intersects at most $k$ elements of $\Fk$ and vice versa, then $|H(\mu,\Ek) - H(\mu,\Fk)| = O(\log k)$.
\item[{\rm (ii)}] If $f,g:\ \RP\to \RP$ and $|f(x) - g(x)| \le C2^{-m}$ for $x\in \RP$, then
$$
|H(f\mu,\Dk_m) - H(g\mu,\Dk_m)| \le O_C(1).
$$
\item[{\rm (iii)}]
If $f$ is bi-Lipschitz with constant $C$, then
$$
H(f\mu,\Dk_n) = H(\mu,\Dk_n) + O(\log C).
$$
\item[{\rm (iv)}] for every $\eps>0$ there is $\delta>0$ such that if $\eta,\theta$ are two measures and $\|\eta-\theta\|< \delta$, then for any finite partition $\Ak$ with $k$ elements,
\[
|H(\eta,\Ak) - H(\theta,\Ak)| < \eps.
\]
\end{enumerate}
\end{lem}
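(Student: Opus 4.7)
The plan is to prove (i) first as the workhorse, derive (ii) and (iii) from it by comparing pull-back partitions, and handle (iv) separately as a continuity statement.

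For (i), I would form the common refinement $\Ek\vee\Fk$. Monotonicity gives $H(\mu,\Ek)\le H(\mu,\Ek\vee\Fk)$, and the chain rule gives $H(\mu,\Ek\vee\Fk)=H(\mu,\Fk)+H(\mu,\Ek|\Fk)$. Since each atom $F\in\Fk$ meets at most $k$ atoms of $\Ek$, the conditional measure $\mu_F$ is supported on a set with at most $k$ nontrivial $\Ek$-cells, so $H(\mu_F,\Ek)\le\log k$, and averaging over $F$ yields $H(\mu,\Ek|\Fk)\le\log k$. Hence $H(\mu,\Ek)\le H(\mu,\Fk)+\log k$; the reverse inequality follows by symmetry.

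For (ii) and (iii) I would apply (i) to the pull-back partitions. Using $H(f\mu,\Dk_m)=H(\mu,f^{-1}\Dk_m)$ and the analogue for $g$, I compare the partitions $f^{-1}\Dk_m$ and $g^{-1}\Dk_m$ of $\RP$. If $x\in f^{-1}(D)\cap g^{-1}(D')$ then $f(x)\in D$, $g(x)\in D'$ and $|f(x)-g(x)|\le C\cdot 2^{-m}$, forcing $D'$ to lie among the $O_C(1)$ cells of $\Dk_m$ within distance $C\cdot 2^{-m}$ of $D$; hence each atom of one partition meets $O_C(1)$ atoms of the other, and (i) with $k=O_C(1)$ gives (ii). For (iii), bi-Lipschitzness with constant $C$ forces each atom of $f^{-1}\Dk_n$ to have diameter at most $C\cdot 2^{-n}$ and therefore meet $O(C)$ atoms of $\Dk_n$; conversely, any $D\in\Dk_n$ has image $f(D)$ of diameter $\le C\cdot 2^{-n}$, meeting $O(C)$ cells of $\Dk_n$ and hence $D$ meets $O(C)$ atoms of $f^{-1}\Dk_n$. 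Applying (i) with $k=O(C)$ yields the $O(\log C)$ bound.

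For (iv), I would use uniform continuity of $\phi(x)=-x\log x$ on $[0,1]$: given $\eps>0$ and a partition $\Ak$ with $k$ atoms, choose $\delta=\delta(\eps,k)$ so that $|\phi(s)-\phi(t)|<\eps/k$ whenever $|s-t|<\delta$. If $\|\eta-\theta\|<\delta$ in total variation, then $|\eta(A)-\theta(A)|<\delta$ for every $A\in\Ak$, and summing $|\phi(\eta(A))-\phi(\theta(A))|$ over the $k$ atoms gives $|H(\eta,\Ak)-H(\theta,\Ak)|<\eps$. No step presents a real obstacle: part (i) is immediate from the chain rule, (ii) and (iii) reduce to a geometric bookkeeping of how many dyadic cells lie near a given one, and (iv) is purely a continuity estimate, with the (expected) dependence of $\delta$ on the number of atoms $k$ being unavoidable in view of Fannes-type inequalities.
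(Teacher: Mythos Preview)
Your argument is correct and is precisely the standard one. The paper itself does not prove this lemma; it only states the four properties as standard facts with references to \cite[Section~3.1]{Hochman2014} and \cite[Section~2.4]{Hochman2016}, where exactly the approach you describe (common refinement plus the chain-rule bound $H(\mu,\Ek|\Fk)\le\log k$ for (i), pull-back to $f^{-1}\Dk_m$ for (ii)--(iii), and uniform continuity of $-x\log x$ for (iv)) is carried out.

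Your observation about (iv) is also on point: as literally written, the order of quantifiers suggests $\delta=\delta(\eps)$ independent of $k$, which is indeed impossible by Fannes-type bounds; the intended reading is $\delta=\delta(\eps,k)$, and in every application in the paper the relevant partition has a controlled number of atoms.
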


Combining the last two lemmas we also obtain

\begin{lem} \label{lem:entropy_distort}
Suppose that $f$ scales $\supp\mu$ by $u>0$ with distortion $C$. Then 
$$
H(f\mu,\Dk_{n-\log u}) = H(\mu, \Dk_{n}) + O_C(1).
$$
\end{lem}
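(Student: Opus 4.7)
The plan is to factor $f$ through a bi-Lipschitz rescaling, thereby reducing to the two basic tools already recorded: the scaling identity (\ref{ent:scale}) and the bi-Lipschitz invariance from Lemma \ref{lem:entropy_perturb}(iii).

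First I would define $g = S_{-\log u} \circ f$ on $\supp\mu$. A direct computation shows that for $x,y \in \supp\mu$,
\[
d(g(x),g(y)) \;=\; u^{-1}\, d(f(x),f(y)),
\]
which by the distortion hypothesis on $f$ lies in $[C^{-1} d(x,y),\, C\cdot d(x,y)]$. Hence $g$ is bi-Lipschitz on $\supp\mu$ with constant $C$ (with no $u$-dependence remaining), and Lemma \ref{lem:entropy_perturb}(iii) yields
\[
H(g\mu,\Dk_n) \;=\; H(\mu,\Dk_n) + O(\log C).
\]

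Next, since $f = S_{\log u} \circ g$, the push-forward decomposes as $f\mu = S_{\log u}(g\mu)$. Applying the scaling identity (\ref{ent:scale}) with $t = \log u$ to the measure $g\mu$ gives
\[
H(f\mu,\Dk_{n-\log u}) \;=\; H(S_{\log u}(g\mu),\Dk_{n-\log u}) \;=\; H(g\mu,\Dk_n) + O(1).
\]
Chaining the two identities produces $H(f\mu,\Dk_{n-\log u}) = H(\mu,\Dk_n) + O_C(1)$, as required.

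I do not anticipate any real obstacle; the only point to keep in mind is that the bi-Lipschitz clause in Lemma \ref{lem:entropy_perturb}(iii) only needs to be verified on $\supp\mu$ (since entropy depends only on the measures), which is exactly what the distortion hypothesis supplies. A minor bookkeeping subtlety is that $\Dk_{n-\log u}$ is interpreted via the floor convention $\Dk_t = \Dk_{\lfloor t\rfloor}$ of Section \ref{subsec:Dyadic-partitions-and-entropy}, but the rounding contributes only an $O(1)$ error, absorbed into the final $O_C(1)$.
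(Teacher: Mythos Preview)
Your proposal is correct and matches the paper's own approach: the paper states this lemma without a written-out proof, merely saying it follows by ``combining the last two lemmas,'' which is exactly your factorization through the scaling map $S_{\log u}$ plus the bi-Lipschitz estimate of Lemma~\ref{lem:entropy_perturb}(iii). Your remark that bi-Lipschitzness on $\supp\mu$ suffices is the only point needing comment, and it is fine for the reason you give.
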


\subsection{\label{subsec:Convergence-of-entropy-to-dimension}Convergence of
entropy to dimension}

Write 
\[
\alpha=\dim\nu.
\]
We have seen that $\nu$ is exact dimensional (Theorem \ref{thm:-exact-dimensionality}).
This implies that\footnote{The limit $\lim_{n\rightarrow\infty}\frac{1}{n}H(\eta,\mathcal{D}_{n})$
(which in general may not exist, or may be distinct from the dimension)
is sometimes called the entropy dimension of $\eta$, and denoted
$\edim\eta$.} 
\[
\lim_{n\rightarrow\infty}\frac{1}{n}H(\nu,\mathcal{D}_{n})=\alpha.
\]
Below we establish that a number of other natural entropies converge
to $\alpha$ as well. Before doing so we need a simple lemma.
\begin{lem}
\label{lem:coupling-bound}Let $\xi,\zeta$ be random variables defined
on a probability space $(\Omega,\mathcal{F},\mathbb{P})$, and with
values in $\RP$. Suppose that
$\mathbb{P}(d_{\RP}(\xi,\zeta)<2^{-(1-\varepsilon)n})=1-\delta$.
Write $\eta=\mathbb{E}(\delta_{\xi})$ and $\theta=\mathbb{E}(\delta_{\zeta})$.
Then 
\begin{equation}
\left|H(\theta,\mathcal{D}_{n})-H(\eta,\mathcal{D}_{n})\right|=O((\varepsilon+\delta )n+H(\delta)),\label{eq:5}
\end{equation}
and in fact
\begin{equation}
H(\mathbb{P},\xi^{-1}\mathcal{D}_{n}\,|\,\zeta^{-1}\mathcal{D}_{n})=O((\varepsilon+\delta )n+H(\delta)).\label{eq:6}
\end{equation}
\end{lem}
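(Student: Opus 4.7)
The plan is to establish (\ref{eq:6}) directly and then deduce (\ref{eq:5}) from it. The general identity
\[H(\mu,\Pk) - H(\mu,\Qk) = H(\mu,\Pk|\Qk) - H(\mu,\Qk|\Pk),\]
obtained from computing $H(\mu,\Pk\vee\Qk)$ two ways, implies $|H(\mu,\Pk) - H(\mu,\Qk)| \le \max\{H(\mu,\Pk|\Qk),\,H(\mu,\Qk|\Pk)\}$. Since the hypothesis on $(\xi,\zeta)$ is symmetric in the two variables, once (\ref{eq:6}) is proved, the analogous bound with the roles of $\xi,\zeta$ reversed holds automatically, and (\ref{eq:5}) follows.

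The one substantive step is a geometric observation. Let $E$ be the event $\{d_\RP(\xi,\zeta) < 2^{-(1-\varepsilon)n}\}$, so $\Prob(E) = 1-\delta$. The cells of $\Dk_n$ are arcs of length $2^{-n}$, while on $E$ we have $d_\RP(\xi,\zeta) \le 2^{\varepsilon n}\cdot 2^{-n}$. Consequently, on $E$, once the cell $\zeta^{-1}\Dk_n(\omega)$ is known, $\xi(\omega)$ lies in one of at most $O(2^{\varepsilon n})$ cells of $\Dk_n$. Off the event $E$, $\xi^{-1}\Dk_n$ takes at most $|\Dk_n| = 2^n$ possible values.

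Now apply the standard conditioning inequality $H(\Pk|\Qk) \le H(\Pk\,|\,\Qk\vee\Rk) + H(\Rk)$ with $\Rk = \{E,E^c\}$, noting $H(\Rk) = H(\delta)$:
\[H(\Prob,\xi^{-1}\Dk_n\,|\,\zeta^{-1}\Dk_n) \le H(\Prob,\xi^{-1}\Dk_n\,|\,\zeta^{-1}\Dk_n \vee \{E,E^c\}) + H(\delta).\]
The first term on the right decomposes as
\[\Prob(E)\cdot H(\Prob|_E,\xi^{-1}\Dk_n\,|\,\zeta^{-1}\Dk_n) + \Prob(E^c)\cdot H(\Prob|_{E^c},\xi^{-1}\Dk_n\,|\,\zeta^{-1}\Dk_n),\]
and by the geometric observation the first summand is at most $(1-\delta)\cdot O(\varepsilon n) = O(\varepsilon n)$, while the second is at most $\delta \cdot \log|\Dk_n| = \delta n$. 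Summing yields (\ref{eq:6}), and then (\ref{eq:5}) follows from the opening remark.

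There is no real obstacle: the argument is a routine manipulation of conditional entropies, hinging only on the elementary fact that a $2^{\varepsilon n}\cdot 2^{-n}$-neighborhood meets only $O(2^{\varepsilon n})$ cells of $\Dk_n$.
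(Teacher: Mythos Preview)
Your proof is correct and takes a genuinely different route from the paper's. The paper proceeds in the opposite order: it first establishes (\ref{eq:5}) by working with the coupling measure $\tau=\mathbb{E}(\delta_{(\xi,\zeta)})$ on $\RP^2$, computing $H(\tau,\mathcal{D}_n\times\mathcal{D}_n)$ two ways (first in the case $\delta=0$, then reducing the general case via almost-convexity of entropy). It then deduces (\ref{eq:6}) by applying (\ref{eq:5}) to the auxiliary variables $\xi'=(\xi,\xi)$ and $\zeta'=(\xi,\zeta)$ in $\RP^2$.

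Your argument instead attacks the conditional entropy (\ref{eq:6}) head-on by refining with the event $\{E,E^c\}$, and then obtains (\ref{eq:5}) from the entropy identity $H(\Pk)-H(\Qk)=H(\Pk|\Qk)-H(\Qk|\Pk)$ together with the symmetry of the hypothesis. This is cleaner: it avoids the product-space detour and the separate treatment of the $\delta=0$ case, and it makes explicit why (\ref{eq:5}) is an immediate corollary of (\ref{eq:6}) rather than the other way around. The paper's approach has the minor advantage of making the coupling structure more visible, but yours is the more economical argument.
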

\begin{proof}
Suppose first that $\delta=0$, so that $d_{\RP}(\xi,\zeta)<2^{-(1-\varepsilon)n}$
always holds. Consider the random pair $(\xi,\zeta)\in\RP^{2}$
and the measure $\tau=\mathbb{E}(\delta_{(\xi,\zeta)})$. Let $\mathcal{T}=\{\emptyset,\RP\}$
denote the trivial partition of $\RP$ and note that $H(\eta,\mathcal{D}_{n})=H(\tau,\mathcal{D}_{n}\times\mathcal{T})$
and $H(\theta,\mathcal{D}_{n})=H(\tau,\mathcal{T}\times\mathcal{D}_{n})$.
Now, by the assumption that $d_{\RP}(\xi,\zeta)<2^{-(1-\varepsilon)n}$,
we see that given $\xi$, or the atom of $\mathcal{D}_n$ to which it belongs, there are $O(2^{\varepsilon n})$
possible atoms of $\Dk_n\times \Dk_n$ to which $\zeta$ can belong. Thus 
\begin{eqnarray*}
H(\tau,\mathcal{D}_{n}\times\mathcal{D}_{n}) & = & H(\tau,\mathcal{D}_{n}\times\mathcal{D}_{n}|\mathcal{D}_{n}\times\mathcal{T})+H(\tau,\mathcal{D}_{n}\times\mathcal{T})\\
 & = & O(\varepsilon n)+H(\eta,\mathcal{D}_{n}),
\end{eqnarray*}
and similarly $H(\tau,\mathcal{D}_{n}\times\mathcal{D}_{n})=O(\varepsilon n)+H(\theta,D_{n})$.
Hence assuming $\delta=0$ we have found that
\[
\left|H(\theta,\mathcal{D}_{n})-H(\eta,\mathcal{D}_{n})\right|=O(\varepsilon n).
\]

\begin{sloppypar}
For $\delta>0$, let $\Omega$ denote the event that $d_{\RP}(\xi,\zeta)<2^{-(1-\varepsilon)n}$,
so $\mathbb{P}(\Omega)=1-\delta$. Let $\eta'=\mathbb{E}(\delta_{\xi}|\Omega)$,
$\eta''=\mathbb{E}(\delta_{\xi}|\Omega^{c})$ and let $\theta'=\mathbb{E}(\delta_{\zeta}|\Omega)$
and $\theta''=\mathbb{E}(\delta_{\zeta}|\Omega^{c})$, so that $\eta=(1-\delta)\eta'+\delta\eta''$
and $\theta=(1-\delta)\theta'+\delta\theta''$. Thus, by Lemma~\ref{lem:entropy_prop} (i) and (ii) we have
\[
\left|H(\eta,\mathcal{D}_{n})-H(\eta',\mathcal{D}_{n})\right|<H(\delta)+\delta\left|H(\eta'',\mathcal{D}_{n})-H(\eta', \mathcal{D}_{n})\right|\leq H(\delta)+2\delta n,
\]
where we used that $|\Dk_n|=2^n$. Similarly, we have $\left|H(\theta,\mathcal{D}_{n})-H(\theta',\mathcal{D}_{n})\right|<H(\delta)+2\delta n$.
Finally, by the previous paragraph we know that $\left|H(\theta',\mathcal{D}_{n})-H(\eta',\mathcal{D}_{n})\right|=O(\varepsilon n)$.
Combining these three bounds gives (\ref{eq:5}).
\end{sloppypar}

For the second statement, we can apply the first statement to the
random variables $\xi'=(\xi,\xi)$, $\zeta'=(\xi,\zeta)$ and measures
$\eta'=\mathbb{E}(\delta_{\xi'})$, $\theta'=\mathbb{E}(\delta_{\zeta'})$.
Since $H(\eta',\mathcal{D}_{n}\times\mathcal{D}_{n})=H(\eta,\mathcal{D}_{n})$
we conclude that
\[
\left|H(\eta,\mathcal{D}_{n})-H(\theta',\mathcal{D}_{n}\times\mathcal{D}_{n})\right|=O((\varepsilon+\delta )n+H(\delta)).
\]
But also $H(\eta,\mathcal{D}_{n})=H(\mathbb{P},\xi^{-1}\mathcal{D}_{n})$
and $H(\theta',\mathcal{D}_{n}\times\mathcal{D}_{n})=H(\mathbb{P},\xi^{-1}\mathcal{D}_{n}\lor\zeta^{-1}\mathcal{D}_{n})$,
so the last bound is precisely the conditional entropy in (\ref{eq:6}).
\end{proof}

\begin{prop}
\label{prop:convergence-of-orbit-entropy-to-dimension}
For any $c>0$, for any $A_n \seq G$, with  $\mu^{\star n} (A_n) > c$, for any ${w}\in\RP$
we have 
\begin{equation} \label{conv1}
\frac{1}{2\chi n} H((\mu^{\star n})_{A_n}\conv{w}, \Dk_{2\chi n})=\alpha+o_c(1) \ \ \ \mbox{as}\  n\rightarrow\infty.
\end{equation}
\end{prop}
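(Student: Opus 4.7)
The plan is a two-step coupling argument: first replace $\Pi_n w := X_1\cdots X_n w$ by the Oseledets limit $u^+$ using Proposition~\ref{prop:quantitative-Furstenberg}, and then identify and estimate the resulting conditional law of $u^+$ using stationarity and exact dimensionality of $\nu$.

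Set $\Pi_n = X_1\cdots X_n$, $c' = \mu^{\star n}(A_n)\ge c$, and $\mathcal{A}_n = \{\Pi_n\in A_n\}\subseteq\Omega$, so $\widehat{\mu}(\mathcal{A}_n)=c'$. By Proposition~\ref{prop:quantitative-Furstenberg}, there are $\eps_n,\delta_n\to 0$ with $d_{\RP}(\Pi_n w,u^+)<2^{-2\chi n(1-\eps_n)}$ off a $\widehat{\mu}$-event of mass $\delta_n$, hence of conditional mass $\le \delta_n/c$ under $\widehat{\mu}(\,\cdot\,|\,\mathcal{A}_n)$. Applying Lemma~\ref{lem:coupling-bound} at scale $2\chi n$ to the pair $(\Pi_n w, u^+)$ under this conditional measure yields
\[
\bigl|H((\mu^{\star n})_{A_n}\conv w,\Dk_{2\chi n}) - H(\nu_{A_n},\Dk_{2\chi n})\bigr| = o_c(n),
\]
where $\nu_{A_n}$ is the conditional law of $u^+$ on $\mathcal{A}_n$. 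Since $(X_{n+1},X_{n+2},\ldots)$ is i.i.d.\ $\mu$ and independent of $\Pi_n$, applying Furstenberg's Theorem~\ref{thm:Furstenberg} to the tail gives $u^+=\Pi_n\cdot\tilde u^+$ with $\tilde u^+\sim\nu$ independent of $\Pi_n$, so $\nu_{A_n}=(\mu^{\star n})_{A_n}\conv\nu$. Stationarity $\nu = c'\nu_{A_n}+(1-c')(\mu^{\star n})_{A_n^c}\conv\nu$ then forces $\nu_{A_n}\le\nu/c$ as measures; in particular $\nu_{A_n}\ll\nu$ with Radon--Nikodym derivative bounded by $1/c$.

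It remains to show: for any probability measure $\eta$ on $\RP$ with $\eta\le\nu/c$, one has $\tfrac{1}{m}H(\eta,\Dk_m)\to\alpha$ as $m\to\infty$ uniformly in $\eta$, with rate depending only on $c$ and $\nu$. The identity
\[
H(\eta,\Dk_m)=\int\bigl(-\log\nu(\Dk_m(x))\bigr)\,d\eta(x) - \sum_{D\in\Dk_m}\eta(D)\log\tfrac{\eta(D)}{\nu(D)}
\]
has second term a relative entropy belonging to $[0,\log(1/c)]$, so up to $O_c(1)$ we must estimate the first term. Theorem~\ref{thm:-exact-dimensionality} gives $\tfrac{1}{m}(-\log\nu(\Dk_m(x)))\to\alpha$ on a $\nu$-typical set $E_m$ with $\nu(E_m)\to 1$; hence $\eta(E_m^c)\le\nu(E_m^c)/c\to 0$ uniformly in $\eta$. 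The contribution from $E_m$ is $(\alpha+o(1))\eta(E_m)\cdot m$, while on $E_m^c$ the bound $-\log\nu(\Dk_m(x))\le\log(1/c)-\log\eta(\Dk_m(x))$ (from $\eta\le\nu/c$) combined with $\int_{E_m^c}(-\log\eta(\Dk_m(x)))\,d\eta\le \eta(E_m^c)\cdot m$ makes the $E_m^c$-contribution $o(m)$ uniformly in $\eta$. Applying this with $\eta=\nu_{A_n}$ and $m=2\chi n$, combined with the coupling estimate above, yields the proposition.

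The main technical obstacle is precisely this last uniform convergence: on $\nu$-atypical cells the quantity $-\log\nu(\Dk_m(x))$ can exceed $m\alpha$ substantially, but the sub-measure bound $\eta\le\nu/c$ controls both the $\eta$-mass of such cells and the per-cell contribution by quantities depending only on $c$ and on the dimensional regularity of $\nu$, independent of $n$ and $A_n$.
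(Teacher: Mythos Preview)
Your argument is correct and follows the same overall scheme as the paper's proof: couple $(\mu^{\star n})_{A_n}\conv w$ with the conditional law $\nu_{A_n}$ of $u^+$ via Proposition~\ref{prop:quantitative-Furstenberg} and Lemma~\ref{lem:coupling-bound}, then use $\nu_{A_n}\le\nu/c$ together with exact dimensionality. One small slip: the bound $\int_{E_m^c}(-\log\eta(\Dk_m(x)))\,d\eta\le \eta(E_m^c)\cdot m$ is off by an additive $O(1)$ term (coming from $-\eta(E_m^c)\log\eta(E_m^c)$ when you normalize to a probability on $E_m^c$), but this is harmless.

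Where you differ from the paper is in how you obtain the \emph{upper} bound for $\tfrac{1}{m}H(\nu_{A_n},\Dk_m)$. The paper proves only the lower bound directly (via the Radon--Nikodym bound $f_n\le 1/c$ and exact dimensionality), and gets the matching upper bound indirectly: it first establishes the unconditional case $A_n\equiv G$, and then observes that since the $(c',1-c')$-combination of the entropies for $A_n$ and $A_n^c$ equals $\alpha+o(1)$ by concavity/almost-convexity, the lower bound for both pieces forces equality for each. Your route bypasses this two-step structure by proving both inequalities at once via the relative-entropy decomposition $H(\eta,\Dk_m)=\int(-\log\nu(\Dk_m(x)))\,d\eta - D_{\mathrm{KL}}(\eta_m\|\nu_m)$, together with the pointwise bound $-\log\nu(D)\le\log(1/c)-\log\eta(D)$ on the bad set. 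This is slightly more direct and gives uniformity in $\eta\le\nu/c$ in one stroke; the paper's route has the minor advantage of not needing to control the integrand on the atypical set at all.
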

\begin{proof}
First we prove the inequality for $A_n \equiv G$, that is,
\begin{equation} \label{conv2}
\frac{1}{2\chi n} H(\mu^{\star n} \conv{w}, \Dk_{2\chi n})=\alpha+o(1) \ \ \ \mbox{as}\  n\rightarrow\infty.
\end{equation}
Fix ${w}\in\RP$. Let $X_{1},X_{2},\ldots$ be i.i.d. random
matrices with marginal $\mu$, so that $\mu^{\star n}$ is the distribution
of the random product $Z_{n}=X_{1}X_{2}\ldots X_{n}$, and let ${u}^{+}=\lim Z_n {u}_{Z_{n}}^{+}$,
so that 
\[
\nu=\mathbb{E}(\delta_{{u}^{+}}).
\]
Next, let ${w}_{n}=Z_{n}w$, so that 
$
\mu^{\star n} \conv {w}=\mathbb{E}(\delta_{w_n}).
$
By Proposition \ref{prop:quantitative-Furstenberg}, we have
\begin{equation} \label{conv3}
\mathbb{P}\left(d_{\RP}({u}^{+},{w}_{n})<2^{-2(\chi-\varepsilon_{n})n}\right)\rightarrow1\qquad\mbox{as }n\rightarrow\infty,
\end{equation}
for some $\eps_n\to 0$. 
By the previous lemma
(taking $\xi={u}^{+}$ and $\zeta={w}_{n}$ and
replacing $n$ by $2\chi n$, which is formally equivalent), we have
\[
\left|\frac{1}{2\chi n}H(\nu,\mathcal{D}_{2\chi n})-\frac{1}{2\chi n}H(\mu^{\star n} \conv{w},\mathcal{D}_{2\chi n})\right|\rightarrow0\qquad\mbox{as }n\rightarrow\infty,
\]
and (\ref{conv2}) follows.

Now, in order to prove the asymptotic equality (\ref{conv1}), it is enough to prove the inequality $\ge$. Indeed, by concavity and almost-convexity of entropy (Lemma~\ref{lem:entropy_prop}), the convex combination of the normalized entropy in the left-hand side of (\ref{conv1}) together with the analogous normalized entropy for the complements of $A_n$, is asymptotically equal to the normalized entropy of $\mu^{\star n}\conv w$, and we already know that the latter converges to $\alpha$ by (\ref{conv2}). Thus, knowing the inequality for both $A_n$ and $A_n^c$ (the complementary sets) proves the desired equality. Note that if the masses of the complements tend to zero, then their contribution to the convex combination also tends to zero, so we get equality in this case also.

To this end, define  \[
\nu_{n}:= \E(\delta_{u^+}\,|\,Z_n\in A_n),
\]
and note that, using $w_n = Z_n w$ as above, we have
\[
(\mu^{\star n})_{A_n} \conv w = \mathbb{E}(\delta_{w_n}\,|\,Z_n\in A_n).
\]
By (\ref{conv3}), 
\[
1 - \mathbb{P}\left(d_{\RP}({u}^{+},{w}_{n})<2^{-2(\chi-\varepsilon_{n})n}\,|\, Z_n \in A_n\right) = o_c(1)  \ \mbox{as}\  n\rightarrow\infty.
\]
Using the previous lemma again
(taking $\xi={u}^{+}$ and $\zeta={w}_{n}$ conditioned by $\{Z_n \in A_n\}$) yields
\[
\left|\frac{1}{2\chi n}H(\nu_n,\mathcal{D}_{2\chi n})-\frac{1}{2\chi n}H((\mu^{\star n})_{A_n} \conv{w},\mathcal{D}_{2\chi n})\right|\rightarrow0\qquad\mbox{as }n\rightarrow\infty.
\]
To complete the proof we must now show that $$
H(\nu_{n},\mathcal{D}_{2\chi n})\ge 2\chi\alpha n+o_c(n).
$$
For this, note that by the Radon-Nikodym theorem,  $\nu_n = \int f_n\,d\nu$ for some $0\le f_n \le 1/c$. By exact dimensionality of $\nu$ we have $\nu(B_r(x))=r^{\alpha+o_x(1)}$ for $\nu$-a.e.\ $x$, hence, as is well-known,
$\nu(\Dk_n(x)) = r^{\alpha+o_x(1)}$ for $\nu$-a.e.\ $x$.
For such $x$ the bound on $f_n$ gives $\nu_n(\mathcal{D}_n(x)) \le 2^{-n(\alpha+o_{x,c}(1))}$. Taking logarithms and integrating gives the desired entropy bound, and proves the proposition.
\end{proof}

\begin{cor}
  Let $c$ and $A_n\subseteq G$ be as in the previous proposition. Let $w=(w_1,w_2,w_3)\in\RP^3$, and write $\rho_w(g)=gw=(gw_1,gw_2,gw_3)$ and \[
  \mathcal{E}_n = \rho_w^{-1}\mathcal{D}_{2\chi n}
\]
(where $\mathcal{D}_{2\chi n}$ refers to the product partition in $\RP^3$). Then \[
  \frac{1}{2\chi n}H((\mu^{\star n})_{A_n},\mathcal{E}_n) = \alpha + o_{c,r}(1)
  \]
\end{cor}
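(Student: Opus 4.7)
My plan is to reduce the entropy on $G$ to an entropy on $\RP^3$ via the tautological identity
\[
H((\mu^{\star n})_{A_n},\mathcal{E}_n) = H\bigl(\rho_w((\mu^{\star n})_{A_n}),\mathcal{D}_{2\chi n}^3\bigr),
\]
where $\mathcal{D}_{2\chi n}^3$ denotes the product dyadic partition on $\RP^3$. The measure on the right is the law of the triple $(Z_n w_1, Z_n w_2, Z_n w_3)$ where $Z_n = X_1\cdots X_n$ with $(X_i)$ i.i.d.\ $\sim\mu$, conditioned on $\{Z_n\in A_n\}$. By Proposition~\ref{prop:quantitative-Furstenberg} applied separately to each $w_i$ and a union bound, with probability $1-o(1)$ we have $d_{\RP}(Z_n w_i,u^+)<2^{-2(\chi-\varepsilon_n)n}$ for $i=1,2,3$ simultaneously; and conditioning on an event of probability $\ge c$ replaces this by $1-o_c(1)$.

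Next I would apply an $\RP^3$-version of the coupling lemma (Lemma~\ref{lem:coupling-bound}) to the pair $\zeta_n=(Z_n w_1, Z_n w_2, Z_n w_3)$ and $\xi_n=(u^+,u^+,u^+)$ under the conditional probability. This generalization is routine: the only property of $\RP$ used in the proof is that a ball of radius $2^{-(1-\varepsilon)n}$ meets $O(2^{\varepsilon n})$ cells of $\mathcal{D}_n$, which for $\RP^3$ becomes $O(2^{3\varepsilon n})$ cells of $\mathcal{D}_n^3$, affecting only constants. The conclusion is
\[
\bigl|H\bigl(\rho_w((\mu^{\star n})_{A_n}),\mathcal{D}_{2\chi n}^3\bigr) - H\bigl(\nu_n^{\mathrm{diag}},\mathcal{D}_{2\chi n}^3\bigr)\bigr|=o_c(n),
\]
where $\nu_n^{\mathrm{diag}}$ is the push-forward of $\nu_n=\mathbb{E}(\delta_{u^+}\mid Z_n\in A_n)$ under the diagonal embedding $x\mapsto(x,x,x)$.

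Finally, I would note that two distinct level-$2\chi n$ dyadic cells of $\RP$ are disjoint, hence a product cell $I_1\times I_2\times I_3\in\mathcal{D}_{2\chi n}^3$ meets the diagonal precisely when $I_1=I_2=I_3$; thus the partition of the diagonal induced by $\mathcal{D}_{2\chi n}^3$ corresponds bijectively (via the diagonal map) to $\mathcal{D}_{2\chi n}$ on $\RP$, yielding
\[
H\bigl(\nu_n^{\mathrm{diag}},\mathcal{D}_{2\chi n}^3\bigr)=H(\nu_n,\mathcal{D}_{2\chi n}).
\]
The proof of Proposition~\ref{prop:convergence-of-orbit-entropy-to-dimension} already established (via the very same coupling argument combined with the convex-combination trick involving $A_n$ and $A_n^c$) that $H(\nu_n,\mathcal{D}_{2\chi n})=2\chi n\alpha+o_c(n)$, from which the corollary follows.

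The step I anticipate requiring the most care is verifying that Lemma~\ref{lem:coupling-bound} generalizes cleanly to $\RP^3$ with the product partition and that the $o_c$ error terms propagate correctly when one conditions on the event $\{Z_n\in A_n\}$ (which has probability $\ge c$, so conditional probabilities of complementary events inflate only by a factor of $1/c$). Once this bookkeeping is done, the rest is a direct identification of three entropies and a citation of the previous proposition.
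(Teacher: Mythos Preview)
Your argument is correct. The route differs from the paper's in its organization: rather than lifting Lemma~\ref{lem:coupling-bound} to $\RP^3$ and then collapsing via the diagonal, the paper stays on $\RP$ throughout by using the chain rule
\[
H((\mu^{\star n})_{A_n},\mathcal{E}_n)=H((\mu^{\star n})_{A_n},\mathcal{E}_{n,1})+H\bigl((\mu^{\star n})_{A_n},\mathcal{E}_{n,2}\vee\mathcal{E}_{n,3}\,\big|\,\mathcal{E}_{n,1}\bigr),
\]
where $\mathcal{E}_{n,i}=\rho_{w_i}^{-1}\mathcal{D}_{2\chi n}$. The first summand is $2\chi\alpha n+o_c(n)$ by Proposition~\ref{prop:convergence-of-orbit-entropy-to-dimension} directly, and the conditional terms are shown to be $o_c(n)$ by invoking the \emph{conditional} part of Lemma~\ref{lem:coupling-bound}, namely (\ref{eq:6}), applied to the pair $(w_{n,1},w_{n,i})$, which are $2^{-(2\chi-\varepsilon_n)n}$-close with probability $1-o_c(1)$. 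Thus the paper avoids both the $\RP^3$ extension of the coupling lemma and the separate verification of $H(\nu_n,\mathcal{D}_{2\chi n})=2\chi\alpha n+o_c(n)$; on the other hand, your diagonal observation is pleasant and the $\RP^3$ extension is genuinely routine, so the two arguments are of comparable length and difficulty.
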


\begin{proof}
Let $\rho_{w_i}(g)=gw_i$ and let \[
  \mathcal{E}_{n,i} = \rho_{w_i}^{-1}\mathcal{D}_{2\chi n}
\]
so that $\mathcal{E}_n=\bigvee_{i=1,2,3}\mathcal{E}_{n,i}$. Then
\[
H((\mu^{\star n})_{A_n}, \Ek_n)=H((\mu^{\star n})_{A_n},\mathcal{E}_{n,1})+H\Bigl((\mu^{\star n})_{A_n},\mathcal{E}_{n,2}\vee \mathcal{E}_{n,3}\,|\,\mathcal{E}_{n,1}\Bigr).
\]
By the previous proposition we know that the first summand is of order $2\chi \alpha n+o(n)$,
so we must show that the other summand is $o(n)$. For this it is enough to show that for each $i=2,3$.
\begin{equation}
H\bigl((\mu^{\star n})_{A_n},\mathcal{E}_{n,i}\,|\,\mathcal{E}_{n,1}\bigr)=o(n)\label{eq:1}
\end{equation}
This follows from Lemma \ref{lem:coupling-bound}. Indeed, adopting the notation of Proposition \ref{prop:quantitative-Furstenberg}, writing  ${w}_{n,i}=X_{1}X_{2}\ldots X_{n}{w}_{i}$, and assuming $\varepsilon_{n}\rightarrow0$
slowly enough, by that proposition
\[
\mathbb{P}\left(d_{\RP}({w}_{n,i},{u}^{+})<2^{-(2\chi-\varepsilon_{n})n}\right)\rightarrow1\qquad\mbox{as }n\rightarrow\infty,
\]
whereby by a slight increase of $\varepsilon_{n}$ we have
\[
\mathbb{P}\left(d_{\RP}({w}_{n,i},{w}_{n,1})<2^{-(2\chi-\varepsilon_{n})n}\right)\rightarrow1\qquad\mbox{as }n\rightarrow\infty.
\]
Since $\mu^{\star}(A_n)>c$, the last equation remains true if we condition on $X_1\ldots X_n\in A_n$, and then (\ref{eq:1}) follows from Lemma \ref{lem:coupling-bound}, equation (\ref{eq:6}).
\end{proof}

\begin{prop}
\label{prop:convergence-of-convolution-entropy-to-dimension}$\frac{1}{2\chi n}H(\mu^{\star  n},\mathcal{D}_{1}^{G})=\alpha +o(1)$
as $n\rightarrow\infty$.
\end{prop}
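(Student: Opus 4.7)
The strategy is to reduce to the previous corollary. Applying that corollary with $A_n = G$ (any $c < 1$) and a triple $w = (w_1, w_2, w_3) \in \RP^3$ gives $H(\mu^{\star n}, \widetilde{\Ek}_n) = 2\chi n \alpha + o(n)$, where $\widetilde{\Ek}_n = \rho_w^{-1}\Dk_{2\chi n}$. The proposition then follows once we establish
\[
|H(\mu^{\star n}, \Dk_1^G) - H(\mu^{\star n}, \widetilde{\Ek}_n)| = o(n),
\]
by showing that on a set carrying most of the $\mu^{\star n}$-mass, the partitions $\Dk_1^G$ of $G$ and $\widetilde{\Ek}_n$ of $G$ are ``compatible up to bounded multiplicity''.

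\textbf{Setup of the good set.} Fix $\eps_0 > 0$ small and a triple $w = (w_1, w_2, w_3)$ pairwise $\eps_0$-separated, chosen so that $\nu^*(B_{\eps_0}(w_i)) \leq \delta_0$ for each $i$, where $\nu^*$ is the Furstenberg measure of the reverse walk $X_n^* \cdots X_1^*$ and $\delta_0 = \delta(\eps_0) \to 0$ as $\eps_0 \to 0$; this is possible since $\nu^*$ is non-atomic. Let $\eps_n \to 0$ slowly and set
\[
A_n = \bigl\{g \in G : \|g\| \in [2^{\chi n(1-\eps_n)}, 2^{\chi n(1+\eps_n)}],\ d_{\RP}(u^-_g, w_i) > \eps_0 \text{ for all } i\bigr\}.
\]
By Oseledets' theorem and Furstenberg's theorem applied to the reverse walk, $\mu^{\star n}(A_n) \geq 1 - \delta_0 - o(1)$. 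On $A_n$, Lemma~\ref{lem:separation-of-G-by-3-orbits} says that $\rho_w$ restricted to each ball $B_{r_0}(g_0)$ with $g_0 \in A_n$ and $r_0 = r(\eps_0)$ is bi-Lipschitz with scaling $\|g_0\|^{-2}$ and distortion $O_{\eps_0}(1)$.

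\textbf{Comparison of entropies.} Choose $k_0 = k_0(\eps_0)$ so that $\Dk_{k_0}^G$ cells have diameter less than $r_0$; then $|H(\mu^{\star n}, \Dk_1^G) - H(\mu^{\star n}, \Dk_{k_0}^G)| = O(k_0) = O_{\eps_0}(1)$. Via the identity $H(\mu, X) - H(\mu, Y) = H(\mu, X \mid Y) - H(\mu, Y \mid X)$, it suffices to show that both $H(\mu^{\star n}, \Dk_{k_0}^G \mid \widetilde{\Ek}_n)$ and $H(\mu^{\star n}, \widetilde{\Ek}_n \mid \Dk_{k_0}^G)$ are $o(n) + O(\delta_0 n)$. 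For $H(\mu^{\star n}, \widetilde{\Ek}_n \mid \Dk_{k_0}^G)$: for each $D \in \Dk_{k_0}^G$ meeting $A_n$, $D$ is contained in $B_{r_0}(g_0)$ for some $g_0 \in A_n$, and the bi-Lipschitz estimate gives $\diam(\rho_w(D)) = O_{\eps_0}(2^{-2\chi n(1-\eps_n)})$, so $\rho_w(D)$ meets at most $2^{O(n\eps_n) + O_{\eps_0}(1)}$ cells of $\Dk_{2\chi n}$; splitting $\mu^{\star n}$ into its restrictions to $A_n$ and $A_n^c$ via almost-convexity of conditional entropy, the $A_n$-part contributes $O_{\eps_0}(1) + O(n\eps_n) = o(n)$ and the $A_n^c$-part contributes at most $\mu^{\star n}(A_n^c)\cdot O(n) = O(\delta_0 n) + o(n)$. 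For the symmetric direction $H(\mu^{\star n}, \Dk_{k_0}^G \mid \widetilde{\Ek}_n)$: one uses that for generic $w$, $\rho_w$ factors as $G \to PSL_2(\R) \hookrightarrow \RP^3$ with the second map an open embedding (since no nontrivial element of $PSL_2(\R)$ fixes three distinct points of $\RP$), so each cell of $\widetilde{\Ek}_n$ has only $O(1)$ connected components per $\{g,-g\}$-orbit, each of $G$-diameter $O_{\eps_0}(2^{2\chi n \eps_n})$ on $A_n$ via the bi-Lipschitz inverse, meeting at most $2^{O(n\eps_n) + O_{\eps_0, k_0}(1)}$ cells of $\Dk_{k_0}^G$; the same type of bound follows. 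Letting $n \to \infty$ and then $\eps_0 \to 0$ yields the proposition. The main technical obstacle is the reverse direction: the global topology of cells of $\widetilde{\Ek}_n$ is not directly controlled by the local bi-Lipschitz lemma, and is handled via the factorization through an open embedding of $PSL_2(\R)$ into $\RP^3$, bounding the number of components per cell (up to boundary effects coming from the complement of $\iota(PSL_2(\R))$ in $\RP^3$, which are negligible in entropy).
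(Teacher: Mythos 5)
Your overall architecture is essentially the paper's: compare $\Dk_1^G$ with the pullback partition $\widetilde{\Ek}_n=\rho_w^{-1}\Dk_{2\chi n}$ on a set where Lemma \ref{lem:separation-of-G-by-3-orbits} applies and the norm behaves as Oseledets predicts, then invoke the preceding corollary. Your variation --- a single triple $w$ chosen so that $\nu^*$ gives small mass near each $w_i$, discarding an exceptional set of mass $\delta_0+o(1)$ at a cost of $O(\delta_0 n)$ in entropy and letting $\eps_0\to 0$ at the end --- is a legitimate substitute for the paper's partition of $G$ into four sets $A_n^i$, each paired with its own triple (the paper itself records your variant in a parenthetical). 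The forward comparison $H(\mu^{\star n},\widetilde{\Ek}_n\mid\Dk_{k_0}^G)=o(n)+O(\delta_0 n)$ is fine as you argue it.

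The gap is in the reverse bound $H(\mu^{\star n},\Dk_{k_0}^G\mid\widetilde{\Ek}_n)$, which you correctly single out as the main obstacle but do not close. Two problems. First, the accounting: in the left-invariant Riemannian metric a subset of $G$ of diameter $R$ can meet $e^{O(R)}$ atoms of $\Dk_{k_0}^G$ (exponential volume growth), so a piece of diameter $O_{\eps_0}(2^{2\chi n\eps_n})$ --- which is what the local bi-Lipschitz inverse would give if it applied globally --- contributes entropy of order $2^{O(\eps_n n)}$, not $O(\eps_n n)$; that is $o(n)$ only if $\eps_n$ decays faster than Oseledets guarantees. Second, the topology: connectivity of a component $K$ of $\rho_w^{-1}(Q)$ does not let you propagate the local co-Lipschitz estimate, since $K$ may leave the balls $B_{r_0}(g_0)$ (and the set $A_n$) on which Lemma \ref{lem:separation-of-G-by-3-orbits} holds, and $K\cap A_n\cap\Gamma_n$ need not be connected. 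What is true, and suffices, is a direct, connectivity-free estimate: if $g,g'\in\Gamma_n\cap A_n$ lie in the same atom of $\widetilde{\Ek}_n$, write $g=k_1ak_2$, $g'=k_1'a'k_2'$ in singular-value form. The common atom forces $d_{\RP}(v_g^+,v_{g'}^+)=O_{\eps_0}(2^{-(2\chi-\eps_n)n})$, membership in $\Gamma_n$ forces $\Vert a^{-1}a'\Vert\le 2^{O(\eps_n n)}$, and hence $\Vert g^{-1}g'\Vert=\Vert a^{-1}(k_1^{-1}k_1')a'\Vert=2^{O(\eps_n n)}$, so $d(g,g')=d(1_G,g^{-1}g')=O(1+\log\Vert g^{-1}g'\Vert)=O(\eps_n n)$. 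Thus the preimage of each atom, intersected with the good set, has Riemannian diameter $O(\eps_n n)$ and meets only $2^{O(\eps_n n)}=2^{o(n)}$ atoms of $\Dk_{k_0}^G$, which is the bound you need. Note in particular that the atom of $\widetilde{\Ek}_n$ pins down $v_g^+$ to scale $2^{-2\chi n}$ but says essentially nothing about $u_g^-$ or about $\Vert g\Vert$ beyond membership in $\Gamma_n$, so this preimage genuinely has diameter of order $\eps_n n$, not $O(1)$; no argument that tries to localize it to a single ball $B_{r_0}(g_0)$ can succeed.
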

\begin{proof}
Fix $1/10$-separated ${w}_{1},{w}_{2},{w}_{3},{w}_{4}\in\RP$.
Let $r<1/20$ be small enough to apply Lemma \ref{lem:separation-of-G-by-3-orbits}
(with $\varepsilon=1/10$). For any $g_{0}\in G$ we certainly have
$u_{g_{0}}^{-}\notin B_{r}({w}_{i})$ for three out of the
four points. Thus for each $n$ we can partition $G$ into measurable sets $A_n^1,A_n^2,A_n^3,A_n^4$ such that for each $i=1,2,3,4$, \[
 g \in A_n^i \ \ \implies \ \ u_{g}^- \notin B_r(w_j)\ \ \ \textrm{ for } j\neq i,
\]
so that
\[
  p_{n,i}: = \mu^{\star n}(A_n^i) > c >0.
  \]
(In fact, we can take $c=1/5$, provided $r>0$ is sufficiently small. This follows from the claim that 
$$
\lim_{r\to 0} \lim_{n\to \infty}\mu^{\star n}(\{g\in G:\, u_g^-\in B_r(w)\})=0,
$$
uniformly in $w\in \RP$, which is a consequence of  Proposition~\ref{prop:quantitative-Furstenberg} and continuity of the stationary measure.)
Now observe that 
$$
\Bigl|H(\mu^{\star n}, \Dk_1^G) - \sum_{i=1}^4 p_{n,i}\cdot H((\mu^{\star n})_{A_n^i}, \Dk_1^G)\Bigr| \le 2,
$$
by Lemma~\ref{lem:entropy_prop}, using that $2=\log 4$ is the maximal entropy of $H(\mu^{\star n}, \{A_n^i\}_{i=1}^4)$. Thus
it is enough for us to show that for every $\eps>0$, for all large enough $n$, 
\begin{equation}
   |\frac{1}{2\chi n}H((\mu^{\star n})_{A_n^i}, \Dk_1^G) - \alpha | < \eps\;\;\;\textrm{ assuming } p_{n,i}>c. \label{eq:71}
\end{equation}
Without loss of generality we prove this for $i=4$. From now on we fix $i=4$ and a parameter $c>0$.
Define \[
\Ek_n:=\bigvee_{i=1,2,3}\rho_{w_i}^{-1}(\mathcal{D}_{2\chi n})
\]
(recall again that $\rho_w(g) = gw$).
Let $\varepsilon_{n}\rightarrow0$ and 
\[
\Gamma_{n}=\{g\in G\,:\,2^{-(2\chi+\varepsilon_{n})n}\leq\left\Vert g\right\Vert ^{-2}\leq2^{-(2\chi-\varepsilon_{n})n}\}.
\]
By Theorem \ref{thm:Oseledets} we can choose $\varepsilon_{n}$ so
that
\begin{equation}
\mu^{\star n}(\Gamma_{n})\rightarrow 1.\label{eq:4}
\end{equation}
By Lemma~\ref{lem:separation-of-G-by-3-orbits} and our choice of the sets $A_n^j$, for any $g_0\in A_n^4$ the map $B_r(g_0) \to \RP^3$,
$g\mapsto (gw_j)_{j\ne 4}$, scales by $\|g_0\|^{-2}$ with distortion $O(1)$.
It follows that if $2^{-(2\chi+\varepsilon)n}\leq\left\Vert g_{0}\right\Vert ^{-2}\leq2^{-(2\chi-\varepsilon)n}$,
then in an $O(1)$-neighborhood of $g_{0}\in A_n^4$ each atom of $\mathcal{D}_{1}^{G}$
can be covered by $O(2^{\varepsilon n})$ atoms of 
$\mathcal{E}_n$ and vice versa. It follows that
\[
\left|H((\mu^{\star n})_{\Gamma_{n}\cap A_n^4},\mathcal{D}^G_{1})-H((\mu^{\star n})_{\Gamma_{n}\cap A_n^4},\mathcal{E}_{n})\right|=o(n).
\]
By (\ref{eq:4}) and the fact that $\mu^{\star n}$ has only exponentially
many atoms, we also have
\begin{eqnarray*}
|H((\mu^{\star n})_{\Gamma_{n}\cap A_n^4},\mathcal{D}^G_{1}) - H((\mu^{\star n})_{A_n^4},\mathcal{D}^G_{1})| & = & o(n),\\
|H((\mu^{\star n})_{\Gamma_{n}\cap A_n^4},\mathcal{E}_{n}) - H((\mu^{\star n})_{A_n^4},\mathcal{E}_{n})| & = & o(n).
\end{eqnarray*}
Combining these estimates, (\ref{eq:71}) becomes \[
  |\frac{1}{2\chi n}H((\mu^{\star n})_{A_n^4}, \mathcal{E}_n) - \alpha | < \eps,\;\;\;\textrm{ assuming } \mu^{\star n}(A_n^4)>c,
\]
and this is the statement of the previous corollary.
\end{proof}

\subsection{\label{subsec:Component-measures}Component measures }

For $x\in\mathbb{R}$ recall that $\mathcal{D}_{n}(x)$ denotes the unique
element of $\mathcal{D}_{n}$ containing it, and for a measure $\eta$
on $[0,1)$ (or $\RP$), define the level-$n$ component of $\eta$
at $x$ to be the conditional measure on $\mathcal{D}_{n}(x)$:
\[
\eta_{x,n}=\frac{1}{\eta(\mathcal{D}_{n}(x))}\eta|_{\mathcal{D}_{n}(x)}.
\]

We define components of a measure $\theta\in\mathcal{P}(G)$ in the
same way, using the dyadic partitions $\mathcal{D}_{n}^{G}$, so $\theta_{g,n}=\frac{1}{\theta(\mathcal{D}_{n}^{G}(g))}\theta|_{\mathcal{D}_{n}^{G}(g)}$.

\subsection{\label{subsec:Random-component-measures}Random component measures }

We often view $\eta_{x,n}$  as random variables,
with $n$ chosen uniformly within some specified range, and $x$ chosen
independently of $n$ according to $\eta$. This is the intention
whenever $\eta_{x,n}$ appears in an expression $\mathbb{P}(\ldots)$
or $\mathbb{E}(\ldots)$. For example, if $\mathcal{U}$ is a set
of measures then $\mathbb{P}_{0\leq i\leq n}(\eta_{x,i}\in\mathcal{U})$
is the probability that $\eta_{x,n}\in\mathcal{U}$ when $0\leq i\leq n$
is chosen uniformly and $x$ is independently chosen according to
$\eta$. 

Similarly, $\mathbb{E}_{i=n}(H(\eta_{x,i}, \Dk_{i+m}))$ denotes the expected entropy of a component at level $n$ (note that we took $i=n$, so the level is deterministic), measured at scale $n+m$, so  by definition,
\begin{equation} \label{ent:nova}
H(\eta,\Dk_{n+m}|\Dk_n) = \mathbb{E}_{i=n}(H(\eta_{x,i}, \Dk_{n+m})).
\end{equation}
As another
example, for every $n$ we have the trivial identity
\[
\eta=\mathbb{E}_{i=n}(\eta_{x,i}).
\]

We view components of measures on $G$ as random variables in the
same way as above and adopt the same notational conventions. 

When several random components are involved, they are assumed to be
chosen independently unless otherwise specified. Thus $\theta_{g,i}\times\eta_{x,i}$
is obtained by choosing $g$ and $x$ independently according to $\theta$
and $\eta$, respectively.

The distribution on components has the convenient property that it
is almost invariant under repeated sampling, i.e.\ choosing components
of components. More precisely, for a probability measure $\eta\in\mathcal{P}(\RP)$
and $m,n\in\mathbb{N}$, let $\mathbb{P}_{n}^{\eta}$ denote the distribution
of components $\eta_{x,i}$, $0\leq i\leq n$, as defined above; and
let $\mathbb{Q}_{n,m}^{\eta}$ denote the distribution on components
obtained by first choosing a random component $\eta_{x,i}$, $0\leq i\leq n$,
as above, and then, conditionally on $\theta=\eta_{x,i}$, choosing
a component $\theta_{y,j}$, $i\leq j\leq i+m$ with the usual distribution
(note that $\theta_{y,j}=\eta_{y,j}$ is indeed a component of $\eta$). 
\begin{lem}
\label{lem:distribution-of-components-of-components}Given $\eta\in\mathcal{P}(\RP)$
and $m,n\in\mathbb{N}$, the total variation distance between $\mathbb{P}_{n}^{\eta}$
and $\mathbb{Q}_{n,m}^{\eta}$ satisfies 
\[
\left\Vert \mathbb{P}_{n}^{\eta}-\mathbb{Q}_{n,m}^{\eta}\right\Vert =O(\frac{m}{n}).
\]
In particular, let $\mathcal{A}_i,\mathcal{B}_i\subseteq\mathcal{P}([0,1)^{d})$, write $\alpha = \Prob_{0\le i \le n}(\eta_{x,i}\in \Ak_i)$, and suppose that $\theta \in \Ak_i$ implies $\Prob_{i\le j \le i+m}(\theta_{x,j} \in \Bk_j) \ge \beta$. Then
\[
\mathbb{P}_{0\leq i\leq n}(\eta_{x,i}\in\mathcal{B}_i)>\alpha \beta-O(\frac{m}{n}).
\]
\end{lem}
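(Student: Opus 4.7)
My plan is to view both $\mathbb{P}_n^\eta$ and $\mathbb{Q}_{n,m}^\eta$ as probability measures on the common discrete space of dyadic components of $\eta$ (equivalently, on pairs $(j,D)$ with $D\in\Dk_j$ and $\eta(D)>0$), and to reduce the total variation bound to computing the marginal laws of the final level. The first key observation is that, conditional on the final level $j$, both sampling schemes produce the component $\eta_D$ with probability $\eta(D)$ for every atom $D\in\Dk_j$. For $\mathbb{P}_n^\eta$ this is built into the definition. For $\mathbb{Q}_{n,m}^\eta$, fix $j$ and $D\in\Dk_j$ and sum over the intermediate level $i\in[\max(0,j-m),\min(n,j)]$: picking $i$ contributes $1/(n+1)$; picking the unique $D'\in\Dk_i$ with $D\subseteq D'$ via $x\sim\eta$ contributes $\eta(D')$; picking $j$ contributes $1/(m+1)$; and picking $y\in D$ via $y\sim\eta_{x,i}$ contributes $\eta(D)/\eta(D')$. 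The $\eta(D')$ factors telescope, yielding total weight $c_j\,\eta(D)/((n+1)(m+1))$, where $c_j$ counts the valid $i$.

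Since the conditional-on-level distributions coincide, the total variation reduces to that of the level marginals. Under $\mathbb{P}_n^\eta$, level $j$ has probability $1/(n+1)$ on $\{0,\ldots,n\}$ and $0$ elsewhere. Under $\mathbb{Q}_{n,m}^\eta$, level $j$ has probability $c_j/((n+1)(m+1))$: for $j\in[m,n]$ one has $c_j=m+1$ and the marginals agree; for $j\in[0,m-1]$ one has $c_j=j+1$, so the $\mathbb{P}_n^\eta$-excess is $(m-j)/((n+1)(m+1))$; and for $j\in[n+1,n+m]$ one has $c_j=n+m+1-j$, so the $\mathbb{Q}_{n,m}^\eta$-excess is $(n+m+1-j)/((n+1)(m+1))$. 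Summing either side gives $\sum_{k=1}^{m}k/((n+1)(m+1))=m/(2(n+1))=O(m/n)$, which is the desired bound.

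For the ``in particular'' clause, I would note that the event ``the sampled component lies in $\Bk_{\text{its level}}$'' is a well-defined subset of the common component space, so its probabilities under $\mathbb{P}_n^\eta$ and $\mathbb{Q}_{n,m}^\eta$ differ by at most $O(m/n)$. Directly from the definition of $\mathbb{Q}_{n,m}^\eta$,
\[
\mathbb{Q}_{n,m}^\eta(\eta_{y,j}\in\Bk_j) = \mathbb{E}_{0\le i\le n}\bigl[\mathbb{E}_{x\sim\eta}\bigl[\mathbb{P}_{i\le j\le i+m}(\theta_{y,j}\in\Bk_j)\bigr]\bigr],
\]
where $\theta=\eta_{x,i}$. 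The inner probability is at least $\beta$ on the event $\{\theta\in\Ak_i\}$, whose total probability equals $\alpha$ by hypothesis, so the right-hand side is at least $\alpha\beta$. Transferring via the first part yields $\mathbb{P}_n^\eta(\eta_{x,i}\in\Bk_i)\ge\alpha\beta-O(m/n)$.

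The main obstacle is purely bookkeeping: articulating both $\mathbb{P}_n^\eta$ and $\mathbb{Q}_{n,m}^\eta$ as measures on a single space so the total variation inequality transfers to the event of interest, and carefully counting the level weights near the boundary ranges $[0,m-1]$ and $[n+1,n+m]$ where the two marginals disagree. There is no conceptual difficulty; the argument is a short combinatorial computation combined with the Fubini-style rewriting of the $\mathbb{Q}_{n,m}^\eta$-probability.
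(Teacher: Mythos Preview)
Your proposal is correct and supplies exactly the computation the paper omits: the paper itself gives no proof beyond the remark ``these are essentially applications of the law of total probability'' and a reference to \cite[Lemma~2.7]{Hochman2015}, and your argument is the standard way to carry this out. The key reduction---that both sampling schemes give the \emph{same} conditional distribution on cells once the level $j$ is fixed, so the total variation collapses to that of the level marginals---together with the explicit count $c_j$ and the boundary sums $\sum_{k=1}^m k/((n+1)(m+1))=m/(2(n+1))$ is precisely what is needed; your derivation of the ``in particular'' clause via the law of total probability under $\mathbb{Q}_{n,m}^\eta$ is also correct. (Two cosmetic remarks: the $\eta(D')$ factors \emph{cancel} rather than ``telescope''; and your case analysis of $c_j$ tacitly assumes $m\le n$, but for $m>n$ the claimed bound is trivial since total variation is bounded.)
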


These are essentially applications of the law of total probability, for details see \cite[Lemma 2.7]{Hochman2015}.

Similar statements hold for  components  of measures
on $G$. We leave these generalizations to the reader.


\subsection{\label{subsec:Reduction}Reduction of main theorem to two entropy inequalities}

Although it is possible to give an effective proof of Theorem \ref{thm:main}, the proof is most transparently presented by contradiction. The following proposition says that if the main theorem fails, then, when the measure $\mu^{\star n}$ is conditioned on typical dyadic cells of diameter $O(1)$, the resulting measures have two important properties: first, their entropy is substantial (it grows linearly up to a suitably chosen scale), and second, when the measures are convolved with the stationary measure $\nu$, then the result does not have substantially more entropy than $\nu$ we started out with. These properties will be seen in the next section to be incompatible with the multi-scale regularity of $\nu$.

\begin{prop}
\label{prop:reduction}If $\mu\in\mathcal{P}(G)$ is as in Theorem \ref{thm:main}, with $\supp \mu$ Diophantine, and if
the conclusion of Theorem \ref{thm:main} fails, then there are constants
$c,c'>0$ such that as $n\rightarrow\infty$,
\begin{eqnarray*}
\mathbb{E}_{i=1}\left(H((\mu^{\star n})_{g,i},\mathcal{D}_{cn}^{G})\right) & \geq & c'\cdot n-o(n),\\
\mathbb{E}_{i=1}\left(H((\mu^{\star n})_{g,i}\conv\nu,\mathcal{D}_{2\chi n+cn})\right) & \leq & H(\nu,\mathcal{D}_{cn})+o(n).
\end{eqnarray*}
\end{prop}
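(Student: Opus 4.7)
The proof splits into verifying the two inequalities, both of which use the stationarity identity $\nu=\sum_{D\in\Dk_1^G}\mu^{\star n}(D)\cdot(\mu^{\star n})_D\conv\nu$.

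For the first inequality, set $\alpha=\dim\nu$; the failure of Theorem~\ref{thm:main} forces $\alpha<1$ and $\alpha<h_{\RW}(\mu)/(2\chi)$, so that $c':=h_{\RW}(\mu)-2\chi\alpha>0$. By the Diophantine property and Conclusion~\ref{concl}, distinct elements of $\supp\mu^{\star n}$ are $d$-separated by $\geq c_1^n$ for some $c_1>0$; choosing $c$ large enough that atoms of $\Dk_{cn}^G$ have diameter $<c_1^n/2$, distinct atoms of $\mu^{\star n}$ land in distinct $\Dk_{cn}^G$-atoms. Therefore $H(\mu^{\star n},\Dk_{cn}^G)=H(\mu^{\star n})=nh_{\RW}(\mu)+o(n)$, while $H(\mu^{\star n},\Dk_1^G)=2\chi n\alpha+o(n)$ by Proposition~\ref{prop:convergence-of-convolution-entropy-to-dimension}. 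Subtracting and applying~\eqref{ent:nova} yields $\mathbb{E}_{i=1}H((\mu^{\star n})_{g,1},\Dk_{cn}^G)=c'n+o(n)$.

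The key step for the second inequality is the concentration estimate
\begin{equation}\label{eq:concen}
\mathbb{E}_{i=1}H((\mu^{\star n})_{g,1}\conv\nu,\Dk_{2\chi n})=o(n).
\end{equation}
Two facts feed into this. First, Proposition~\ref{prop:quantitative-Furstenberg} combined with Fubini and non-atomicity of $\nu$ shows that for $\mu^{\star n}$-typical $g$, the measure $g\nu$ places mass $\geq 1-o(1)$ in a ball of radius $2^{-(2\chi-\eps_n)n}$ around $v_g^+$ for some $\eps_n\to 0$. Second, if $g=g_0h$ lies in the same $\Dk_1^G$-atom $D$ as $g_0$, so that $h$ lies in a $d$-bounded neighborhood of $1_G$, then $g(B_1)=g_0h(B_1)$ is sandwiched between the coaxial ellipses $M^{-1}g_0(B_1)\subseteq g(B_1)\subseteq Mg_0(B_1)$ of axis ratio $\Vert g_0\Vert^2$. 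Comparing support functions in the direction perpendicular to $v_{g_0}^+$ then forces $d_{\RP}(v_g^+,v_{g_0}^+)=O(\Vert g_0\Vert^{-2})$. For the $\mu^{\star n}$-typical atoms $D$, where $\Vert g_0\Vert=2^{(\chi+o(1))n}$ by Oseledets, all directions $v_g^+$ with $g\in D$ cluster within $O(2^{-2\chi n})$ of $v_{g_0}^+$, so $(\mu^{\star n})_D\conv\nu$ places mass $\geq 1-o(1)$ in a ball of radius $O(2^{-(2\chi-\eps_n)n})$ that meets only $O(2^{\eps_n n})$ atoms of $\Dk_{2\chi n}$. A standard entropy estimate then gives $H((\mu^{\star n})_D\conv\nu,\Dk_{2\chi n})=o(n)$ for these $D$, and non-typical $D$ contribute at most $o(1)\cdot 2\chi n=o(n)$ to the expectation, establishing~\eqref{eq:concen}.

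To conclude, apply the identity $H(\eta,\Dk_{2\chi n+cn})=H(\eta,\Dk_{2\chi n+cn}|\Dk_{2\chi n})+H(\eta,\Dk_{2\chi n})$ to $\eta=(\mu^{\star n})_D\conv\nu$ and take expectation over $D$. By~\eqref{eq:concen} the second summand is $o(n)$. Concavity of conditional entropy (Lemma~\ref{lem:entropy_prop}) applied to the stationarity identity gives
\[
\mathbb{E}_{i=1}H((\mu^{\star n})_{g,1}\conv\nu,\Dk_{2\chi n+cn}|\Dk_{2\chi n})\leq H(\nu,\Dk_{2\chi n+cn}|\Dk_{2\chi n}),
\]
and exact dimensionality of $\nu$ yields $H(\nu,\Dk_{2\chi n+cn}|\Dk_{2\chi n})=cn\alpha+o(n)=H(\nu,\Dk_{cn})+o(n)$. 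This establishes the second inequality. The principal technical obstacle is the clustering estimate for $v_g^+$, which makes \eqref{eq:concen} possible and saves the extra $2\chi n\alpha$ worth of entropy that the naive concavity bound $\leq H(\nu,\Dk_{2\chi n+cn})$ would have included.
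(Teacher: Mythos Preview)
Your proof is correct and follows essentially the same architecture as the paper's: the first inequality via the Diophantine property and Proposition~\ref{prop:convergence-of-convolution-entropy-to-dimension}, the second via concavity of conditional entropy applied to the stationarity identity, together with the concentration estimate~\eqref{eq:concen} and exact dimensionality.

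The one place you diverge is in justifying~\eqref{eq:concen}. You establish it by first concentrating each $g\nu$ near $v_g^+$ and then proving a clustering estimate $d_{\RP}(v_g^+,v_{g_0}^+)=O(\|g_0\|^{-2})$ for $g$ in a common $\Dk_1^G$-cell via the ellipse-sandwich argument. The paper instead argues more directly (though somewhat tersely): fix any $g_0$ in the cell $D$; then every $g\in D$ has the form $g_0h$ with $h$ in a fixed bounded set, and for $x$ outside $h^{-1}B_\eps(u_{g_0}^-)$ (a ball of radius $O(\eps)$, hence of $\nu$-mass $o_\eps(1)$) one has $gx=g_0(hx)$ lying in the single interval $g_0(\RP\setminus B_\eps(u_{g_0}^-))$ of length $O_\eps(\|g_0\|^{-2})$. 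This avoids any discussion of how $v_g^+$ varies over the cell. Both routes are valid; yours is a bit longer, but it makes explicit what the paper's phrase ``supported on a set of diameter $O(2^{-(2\chi+o(1))n})$'' glosses over (namely, that this is really a concentration statement, not a support statement).
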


\begin{proof} Write $\alpha=\dim\nu$. We have seen
that $H(\nu,\mathcal{D}_{n})=(\alpha+o(1))n$. Therefore, for every
$c>0$,
\begin{eqnarray*}
H(\nu,\mathcal{D}_{2\chi n+cn}|\mathcal{D}_{2\chi n}) & = & H(\nu,\mathcal{D}_{2\chi n+cn})-H(\nu,\mathcal{D}_{2\chi n})\\
 & = & (\alpha+o(1))\cdot(2\chi n+cn)-(\alpha+o(1))\cdot2\chi n\\
 & = & (\alpha+o(1))\cdot cn\qquad\mbox{as }n\rightarrow\infty.
\end{eqnarray*}
On the other hand, by the identity $\mu^{\star n}=\mathbb{E}_{i=1}\left((\mu^{\star n})_{g,i}\right)$ and linearity of convolution, we have
\[
\nu=\mathbb{E}_{i=1}\left((\mu^{\star n})_{g,i}\conv\nu\right),
\]
so by concavity of entropy,
\begin{eqnarray*}
H(\nu,\mathcal{D}_{2\chi n+cn}|\mathcal{D}_{2\chi n}) & = & H\left(\mathbb{E}_{i=1}\left((\mu^{\star n})_{g,i}\conv\nu\right),\mathcal{D}_{2\chi n+cn}|\mathcal{D}_{2\chi n}\right)\\
 & \geq & \mathbb{E}_{i=1}\left(H((\mu^{\star n})_{g,i}\conv\nu,\mathcal{D}_{2\chi n+cn}|\mathcal{D}_{2\chi n})\right).
\end{eqnarray*}
Now, for $g_{n}$ chosen randomly according to $\mu^{\star n}$ we have
$\frac{1}{n}\log\left\Vert g_{n}\right\Vert \rightarrow\chi$
in probability. Consequently, as $n\rightarrow\infty$, the random
level-$1$ component $(\mu^{\star n})_{g,1}\conv \nu$ is supported on a set
of diameter $O(2^{-(2\chi+o(1))n})$ with probability tending to $1$, hence it intersects $2^{o(n)}$ cells of the partition $\Dk_{2\chi n}$.
It follows that
\[
\mathbb{E}_{i=1}\left(H((\mu^{\star n})_{g,i}\conv \nu,\mathcal{D}_{2\chi n+cn}|\mathcal{D}_{2\chi n})\right)=\mathbb{E}_{i=1}\left(H((\mu^{\star n})_{g,i}\conv \nu,\mathcal{D}_{2\chi n+cn})\right)-o(n),
\]
as $n\to\infty$.
Putting this all together, we conclude that
\begin{eqnarray*}
\mathbb{E}_{i=1}\left(H((\mu^{\star n})_{g,i}\conv \nu,\mathcal{D}_{2\chi n+cn})\right) & \leq & (\alpha+o(1))\cdot cn\\
 & = & H(\nu,\mathcal{D}_{cn})+o(n)\qquad\mbox{as }n\rightarrow\infty.
\end{eqnarray*}
This is the second statement in the proposition.

Now suppose that $\supp \mu$ is  Diophantine, and choose the constant $c$
above to be a constant such that $d(g_{1}\ldots g_{n},g'_{1}\ldots g'_{n})>2^{-cn}$
for all  pairs of sequences $g_{1},\ldots,g_{n}\in\supp\mu$
and $g'_{1},\ldots,g'_{n}\in\supp\mu$, with $g_1\ldots g_n \ne g_1'\ldots g_n'$. Then
each atom of $\mathcal{D}^G_{cn}$ contains $O(1)$ atoms of $\mu^{\star n}$, and
we have 
\[
H(\mu^{\star n},\mathcal{D}^G_{cn})=H(\mu^{\star n})-O(1)\qquad\mbox{as }n\rightarrow\infty.
\]
On the other hand, by Proposition \ref{prop:convergence-of-convolution-entropy-to-dimension},
\begin{eqnarray*}
  H(\mu^{\star n},\mathcal{D}^G_{1}) & = & 2\alpha\chi n-o(n)
  \qquad\mbox{as }n\rightarrow\infty.
\end{eqnarray*}
By the definition of the random walk entropy $h_{\RW} = h_{\RW}(\mu)$, we have $H(\mu^{\star n}) = h_{\RW}\cdot n + o(n)$.
Thus, assuming for the sake of contradiction that $\dim\nu<h_{\RW}/2\chi$,
and writing $c'=h_{\RW}-2\chi\alpha>0$, we conclude that
\begin{eqnarray*}
\mathbb{E}_{i=1}\left(H((\mu^{\star n})_{g,i},\mathcal{D}_{cn}^{G})\right) & = & H(\mu^{\star n},{\mathcal D}_{cn}^{G}|\mathcal{D}_{1}^{G})\\
 & = & (h_{\RW}-2\chi\alpha)\cdot n-o(n)\\
 & = & c'\cdot n-o(n)\qquad\mbox{as }n\rightarrow\infty.
\end{eqnarray*}
This is the first statement in the proposition.
\end{proof}

\section{\label{sec:Inverse-theorem,-linearization-etc}Inverse theorem, linearization,
and completion of the proof}

\subsection{\label{subsec:Multiscale-formulas-for-entropy}Multiscale formulas
for entropy }

A simple property of scale-$n$ entropy of a measure is that when
$m\ll n$ it is roughly equal to the average of the scale-$m$ entropies
of its components, and for convolutions a related bound can be given.
The proofs are similar to e.g. \cite[Lemmas 3.4 and 3.5]{Hochman2014}.
Below we write $\eta*\eta'$ for the convolution of measures on
$\mathbb{R}$ or $\RP$.
\begin{lem}
  \label{lem:multiscale-formula-for-entropy}
  \begin{enumerate}
\item[{\rm (i)}] For any $\eta\in\mathcal{P}(\RP)$, for every $m,n\in\mathbb{N}$, 
\begin{eqnarray*}
\frac{1}{n}H(\eta,\Dk_n) & = & \mathbb{E}_{1\leq i\leq n}(\frac{1}{m} H(\eta_{x,i}, \Dk_{i+m}))+O(\frac{m}{n}).
\end{eqnarray*}
\item[{\rm (ii)}] For any $\eta\in \Pk(G)$ with support of diameter $O(1)$,
\begin{eqnarray*}
\frac{1}{n} H(\eta,\Dk_n^G) & = & \mathbb{E}_{1\leq i\leq n}(\frac{1}{m} H_{m}(\eta_{x,i}, \Dk_{i+m}^G))+O(\frac{m}{n}).
\end{eqnarray*}
\end{enumerate}
\end{lem}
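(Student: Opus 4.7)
The plan is to treat (i) and (ii) uniformly, since the only difference between $\RP$ and $G$ lies in the constants governing how the partitions refine. The starting point is the identity
\[
H(\eta,\Dk_{i+m}\mid \Dk_i) = \mathbb{E}_{x\sim\eta}\bigl(H(\eta_{x,i},\Dk_{i+m})\bigr)
\]
from (\ref{ent:nova}), which allows me to rewrite the right-hand side of (i) as $\frac{1}{nm}\sum_{i=1}^{n} H(\eta,\Dk_{i+m}\mid \Dk_i) + O(m/n)$. So it suffices to compare this average of conditional entropies to $\frac{1}{n}H(\eta,\Dk_n)$.

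Next I would expand each conditional entropy telescopically,
\[
H(\eta,\Dk_{i+m}\mid \Dk_i) = \sum_{j=i}^{i+m-1}\bigl[H(\eta,\Dk_{j+1}) - H(\eta,\Dk_j)\bigr],
\]
and exchange the order of summation in $\sum_{i=1}^n$. The single-scale increment at level $j$ then appears with multiplicity equal to $\#\{i\in[1,n]:j-m+1\le i\le j\}$, which equals $m$ for $j$ in the bulk $[m,n]$ and is smaller near the endpoints $j<m$ and $j>n$. Because each increment $H(\eta,\Dk_{j+1})-H(\eta,\Dk_j)$ is bounded by a universal constant $C$ (namely $C=1$ on $\RP$, and $C=\log M$ on $G$ from property (ii) of $\Dk_n^G$), the boundary terms contribute at most $O(Cm^2)$. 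Consequently the full sum equals $m\cdot[H(\eta,\Dk_n)-H(\eta,\Dk_1)] + O(m^2)$, and dividing by $nm$ gives
\[
\frac{1}{nm}\sum_{i=1}^{n} H(\eta,\Dk_{i+m}\mid \Dk_i) = \frac{1}{n}H(\eta,\Dk_n) - \frac{1}{n}H(\eta,\Dk_1) + O\!\bigl(\tfrac{m}{n}\bigr).
\]

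For (i) the term $H(\eta,\Dk_1)$ is at most $1$ and is absorbed into $O(m/n)$. For (ii) the same argument applies once I know $H(\eta,\Dk_1^G)=O(1)$, which follows from the hypothesis that $\supp\eta$ has diameter $O(1)$: by property (iii) of the $\Dk_n^G$, such a set meets only $O(1)$ atoms of $\Dk_1^G$, so the entropy at scale $1$ is bounded. The rest of the telescoping argument goes through verbatim, with $C=\log M$.

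I do not expect any real obstacle: this is a standard averaging-over-starting-scales argument, and the only point requiring a little care is the bookkeeping of the multiplicities at the boundary together with the verification that on $G$ the diameter assumption tames $H(\eta,\Dk_1^G)$.
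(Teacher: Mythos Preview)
Your proof is correct and is essentially the standard telescoping argument that the paper defers to (it gives no proof of its own, only citing \cite[Lemmas 3.4 and 3.5]{Hochman2014}, which proceeds in the same way). One cosmetic point: the first rewriting via (\ref{ent:nova}) is an exact identity, so the ``$+O(m/n)$'' you appended there is superfluous; the error term only enters after the boundary bookkeeping.
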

For convolutions in $\RP$ (or $\mathbb{R}$), we have a lower bound:
\begin{lem}
\textup{\em \label{lem:multiscale-formula-for-entropy-of-additive-convolution}For
any $\eta,\theta\in\mathcal{P}(\RP)$, for every $m,n\in\mathbb{N}$,
\begin{eqnarray*}
\frac{1}{n}H(\theta*\eta,\Dk_n) & \geq & \mathbb{E}_{1\leq i\leq n}(\frac{1}{m}H(\theta_{y,i}*\eta_{x,i}, \Dk_{i+m}))-O(\frac{1}{m}+\frac{m}{n}).
\end{eqnarray*}
}
\end{lem}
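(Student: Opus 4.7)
The plan is to combine three ingredients: (a) the bilinearity of convolution, which lets us write $\theta*\eta=\mathbb{E}_{y,x}(\theta_{y,i}*\eta_{x,i})$ for independent $y\sim\theta$, $x\sim\eta$; (b) the concavity of $H(\cdot,\mathcal{D}_{i+m}\mid\mathcal{D}_i)$, which turns the preceding integral representation into a lower bound on the entropy of $\theta*\eta$; and (c) the multiscale identity of Lemma \ref{lem:multiscale-formula-for-entropy}(i) applied to the measure $\theta*\eta$, which converts the telescoping conditional entropies $H(\theta*\eta,\mathcal{D}_{i+m}\mid\mathcal{D}_i)$ into the bulk quantity $\frac{1}{n}H(\theta*\eta,\mathcal{D}_n)$ at the cost of $O(m/n)$.

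For the key step, fix $1\le i\le n$ and write $D=\mathcal{D}_i(z)$. Since $\theta_{y,i}$ and $\eta_{x,i}$ are each supported on a set of diameter $\le 2^{-i}$, their convolution $\theta_{y,i}*\eta_{x,i}$ is supported on a set of diameter $\le 2\cdot 2^{-i}$, so it intersects only $O(1)$ atoms of $\mathcal{D}_i$, and therefore $H(\theta_{y,i}*\eta_{x,i},\mathcal{D}_i)=O(1)$. Writing $(\theta*\eta)_D$ as the convex combination $\int\int \frac{(\theta_{y,i}*\eta_{x,i})(D)}{(\theta*\eta)(D)}\,(\theta_{y,i}*\eta_{x,i})_D\, d\theta(y)\,d\eta(x)$, applying concavity of $H(\cdot,\mathcal{D}_{i+m})$, multiplying by $(\theta*\eta)(D)$ and summing over $D\in\mathcal{D}_i$ yields
\[
H(\theta*\eta,\mathcal{D}_{i+m}\mid\mathcal{D}_i)\ge \mathbb{E}_{y,x}\bigl(H(\theta_{y,i}*\eta_{x,i},\mathcal{D}_{i+m}\mid\mathcal{D}_i)\bigr)\ge \mathbb{E}_{y,x}\bigl(H(\theta_{y,i}*\eta_{x,i},\mathcal{D}_{i+m})\bigr)-O(1).
\]

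To finish, divide by $m$ and average over $1\le i\le n$. Using Lemma \ref{lem:multiscale-formula-for-entropy}(i) applied to the measure $\theta*\eta$, the left-hand side equals $\frac{1}{n}H(\theta*\eta,\mathcal{D}_n)+O(m/n)$, while the right-hand side is exactly $\mathbb{E}_{1\le i\le n}\bigl(\frac{1}{m}H(\theta_{y,i}*\eta_{x,i},\mathcal{D}_{i+m})\bigr)-O(1/m)$. Rearranging gives the claimed bound.

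The only mildly delicate point is the initial concavity step, where one must carefully bookkeep how an atom of $\mathcal{D}_i$ receives mass from the random measures $\theta_{y,i}*\eta_{x,i}$; the bound $H(\theta_{y,i}*\eta_{x,i},\mathcal{D}_i)=O(1)$ (from the small-diameter support) is what controls the constant loss per scale and ultimately produces the $O(1/m)$ term.  Everything else is a direct application of Lemma \ref{lem:entropy_prop} and Lemma \ref{lem:multiscale-formula-for-entropy}(i), so no new tools are required beyond what the paper has already set up.
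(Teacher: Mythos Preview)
Your proof is correct and follows essentially the same approach as the paper. The paper does not spell out this proof, referring instead to \cite[Lemmas 3.4 and 3.5]{Hochman2014}, but the argument there---and the one visible in the paper's proof of the closely analogous Lemma \ref{lem:multiscale-formula-for-entropy-of-action-convolution}---proceeds exactly as you do: decompose $\theta*\eta=\mathbb{E}_{j=i}(\theta_{y,j}*\eta_{x,j})$ by bilinearity, apply concavity of conditional entropy to get $H(\theta*\eta,\mathcal{D}_{i+m}\mid\mathcal{D}_i)\ge\mathbb{E}_{y,x}(H(\theta_{y,i}*\eta_{x,i},\mathcal{D}_{i+m}\mid\mathcal{D}_i))$, remove the conditioning at cost $O(1)$ using the $O(2^{-i})$ diameter of the support, and then average over $i$ using the multiscale identity.
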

\noindent In the expectations above, the random variables $\eta_{x,i}$ and
$\theta_{y,i}$ are independent.

Before we state the analogous formula for convolutions $\theta\conv\eta$
where $\theta\in\mathcal{P}(G)$ and $\eta\in\mathcal{P}(\RP)$, we
note that when $g\in G$ acts on a measure $\eta\in\mathcal{P}(\RP)$,
it typically (i.e. unless $\eta$ gives substantial mass to a small
neighborhood of $u_{g}^{-}$) scales most of $\eta$ by $\left\Vert g\right\Vert ^{-2}$.
This implies that for large $i$, 
\[
H(\eta,\mathcal{D}_{i})\approx H(g\eta,\mathcal{D}_{i+2\log\left\Vert g\right\Vert })\approx H(S_{-2\log\left\Vert g\right\Vert }g\eta,\mathcal{D}_{i}).
\]
Thus if $\theta$ is supported near $g$, then we should measure the
entropy of $\theta\conv\eta$ at resolution $2\log\left\Vert g\right\Vert $-scales
smaller than that at which we consider $\eta$.

\begin{lem}
\label{lem:multiscale-formula-for-entropy-of-action-convolution}
Let $\theta\in\mathcal{P}(G)$ and $\eta\in\mathcal{P}(\RP)$. Suppose
that $\theta$ is supported on a set of diameter $O(1)$, let $g_{0}\in\supp\theta$
and set $\ell=2\log\left\Vert g_{0}\right\Vert $. Given $\delta>0$
let
\begin{align*}
E_{\delta} & =\left\{ (g,x)\in G\times\RP\,:\,x\notin B_\delta(u_{g}^{-})\right\}; \\
p_{\delta} & =1-\theta\times\eta(E_{\delta}).
\end{align*}
Then for integers $m<n$, 
\begin{align*}
\frac{1}{n}H(\theta\conv \eta\,,\,\mathcal{D}_{n+\ell}) & \geq\mathbb{E}_{0\leq i\leq n}\left(\frac{1}{m}H(\theta_{g,i}\conv \eta_{x,i}\,,\,\mathcal{D}_{i+\ell+m})\;\Big|\;(g,x)\in E_\delta\right)\\
 & \qquad-\,O_{\delta}\left(\frac{1}{m}+\frac{m}{n}\right)-\frac{p_{\delta}\ell+H(p_{\delta})}{n}\,.
\end{align*}
In particular, if $\eta$ is continuous, then 
\[
\frac{1}{n}H(\theta\conv \eta\,,\,\mathcal{D}_{n+\ell})\geq\mathbb{E}_{0\leq i\leq n}\left(\frac{1}{m}H(\theta_{g,i}\conv \eta_{x,i}\,,\,\mathcal{D}_{i+\ell+m})\;\Big|\;(g,x)\in E_\delta\right)-\,o_{\eta}(1)
\]
as $m,n/m\rightarrow\infty$, and the error term is uniform in $\theta$
assuming that $\int\log\left\Vert g\right\Vert d\theta(g)$ (equivalently,
$\ell$) grows at most linearly in $n$.
\end{lem}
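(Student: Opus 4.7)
The plan is to adapt the argument for Lemma \ref{lem:multiscale-formula-for-entropy-of-additive-convolution} (the analogous bound for additive convolution on $\R$), using the geometric estimates of Section \ref{sec:Geometry-of-the-action} to control the non-linearity of the $G$-action. The key new ingredient is the following support estimate: for every $(g,x)\in E_\delta$ and every $i\geq 0$, the component convolution $\theta_{g,i}\conv\eta_{x,i}$ is supported in a ball of diameter $O_\delta(2^{-(i+\ell)})$ in $\RP$. To verify this, I would use that $\theta_{g,i}$ lies in a $G$-ball of radius $O(2^{-i})$ around $g$ and $\eta_{x,i}$ in an $\RP$-ball of radius $O(2^{-i})$ around $x$; since $\theta$ has $G$-diameter $O(1)$, every $g'\in\supp\theta$ satisfies $\log\|g'\|=\ell/2+O(1)$. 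Combining Lemma \ref{lem:g-contracts-on-most-of-RP} (for how $g'$ acts on a neighborhood of $x$) with the elementary Lipschitz bound for the orbit map $h\mapsto hx$ near $g$ (both with constant $O_\delta(\|g\|^{-2})=O_\delta(2^{-\ell})$ when $x\notin B_\delta(u_g^-)$) gives $d_\RP(g'x',gx)=O_\delta(2^{-(i+\ell)})$ for every $(g',x')\in\supp\theta_{g,i}\times\supp\eta_{x,i}$.

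Given the support estimate, equation (\ref{ent:supp1}) gives, for $(g,x)\in E_\delta$, the bound
\[
H(\theta_{g,i}\conv\eta_{x,i},\Dk_{i+\ell+m}|\Dk_{i+\ell})=H(\theta_{g,i}\conv\eta_{x,i},\Dk_{i+\ell+m})-O_\delta(1).
\]
For each fixed $i$, bilinearity yields $\theta\conv\eta=\int\theta_{g,i}\conv\eta_{x,i}\,d(\theta\times\eta)(g,x)$; combining this with concavity of conditional entropy (Lemma \ref{lem:entropy_prop}) and restricting the integral to $E_\delta$ gives
\[
H(\theta\conv\eta,\Dk_{i+\ell+m}|\Dk_{i+\ell})\geq(1-p_\delta)\,\E\!\bigl(H(\theta_{g,i}\conv\eta_{x,i},\Dk_{i+\ell+m})\,\big|\,E_\delta\bigr)-O_\delta(1).
\]
Telescoping this inequality over $i=km$ for $k=0,\ldots,\lfloor n/m\rfloor-1$, the left-hand side sums to $H(\theta\conv\eta,\Dk_{n+\ell}|\Dk_\ell)\leq H(\theta\conv\eta,\Dk_{n+\ell})$, and averaging over starting shifts $s\in[0,m)$ converts the discrete sum into an $\E_{0\leq i\leq n}$ expectation at cost $O(m/n)$. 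Dividing by $n$ gives the desired inequality with a factor $(1-p_\delta)$ in front of the main term.

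To sharpen the resulting error from the crude $p_\delta$ coming out of the previous paragraph to the stated $(p_\delta\ell+H(p_\delta))/n$, one reintroduces the $E_\delta^c$ contribution via the decomposition $\theta\conv\eta=(1-p_\delta)\mu_\delta+p_\delta\mu_\delta'$ with $\mu_\delta=\E(\theta_{g,i}\conv\eta_{x,i}|E_\delta)$, and combines concavity with almost-convexity (Lemma \ref{lem:entropy_prop}): the $H(p_\delta)$ is the almost-convexity defect, and the factor $p_\delta\ell$ arises from the trivial bound $H(\mu_\delta',\Dk_\ell)\leq\ell$ applied at the coarse scales below $\Dk_\ell$. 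The ``in particular'' clause then follows: for continuous $\eta$, $p_\delta\leq\sup_{y\in\RP}\eta(B_\delta(y))\to 0$ as $\delta\to 0$ uniformly in $\theta$, so under the hypothesis $\ell=O(n)$ the terms $p_\delta\ell/n$ and $H(p_\delta)/n$ can be made arbitrarily small by choosing $\delta$ small, while $O_\delta(1/m+m/n)\to 0$ as $m,n/m\to\infty$. The most delicate step in the whole argument is the support estimate at small scales $i$, where $\theta_{g,i}$ and $\eta_{x,i}$ are not well localized; establishing that $u_{g'}^-$ does not wander enough from $u_g^-$ to invalidate the $E_\delta$ condition requires some uniform control on the map $g\mapsto u_g^-$, which at the smallest scales (notably $i=0$, where the component has $G$-diameter $O(1)$) has to be extracted by compactness arguments in the single $\Dk_0^G$-cell containing $g$.
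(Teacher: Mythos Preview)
Your overall strategy matches the paper's: decompose $\theta\conv\eta$ into component convolutions via concavity, then use a support estimate to replace the conditional entropy $H(\theta_{g,i}\conv\eta_{x,i},\Dk_{i+\ell+m}|\Dk_{i+\ell})$ by the unconditional one. There is, however, a genuine gap in your key support estimate.

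The claim that $\theta_{g,i}\conv\eta_{x,i}$ has diameter $O_\delta(2^{-(i+\ell)})$ is simply false for small $i$, and your compactness argument misidentifies the obstruction. The difficulty is not that $u_{g'}^-$ wanders from $u_g^-$ as $g'$ ranges over the cell; it is on the $\RP$ side. For $i$ with $2^{-i}\gtrsim\delta$, the cell $\Dk_i(x)$ can contain points $x'\in B_{\delta/2}(u_g^-)$ even though $x\notin B_\delta(u_g^-)$, and then $gx'$ can land essentially anywhere in $\RP$. No compactness argument on $g\mapsto u_g^-$ fixes this. The paper's resolution is to observe that the support estimate holds for $i\geq i_0(\delta)$ (so that $\supp\eta_{x,i}\subseteq\RP\setminus B_{\delta/2}(u_g^-)$), and that the $O_\delta(1)$ bad values of $i$ contribute at most $O_\delta(m/n)$ to the final average, which is already absorbed in the error term.

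Your sharpening from the $(1-p_\delta)$ factor to the stated error $(p_\delta\ell+H(p_\delta))/n$ is also muddled: the measure $\mu_\delta=\E(\theta_{g,i}\conv\eta_{x,i}\mid E_\delta)$ depends on $i$, and it is not the same as the pushforward of $(\theta\times\eta)|_{E_\delta}$ under the action map, since level-$i$ product cells straddle $\partial E_\delta$. The paper avoids this by separating the two error sources cleanly: it first writes $H(\theta\conv\eta,\Dk_{n+\ell})=H(\theta\conv\eta,\Dk_\ell)+\sum_i\frac{1}{m}H(\theta\conv\eta,\Dk_{i+\ell+m}|\Dk_{i+\ell})+O(m)$, and bounds the coarse term directly via the decomposition $\theta\conv\eta=(1-p_\delta)\sigma'+p_\delta\sigma''$ of the \emph{pushforward} (not the components), where $\sigma'$ is supported on an interval of length $O(2^{-\ell})$; this gives $H(\theta\conv\eta,\Dk_\ell)\leq O(1)+p_\delta\ell+H(p_\delta)$, which is exactly the source of that error term.
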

\begin{proof}
Using the conditional entropy formula, for any measure $\sigma\in\mathcal{P}(\RP)$
we have (see proof of Lemma 3.4 in \cite{Hochman2014}):
\begin{align*}
\frac{1}{n}H(\sigma,\mathcal{D}_{n+\ell}) & =\frac{1}{n}H(\sigma,\mathcal{D}_{\ell})+\frac{1}{n}\sum_{i=1}^{n}\frac{1}{m}H(\sigma,\mathcal{D}_{i+\ell+m}|\mathcal{D}_{i+\ell})+O(\frac{m}{n}).
\end{align*}
Now, given
$i$, we have the identities $\theta=\mathbb{E}_{j=i}(\theta_{g,j})$
and $\eta=\mathbb{E}_{j=i}(\eta_{g,j})$. By linearity of the convolution
operation over the measures, we have $\theta\conv \eta=\mathbb{E}_{j=i}(\theta_{g,j}\conv \eta_{x,j})$,
so by concavity of entropy,
\begin{align*}
H(\theta\conv \eta,\mathcal{D}_{i+\ell+m}|\mathcal{D}_{i+\ell}) & =H\left(\mathbb{E}_{j=i}(\theta_{g,j}\conv \eta_{x,j}),\mathcal{D}_{i+\ell+m}|\mathcal{D}_{i+\ell}\right)\\
 & \geq\mathbb{E}_{j=i}\left(H(\theta_{g,j}\conv \eta_{x,j},\mathcal{D}_{i+\ell+m}|\mathcal{D}_{i+\ell})\right).
\end{align*}
Using this and substituting $\sigma=\theta\conv \eta$ into the previous
identity we get
\begin{multline}
H(\theta\conv \eta\,,\,\mathcal{D}_{n+\ell})\geq\label{eq:first-splitting}\\
\geq\;H(\theta\conv \eta,\mathcal{D}_{\ell})+\sum_{i=1}^{n}\mathbb{E}_{j=i}\left(\frac{1}{m}H(\theta_{g,j}\conv \eta_{x,j},\mathcal{D}_{i+\ell+m}|\mathcal{D}_{i+\ell})\right)+O(m).
\end{multline}
Now, since $\theta$ has diameter $O(1)$, there is an interval $I\subseteq\RP$
of length $O(\left\Vert g_{0}\right\Vert ^{-2})=O(2^{-\ell})$ such
that if $g\in\supp\theta$ and $x\notin B_{\delta}(u_{g}^{-})$,
then $gx\in I$. Write
\begin{align*}
\theta\conv \eta & =\int\delta_{gx}\,d\, \eta(x)\,d\theta(g)\\
 & =\int_{E_{\delta}}\delta_{gx}\,d\, \eta(x)\,d\theta(g)+\int_{\RP\setminus E_{\delta}}\delta_{gx}\,d\, \eta(x)\,d\theta(g).
\end{align*}
The first term has total mass $1-p_{\delta}=\theta\times\eta(E_{\delta})$
and is supported on $I$, and the second has total mass $p_{\delta}$.
Re-writing the last line as a convex combination $(1-p_{\delta})\sigma'+p_{\delta}\sigma''$,
with the measure  $\sigma'$ is supported on $I$, we have $H(\sigma',\mathcal{D}_{\ell})=O(1)$
and trivially $H(\sigma'',\mathcal{D}_{\ell})\leq\ell$. It follows
from almost-concavity of entropy (Lemma \ref{lem:entropy_prop} (ii)) that
\begin{align*}
H(\theta\conv \eta,\mathcal{D}_{\ell}) & \leq O(1)+p_{\delta}\ell+H(p_{\delta}).
\end{align*}
Thus, after dividing by $n$, the term $H(\theta\conv \eta,\mathcal{D}_{\ell})$
in (\ref{eq:first-splitting}) is absorbed in the error term of the
inequality we are trying to prove. It remains to analyze the sum of
expectations in (\ref{eq:first-splitting}). For this, fix $i$ and
condition on $E_{\delta}$:
\begin{multline}
\mathbb{E}_{j=i}\left(H(\theta_{g,j}\conv \eta_{x,j},\mathcal{D}_{i+\ell+m}|\mathcal{D}_{i+\ell})\right)\geq\\
\begin{aligned}\geq\quad & (1-p_\delta)\mathbb{E}_{j=i}\left(H(\theta_{g,j}\conv \eta_{x,j},\mathcal{D}_{i+\ell+m}|\mathcal{D}_{i+\ell})\;\left|\;(g,x)\in E_{\delta}\right.\right)\\
 & \;+p_{\delta}\mathbb{E}_{j=i}\left(H(\theta_{g,j}\conv \eta_{x,j},\mathcal{D}_{i+\ell+m}|\mathcal{D}_{i+\ell})\;\left|\;(g,x)\notin E_{\delta}\right.\right)\\
\geq\quad & \mathbb{E}_{j=i}\left(H(\theta_{g,j}\conv \eta_{x,j},\mathcal{D}_{i+\ell+m}|\mathcal{D}_{i+\ell})\;\left|\;(g,x)\in E_{\delta}\right.\right)-mp_{\delta}.
\end{aligned}
\label{eq:expectation-estimate}
\end{multline}
Finally, fixing $(g,x)\in E_{\delta}$, and assuming $i$
large relative to $\delta$, the component $\eta_{x,i}$ is supported
on the complement of $B_{\delta/2}(u_{g}^{-})$ and so $g$ acts
on its support by contracting by $\left\Vert g\right\Vert ^{-2}$
with distortion $O_{\delta}(1)$. Since $\eta_{x,i}$ is supported
on a set of diameter $2^{-i}$ the set $\theta_{g,i}\conv \eta_{x,i}$
is supported on an interval of diameter $O_{\delta}(2^{-i}\cdot\left\Vert g\right\Vert ^{-2})=O_{\delta}(2^{-(i+\ell)})$
so $H(\theta_{g,i}\conv \eta_{x,i},\mathcal{D}_{i+\ell})=O_{\delta}(1)$
and we can remove the conditioning in the entropy in the last line of (\ref{eq:expectation-estimate})
at a cost of $O_{\delta}(1)$.

Inserting what we have got into (\ref{eq:first-splitting}), the inequality
in the lemma is proved.

For the second statement we only need to show that $p_{\delta}\rightarrow0$
as $\delta\rightarrow0$. When $\eta$ is continuous, we have $\eta(B_{\delta}(u_{g}^{-}))\rightarrow0$
as $\delta\rightarrow0$ uniformly in $g$. Therefore by Fubini,
\[
p_{\delta}=\theta\times\eta(\RP\setminus E_{\delta})=\int\eta(B_{\delta}(u_{g}^{-}))d\theta(g)\rightarrow0\qquad\text{as }\delta\rightarrow0,
\]
as desired.
\end{proof}

\subsection{\label{subsec:Essentially-bounded-component-component-entropy}Entropy porosity}

One of the important properties of the stationary measure $\nu$ is
that most of its components have essentially the same entropy when
measured at a suitable scale. We will make use of this mainly through
the common upper bound on the entropy of components. 

For the following discussion consider a general probability measure
$\eta\in\mathcal{P}(\RP)$. We say that \emph{$\eta$ is $(h,\delta,m)$-entropy porous 
from scale $n_{1}$
to $n_{2}$ }if 
\begin{equation}
\mathbb{P}_{n_{1}\leq i\leq n_{2}}\left(\frac{1}{m}H(\eta_{x,i}, \Dk_{i+m})\leq h+\delta\right)>1-\delta. \label{eq:bounded-component-entropy}
\end{equation}
We say that \emph{$\eta$ is $h$-entropy porous}  if\footnote{This implies that $\limsup \frac{1}{n}H(\eta,\Dk_n)\leq h$, but the converse is
false in general. Entropy porosity is closely related to the notion of uniform entropy dimension from \cite{Hochman2014}, but that definition made more requirements and did not specify some of the parameters, which we want to be explicit about here.} for every $\delta>0$, $m>m(\delta)$ and $n>n(\delta,m)$ the measure is $(h,\delta,m)$-entropy porous
from
scale $0$ to $n$.

It turns out that entropy porosity passes to components. The next lemma appears in \cite{Hochman2016} verbatim, but we repeat it here for the reader's convenience.
\begin{lem}
\label{lem:bounded-component-entropy-passes-to-components}Let $0<\delta<1$,
$k\in\mathbb{N}$ and $n>n(\delta,k)$. If $\eta\in\mathcal{P}(\RP)$ is $(h,\delta^2/2,m)$-porous from scale $0$ to $n$, then 
\begin{equation}
\mathbb{P}_{0\leq i\leq n}\left(\begin{array}{c}
\eta_{x,i}\mbox{ is }\mbox{$(h,\delta,m)$-entropy}\\
\mbox{ porous from scale }i\mbox{ to }i+k
\end{array}\right)>1-\delta.\label{eq:bounded-component-entropy-of-components}
\end{equation}
\end{lem}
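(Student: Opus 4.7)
The plan is to reduce the statement to a single application of Markov's inequality, combined with Lemma \ref{lem:distribution-of-components-of-components}, which says that the iterated component-sampling distribution $\mathbb{Q}_{n,k}^\eta$ is $O(k/n)$-close in total variation to the direct sampling distribution $\mathbb{P}_n^\eta$. The key observation is that a component of a component of $\eta$ is again a component of $\eta$: for $j \geq i$ and $y \in \mathcal{D}_i(x)$, the cell $\mathcal{D}_j(y)$ is contained in $\mathcal{D}_i(x)$, so
\[
(\eta_{x,i})_{y,j} = \eta_{y,j}.
\]
Consequently, sampling $\eta_{x,i}$ with $0 \leq i \leq n$ uniform and then $(\eta_{x,i})_{y,j}$ with $i \leq j \leq i+k$ uniform is exactly the procedure defining $\mathbb{Q}_{n,k}^\eta$.

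With this in mind, I would introduce the ``badness'' function
\[
B_i(\theta) = \mathbb{P}_{i \leq j \leq i+k}\Bigl(\tfrac{1}{m}H(\theta_{y,j}, \mathcal{D}_{j+m}) > h + \delta\Bigr),
\]
which measures the failure of $\theta$ to be $(h, \delta, m)$-entropy porous from scale $i$ to $i+k$. By definition, the event whose probability we must bound by $\delta$ is precisely $\{B_i(\eta_{x,i}) \geq \delta\}$. By Markov's inequality, it therefore suffices to prove
\[
\mathbb{E}_{0 \leq i \leq n}\bigl(B_i(\eta_{x,i})\bigr) \leq \delta^2.
\]
Using the identification from the previous paragraph, the left-hand side equals the $\mathbb{Q}_{n,k}^\eta$-probability of the bad set $\{\tfrac{1}{m}H(\,\cdot\,, \mathcal{D}_{\cdot + m}) > h + \delta\}$.

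From here everything is routine. By Lemma \ref{lem:distribution-of-components-of-components}, the $\mathbb{Q}_{n,k}^\eta$-probability in question differs from the corresponding $\mathbb{P}_n^\eta$-probability by at most $O(k/n)$. The latter is bounded above by
\[
\mathbb{P}_{0 \leq j \leq n}\Bigl(\tfrac{1}{m}H(\eta_{y,j}, \mathcal{D}_{j+m}) > h + \tfrac{\delta^2}{2}\Bigr) < \tfrac{\delta^2}{2},
\]
where the strict inequality is the hypothesis that $\eta$ is $(h, \delta^2/2, m)$-entropy porous from scale $0$ to $n$. Taking $n$ large enough (depending on $\delta$ and $k$) that the $O(k/n)$ error is below $\delta^2/2$ then gives $\mathbb{E}(B_i) < \delta^2$, as required.

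I do not anticipate any serious obstacle; the only mildly subtle point is verifying that ``a component of a component is a component'' in exactly the form needed to identify $\mathbb{E} B_i$ with a $\mathbb{Q}_{n,k}^\eta$-probability, and this is immediate from the nesting of the dyadic partitions. The choice of tolerance $\delta^2/2$ in the hypothesis is precisely the slack that the Markov step consumes.
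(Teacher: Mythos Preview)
Your proposal is correct and follows essentially the same approach as the paper: both arguments reduce to Lemma~\ref{lem:distribution-of-components-of-components} to compare two-step sampling with one-step sampling, then use the hypothesis to bound the one-step bad probability by $\delta^2/2$. The only cosmetic difference is that you phrase the final step as a direct Markov bound on $\mathbb{E}(B_i)$ via the total-variation form of the lemma, whereas the paper argues by contradiction using the $\alpha\beta - O(k/n)$ form; these are equivalent formulations of the same idea.
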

\begin{proof}
By assumption, 
\begin{equation}
\mathbb{P}_{0\leq i\leq n}\left(\frac{1}{m} H(\eta_{x,i}, \Dk_{i+m})\leq h+\frac{\delta^{2}}{2}\right)>1-\frac{\delta^{2}}{2}.\label{eq:10}
\end{equation}
Let $\mathcal{B}_i\subseteq\mathcal{P}(\RP)$ denote the set of measures
$\theta$ with $\frac{1}{m}H(\theta, \Dk_{i+m})>h+\delta$, and $\mathcal{A}_i\subseteq\mathcal{P}(\RP)$
the set of $\theta$ such that $\mathbb{P}_{i\leq j\leq i+k}(\theta_{x,j}\in\mathcal{B}_j)>\delta$.
It suffices for us to show that $\mathbb{P}_{0\leq i\leq n}(\eta_{x,i}\in\mathcal{A}_i)\leq2\delta/3$.
Indeed, if we had $\mathbb{P}_{0\leq i\leq n}(\eta_{x,i}\in\mathcal{A}_i)>2\delta/3$,
then Lemma \ref{lem:distribution-of-components-of-components} would
imply $\mathbb{P}_{0\leq i\leq n}(\eta_{x,i}\in\mathcal{B}_i)=2\delta^{2}/3-O(k/n)$,
which, assuming as we may that $n$ large relative to $k,\delta$,
contradicts (\ref{eq:10}).
\end{proof}

We note that entropy porosity is stable under translation and re-scaling in the following sense: For every $h,\delta>0$ and $m$ there exist $\delta'>0$ such that if $\eta$ is $(h,\delta',m)$-entropy porous from scale $0$ to $n$, then for any non-singular affine map $A$, the measure $A\eta$  is $(h,\delta,m)$-entropy porous at these scales, as long as $n$ is large compared to $A$. We shall not need this, though, and omit the proof.

Returning to our stationary measure $\nu$, from its ``approximate
self-similarity'' we show that
\begin{prop} \label{prop:approx}
The measure $\nu$ is $\alpha$-entropy porous, where $\alpha=\dim\nu$,
and in particular $\nu$ satisfies (\ref{eq:bounded-component-entropy-of-components}).
\end{prop}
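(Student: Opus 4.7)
The plan is to combine exact dimensionality of $\nu$ (Theorem~\ref{thm:-exact-dimensionality}) with the approximate self-similarity expressed by the stationarity $\nu=\mu^{\star k}\conv\nu$, paralleling the proof of uniform entropy dimension for self-similar measures in \cite{Hochman2014}. Fix $\delta>0$. Since $H(\nu,\Dk_m)/m\to\alpha$ by exact dimensionality, the first step is to choose $m=m(\delta)$ large enough that $H(\nu,\Dk_m)/m<\alpha+\delta/10$.

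Next, for each scale $i\le n$, I would use the iterate $\nu=\int A\nu\,d\mu^{\star k(i)}(A)$ with $k(i)=\lfloor i/(2\chi)\rfloor$. By Theorem~\ref{thm:Oseledets} a $\mu^{\star k(i)}$-typical $A$ satisfies $\|A\|^{-2}=2^{-i+o(i)}$, and by Lemma~\ref{lem:g-contracts-on-most-of-RP} it scales $\RP\setminus B_\eps(u_A^-)$ by $\|A\|^{-2}$ with distortion $O_\eps(1)$. Consequently $A\nu$ is essentially supported in an interval of length $\approx 2^{-i}$, and Lemma~\ref{lem:entropy_distort} gives $H(A\nu,\Dk_{i+m})=H(\nu,\Dk_m)+O_\eps(1)$, which is at most $(\alpha+\delta/5)m$ for $m$ large.

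The component $\nu_{x,i}$ then decomposes as $\nu_{x,i}=\sum_A q_A(x,i)\cdot(A\nu)_{\Dk_i(x)}$ with weights $q_A=\mu^{\star k(i)}(A)\cdot A\nu(\Dk_i(x))/\nu(\Dk_i(x))$. Concavity of entropy gives $H(\nu_{x,i},\Dk_{i+m})\ge\sum_A q_A H((A\nu)_{\Dk_i(x)},\Dk_{i+m})$, and almost-convexity (Lemma~\ref{lem:entropy_prop}) gives the reverse bound with an additive $H(q_A)$. A direct Fubini computation shows $\mathbb{E}_x\left[\sum_A q_A H((A\nu)_{\Dk_i(x)},\Dk_{i+m})\right]=\sum_A p_A H(A\nu,\Dk_{i+m}|\Dk_i)$, which for typical $A$ (where $H(A\nu,\Dk_i)=O(1)$) equals $H(\nu,\Dk_m)+O(1)\le(\alpha+\delta/5)m+o(m)$. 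Pairing this with the multi-scale identity $\mathbb{E}_{i,x}[H(\nu_{x,i},\Dk_{i+m})]=\alpha m+o(m)$ (from the telescoping argument of Lemma~\ref{lem:multiscale-formula-for-entropy}) and a Markov argument on the nonnegative ``excess'' variable $H(\nu_{x,i},\Dk_{i+m})-\sum_A q_A H((A\nu)_{\Dk_i(x)},\Dk_{i+m})$ yields $H(\nu_{x,i},\Dk_{i+m})/m\le\alpha+\delta$ for a fraction $\ge 1-\delta$ of $(x,i)$, which is the required entropy porosity.

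The hardest part of the argument will be controlling the combinatorial entropy $H(q_A)$ appearing in the almost-convexity bound, which is a priori of order $h_{\RW}\cdot k(i)\sim i$ and therefore large compared to $\delta m$ when $i\gg m$. The route outlined above sidesteps a direct pointwise bound on $H(q_A)$ by instead using the averaged inequality together with the lower bound from concavity, but this requires carefully bounding the atypical contributions to $\sum_A p_A H(A\nu,\Dk_{i+m}|\Dk_i)$ from matrices whose operator norm deviates from the Lyapunov prediction $\|A\|^{-2}\approx 2^{-i}$. This control will come from the quantitative large-deviations estimates built into Proposition~\ref{prop:quantitative-Furstenberg}, ensuring that such atypical $A$ carry vanishingly small $\mu^{\star k(i)}$-mass so that the trivial bound $H(\cdot)\le m$ applied to them contributes only $o(m)$ to the average.
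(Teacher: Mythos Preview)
Your approach diverges from the paper's in one essential respect, and the resulting complication is where your sketch develops a gap.

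The paper does not iterate a fixed number $k(i)=\lfloor i/(2\chi)\rfloor$ of times and then appeal to large deviations. Instead it introduces the \emph{stopping time} $\tau=\min\{k:\|X_1\ldots X_k\|\ge 2^{i/2}\}$, so that $2\log\|X_1\ldots X_\tau\|=i+O(1)$ for \emph{every} realization. Stationarity still gives $\nu=\mathbb{E}(X_1\ldots X_\tau\nu)$, and since each piece $g\nu$ now has exactly the right scale, Lemma~\ref{lem:entropy_distort} yields $\frac{1}{m}H(g\nu,\Dk_{i+m}|\Dk_i)\ge(1-O(\varepsilon))\alpha$ uniformly. Concavity then gives the lower bound $\frac{1}{m}H(\nu,\Dk_{i+m}|\Dk_i)\ge(1-O(\varepsilon))\alpha$ valid for \emph{every} $i$, with no atypical set to control. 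Markov is then applied over $i$ to the nonnegative quantity $\frac{1}{m}H(\nu,\Dk_{i+m}|\Dk_i)-(1-O(\varepsilon))\alpha$, whose average is $O(\varepsilon)$ by exact dimensionality.

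Your Markov argument on the excess $E(x,i)=H(\nu_{x,i},\Dk_{i+m})-\sum_A q_A H((A\nu)_{\Dk_i(x)},\Dk_{i+m})$ does not close. Showing $\mathbb{E}[E]=o(m)$ only gives that $E$ is small with high probability; to conclude $H(\nu_{x,i},\Dk_{i+m})\le(\alpha+\delta)m$ you still need a high-probability upper bound on $L(x,i)=\sum_A q_A(x,i) H((A\nu)_{\Dk_i(x)},\Dk_{i+m})$. You have not established this, and it is not automatic: the weights $q_A(x,i)=p_A\cdot A\nu(\Dk_i(x))/\nu(\Dk_i(x))$ depend on $x$, and for specific $x$ they could concentrate on matrices whose restricted measures $(A\nu)_{\Dk_i(x)}$ have large entropy. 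A direct Markov on $L$ fails too, since $\mathbb{E}[L]\approx\alpha m$ with only the trivial lower bound $L\ge 0$ yields nothing when $\alpha>0$. The stopping-time device is precisely what supplies the missing uniform lower bound and makes the Markov step work.
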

\begin{proof}
Let $\delta>0$ and set 
\[
\varepsilon=\varepsilon(\delta)=\sup\{\nu(B_{\delta}({x}))\,:\,x\in\RP\}.
\]
Given $g\in G$ and $I_{g}^{\delta}=\RP\setminus B_{\delta}({u}_{g}^{-})$,
we know by Lemma \ref{lem:g-contracts-on-most-of-RP} that $g|_{I_{g}^{\delta}}$
scales by $\left\Vert g\right\Vert ^{-2}$ with distortion $O_{\delta}(1)$.
Write $\nu=\nu(I_{g}^{\delta})\cdot\nu_{I_{g}^{\delta}}+(1-\nu(I_{g}^{\delta}))\nu_{\RP\setminus I_{g}^{\delta}}$,
and note that $\nu(I_{g}^\delta)>1-\varepsilon(\delta)$. By Lemma
\ref{lem:entropy_prop}(ii), we have 
\[
\frac{1}{m} H(\nu_{I_{g}^{\delta}},\Dk_m)\geq\frac{1}{1-\varepsilon}\Bigl(\frac{1}{m}H(\nu,\Dk_m)-\varepsilon-\frac{1}{m}H(\varepsilon)\Bigr),
\]
so, assuming $m$ large relative to $\delta$, we have
\[
\frac{1}{m}H(\nu_{I_{g}^{\delta}},\Dk_m)=(1-O(\varepsilon))\frac{1}{m} H(\nu,\Dk_m).
\]
Similarly, writing 
$$
g\nu = \nu(I_g^\delta)\cdot g(\nu_{I_g^\delta}) + (1 - \nu(I_g^\delta))\cdot g(\nu_{\RP\setminus I_g^\delta}),
$$
 by concavity of entropy, the previous discussion and Lemmas \ref{lem:entropy_perturb} and \ref{lem:entropy_distort},
\begin{eqnarray*}
\frac{1}{m}H(g\nu,\mathcal{D}_{2\log\left\Vert g\right\Vert +m}\,|\,\mathcal{D}_{2\log\left\Vert g\right\Vert }) & \geq & \frac{1-\varepsilon}{m}H(g(\nu_{I_{g}^{\delta}}),\mathcal{D}_{2\log\left\Vert g\right\Vert +m}|\mathcal{D}_{2\log\left\Vert g\right\Vert })\\
 & = & \frac{1-\varepsilon}{m}H(\nu_{I_{g}^{\delta}},\mathcal{D}_{m})-O_{\delta}(\frac{1}{m}).
\end{eqnarray*}
In the last line we used the fact that $g(\nu_{I_g^\delta})$ is supported on a set of size $O(\|g\|^{-2})$, hence intersects $O_\delta(1)$ atoms of the partition $\Dk_{2\log\|g\|}$, and therefore,
$$
H(g(\nu_{I_g^\delta}), \Dk_{2\log\|g\|}) = O_\delta(1).
$$
Now, if we again assume $m$ large relative to $\delta$ we conclude that 
\begin{eqnarray*}
\frac{1}{m}H(g\nu,\mathcal{D}_{m+2\log\left\Vert g\right\Vert }\,| \,\Dk_{2\log\|g\|}) & \geq & (1-O(\varepsilon)) \frac{1}{m} H(\nu,\Dk_m).
\end{eqnarray*}
Since $\frac{1}{m} H(\nu,\Dk_m)=\alpha+o(1)$ as $m\rightarrow\infty$, for $m$
large enough, we obtain
\begin{equation} \label{nova1}
\frac{1}{m}H(g\nu,\mathcal{D}_{m+2\log\left\Vert g\right\Vert }\, |\, \Dk_{2\log\|g\|})\geq(1-O(\varepsilon))\alpha.
\end{equation}

Fix an $m$ as above and consider a fixed $i\in\mathbb{N}$. Let $X_{1},X_{2},\ldots$
be i.i.d.\ with marginal $\mu$ and define the stopping time $\tau$
to be the minimal $\tau\in\mathbb{N}$ such that $\left\Vert X_{1}X_{2}\ldots X_{\tau}\right\Vert \geq2^{i/2}$.
Note that $\left\Vert X_{1}X_{2}\ldots X_{k}\right\Vert \rightarrow\infty$
a.s., so $\tau$ is almost surely finite. It is an easy consequence
of stationarity that 
\[
\nu=\mathbb{E}(X_{1}X_{2}\ldots X_{\tau}\nu).
\]
Thus, by concavity of entropy,
\begin{eqnarray*}
\frac{1}{m}H(\nu,\mathcal{D}_{i+m}|\mathcal{D}_{i}) & = & \frac{1}{m}H\left(\mathbb{E}(X_{1}\ldots X_{\tau}\nu),\mathcal{D}_{i+m}|\mathcal{D}_{i}\right)\\
 & \geq & \mathbb{E}\left(\frac{1}{m}H(X_{1}\ldots X_{\tau}\nu,\mathcal{D}_{i+m}|\mathcal{D}_{i})\right)\\
 & \geq & (1-O(\varepsilon))\alpha.
\end{eqnarray*}
In the last line we used (\ref{nova1}), with $g=X_{1}X_{2}\ldots X_{\tau}$, so that
$$2\log\|g\|=2\log\left\Vert X_{1}X_{2}\ldots X_{\tau}\right\Vert =i+O(1)$$ by
the definition of $\tau$.

To conclude the argument, by Lemma \ref{lem:multiscale-formula-for-entropy} and (\ref{ent:nova}), for large enough $n$,
\begin{eqnarray*}
\alpha & \geq & \frac{1}{n} H(\nu,\Dk_n)-\varepsilon\\
 & = & \mathbb{E}_{1\leq i\leq n}(\frac{1}{m} H(\nu_{x,i}, \Dk_{i+m}))-O(\frac{m}{n})-\varepsilon\\
 & = & \frac{1}{n}\sum_{i=1}^{n}\frac{1}{m}H(\nu,\mathcal{D}_{i+m}|\mathcal{D}_{i})-O(\frac{m}{n})-\varepsilon,
\end{eqnarray*}
so for large enough $n$,
\[
\frac{1}{n}\sum_{i=1}^{n}\frac{1}{m}H(\nu,\mathcal{D}_{i+m}|\mathcal{D}_{i})\leq\alpha+2\varepsilon.
\]
This is bounded above by $\alpha+2\varepsilon$, but each term
in the average is bounded below by $\alpha-O(\varepsilon)$. Thus by
Markov's inequality,
\[
\frac{1}{n}\#\left\{ 1\leq i\leq n\,:\,\frac{1}{m}H(\nu,\mathcal{D}_{i+m}|\mathcal{D}_{i})>\alpha+\varepsilon'\right\} <\varepsilon',
\]
where $\varepsilon'=O(\sqrt{\varepsilon})$. But this just means that
\[
\mathbb{P}_{1\leq i\leq n}\left(\frac{1}{m}H(\nu_{x,i},\Dk_{i+m})\leq\alpha+\varepsilon'\right)>1-\varepsilon'.
\]
Since $\varepsilon$, and hence $\varepsilon'$, can be made arbitrarily
small by making $\delta$ small, this is what was claimed.
\end{proof}

\subsection{\label{subsec:Entropy-growth-under-conv-Euclidean}Entropy growth
under convolution: Euclidean case}

Recall that $\theta*\eta$ denotes the convolution of measures
on $\mathbb{R}$. The entropy of a convolution is generally at least
as large as each of the convolved measures, although due to the discretization
involved there may be a small loss:
\begin{lem}
\label{lem:entropy-monotonicity-underconvolution}For every $\eta,\theta\in\mathcal{P}([0,1))$,
\[
\frac{1}{n}H(\eta,\mathcal{D}_{n})-O(\frac{1}{n})\leq\frac{1}{n}H(\theta*\eta,\mathcal{D}_{n})\leq\frac{1}{n}H(\eta,\mathcal{D}_{n})+\frac{1}{n}H(\theta,\mathcal{D}_{n})+O(\frac{1}{n}).
\]
\end{lem}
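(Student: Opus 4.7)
The plan is to handle the two inequalities separately. Each is a soft consequence of the fact that $\theta\ast\eta$ is the push-forward of $\theta\times\eta$ under the addition map $f(x,y)=x+y \pmod 1$, combined with the basic entropy estimates recalled in Lemma \ref{lem:entropy_perturb}.

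For the upper bound, I would use the identity $H(\theta\ast\eta,\mathcal{D}_n)=H(\theta\times\eta,f^{-1}\mathcal{D}_n)$. The atoms of $f^{-1}\mathcal{D}_n$ are diagonal strips in $[0,1)^2$, and an inspection shows that each square of $\mathcal{D}_n\times\mathcal{D}_n$ is cut by $f^{-1}\mathcal{D}_n$ into at most two pieces. Hence the join $(\mathcal{D}_n\times\mathcal{D}_n)\vee f^{-1}\mathcal{D}_n$ adds at most one bit of information over $\mathcal{D}_n\times\mathcal{D}_n$, so
\[
H(\theta\ast\eta,\mathcal{D}_n)\leq H(\theta\times\eta,\mathcal{D}_n\times\mathcal{D}_n)+O(1)=H(\theta,\mathcal{D}_n)+H(\eta,\mathcal{D}_n)+O(1),
\]
using independence of the marginals in $\theta\times\eta$. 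Dividing by $n$ gives the right-hand inequality.

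For the lower bound, write $\theta\ast\eta=\int T_x\eta\,d\theta(x)$, where $T_x$ is translation by $x$. Concavity of entropy (Lemma \ref{lem:entropy_prop}(i)) gives
\[
H(\theta\ast\eta,\mathcal{D}_n)\geq\int H(T_x\eta,\mathcal{D}_n)\,d\theta(x).
\]
Since each atom of $T_{-x}\mathcal{D}_n$ meets at most two atoms of $\mathcal{D}_n$ and vice versa, Lemma \ref{lem:entropy_perturb}(i) gives $|H(T_x\eta,\mathcal{D}_n)-H(\eta,\mathcal{D}_n)|=O(1)$ uniformly in $x$. Therefore $H(\theta\ast\eta,\mathcal{D}_n)\geq H(\eta,\mathcal{D}_n)-O(1)$, which upon dividing by $n$ yields the left-hand inequality.

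Neither step poses a real obstacle; the only point that needs a moment's care is verifying that the boundary of $f^{-1}\mathcal{D}_n$ cuts each dyadic square in a controlled way, which is immediate from the fact that the coordinate sum over a square of side $2^{-n}$ ranges in an interval of length $2\cdot 2^{-n}$ and so meets at most two atoms of $\mathcal{D}_n$.
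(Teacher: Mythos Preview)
Your proof is correct. The paper states this lemma without proof, treating it as a standard fact; your argument is the natural one and matches how such bounds are typically derived (see also the earlier remark that $H(T_u\mu,\mathcal{D}_n)=H(\mu,\mathcal{D}_n)+O(1)$, which is exactly your lower-bound step).
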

Typically one expects the upper bound to be the correct one, but in
general this is not the case and one cannot rule out that the lower
bound is achieved, i.e. there is no entropy growth at all. It is quite
non-trivial to give useful conditions under which the upper bound is
achieved, but Theorem 2.8 of \cite{Hochman2014} provides a verifiable
condition under which at least some entropy growth occurs.
\begin{thm}
\label{thm:entropy-growth-under-convolution}For every $\varepsilon>0$
there exists $\delta=\delta(\varepsilon)>0$ such that for every
$m>m(\varepsilon,\delta)$ and $n>n(\varepsilon,\delta,m)$, the following
holds: 

Let $\eta\in\mathcal{P}([0,1))$ and $\theta\in\mathcal{P}([0,1))$
be probability measures and suppose that $\eta$ is $(1-\eps,\delta,m)$-entropy porous from scale $1$
to $n$. Then 
\[
\frac{1}{n} H(\theta,\Dk_n)>\varepsilon\quad\implies\quad \frac{1}{n}H(\theta*\eta,\Dk_n)>\frac{1}{n} H(\eta,\Dk_n)+\delta.
\]
More generally, if $\eta$ is $(1-\eps,\delta,m)$-entropy porous from scale $n_1$ to $n_2$ and $n_2-n_1 > n(\eps,\delta,m)$, then 
\begin{multline}
 \frac{1}{n_2-n_1} H(\theta,\mathcal{D}_{n_{2}}|\mathcal{D}_{n_{1}})>\varepsilon\quad \\
  \implies\quad H(\theta*\eta,\mathcal{D}_{n_{2}}|\mathcal{D}_{n_{1}})>H(\eta,\mathcal{D}_{n_{2}}|\mathcal{D}_{n_{1}})+\delta\cdot(n_{2}-n_{1}).
\end{multline}
\end{thm}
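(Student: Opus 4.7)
The plan is to derive the theorem as a direct consequence of the inverse theorem for convolutions on $\mathbb{R}$ established in \cite[Theorem 2.8]{Hochman2014}, via a contrapositive argument. That inverse theorem asserts, roughly: for every $\varepsilon^\star > 0$ there exist $\delta^\star = \delta^\star(\varepsilon^\star)$, $m^\star(\varepsilon^\star,\delta^\star)$ and $n^\star(\varepsilon^\star,\delta^\star,m^\star)$ such that whenever $\theta,\eta\in\mathcal{P}([0,1))$ satisfy $\tfrac{1}{n}H(\theta,\mathcal{D}_n)\geq \varepsilon^\star$ and $\tfrac{1}{n}H(\theta\ast\eta,\mathcal{D}_n)\leq \tfrac{1}{n}H(\eta,\mathcal{D}_n)+\delta^\star$, then there is a subset $I\subseteq\{1,\ldots,n\}$ of density at least $1-\varepsilon^\star$ such that for every $i\in I$, with $\eta$-probability at least $1-\varepsilon^\star$, the component $\eta_{x,i}$ satisfies $\tfrac{1}{m}H(\eta_{x,i},\mathcal{D}_{i+m})\geq 1-\varepsilon^\star$ (i.e.\ $\eta_{x,i}$ is close to uniform at scale $i+m$).

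To prove the first displayed implication, assume the hypotheses and suppose for contradiction that $\tfrac{1}{n}H(\theta\ast\eta,\mathcal{D}_n)\leq \tfrac{1}{n}H(\eta,\mathcal{D}_n)+\delta$. Set $\varepsilon^\star=\varepsilon/3$ and apply the inverse theorem, obtaining a threshold $\delta^\star$ and scale requirements $m^\star,n^\star$. I would then choose $\delta\leq\delta^\star$ small enough that also $\delta<\varepsilon/3$, and $m,n$ large enough that both the porosity hypothesis and the inverse theorem are applicable. The inverse theorem yields a set of scales $I$ of density $\geq 1-\varepsilon/3$ on each of which $\eta$-most components have $\tfrac{1}{m}H(\eta_{x,i},\mathcal{D}_{i+m})>1-\varepsilon/3$. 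The porosity hypothesis, on the other hand, produces a set of scales of density $\geq 1-\delta$ on each of which $\eta$-most components have $\tfrac{1}{m}H(\eta_{x,i},\mathcal{D}_{i+m})\leq 1-\varepsilon+\delta<1-2\varepsilon/3$. Intersecting these two families of $(i,x)$ pairs gives, for small enough $\delta$, a set of positive density on which $\eta_{x,i}$ would have to simultaneously satisfy both bounds, which is a contradiction.

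The conditional version (second display) follows by the same argument applied on the scale window $\{n_1,\ldots,n_2\}$: one reduces to the unconditional case by passing to an appropriate scale-$n_1$ component and rescaling so that $n_1$ becomes $0$ and $n_2$ becomes $n_2-n_1$. Entropy porosity from scale $n_1$ to $n_2$ becomes porosity from scale $0$ to $n_2-n_1$ after rescaling, the conditional-entropy hypothesis on $\theta$ becomes the unconditional lower bound, and the same application of the inverse theorem to the rescaled pair yields the required strict gain $\delta\cdot(n_2-n_1)$.

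The main obstacle is of course the inverse theorem of \cite{Hochman2014} itself, which is a deep external input; the present proof is essentially parameter bookkeeping aimed at converting its structural conclusion into the numerical gain claimed here. Particular care is needed to align the two distinct roles of $\varepsilon$ in the statement (one controlling the $\theta$-entropy floor in the hypothesis of the inverse theorem, the other controlling the gap $1-\varepsilon$ in the porosity assumption) and to arrange the dependencies $\delta=\delta(\varepsilon)$, $m=m(\varepsilon,\delta)$, $n=n(\varepsilon,\delta,m)$ so they subsume the analogous quantifier dependencies coming out of the inverse theorem.
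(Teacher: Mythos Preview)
The paper does not prove this theorem at all: it is quoted verbatim as Theorem 2.8 of \cite{Hochman2014}, so there is no in-paper argument to compare against. Your overall strategy---argue by contradiction and invoke the inverse theorem for entropy from \cite{Hochman2014}---is indeed how the statement is obtained in that reference, so in spirit you are on the right track.

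There is, however, a genuine gap in your formulation of the inverse theorem. What you wrote is not what the inverse theorem says. The actual conclusion of the inverse theorem (Theorem~2.7 in \cite{Hochman2014}, not 2.8---the latter is precisely the statement you are trying to prove) is a \emph{dichotomy}: if $\tfrac{1}{n}H(\theta\ast\eta,\mathcal{D}_n)\leq \tfrac{1}{n}H(\eta,\mathcal{D}_n)+\delta^\star$, then for a set of scales $i$ of density $\geq 1-\varepsilon^\star$, \emph{either} most $\eta$-components satisfy $\tfrac{1}{m}H(\eta_{x,i},\mathcal{D}_{i+m})>1-\varepsilon^\star$ \emph{or} most $\theta$-components satisfy $\tfrac{1}{m}H(\theta_{y,i},\mathcal{D}_{i+m})<\varepsilon^\star$. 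The hypothesis $\tfrac{1}{n}H(\theta,\mathcal{D}_n)>\varepsilon$ is not part of the inverse theorem and does not by itself force the first alternative. You need an additional step: by the multiscale entropy formula (Lemma~\ref{lem:multiscale-formula-for-entropy}), if the $\theta$-atomic alternative held on a set of scales of density $\geq 1-\varepsilon^\star$, one would get $\tfrac{1}{n}H(\theta,\mathcal{D}_n)\leq 2\varepsilon^\star + O(m/n)$, contradicting the entropy lower bound on $\theta$ once $\varepsilon^\star$ is small enough relative to $\varepsilon$. Only after ruling out the $\theta$-atomic branch can you conclude that $\eta$-components are near-uniform on a positive-density set of scales, and then contradict entropy porosity as you describe. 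Without this step your argument is circular (you are effectively assuming the conclusion of the theorem you are proving as your ``inverse theorem'').

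Your reduction for the conditional version is also too quick: passing to a single level-$n_1$ component and rescaling does not immediately give the conditional conclusion, since the hypothesis $\tfrac{1}{n_2-n_1}H(\theta,\mathcal{D}_{n_2}|\mathcal{D}_{n_1})>\varepsilon$ is an \emph{average} over level-$n_1$ components of $\theta$, and likewise for $\eta$ and for the porosity hypothesis. One must either average the unconditional statement over components (with some bookkeeping for the components where the entropy hypothesis fails) or, more cleanly, re-run the inverse-theorem argument directly on the scale window $[n_1,n_2]$.
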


\subsection{Linearization}

We require an analogous statement for convolutions $\theta\conv\eta$,
where $\eta\in\mathcal{P}(\RP)$ and $\theta\in\mathcal{P}(G)$. The
main idea, similar to that in \cite{Hochman2015}, is to reduce the
``non-linear convolution'' $\theta\conv\eta$ to an average of linear
convolutions involving (images of) components of $\theta$ and $\eta$,
to which, with an additional argument, we can apply the Euclidean
theorem above. Our argument is then based on linearization\footnote{The ``correct'' way to linearize is to use coordinates; concretely,
lift\textbf{ $B_{r}(g_{0})\times B_{r}(x_{0})$} to the tangent space
$T(G\times\RP)=TG\times T\RP$ via the logarithmic map, incurring
some distortion, apply the derivative map, and project
back down to $\RP$. However this introduces some unnecessary complications,
and instead we use a more elementary approach relying on the simple
angle-based coordinates in $\RP$.} of the action map $G\times\RP\rightarrow\RP$. 
We interpret sums and differences of elements in $\RP$, and also
their products with scalars, using the identification of $\RP$ with
$[0,1)\subseteq\mathbb{R}$. They are well defined as long as all
terms are nearby in $\RP$.
Assume that $B_{1}\times B_{2}\subseteq G\times\RP$
is a product of $r$-balls and $(g_{0},x_{0})\in B_{1}\times B_{2}$,
and  $x_{0}$ is far enough from $u_{g_{0}}^{-}$. 
For $x_{0}\in\RP$ write $\theta\conv x_{0}$ for the push-forward of $\theta$
by $g\mapsto gx_{0}$, and for $g_{0}\in G$ we write $g_{0}\eta$
for the push-forward of $\eta$ by $g_{0}$ (so $\theta\conv x_{0}=\theta\conv\delta_{x_{0}}$
and $g_{0}\eta=\delta_{g_{0}}\conv\eta$). Then some calculus shows that
on $B_{1}\times B_{2}$ the map $(g,x)\mapsto gx$ is very close, up to translation, to
the map $(g,x)\mapsto(gx_{0})+g_{0}x$, which in turn (using that
$x_{0}$ is far from $u_{g_{0}}^{-}$) is close to $(g,x)\mapsto gx_{0}+\left\Vert g_{0}\right\Vert ^{-2}x$.
In particular, since the convolution $\theta \conv \eta$ of $\theta\in\mathcal{P}(B_{1})$
and $\eta\in\mathcal{P}(B_{2})$ is the image of $\theta\times\eta$
under $(g,x)\mapsto gx$, we find that $\theta\conv \eta$ will be close
enough to the image of $\theta\times\eta$ under the map $(g,x)\mapsto(gx_{0})+\left\Vert g_{0}\right\Vert ^{-2}x$,
which is just $(\theta\conv x_{0})*(S_{-2\log\lambda_{g_{0}}}\eta)$.
This will imply that their entropies, at suitably small scales, are
close. 

\begin{lem}
\label{lem:linearization-at-the-identity}For $0<r<1$, $h\in B_{r}(1_{G})$
and $x\in B_{r}(x_{0})$ we have
\[
hx-hx_{0}=x-x_{0}+O(r^{2}).
\]
\end{lem}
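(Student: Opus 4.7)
The plan is to recognize this as a standard Taylor expansion statement for the smooth action map $F:G\times \RP\to \RP$, $F(h,x)=hx$, exploiting two ``trivial'' identities that force the first-order terms to vanish.

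First I would fix a coordinate chart on $\RP$ around $x_0$ (say $\angle_u$ for a suitable $u$, as in Section~\ref{subsec:Projective-space}) and local coordinates on $G$ around $1_G$ via the exponential map (or just the matrix entries). In these coordinates, $F$ is a real-analytic function of $(h,x)$ on a neighborhood of $(1_G,x_0)$, so in particular it is $C^2$, with all second derivatives bounded on a fixed neighborhood. By the equivalence of $d$ with the matrix norm near $1_G$ (Conclusion~\ref{concl}), the hypothesis $d(h,1_G)<r$ translates into $\|h-1_G\|=O(r)$ in these local coordinates.

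Next I would introduce the auxiliary function
\[
g(h,x) = F(h,x) - F(h,x_0) - (x-x_0) = hx - hx_0 - (x-x_0),
\]
and make two trivial observations: $g(1_G,x) \equiv 0$ (since $1_G\cdot y = y$), and $g(h,x_0) \equiv 0$ (by construction). These identities tell us that $g$ vanishes on the two coordinate ``axes'' through $(1_G,x_0)$, hence both partial derivatives of $g$ at $(1_G,x_0)$ vanish, i.e.\ $\nabla g(1_G,x_0) = 0$.

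Finally I would apply Taylor's theorem to $g$: since $g$ is $C^2$ with second derivatives bounded by a constant on a fixed neighborhood of $(1_G,x_0)$, and since its first derivatives vanish at $(1_G,x_0)$, one has
\[
g(h,x) = O\bigl(\|h-1_G\|^2 + (x-x_0)^2\bigr) = O(r^2)
\]
whenever $\|h-1_G\|\le O(r)$ and $|x-x_0|\le r$, which gives exactly $hx-hx_0 = (x-x_0)+O(r^2)$. The only mild point of care is to ensure that the differentiation ``on $\RP$'' is meaningful, which is why one passes to a fixed chart; there is no real obstacle, the content of the lemma is essentially that both $\partial_h g$ and $\partial_x g$ vanish at $(1_G,x_0)$ for the tautological reasons above.
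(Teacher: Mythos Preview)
Your proof is correct and is essentially the same Taylor-expansion argument as the paper's, just packaged slightly differently: the paper fixes $h$, expands $\widehat{h}$ to first order in $x$ around $x_0$, and uses $\widehat{h}'(x_0)=1+O(r)$ (since $\widehat{1_G}'\equiv 1$ and the map $h\mapsto\widehat{h}'$ is smooth), whereas you expand the difference function $g$ jointly in $(h,x)$ and use its vanishing on both coordinate axes to kill the linear term. Both routes amount to the same second-order estimate.
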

\begin{proof}
For $h_{0}=1_{G}$ the map $\widehat{h}_{0}$ is the identity on $\RP$,
so its first derivative is identically $1$ and its second vanishes.
Thus for $h\in B_{r}(h_{0})$, writing $h$ in local coordinates around
$h_{0}=1_{G}$, we find that $\left\Vert \widehat{h}'\right\Vert _{\infty}=1+O(r)$
and $\left\Vert \widehat{h}''\right\Vert _{\infty}=O(r)$, and by
first-order approximation of $\widehat{h}$ at $x_{0}$, for $x\in B_{r}(x_{0})$
we have
\begin{eqnarray*}
hx & = & hx_{0}+\widehat{h}'(x_{0})(x-x_{0})+O((x-x_{0})^{2})\\
 & = & hx_{0}+(1+O(r))(x-x_{0})+O((x-x_{0})^{2})\\
 & = & hx_{0}+(x-x_{0})+O(r^{2}),
\end{eqnarray*}
as claimed.
\end{proof}
\begin{cor}
\label{cor:linearization-at-the-identity}For $0<r<1$, $h\in B_{r}(1_{G})$
and $x\in B_{r}(x_{0})$ we have $hx_{0}-x_{0}=O(r)$ and $hx-x_{0}=O(r)$.
\end{cor}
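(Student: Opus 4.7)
The corollary is a direct consequence of the preceding lemma combined with the observation that the action of $G$ on $\RP$ is smooth (hence, on any fixed bounded neighborhood of the identity, uniformly Lipschitz in both variables). I would prove the two estimates in sequence, the first by invoking smoothness of $h\mapsto hx_0$ at $h=1_G$, and the second by chaining the first with the previous lemma.

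Concretely, for the estimate $hx_0 - x_0 = O(r)$, I would argue that the evaluation map $h\mapsto hx_0$ from $G$ to $\RP$ is smooth, and at $h=1_G$ it returns $x_0$. Hence on the compact ball $B_1(1_G)$ it is Lipschitz with some universal constant $C$ (independent of $x_0$, by compactness of $\RP$ and smoothness of the action map $G\times\RP\to\RP$), so for any $h\in B_r(1_G)\subseteq B_1(1_G)$ we obtain $|hx_0-x_0|\leq C\cdot d(h,1_G)\leq Cr$. (Alternatively, one can read this off directly from the proof of Lemma \ref{lem:linearization-at-the-identity}: writing $h$ in local coordinates around $1_G$ gives $\widehat h(x_0)-x_0 = O(r)$, uniformly in $x_0$.)

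For the second estimate $hx - x_0 = O(r)$, I would simply decompose
\[
hx - x_0 = (hx - hx_0) + (hx_0 - x_0),
\]
apply Lemma \ref{lem:linearization-at-the-identity} to bound $hx-hx_0 = (x-x_0) + O(r^2)$, use $x\in B_r(x_0)$ to bound $x-x_0 = O(r)$, and combine with the first estimate to conclude $hx-x_0 = O(r) + O(r^2) + O(r) = O(r)$.

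No step here is a real obstacle; the only point requiring mild care is to verify that the implicit constants are uniform in $x_0\in\RP$, which follows from compactness of $\RP$ together with the smoothness of the $G$-action. The corollary then falls out immediately.
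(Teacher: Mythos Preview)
Your proof is correct and matches the paper's intent: the paper states the corollary without proof, treating both estimates as immediate from the smoothness of the action (as already exploited in the proof of Lemma~\ref{lem:linearization-at-the-identity}) together with the lemma itself. Your decomposition $hx-x_0=(hx-hx_0)+(hx_0-x_0)$ and appeal to compactness of $\RP$ for uniformity of constants are exactly the details one would supply.
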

\begin{prop}
\label{prop:linearization}Let $0<\varepsilon<1$, let $0<r<r(\varepsilon)$,
and let $(g,x)\in B_{r}(g_{0})\times B_{r}(x_{0})\subset G\times\RP$.
Assume that $x_{0},x\notin B_{\varepsilon}(u_{g_{0}}^{-})$. Then
\begin{eqnarray*}
gx & = & gx_{0}+g_{0}x-g_{0}x_{0}+O_{\varepsilon}(\frac{r^{2}}{\lambda_{g_{0}}^{2}})\\
     & = & gx_{0}+\widehat{g}'_{0}(x_{0})(x-x_{0})+O_{\varepsilon}(\frac{r^{2}}{\lambda_{g_{0}}^{2}})
\end{eqnarray*}
where addition and scaling in $\RP$ is performed via the identification $\RP=[0,1)$.
\end{prop}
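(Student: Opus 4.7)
The plan is to decompose $g = g_0 h$ with $h \in B_r(1_G)$ (using left-invariance of $d$), so that $gx = g_0(hx)$ and $gx_0 = g_0(hx_0)$. The strategy then is to transport the ``near-identity'' linearization of Lemma \ref{lem:linearization-at-the-identity} forward through $g_0$, using the ``far from $u_{g_0}^-$'' linearization of Lemma \ref{lem:g-contracts-on-most-of-RP} for $g_0$ itself. Both approximations have quadratic remainders in the deviation, and the key point is that applying $g_0$ contracts the remainder by $\lambda_{g_0}^{-2}$, which is precisely the factor appearing in the error term.

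First, by Lemma \ref{lem:linearization-at-the-identity}, $hx - hx_0 = (x - x_0) + O(r^2)$, and by Corollary \ref{cor:linearization-at-the-identity}, $hx_0$ and $hx$ both lie within $O(r)$ of $x_0$. Choosing $r = r(\varepsilon)$ small enough, we may therefore assume $hx, hx_0 \notin B_{\varepsilon/2}(u_{g_0}^-)$. Applying Lemma \ref{lem:g-contracts-on-most-of-RP} to $\widehat{g_0}$ at the base point $hx_0$ and evaluation point $hx$, we obtain
\[
g_0(hx) = g_0(hx_0) + \widehat{g_0}'(hx_0)\cdot(hx - hx_0) + O_{\varepsilon}\Bigl(\frac{(hx - hx_0)^2}{\lambda_{g_0}^2}\Bigr).
\]
Since $|hx - hx_0| = O(r)$ and $\widehat{g_0}'(hx_0) = \Theta_{\varepsilon}(\lambda_{g_0}^{-2})$, substituting $hx - hx_0 = (x-x_0) + O(r^2)$ absorbs the $O(r^2)$ term into $O_{\varepsilon}(r^2/\lambda_{g_0}^2)$, yielding
\[
gx = gx_0 + \widehat{g_0}'(hx_0)\cdot(x - x_0) + O_{\varepsilon}\Bigl(\frac{r^2}{\lambda_{g_0}^2}\Bigr).
\]

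Next I would replace $\widehat{g_0}'(hx_0)$ by $\widehat{g_0}'(x_0)$. Using the second-derivative bound $|\widehat{g_0}''| = O_\varepsilon(\lambda_{g_0}^{-2})$ on $\RP \setminus B_{\varepsilon/2}(u_{g_0}^-)$ from the calculation preceding Lemma \ref{lem:g-contracts-on-most-of-RP}, and the fact that $|hx_0 - x_0| = O(r)$, we get $\widehat{g_0}'(hx_0) = \widehat{g_0}'(x_0) + O_{\varepsilon}(r/\lambda_{g_0}^2)$. Multiplying by $(x-x_0) = O(r)$ produces yet another $O_{\varepsilon}(r^2/\lambda_{g_0}^2)$ error, giving the second equality of the proposition:
\[
gx = gx_0 + \widehat{g_0}'(x_0)(x - x_0) + O_{\varepsilon}\Bigl(\frac{r^2}{\lambda_{g_0}^2}\Bigr).
\]

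Finally, for the first equality, apply Lemma \ref{lem:g-contracts-on-most-of-RP} directly to $g_0$ with base point $x_0$ and evaluation point $x$ (both in $\RP \setminus B_\varepsilon(u_{g_0}^-)$ by hypothesis): $g_0 x = g_0 x_0 + \widehat{g_0}'(x_0)(x - x_0) + O_{\varepsilon}(r^2/\lambda_{g_0}^2)$, so $\widehat{g_0}'(x_0)(x-x_0) = g_0 x - g_0 x_0 + O_\varepsilon(r^2/\lambda_{g_0}^2)$. Substituting into the previous display gives $gx = gx_0 + g_0 x - g_0 x_0 + O_{\varepsilon}(r^2/\lambda_{g_0}^2)$. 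The only mildly delicate point is keeping track of which error terms are absorbed at which step; since every linearization error is quadratic in the $O(r)$ deviations and every relevant derivative is $\Theta_\varepsilon(\lambda_{g_0}^{-2})$, all remainders collapse into a single $O_\varepsilon(r^2/\lambda_{g_0}^2)$, and the assumption $x_0, x \notin B_\varepsilon(u_{g_0}^-)$ together with $r$ small enough is precisely what is needed to ensure the hypotheses of Lemma \ref{lem:g-contracts-on-most-of-RP} hold at all the relevant evaluation points.
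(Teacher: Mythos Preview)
Your proof is correct and follows essentially the same approach as the paper: decompose $g=g_0h$, use Lemma~\ref{lem:linearization-at-the-identity} and Corollary~\ref{cor:linearization-at-the-identity} for the near-identity piece, and Lemma~\ref{lem:g-contracts-on-most-of-RP} for $g_0$. The only cosmetic difference is that the paper expands $\widehat{g_0}$ at the base point $x_0$ for both $hx$ and $hx_0$ and subtracts (so $\widehat{g_0}'(x_0)$ appears directly), whereas you expand at $hx_0$ and then move the derivative to $x_0$ via the second-derivative bound; these are equivalent rearrangements of the same Taylor estimate.
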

\begin{proof}
Write $g=g_{0}h$ with $h\in B_{r}(1_{G})$ and observe that since
$r$ is small relative to $\varepsilon$ and $x_{0}\notin B_{\varepsilon}(u_{g_{0}}^{-})$
we can assume that $hx_{0}\notin B_{\varepsilon/2}(u_{g_{0}}^{-})$;
and since $d_{\RP}(x,x_{0})<r$ we can also assume that $x,hx\notin B_{\varepsilon/2}(u_{g_{0}}^{-})$.
Using the first order approximation of $\widehat{g}_{0}$ at $x_{0}$
(Lemma \ref{lem:g-contracts-on-most-of-RP}) and using $hx-hx_{0}=x-x_{0}+O(r^{2})$
(Lemma \ref{lem:linearization-at-the-identity}) and $hx_{0}-x_{0}=O(r)$
and $hx-x_{0}=O(r)$ (Corollary \ref{cor:linearization-at-the-identity}),
we have
\begin{eqnarray*}
gx-gx_{0} & = & \widehat{g}_{0}(hx)-\widehat{g}_{0}(hx_{0})\\
 & = & \widehat{g}'_{0}(x_{0})(hx-hx_{0})+O_{\varepsilon}(\frac{(hx-x_{0})^{2}+(hx_{0}-x_{0})^{2}}{\lambda_{g_{0}}^{2}})\\
 & = & \widehat{g}'_{0}(x_{0})(x-x_{0})+O_{\varepsilon}(\frac{r^{2}}{\lambda_{g_{0}}^{2}}).
\end{eqnarray*}
This gives the second claim. Applying the calculation above with $g=g_{0}$
we conclude that $g_{0}x-g_{0}x_{0}=\widehat{g}'_{0}(x_{0})(x-x_{0})+O_{\varepsilon}(r^{2}/\lambda_{g_{0}}^{2})$.
Substituting this into the second equation in the statement gives
the first. 
\end{proof}
We endow $\mathcal{P}(\RP)$ with the compatible metric given by
\[
\varrho(\eta,\eta')=\sup\left\{ \left|\int fd\eta-\int fd\eta'\right|\,:\,f:\RP\rightarrow\mathbb{R}\mbox{ is }1\mbox{-Lipschitz}\right\}. 
\]

\begin{cor}
\label{cor:linearization-and-entropy}Let $0<\varepsilon<1$ and $0<r<r(\varepsilon)$.
Let $\theta\in\mathcal{P}(G)$ and $\eta\in\mathcal{P}(\RP)$ be supported
on sets of diameter $O(r)$ and let $g_{0}\in\supp\theta$ and $x_{0}\in\supp\eta$.
Assuming that $x_{0}\notin B_{\varepsilon}(u_{g_{0}}^{-})$, there
are $u,v\in\mathbb{R}$ such that 
\begin{eqnarray*}
\varrho\left(\theta\conv\eta\,,\,T_{u}((\theta\conv x_{0})*(g_{0}\conv\eta))\right) & = & O_{\varepsilon}(\frac{r^{2}}{\lambda_{g_{0}}^{2}}),\\
\varrho\left(\theta\conv\eta\,,\,T_{v}((\theta \conv x_{0})*(S_{\log\widehat{g}'_{0}(x_{0})}\eta))\right) & = & O_{\varepsilon}(\frac{r^{2}}{\lambda_{g_{0}}^{2}}).
\end{eqnarray*}
Moreover, given $n>n(\varepsilon)$, for 
$k=-\log r+2\log\lambda_{g_0}$, we have that $\theta\conv\eta$ is supported
on a set of diameter $O_\eps(2^{-k})$ and for $0< r < r(\eps,n)$,
\[
H(\theta\conv\eta,\mathcal{D}_{k+n})=H((\theta\conv x_{0})*(S_{\log\widehat{g}'_{0}(x_{0})}\eta),\mathcal{D}_{k+n})+O_{\varepsilon}(1).
\]
\end{cor}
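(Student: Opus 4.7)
\textbf{Proof plan for Corollary \ref{cor:linearization-and-entropy}.} The plan is to view $\theta\conv\eta$ as the push-forward of $\theta\times\eta$ under the exact action map $F_1(g,x)=gx$, and then to compare $F_1$ with two affine approximations obtained from Proposition \ref{prop:linearization}:
\begin{align*}
F_2(g,x) &= gx_0 + g_0 x - g_0 x_0,\\
F_3(g,x) &= gx_0 + \widehat{g}_0'(x_0)\,(x-x_0).
\end{align*}
Both $F_2$ and $F_3$ are affine in each variable separately, and since $g\mapsto gx_0$ pushes $\theta$ to $\theta\conv x_0$ while $x\mapsto g_0 x$ and $x\mapsto \widehat{g}_0'(x_0)(x-x_0)$ push $\eta$ to $g_0\eta$ and to $T_{-\widehat{g}_0'(x_0)x_0}(S_{\log\widehat{g}_0'(x_0)}\eta)$, respectively, the pushforwards of $\theta\times\eta$ under $F_2$ and $F_3$ are translates of $(\theta\conv x_0)\ast(g_0\conv\eta)$ and $(\theta\conv x_0)\ast(S_{\log\widehat{g}_0'(x_0)}\eta)$. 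These translations give the $u$ and $v$ in the statement.

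The first two displayed bounds follow immediately: Proposition \ref{prop:linearization} gives, uniformly on $B_r(g_0)\times B_r(x_0)$ (shrinking $r$ if necessary so that $B_r(x_0)\cap B_\varepsilon(u_{g_0}^-)=\emptyset$),
\[
|F_1(g,x)-F_2(g,x)|,\ |F_1(g,x)-F_3(g,x)| \;=\; O_\varepsilon(r^2/\lambda_{g_0}^2).
\]
Testing against a 1-Lipschitz function $\phi$ and comparing $\int \phi\circ F_1\, d(\theta\times\eta)$ with $\int \phi\circ F_i\,d(\theta\times\eta)$ yields the two $\varrho$-distance bounds, since the pointwise bound on $|F_1-F_i|$ directly dominates the difference of integrals.

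For the entropy statement, first observe that the support diameters of both $\theta\conv\eta$ and $(\theta\conv x_0)\ast(S_{\log\widehat{g}_0'(x_0)}\eta)$ are $O_\varepsilon(r/\lambda_{g_0}^2)=O_\varepsilon(2^{-k})$: the factor $\theta\conv x_0$ has this diameter by Lemma \ref{lem:separation-of-G-by-3-orbits} (the evaluation map at a single point $x_0\notin B_\varepsilon(u_{g_0}^-)$ scales $B_r(g_0)$ by $\lambda_{g_0}^{-2}$ with $O_\varepsilon(1)$ distortion), the factor $S_{\log\widehat{g}_0'(x_0)}\eta$ has diameter $O(r\cdot|\widehat{g}_0'(x_0)|)=O_\varepsilon(r/\lambda_{g_0}^2)$ by Lemma \ref{lem:g-contracts-on-most-of-RP}, and similarly $g_0\eta$ is controlled directly. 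Then apply Lemma \ref{lem:entropy_perturb}(ii) with the coupling $\mu=\theta\times\eta$ and the two maps $F_1$ and $F_3$: since $2^{-(k+n)}=r\,2^{-n}/\lambda_{g_0}^2$, the condition $|F_1-F_3|\le C_\varepsilon\cdot 2^{-(k+n)}$ is equivalent to $r^2/\lambda_{g_0}^2\le C_\varepsilon r\cdot 2^{-n}/\lambda_{g_0}^2$, i.e.\ $r\le C_\varepsilon\cdot 2^{-n}$, which is exactly what $r<r(\varepsilon,n)$ provides. Hence
\[
\bigl|H(\theta\conv\eta,\Dk_{k+n})-H(F_3(\theta\times\eta),\Dk_{k+n})\bigr|=O_\varepsilon(1),
\]
and since translation by $v$ changes dyadic entropy by only $O(1)$, the entropy equality follows.

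The only real obstacle is bookkeeping the scales: one must verify that the linearization error $O_\varepsilon(r^2/\lambda_{g_0}^2)$, which is one factor of $r$ smaller than the natural support scale $2^{-k}=r/\lambda_{g_0}^2$, is actually smaller than the refined scale $2^{-(k+n)}$ at which the entropy is measured. This is exactly the source of the constraint $r<r(\varepsilon,n)$, and it is what makes the choice $k=-\log r+2\log\lambda_{g_0}$ (rather than $k=2\log\lambda_{g_0}$) the correct normalization.
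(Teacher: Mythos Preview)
Your proof is correct and follows essentially the same route as the paper: introduce the exact action map and its two affine approximations from Proposition~\ref{prop:linearization}, read off the $\varrho$-bounds from the pointwise error, and then use Lemma~\ref{lem:entropy_perturb}(ii) together with the scale bookkeeping $O_\varepsilon(r^2/\lambda_{g_0}^2)=O_\varepsilon(r\cdot 2^{-k})\le O_\varepsilon(2^{-(k+n)})$ when $r\le 2^{-n}$.

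One small remark: citing Lemma~\ref{lem:separation-of-G-by-3-orbits} for the diameter of $\theta\conv x_0$ is slightly off, since that lemma concerns a $3$-point evaluation and gives a two-sided (bi-Lipschitz) estimate; for a single point the map $g\mapsto gx_0$ is not injective on $B_r(g_0)$, and what you actually need is only the upper Lipschitz bound, which follows directly from Lemma~\ref{lem:g-contracts-on-most-of-RP} (as in the paper's argument). Also, you assert the $O_\varepsilon(2^{-k})$ support bound for $\theta\conv\eta$ itself but only argue it for the affine convolution; of course this is immediate from the pointwise bound $|F_1-F_3|=O_\varepsilon(r^2/\lambda_{g_0}^2)\le O_\varepsilon(2^{-k})$, so there is no real gap.
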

\begin{proof}
The first two statements are immediate from the previous proposition. We interpret sums and differences of elements in $\RP$, and also
their products with scalars, using the identification of $\RP$ with
$[0,1)\subseteq\mathbb{R}$. They are well defined as long as all
terms are nearby in $\RP$, with $u=g_0x_0$ and $v=\widehat{g}_0'(x_0)x_0$.
Next, since $r$ is small relative to $\eps$ we can assume that $x\not\in B_{\eps/2}(u_g^-)$ for all $g\in \supp \theta$ and $x\in \supp\eta$. Thus, the action map $(g,x)\mapsto gx$ on the product of the supports scales by $\Theta_\eps(\lam_g^{-2})$, and since $\lam_g = \Theta(\lam_{g_0})$ for $g\in \supp\theta$, we conclude that $\theta\conv\eta$ is supported
on a set of diameter $O_\eps(2^{-k})$.

 For
the last statement, note that since $\theta\conv\eta$ is the image of
$\theta\times\eta$ under the action map $(g,x)\mapsto gx$, and $(\theta\conv x_{0})*(S_{\log\widehat{g}'_{0}(x_{0})}\eta)$
is its image under $(g,x)\mapsto gx_{0}+S_{\log\widehat{g}'_{0}(x_{0})}x$, so by
Lemma \ref{lem:entropy_perturb} it is enough to show that after a translation these
two maps differ uniformly (on the support of $\theta\times \eta$) by at most $O(2^{-(k+n)})$. In fact, by
the previous proposition, up to a translation, these maps differ from each other
 by
$O_{\varepsilon}(\widehat{g}'_{0}(x_{0})r^{2})$, and by Lemma \ref{lem:g-contracts-on-most-of-RP}
and the assumption $x_{0}\notin B_{\varepsilon}(u_{g_{0}}^{-})$
we know that $\widehat{g}'_{0}(x_{0})=\Theta_{\varepsilon}(\lambda_{g_{0}}^{-2})$.
Thus, after translation, these maps differ uniformly, on a set of diameter $O(r)$, by at most $$O_\eps\Bigl(\frac{r^2}{\lam_{g_0}^2}\Bigr)=O_\eps(r\cdot 2^{-k}) = O_\eps(2^{-(k+n)}),$$
assuming $r\le 2^{-n}$. This proves the claim.
\end{proof}

\subsection{Entropy growth under convolution: $G$-action }

We now study the growth of entropy for convolutions $\theta\conv\eta$.
We begin with some elementary observations about the scale at which
we should consider this measure. If the support of $\theta$ in $G$
is of diameter $O(1)$, then all elements $g\in\supp\theta$ have
the same norm up to a bounded multiplicative constant, and they contract
(most of) $\RP$ by a factor of roughly $O(\left\Vert g\right\Vert ^{2})$.
Therefore, if $\eta$ does not give too much mass to the exceptional
part of $\RP$ (which is automatic if $\eta$ is non-atomic and we
work at a small enough scale), then most of the mass of $\theta\conv\eta$ will be supported
on a set of diameter $O(\left\Vert g\right\Vert ^{-2})$. Thus properties
of $\theta,\eta$ that are observed at scale $n$ translate to properties
of $\theta\conv\eta$ at scale $2\log\left\Vert g\right\Vert +n$.
\begin{lem}
\label{lem:3-point-separation-and-entropy}Let $\eta\in\mathcal{P}(\RP)$
and let $\rho,\delta>0$ be such that $\eta(B)<\delta$ for every
ball $B\subseteq\RP$ of radius $\rho$. Then for $0<r<r(\rho)$ and
every measure $\theta\in\mathcal{P}(G)$ supported on a set of diameter
$r$, writing $\ell= 2\log\left\Vert g_{0}\right\Vert$ for some $g_{0}\in\supp\theta$,
for every $m$ we have 
\[
\eta\Bigl({x}\in\RP\,:\,H(\theta\conv{x},\mathcal{D}_{\ell+m})>\frac{1}{3}H(\theta,\mathcal{D}^G_{m})-O_{\rho}(1)\Bigr)>1-4\delta.
\]
\end{lem}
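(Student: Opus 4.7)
The plan is a proof by contradiction using the three-point separation Lemma \ref{lem:separation-of-G-by-3-orbits}. Suppose the bad set
\[
E = \bigl\{x\in\RP\,:\,H(\theta\conv x,\mathcal{D}_{\ell+m}) \le \tfrac{1}{3}H(\theta,\mathcal{D}^G_m) - C_\rho\bigr\}
\]
has $\eta(E)\ge 4\delta$, where $C_\rho>0$ is a constant (depending only on $\rho$) to be chosen at the end. Since $\eta(B_\rho(u_{g_0}^-))<\delta$ by hypothesis, the set $E'=E\setminus B_\rho(u_{g_0}^-)$ has $\eta$-mass at least $3\delta$. I will then greedily select three points $x_1,x_2,x_3\in E'$ that are pairwise $\rho$-separated: at each stage removing the $\rho$-ball around a previously chosen point costs less than $\delta$ in $\eta$-mass by hypothesis, so after two such removals a mass of at least $\delta>0$ remains in $E'$, guaranteeing that a third point can be found.

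Next, since $x_1,x_2,x_3$ are $\rho$-separated and each lies outside $B_\rho(u_{g_0}^-)$, Lemma \ref{lem:separation-of-G-by-3-orbits} (applied with $\varepsilon=\rho$) implies that, provided $r<r(\rho)$ is small enough, the evaluation map $f\colon g\mapsto(gx_1,gx_2,gx_3)$ scales $B_r(g_0)\supseteq\supp\theta$ by $\|g_0\|^{-2}=2^{-\ell}$ with distortion $O_\rho(1)$. Combined with Lemma \ref{lem:entropy_distort}, extended from $\RP$ to the three-dimensional target $\RP^3$ (with the product dyadic partition, whose cells at level $n$ also have diameter $\Theta(2^{-n})$), this yields
\[
H(f\theta,\mathcal{D}_{m+\ell}) = H(\theta,\mathcal{D}^G_m) + O_\rho(1).
\]

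The $i$-th coordinate marginal of $f\theta$ is precisely $\theta\conv x_i$, so subadditivity of Shannon entropy gives
\[
\sum_{i=1}^3 H(\theta\conv x_i,\mathcal{D}_{m+\ell}) \ge H(f\theta,\mathcal{D}_{m+\ell}) \ge H(\theta,\mathcal{D}^G_m) - O_\rho(1).
\]
A pigeonhole argument then produces some index $i$ with $H(\theta\conv x_i,\mathcal{D}_{m+\ell})\ge \tfrac{1}{3}H(\theta,\mathcal{D}^G_m)-O_\rho(1)$; choosing $C_\rho$ larger than the implicit constant contradicts $x_i\in E$, completing the argument. The only point that will require real care is verifying the extension of Lemma \ref{lem:entropy_distort} to the $\RP^3$-valued map $f$ and tracking how the constants in Lemma \ref{lem:separation-of-G-by-3-orbits} feed into $C_\rho$; the combinatorial selection of the $x_i$ and the final pigeonhole step are routine, and the conceptual heart of the proof is simply that the three-point separation lemma transfers entropy in $G$ to entropy on $\RP^3$, which must then distribute among the three factors.
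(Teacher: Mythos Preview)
Your proposal is correct and follows essentially the same approach as the paper's own proof: both argue by contradiction, use the hypothesis on $\eta$ to find a $\rho$-separated triple in $E\setminus B_\rho(u_{g_0}^-)$, apply Lemma~\ref{lem:separation-of-G-by-3-orbits} together with Lemma~\ref{lem:entropy_distort} to compare $H(f_x\theta,\mathcal{D}_{\ell+m})$ with $H(\theta,\mathcal{D}^G_m)$, and then use subadditivity over the three coordinates to reach a contradiction. Your explicit greedy selection of the $x_i$ is exactly the mechanism the paper leaves implicit when it says ``we can choose'' such a triple.
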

\begin{proof}
By Lemma \ref{lem:separation-of-G-by-3-orbits}, there are constants
$r(\rho)>0$ and $C=C(\rho)>0$ such that for every $0<r<r(\rho)$
and $B=B_{r}(g_{0})$, for every $\rho$-separated triple $x=( x_{1}, x_{2}, x_{3})\in(\RP\setminus B_{\rho}(u_{g_{0}}^{-}))^{3}$,
the restriction to $B$ of the map $f_{ x}:g\mapsto(g x_{1},g x_{2},g x_{3})$
scales by $2^{-\ell}$ with distortion $C$, and hence by Lemma 
\ref{lem:entropy_distort} there is a constant $C'$, depending only on $C$, such that 
\[
H(f_{ x}\theta,\mathcal{D}_{\ell+m})\geq H(\theta,\mathcal{D}^G_{m})-C'.
\]
Since the partition $\mathcal{D}_{\ell+m}$ on $(\RP)^{3}$ is just
$\bigvee\mathcal{D}_{\ell+m}^{i}$, where $\mathcal{D}_{\ell+m}^{i}=\pi_{i}^{-1}\mathcal{D}_{\ell+m}$
is the pullback of $\mathcal{D}_{\ell+m}$ by the projection $\pi_{i}:(\RP)^{3}\rightarrow\RP$
to the $i$-th coordinate, we have
\begin{eqnarray*}
  H(f_{ x}\theta,\mathcal{D}_{\ell+m})  & = & H(f_{ x}\theta,\mathcal{D}_{\ell+m}^{1}) + H(f_{ x}\theta,\mathcal{D}_{\ell+m}^{2}|\mathcal{D}_{\ell+m}^{1}) \\
    & & \qquad +\; H(f_{ x}\theta,\mathcal{D}_{\ell+m}^{3}|\mathcal{D}_{\ell+m}^{1}\lor\mathcal{D}_{\ell+m}^{2})\\
 & \leq & \sum_{i=1}^{3}H(f_{ x}\theta,\mathcal{D}_{\ell+m}^{i})\\
 & = & \sum_{i=1}^{3}H(\pi_{i}f_{ x}\theta,\mathcal{D}_{\ell+m}),
\end{eqnarray*}
and since $\pi_{i}f_{ x}\theta=\theta\conv  x_{i}$, this and the previous
inequality imply that
\begin{equation}
H(\theta\conv  x_{i},\mathcal{D}_{\ell+m})\geq\frac{1}{3}H(\theta,\mathcal{D}^G_{m})-C'\qquad\mbox{for some }i\in\{1,2,3\}.\label{eq:8}
\end{equation}
Now let
\[
E=\Bigl\{x\in\RP\,:\,H(\theta\conv  x,\mathcal{D}_{\ell+m})<\frac{1}{3} H(\theta,\mathcal{D}^G_{m})-C'\Bigr\}.
\]
We must show that $\eta(E)<4\delta$. Indeed, if this were not the case
then, since $B_{\rho}(u_{g_{0}}^{-})<\delta$ by the choice of $\rho,\delta$,
we would have $\eta(E\setminus B_{\rho}(u_{g_{0}}^{-}))\geq3\delta$.
Again by the properties of $\rho,\delta$ this means we can choose
$( x_{1}, x_{2},  x_{3})\in(E\setminus B_{\rho}(u_{g_{0}}^{-}))^{3}$
which are $\rho$-separated. But now these points are in $E$ but
satisfy (\ref{eq:8}), which is a contradiction. Thus $\eta(E)<4\delta$,
as claimed.
\end{proof}
We note that if $\eta$ is a fixed and non-atomic measure and $\delta>0$
is given, then there always exists a $\rho>0$ such that the hypothesis
of the previous lemma is satisfied. 

\begin{thm}
  \label{thm:entropy-growth-under-action}For every $\varepsilon>0$ there exists $\delta_{1}(\varepsilon)>0$, such that for every $0<\delta<\delta_1(\eps)$, $\rho>0$,  $m\geq m(\varepsilon,\delta,\rho)$ and $N\geq N(\varepsilon,\delta,\rho,m)$, the following holds.

  Suppose that $\eta\in\mathcal{P}(\RP)$ and $\theta\in\mathcal{P}(G)$ satisfy
\begin{enumerate}
\item[{\rm (i)}] The measure $\eta$ is $(1-\eps,\delta,m)$-entropy porous from scale $1$ to $N$;
\item[{\rm (ii)}] $\eta(B)<\delta^2$ for every ball $B\subseteq\RP$ of radius $\rho$;
\item[{\rm (iii)}] $\theta$ has support of diameter at most $1/\eps$.
\end{enumerate}
Let $g_0\in\supp\theta$ and set $\ell=2\log\left\Vert g_0\right\Vert$. Then
\[
\frac{1}{N}H(\theta,\mathcal{D}_{N}^{G})>\varepsilon\quad\implies\quad\frac{1}{N}H(\theta\conv\eta,\mathcal{D}_{N+\ell})>\frac{1}{N}H(\eta,\mathcal{D}_{N})+\delta-\frac{\ell}{N}\delta^2.
\]
\end{thm}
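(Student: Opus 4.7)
The plan is to reduce the nonlinear convolution $\theta\conv\eta$ to a Euclidean convolution at the component level via the linearization of Corollary \ref{cor:linearization-and-entropy}, then invoke the Euclidean entropy growth Theorem \ref{thm:entropy-growth-under-convolution}. Applying Lemma \ref{lem:multiscale-formula-for-entropy-of-action-convolution} with its parameter (called $\delta$ there) taken to be $\rho$ yields
\[
\tfrac{1}{N}H(\theta\conv\eta,\Dk_{N+\ell})\ge\E_{0\le i\le N}\Bigl(\tfrac{1}{m}H(\theta_{g,i}\conv\eta_{x,i},\Dk_{i+\ell+m})\,\Big|\,(g,x)\in E_\rho\Bigr)-O_\rho\bigl(\tfrac{1}{m}+\tfrac{m}{N}\bigr)-\tfrac{p_\rho\ell+H(p_\rho)}{N},
\]
where by Fubini and hypothesis (ii), $p_\rho=\int\eta(B_\rho(u_g^-))\,d\theta(g)\le\delta^2$; this produces exactly the $-\frac{\ell}{N}\delta^2$ error term in the conclusion (up to $O(1/N)$). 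Similarly, Lemma \ref{lem:multiscale-formula-for-entropy}(i) gives $\tfrac{1}{N}H(\eta,\Dk_N)=\E_{1\le i\le N}\bigl(\tfrac{1}{m}H(\eta_{x,i},\Dk_{i+m})\bigr)+O(m/N)$. It therefore suffices to exhibit a uniform gain of $\delta$ in the difference $\tfrac{1}{m}H(\theta_{g,i}\conv\eta_{x,i},\Dk_{i+\ell+m})-\tfrac{1}{m}H(\eta_{x,i},\Dk_{i+m})$ on a positive-density set of triples $(g,x,i)\in E_\rho$.

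Fix such a triple with $i$ large enough that Corollary \ref{cor:linearization-and-entropy} applies, and let $(g_i,x_i)$ denote the center of the relevant component pair. Hypothesis (iii) together with Conclusion \ref{concl} gives $\ell_i:=2\log\|g_i\|=\ell+O_\eps(1)$, and the corollary yields, modulo a translation,
\[
H(\theta_{g,i}\conv\eta_{x,i},\Dk_{i+\ell_i+m})=H\bigl((\theta_{g,i}\conv x_i)*(S_{\log\widehat{g}_i'(x_i)}\eta_{x,i}),\Dk_{i+\ell_i+m}\bigr)+O_\rho(1).
\]
Now apply Theorem \ref{thm:entropy-growth-under-convolution} to these two Euclidean factors. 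Entropy porosity of the scaled factor at unit scale (equivalently, porosity of $\eta_{x,i}$ at scale $i$) is supplied by Lemma \ref{lem:bounded-component-entropy-passes-to-components} applied to hypothesis (i), together with the affine invariance of entropy porosity noted after that lemma.

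The entropy hypothesis on the other factor is established as follows. The assumption $\tfrac{1}{N}H(\theta,\Dk_N^G)>\eps$ combined with Lemma \ref{lem:multiscale-formula-for-entropy}(ii) and Markov's inequality implies that a positive-density set of components satisfies $\tfrac{1}{m}H(\theta_{g,i},\Dk_{i+m}^G)>\eps/2$. Applying Lemma \ref{lem:3-point-separation-and-entropy} to $\theta_{g,i}$ and the global $\eta$ (valid thanks to hypothesis (ii) providing the small-ball control on $\eta$) yields, for $\eta$-most $x$,
\[
H(\theta_{g,i}\conv x_i,\Dk_{i+\ell_i+m})\ge\tfrac{1}{3}H(\theta_{g,i},\Dk_{i+m}^G)-O_\rho(1)\ge\eps m/6-O_\rho(1).
\]
Intersecting with the events ``$\eta_{x,i}$ is entropy porous'' and ``$(g,x)\in E_\rho$'' still leaves a positive-density set of good triples, on which Theorem \ref{thm:entropy-growth-under-convolution} with parameter $\eps/6$ produces a uniform gain $\delta'(\eps)>0$ in the Euclidean convolution entropy; by the linearization display above, this carries over to a gain in $\tfrac{1}{m}H(\theta_{g,i}\conv\eta_{x,i},\Dk_{i+\ell+m})$. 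Defining $\delta_1(\eps)$ as $\delta'(\eps)$ discounted by the density of good triples, averaging over $i$ yields the theorem. The main obstacle is the parameter bookkeeping: choosing $m$ large enough that the $O_\rho(1/m)$ linearization error and the slack introduced by Lemmas \ref{lem:bounded-component-entropy-passes-to-components} and \ref{lem:3-point-separation-and-entropy} are dominated by the gain $\delta'(\eps)$, and then $N$ large enough that the multiscale errors $O(m/N)$ become negligible; these dependencies unwind in the standard order $\eps\to\delta\to\rho\to m\to N$.
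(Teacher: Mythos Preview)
Your overall strategy matches the paper's: multiscale decomposition via Lemma \ref{lem:multiscale-formula-for-entropy-of-action-convolution}, linearization via Corollary \ref{cor:linearization-and-entropy}, then the Euclidean inverse theorem on components whose porosity comes from Lemma \ref{lem:bounded-component-entropy-passes-to-components} and whose ``$\theta$-side'' entropy comes from Lemma \ref{lem:3-point-separation-and-entropy}. The identification of $p_\rho\le\delta^2$ as the source of the $\ell\delta^2/N$ term is exactly right.

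There is, however, a genuine gap in the parameter scheme, not just bookkeeping. You use the single letter $m$ for two distinct roles: the porosity scale in hypothesis (i), and the inner step-size in the multiscale decomposition (the ``$m$'' of Lemma \ref{lem:multiscale-formula-for-entropy-of-action-convolution}). But Theorem \ref{thm:entropy-growth-under-convolution} requires the number of scales in the convolution, say $n$, to satisfy $n>n(\varepsilon',\delta',m)$; in particular $n$ must be large \emph{relative to} $m$, so the two cannot coincide. The paper therefore introduces a third scale $n$ with $m\ll n\ll N$, runs the multiscale formula with step $n$, and applies the second (conditional) form of Theorem \ref{thm:entropy-growth-under-convolution} at scales $i$ to $i+n$ with porosity parameter $m$. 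Your final paragraph ``$\eps\to\delta\to\rho\to m\to N$'' omits this intermediate $n$, and without it the application of Theorem \ref{thm:entropy-growth-under-convolution} is not justified.

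A smaller point: you appeal to the affine invariance of entropy porosity ``noted after that lemma,'' but the paper explicitly says it does not need or prove that statement. The paper sidesteps this by keeping $\eta_{x,i}$ unrescaled and applying the conditional form of Theorem \ref{thm:entropy-growth-under-convolution} with $n_1=i$, $n_2=i+n$; only the $\theta$-factor gets rescaled by $S_{-\ell(g,x)}$, and a rescaled measure has no porosity requirement.
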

\begin{rem}
The role of $\rho$ in the theorem is to quantify the continuity of
the measure $\eta$. If we change the order of quantifiers and fix
a non-atomic measure $\eta$ in advance, then for given $\delta$
there is a $\rho>0$ satisfying (ii), so the conclusion holds (uniformly
in $\theta$) as soon as $m$, $N$ are suitably large (in a manner
that now depends on $\rho$, hence $\eta$).

The role of $\ell$ in the theorem is to control the amount of expansion/contraction of elements in the support of $\theta$. Note that as long as $\ell=O(N)$, the entropy increment $\delta-\delta^2\ell/N$ will be positive as long as $\delta$ is small enough (and $\rho,m,N$ corresponding to it). 
\end{rem}
\begin{proof}
Recall from Section \ref{subsec:Dyadic-partitions-and-entropy} that $M>0$ is a constant such that every level-$i$ dyadic cell in $G$ contains at most $M$ level-($i+1$) sub-cells. In particular, if $\theta\in\mathcal{P}(G)$ is supported on a level-$i$ cell then $\frac{1}{m}H(\theta,\mathcal{D}^G_{i+m})\leq \log M$. We may assume without loss of generality that $\log M\geq 1$. 

Let $\delta>0$ be small enough that \[
\delta'=(40\log M)\frac{\sqrt{\delta}}{\varepsilon}
\]
satisfies the conclusion of Theorem \ref{thm:entropy-growth-under-convolution}
when applied with parameter \[
  \varepsilon'=\frac{\varepsilon}{26}
  \]
(instead of $\varepsilon$ 
in the theorem). Let $m,n$ be large enough to satisfy the conclusion
of that theorem, as well as additional conditions we will see below.
Assume that $N$ is large with respect to the other parameters. 

Let $\eta,\theta$ satisfy (i)-(iii). 
Write $\mathbb{P}^{N}$ for the distribution on independently chosen
pairs of components $(\theta_{g,i},\eta_{x,i})$ with $0\leq i<N$,
and let
\[
\mathcal{E}_{1}=\left\{ (\theta_{g,i},\eta_{x,i})\,:\,x\notin B_{\rho}(u_{g}^{-})\right\}.
\]
We remark that we are slightly abusing notation here: The pair of measures $(\theta_{g,i},\eta_{x,i})$ in general does not determine $g,x$ and $i$. However, we extend our notion of component so that $\theta_{g,i}$ is viewed as the triple $(\theta,g,i)$ and similarly with $\eta_{x,i}$, and view $\mathbb{P}^N$ as a distribution on pairs of such triples. We will continue with this convention later and allow ourselves to refer to $g,x,i$ when given components as above.

\begin{lem} $$\mathbb{P}^{N}(\mathcal{E}_{1})>1-\delta^2.$$
\end{lem}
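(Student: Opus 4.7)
The plan is to observe that the event $\mathcal{E}_1$ depends only on the centers $g$ and $x$ of the components, not on the scale $i$, and then apply Fubini together with hypothesis (ii).

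More precisely, under $\mathbb{P}^N$ we first choose $i$ uniformly in $\{0,1,\ldots,N-1\}$, and then (independently) choose $g$ according to $\theta$ and $x$ according to $\eta$, setting the components to be $\theta_{g,i}$ and $\eta_{x,i}$. The indicator of $\mathcal{E}_1^c$ is just $\mathbf{1}_{B_\rho(u_g^-)}(x)$, which does not depend on $i$. So by independence of $g$ and $x$ and Fubini,
\[
\mathbb{P}^N(\mathcal{E}_1^c) = \int \int \mathbf{1}_{B_\rho(u_g^-)}(x)\, d\eta(x)\, d\theta(g) = \int \eta\bigl(B_\rho(u_g^-)\bigr)\, d\theta(g).
\]
By hypothesis (ii) of the theorem, $\eta(B) < \delta^2$ for every ball $B \subseteq \RP$ of radius $\rho$, and $B_\rho(u_g^-)$ is such a ball for every $g$. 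Hence the integrand is pointwise bounded by $\delta^2$, giving $\mathbb{P}^N(\mathcal{E}_1^c) < \delta^2$, which is exactly the claim.

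There is essentially no obstacle here: the content of the lemma is just hypothesis (ii), lifted from a single ball to the random ball $B_\rho(u_g^-)$ via the independence of the $g$ and $x$ samples and the irrelevance of $i$ to the event.
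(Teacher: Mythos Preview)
Your proof is correct and is essentially the same as the paper's: the paper simply notes that for every component $\theta_{g,i}$ one has $\eta(B_\rho(u_g^-))<\delta^2$ by hypothesis (ii) and concludes by Fubini. You have just spelled out the integration explicitly and made the (correct) observation that the event is independent of $i$.
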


\begin{proof}
For every component $\theta_{g,i}$ of $\theta$ we have by assumption
$\eta(B_{\rho}(u_{g}^{-}))<\delta^2$, so the conclusion follows from Fubini.
\end{proof}

\begin{lem} \begin{eqnarray}
    \frac{1}{N}H(\theta\conv\eta,\mathcal{D}_{N+\ell}) & \geq & \mathbb{E}_{0\leq i\leq N}\left(\frac{1}{n}H(\theta_{g,i}\conv\eta_{x,i},\mathcal{D}_{\ell+i+n})\;\Big|\;\mathcal{E}_1\right) \nonumber \\
    & & \quad -\;O\Bigl(\frac{1}{n}+\frac{n}{N}\Bigr) - \frac{\ell}{N}\delta^2.
\label{eq:211}
\end{eqnarray}
\end{lem}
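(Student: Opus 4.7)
The proof I envision is a direct application of Lemma \ref{lem:multiscale-formula-for-entropy-of-action-convolution}, combined with the bound $\mathbb{P}^{N}(\mathcal{E}_{1})>1-\delta^2$ established just above. The role of the lemma's parameter $\delta$ will be played by $\rho$, its ``small'' scale $m$ by $n$, and its ``large'' scale $n$ by $N$.

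Applying the lemma to $\theta$ and $\eta$ (legitimate since hypothesis (iii) gives that $\theta$ has support of diameter $1/\varepsilon=O(1)$), with the associated $E_\rho=\{(g,x)\in G\times\RP:x\notin B_\rho(u_g^-)\}$ and $p_\rho=1-\theta\times\eta(E_\rho)$, yields
$$\frac{1}{N}H(\theta\conv\eta,\mathcal{D}_{N+\ell})\geq\mathbb{E}_{0\leq i\leq N}\left(\frac{1}{n}H(\theta_{g,i}\conv\eta_{x,i},\mathcal{D}_{i+\ell+n})\;\Big|\;(g,x)\in E_\rho\right)-O_\rho\Bigl(\frac{1}{n}+\frac{n}{N}\Bigr)-\frac{p_\rho\ell+H(p_\rho)}{N}.$$
The conditioning event $(g,x)\in E_\rho$ coincides with $\mathcal{E}_1$, because components $\theta_{g,i}$ and $\eta_{x,i}$ are indexed by independent samples $g\sim\theta$ and $x\sim\eta$ together with a uniform level $i$. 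Consequently the preceding sub-lemma gives $p_\rho=1-\mathbb{P}^N(\mathcal{E}_1)<\delta^2$.

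The first subtracted error term then contributes at most $p_\rho\ell/N<\delta^2\ell/N$, which is exactly the last term of \eqref{eq:211}. For the second, since $p_\rho<\delta^2<1/2$ for $\delta$ small and the binary entropy is monotone on $[0,1/2]$, we have $H(p_\rho)\leq H(\delta^2)$, a quantity depending only on $\delta$; thus $H(p_\rho)/N\to 0$ as $N\to\infty$ with the remaining parameters fixed, and can be absorbed into the $O(1/n+n/N)$ error by taking $N$ sufficiently large (as we are entitled to assume). Finally, since $\rho$ is a fixed parameter of the theorem, the $\rho$-dependence in $O_\rho$ is legitimately concealed inside the plain $O$. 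I do not foresee any substantive obstacle: the whole argument amounts to invoking Lemma \ref{lem:multiscale-formula-for-entropy-of-action-convolution} with the parameter identifications above and reading off the error terms against the already-established probability bound.
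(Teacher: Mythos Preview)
Your proposal is correct and essentially identical to the paper's own proof: both invoke Lemma~\ref{lem:multiscale-formula-for-entropy-of-action-convolution} with $(\delta,m,n)\leftarrow(\rho,n,N)$, identify the conditioning event with $\mathcal{E}_1$, use the preceding lemma to get $p_\rho<\delta^2$, and absorb $H(p_\rho)/N$ into the $O(1/n+n/N)$ error. The only cosmetic difference is that the paper bounds $H(p_\rho)\leq 2$ trivially rather than via monotonicity of binary entropy.
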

\begin{proof}
This is immediate from Lemma \ref{lem:multiscale-formula-for-entropy-of-action-convolution}. Using the notation of the lemma and setting \[
   p_\rho=\theta\times\eta({(g,x)\in G\times \RP\,:\,x\in B_\rho(u_g^-)})
\]
we have (by the argument in the previous lemma) that $p_\rho\leq \delta^2$, and the trivial bound gives $H(p_\rho)\leq 2$ so the error term $H(p_\rho)/N$ in that lemma is absorbed in the  error term of (\ref{eq:211}).
\end{proof}

For $g\in G$ and $x\in\RP$ define \[
  \ell(g,x) = \log\widehat{g}'(x).
\]

\begin{lem}
  Assuming that $n$ is large enough relative to $\varepsilon,\delta$,
  \begin{multline}
    \mathbb{E}_{0\leq i\leq N}\left(\frac{1}{n}H(\theta_{g,i}\conv \eta_{x,i},\mathcal{D}_{\ell+i+n})\;\Big|\;\mathcal{E}_1\right)\\
      \geq \mathbb{E}_{0\leq i\leq N}\left(\frac{1}{n}H\left((S_{-\ell(g,x)}\theta_{g,i}\conv x)*\eta_{x,i},\mathcal{D}_{i+n}\right)\;\Big|\;\mathcal{E}_1\right)-\delta^2.
  \end{multline}   
\end{lem}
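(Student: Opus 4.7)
The plan is to apply Corollary \ref{cor:linearization-and-entropy} componentwise to each pair $(\theta_{g,i},\eta_{x,i})$ with $(g,x)\in\mathcal{E}_1$, and then reconcile the scales on the two sides of the resulting identity using the entropy scaling identity (\ref{ent:scale}).

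Fix a pair of components $(\theta_{g,i},\eta_{x,i})$ with $(g,x)\in\mathcal{E}_1$, i.e., $x\notin B_\rho(u_g^-)$. The components have diameter $O(2^{-i})$ in $G$ and in $\RP$ respectively, and since $\theta$ has diameter $\le 1/\eps$, every element of $\supp\theta$ has norm $\Theta_\eps(\|g_0\|)$, so the parameter $k=i+2\log\lambda_{g_1}$ from the corollary, for any $g_1\in\supp\theta_{g,i}$, satisfies $k=i+\ell+O_\eps(1)$. Provided $i$ is large enough that $2^{-i}<r(\rho,n)$, say $i\ge i_0 = i_0(\rho,n)$, the corollary applied with $\rho$ in place of the parameter $\eps$ there yields
\[
H(\theta_{g,i}\conv\eta_{x,i},\Dk_{i+\ell+n}) = H\bigl((\theta_{g,i}\conv x)*(S_{\ell(g,x)}\eta_{x,i}),\Dk_{i+\ell+n}\bigr)+O_{\eps,\rho}(1).
\]

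To bring the right-hand side into the form appearing in the lemma, set
\[
\sigma_1=(\theta_{g,i}\conv x)*(S_{\ell(g,x)}\eta_{x,i}), \qquad \sigma_2=(S_{-\ell(g,x)}\theta_{g,i}\conv x)*\eta_{x,i}.
\]
Since $S_t$ is linear and commutes with convolution on $\R$, one has $\sigma_1=S_{\ell(g,x)}\sigma_2$. By Lemma \ref{lem:g-contracts-on-most-of-RP}, the assumption $x\notin B_\rho(u_g^-)$ gives $\widehat{g}'(x)=\Theta_\rho(\|g\|^{-2})$, hence $\ell(g,x)+\ell=O_{\eps,\rho}(1)$. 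Applying the entropy scaling identity (\ref{ent:scale}), together with the fact that a bounded shift of scale changes entropy by $O(1)$,
\[
H(\sigma_1,\Dk_{i+\ell+n}) = H(\sigma_2,\Dk_{i+\ell+n+\ell(g,x)})+O(1) = H(\sigma_2,\Dk_{i+n})+O_{\eps,\rho}(1).
\]
Combining this with the previous display and dividing by $n$ gives, for each $i\ge i_0$,
\[
\frac{1}{n}H(\theta_{g,i}\conv\eta_{x,i},\Dk_{i+\ell+n}) \ge \frac{1}{n}H(\sigma_2,\Dk_{i+n}) - \frac{C_{\eps,\rho}}{n}
\]
for a constant $C_{\eps,\rho}$ independent of $i,n,N$.

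Taking expectation over $0\le i\le N$ conditioned on $\mathcal{E}_1$, the restriction $i\ge i_0$ introduces an error of at most $O(i_0/N)$ times the pointwise bound on $|\text{LHS}-\text{RHS}|$; both quantities are bounded by $O(1)$ per term because the relevant supports have diameter $O(2^{-i-\ell})$ and $O(2^{-i})$ respectively and so intersect only $O(2^n)$ cells of the relevant partitions. Taking $n$ large relative to $\eps,\delta,\rho$ makes $C_{\eps,\rho}/n<\delta^2/2$, and then taking $N$ large relative to $\rho,n$ makes $O(i_0/N)<\delta^2/2$, yielding the claimed lower bound. The only real issue is careful bookkeeping of the $\eps$- and $\rho$-dependent constants hidden in the linearization step, which is controlled because they depend only on parameters fixed before $n$ and $N$ are chosen; the successive quantifier order $\eps\to\delta\to\rho\to m\to N$ in the theorem's hypotheses then suffices to absorb the errors.
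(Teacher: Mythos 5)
Your argument is correct and follows essentially the same route as the paper: apply Corollary \ref{cor:linearization-and-entropy} componentwise on $\mathcal{E}_1$, use that $\ell(g,x)+\ell=O_{\eps,\rho}(1)$ (equivalently, replace $\mathcal{D}_{\ell+i+n}$ by $\mathcal{D}_{2\log\|g\|+i+n}$ and then $\mathcal{D}_{\ell(g,x)+i+n}$) and the scaling identity \eqref{ent:scale} to move to the form $H\bigl((S_{-\ell(g,x)}\theta_{g,i}\conv x)*\eta_{x,i},\mathcal{D}_{i+n}\bigr)$, and then absorb the resulting $O_{\eps,\rho}(1)/n$ error and the $i<i_0$ contribution by taking $n$ and $N$ large. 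Your bookkeeping of the $\rho$-dependence and the $i\geq i_0$ restriction is in fact slightly more explicit than the paper's, which glosses over these points.
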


\begin{proof}
It is enough to prove the inequality with an $O(1)$ error, but without the $1/n$ factors. For this, first observe that for all $g\in \supp\theta$ and $x\in \supp\eta$,
\begin{equation*}
H(\theta_{g,i}\conv\eta_{x,i},\mathcal{D}_{\ell+i+n}) = H(\theta_{g,i}\conv\eta_{x,i},\mathcal{D}_{2\log\|g\|+i+n})	+ O(1),
\end{equation*}
since $\|g\| = \Theta(\|g_0\|)$. 
By Corollary \ref{cor:linearization-and-entropy}, there is an $i_{0}\in\mathbb{N}$,
depending on $\delta$, such that for $i>i_{0}$ and $(\theta_{g,i},\eta_{x,i})\in\mathcal{E}_{1}$,
\begin{multline*}
  H(\theta_{g,i}\conv\eta_{x,i},\mathcal{D}_{2\log\|g\|+i+n})\;=\\
  =\;H\left((\theta_{g,i}\conv x)*(S_{\ell(g,x)}\eta_{x,i}),\mathcal{D}_{2\log\|g\|+i+n}\right)-O_{\varepsilon}(1)
\end{multline*}
(applying the corollary with $r=2^{-i}$). We can further replace the partition $\Dk_{2\log\|g\|+i+n}$ in the right-hand side with $\Dk_{\ell(g,x) + i + n}$, absorbing the difference into the error. Finally, by re-scaling we have \begin{multline*}
 H\left((\theta_{g,i}\conv x)*(S_{\ell(g,x)}\eta_{x,i}),\mathcal{D}_{\ell(g,x)+i+n}\right) \;=\\=\; H\left((S_{-\ell(g,x)}\theta_{g,i}\conv x)*\eta_{x,i},\mathcal{D}_{i+n}\right) - O(1).
\end{multline*}
Thus in the event $\mathcal{E}_1$, the desired inequality holds pointwise, and hence in expectation.
\end{proof}

In order to evaluate the lower bound obtained in the last lemma, define
\[
\mathcal{E}_{2}=\left\{ (\theta_{g,i},\eta_{x,i})\in\mathcal{E}_1\,:\,\begin{array}{c}
\frac{1}{n}H\left((S_{-\ell(g,x)}\theta_{g,i}\conv x)*\eta_{x,i},\mathcal{D}_{i+n}\right)\\
\qquad>\quad\frac{1}{n}H(\eta_{x,i},\mathcal{D}_{i+n})+\delta'
\end{array}\right\} 
\]

\begin{lem}
  For $n$ large enough and $N$ large enough relative to $n$,
\begin{multline*}
  \mathbb{E}_{0\leq i\leq N}\left(\frac{1}{n}H\left((S_{-\ell(g,x)}\theta_{g,i}\conv x)*\eta_{x,i},\mathcal{D}_{i+n}\right)\;\Big|\;\mathcal{E}_1\right)\\
\geq\; \frac{1}{N}H(\eta,\mathcal{D}_{N})+\delta'\cdot\mathbb{P}^{N}(\mathcal{E}_{2})-\delta^2.
\end{multline*}
\end{lem}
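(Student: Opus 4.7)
The strategy is to split the conditional expectation into contributions from $\mathcal{E}_2$, where we gain $\delta'$ of entropy by the defining inequality, and from $\mathcal{E}_1\setminus\mathcal{E}_2$, where we only use the trivial monotonicity of entropy under convolution. Specifically, for every pair $(\theta_{g,i},\eta_{x,i})\in\mathcal{E}_1$ Lemma~\ref{lem:entropy-monotonicity-underconvolution} gives
\[
\tfrac{1}{n}H\bigl((S_{-\ell(g,x)}\theta_{g,i}\conv x)*\eta_{x,i},\mathcal{D}_{i+n}\bigr)\;\geq\;\tfrac{1}{n}H(\eta_{x,i},\mathcal{D}_{i+n})-O(1/n),
\]
with an additional $\delta'$ on $\mathcal{E}_2$ by its very definition. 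Integrating with respect to $\mathbb{P}^N(\cdot\,|\,\mathcal{E}_1)$, and using $\mathcal{E}_2\subseteq\mathcal{E}_1$ to replace $\mathbb{P}^N(\mathcal{E}_2\,|\,\mathcal{E}_1)$ by the smaller quantity $\mathbb{P}^N(\mathcal{E}_2)$, one obtains
\begin{multline*}
\mathbb{E}_{0\leq i\leq N}\Bigl(\tfrac{1}{n}H\bigl((S_{-\ell(g,x)}\theta_{g,i}\conv x)*\eta_{x,i},\mathcal{D}_{i+n}\bigr)\,\Big|\,\mathcal{E}_1\Bigr)\\
\geq\;\mathbb{E}_{0\leq i\leq N}\Bigl(\tfrac{1}{n}H(\eta_{x,i},\mathcal{D}_{i+n})\,\Big|\,\mathcal{E}_1\Bigr)+\delta'\cdot\mathbb{P}^N(\mathcal{E}_2)-O(1/n).
\end{multline*}

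Next I would remove the conditioning on $\mathcal{E}_1$. Since $\tfrac{1}{n}H(\eta_{x,i},\mathcal{D}_{i+n})\in[0,1]$ (as $\mathcal{D}_{i+n}$ subdivides each $\mathcal{D}_i$-atom into at most $2^n$ cells) and $\mathbb{P}^N(\mathcal{E}_1^c)\leq\delta^2$ by the first preliminary lemma, a standard estimate gives
\[
\mathbb{E}_{0\leq i\leq N}\Bigl(\tfrac{1}{n}H(\eta_{x,i},\mathcal{D}_{i+n})\,\Big|\,\mathcal{E}_1\Bigr)\;\geq\;\mathbb{E}_{0\leq i\leq N}\Bigl(\tfrac{1}{n}H(\eta_{x,i},\mathcal{D}_{i+n})\Bigr)-\delta^2.
\]
Then the multiscale formula (Lemma~\ref{lem:multiscale-formula-for-entropy}(i)), applied with the roles of $m,n$ there played by $n,N$ here, converts the unconditional average to the global quantity $\tfrac{1}{N}H(\eta,\mathcal{D}_N)+O(n/N)$.

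Chaining the three bounds yields the claim up to an additive error of $O(1/n)+O(n/N)$ on top of the $-\delta^2$ in the statement. The main obstacle, though minor, is this quantitative bookkeeping: by taking $n$ large enough to render $O(1/n)$ negligible and then $N$ large enough relative to $n$ to render $O(n/N)$ negligible, these extra terms can be absorbed into the stated error. Aside from this, no new idea is needed beyond the pointwise gain on $\mathcal{E}_2$, the monotonicity bound of Lemma~\ref{lem:entropy-monotonicity-underconvolution}, and the multiscale decomposition of Lemma~\ref{lem:multiscale-formula-for-entropy}(i).
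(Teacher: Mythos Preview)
Your proof is correct and follows essentially the same approach as the paper's: split into $\mathcal{E}_2$ (gaining $\delta'$) versus its complement in $\mathcal{E}_1$ (using only Lemma~\ref{lem:entropy-monotonicity-underconvolution}), then pass to the unconditional expectation and apply the multiscale formula. Your treatment is in fact slightly more explicit than the paper's at the step where the conditioning on $\mathcal{E}_1$ is removed --- you spell out that $\tfrac{1}{n}H(\eta_{x,i},\mathcal{D}_{i+n})\in[0,1]$ together with $\mathbb{P}^N(\mathcal{E}_1^c)<\delta^2$ bounds the discrepancy, whereas the paper passes to the unconditional expectation in one line.
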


\begin{proof}
By Lemma \ref{lem:entropy-monotonicity-underconvolution}, for any pair of components
$(\theta_{g,i},\eta_{x,i})$ (even not in $\mathcal{E}_{2}$), we have the trivial
bound 
\begin{multline}
  \frac{1}{n}H\left((S_{-\ell(g,x)}\theta_{g,i}\conv x)*\eta_{x,i},\mathcal{D}_{i+n}\right)\;
  \geq\;\frac{1}{n}H(\eta_{x,i},\Dk_{i+n})-O(\frac{1}{n}).
\end{multline}
Thus, conditioning the expectation on $\mathcal{E}_2$ and its complement in $\mathcal{E}_1$, 
\begin{multline*}
\mathbb{E}_{0\leq i\leq N}\left(\frac{1}{n}H\left((S_{-\ell(g,x)}\theta_{g,i}\conv x)*\eta_{x,i},\mathcal{D}_{i+n}\right)\;\Big|\;\mathcal{E}_1\right)\\
\begin{aligned}
  = & \quad \mathbb{P}^N(\mathcal{E}_2\;\big|\;\mathcal{E}_1)\cdot\mathbb{E}_{0\leq i\leq N}\left(\frac{1}{n}H\left((S_{-\ell(g,x)}\theta_{g,i}\conv x)*\eta_{x,i},\mathcal{D}_{i+n}\right) \Big| \mathcal{E}_2\right)\\
  & \qquad + \; \mathbb{P}^N(\mathcal{E}_1\setminus\mathcal{E}_2\;\big|\;\mathcal{E}_1)\cdot\mathbb{E}_{0\leq i\leq N}\left(\frac{1}{n}H\left((S_{-\ell(g,x)}\theta_{g,i}\conv x)*\eta_{x,i},\mathcal{D}_{i+n}\right) \Big| \mathcal{E}_1\setminus\mathcal{E}_2\right)\\
  \geq & \quad \mathbb{P}^N(\mathcal{E}_2)\cdot\left(\mathbb{E}_{0\leq i\leq N}\left(\frac{1}{n}H\left(\eta_{x,i},\mathcal{D}_{i+n}\right)\Big| \mathcal{E}_2\right)+\delta'\right)\\
    & \qquad + \; \mathbb{P}^N(\mathcal{E}_1\setminus\mathcal{E}_2)\cdot\mathbb{E}_{0\leq i\leq N}\left(\frac{1}{n}H\left(\eta_{x,i},\mathcal{D}_{i+n}\right) \Big| \mathcal{E}_1\setminus\mathcal{E}_2\right)-O(\frac{1}{n})\\
  \geq & \quad\mathbb{E}_{0\leq i\leq N}\left(\frac{1}{n}H\left(\eta_{x,i},\mathcal{D}_{i+n}\right)\right) +\delta'\cdot\mathbb{P}^{N}(\mathcal{E}_{2})-O(\frac{1}{n})(1-\mathbb{P}^{N}(\mathcal{E}_{2}))\\
= & \quad\frac{1}{N}H(\eta,\mathcal{D}_{N})+\delta'\cdot\mathbb{P}^{N}(\mathcal{E}_{2})-O(\frac{1}{n}+\frac{n}{N}).
\end{aligned}
\end{multline*}
In the last line we used the multiscale entropy formula from Lemma~\ref{lem:multiscale-formula-for-entropy}(i). Assuming that $n$ and $N$ are suitably large, this proves the lemma.
\end{proof}

Combining all of the inequalities so far, for large $n$ and $N$ we have 
\begin{equation}
\frac{1}{N}H(\theta\conv \eta,\mathcal{D}_{N+\ell})\geq\frac{1}{N}H(\eta,\mathcal{D}_{N})+\delta'\cdot\mathbb{P}^{N}(\mathcal{E}_{2})-(3+\frac{\ell}{N})\delta^2.\label{eq:12}
\end{equation}
Our goal is now to bound $\mathbb{P}^{N}(\mathcal{E}_{2})$ from below by
proving that with non-negligible probability, pairs $(\theta_{g,i},\eta_{x,i})$
satisfy the hypotheses of the Euclidean inverse theorem. Specifically, recall that $\varepsilon'=\varepsilon/26$, and 
set 
\begin{eqnarray*}
\mathcal{E}_{3} & = & \left\{ (\theta_{g,i},\eta_{x,i})\,:\,\begin{array}{l}
\eta_{x,i}\mbox{ is }(1-\eps',\delta',m)\mbox{-entropy porous}\\
\mbox{from scale }i\mbox{ to scale }i+n
\end{array}\right\}, \\
\mathcal{E}_{4} & = & \left\{ (\theta_{g,i},\eta_{x,i})\,:\,\frac{1}{n}H(S_{-\ell(g,x)}\theta_{g,i}\conv x,\mathcal{D}_{i+n})>2\eps'\right\}. 
\end{eqnarray*}

\begin{lem}
For $n$ sufficiently large, we have
\begin{equation} \label{claim1}
\Ek_1\cap\mathcal{E}_{3}\cap\mathcal{E}_{4}\subseteq\mathcal{E}_{2}.
\end{equation}
\end{lem}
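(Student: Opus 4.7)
My plan is to reduce the inclusion to a direct application of Theorem~\ref{thm:entropy-growth-under-convolution}: the three conditions defining $\Ek_1$, $\Ek_3$, $\Ek_4$ are, after rescaling, precisely the hypotheses of that theorem applied to $\eta_{x,i}$ and $\sigma := S_{-\ell(g,x)}\theta_{g,i}\conv x$, and its conclusion is what places the pair in $\Ek_2$.

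First I would check that both $\eta_{x,i}$ and $\sigma$ are supported in intervals of diameter $O_\eps(2^{-i})$. For $\eta_{x,i}$ this is by definition; for $\sigma$, $\theta_{g,i}$ sits in a ball of radius $O(2^{-i})$ in $G$, so by Lemma~\ref{lem:separation-of-G-by-3-orbits} (applicable because $x\notin B_\rho(u_g^-)$, the content of $\Ek_1$) the pushforward $\theta_{g,i}\conv x$ has support of diameter $O_\eps(2^{-i}\|g\|^{-2})$, and applying $S_{-\ell(g,x)}$ with $2^{-\ell(g,x)}=\Theta_\eps(\|g\|^{2})$ stretches this back up to $O_\eps(2^{-i})$.

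Next I would apply $S_i$ (with a suitable translation) to both, obtaining $\eta' := S_i\eta_{x,i}$ and $\sigma' := S_i\sigma$ supported in $[0,1)$. The scaling identity (\ref{ent:scale}) turns the entropy bound in $\Ek_4$ into $\frac{1}{n}H(\sigma', \Dk_n) > 2\eps' - O(1/n) > \eps'$ for $n$ large, and since components of $\eta'$ at level $j$ correspond under $S_i$ to components of $\eta_{x,i}$ at level $i+j$, the $(1-\eps',\delta',m)$-entropy porosity of $\eta_{x,i}$ from scale $i$ to $i+n$ (the content of $\Ek_3$) gives the same porosity of $\eta'$ from scale $0$ to $n$, up to $O(1/m)$ parameter adjustments absorbed for $m$ large. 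Theorem~\ref{thm:entropy-growth-under-convolution} applied with parameter $\eps'$ (the very theorem whose output was used to define $\delta'$ at the outset of the main proof) then yields $\frac{1}{n}H(\sigma' * \eta', \Dk_n) > \frac{1}{n}H(\eta', \Dk_n) + \delta'$, which unscales via (\ref{ent:scale}) and the fact that $S_i$ commutes with additive convolution to give the inequality defining $\Ek_2$.

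The main obstacle, as I see it, is parameter bookkeeping: the rescaling by $S_i$ introduces $O(1)$ additive errors in entropy that become $O(1/n)$ or $O(1/m)$ after normalization, and one must verify that the slack designed into the parameters (the factor $2\eps'$ in $\Ek_4$ versus $\eps'$ in the theorem's hypothesis; the margins in the porosity condition) absorbs all these corrections. The underlying geometric content --- that the linearization reduces the nonlinear $G$-action to an ordinary additive convolution --- has already been packaged into Corollary~\ref{cor:linearization-and-entropy} (and built into the definition of $\sigma$), so no new geometric input is required here.
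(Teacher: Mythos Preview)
Your proposal is correct and follows essentially the same approach as the paper. The only difference is cosmetic: rather than rescaling by $S_i$ and applying the first form of Theorem~\ref{thm:entropy-growth-under-convolution} on $[0,1)$, the paper applies the second (conditional) form of that theorem directly at scales $n_1=i$ to $n_2=i+n$, using the support bound on $\sigma$ to pass from $\frac{1}{n}H(\sigma,\mathcal{D}_{i+n})$ to $\frac{1}{n}H(\sigma,\mathcal{D}_{i+n}\,|\,\mathcal{D}_i)$ at cost $O_\rho(1)/n$, which is absorbed by the $2\varepsilon'$ versus $\varepsilon'$ slack exactly as you anticipated. This avoids having to verify that entropy porosity transfers under the rescaling, but your observation that components of $S_i\eta_{x,i}$ at level $j$ correspond to components of $\eta_{x,i}$ at level $i+j$ handles that point equally well.
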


\begin{proof}
For $ (\theta_{g,i},\eta_{x,i})\in \Ek_1$, we have $x\not\in B_\rho(u_g^-)$, hence $\theta_{g,i}\conv x$ is supported on a set of diameter $O_\rho(2^{-(i+\ell(g,x))})$, so $S_{-\ell(g,x)}\theta_{g,i}\conv x$ is supported on a set of diameter $O_\rho(2^{-i})$. Thus, by 
(\ref{ent:supp1}) we have for $ (\theta_{g,i},\eta_{x,i})\in \Ek_1\cap \Ek_4$,
\begin{align*}
  \frac{1}{n}H(S_{-\ell(g,x)}\theta_{g,i}\conv x,\mathcal{D}_{i+n}\,|\,\Dk_{i}) & \geq \;\; \frac{1}{n}H(S_{-\ell(g,x)}\theta_{g,i}\conv x,\mathcal{D}_{i+n}) - \frac{O_\rho(1)}{n} \qquad \\
    &> \;\;  \frac{\eps}{26}=\eps',
\end{align*}
for $n$ sufficiently large. Now, by  our choice of parameters, Theorem \ref{thm:entropy-growth-under-convolution} applies for all $ (\theta_{g,i},\eta_{x,i})\in \Ek_1\cap\Ek_3\cap \Ek_4$,
with the conclusion that 
\begin{align*}
\frac{1}{n}H\left((S_{-\ell(g,x)}\theta_{g,i}\conv x)*\eta_{x,i},\mathcal{D}_{i+n}\,|\,\Dk_{i}\right) &
>\; \frac{1}{n}H(\eta_{x,i},\mathcal{D}_{i+n}|\mathcal{D}_{i})+\delta'\\
 &=\; \frac{1}{n}H(\eta_{x,i},\mathcal{D}_{i+n})+\delta'.
\end{align*}
Thus (\ref{claim1}) has been verified.
\end{proof}

\begin{lem}
  For $N$ large enough relative to $\delta,n$,
  \[
  \mathbb{P}^{N}\left(\mathcal{E}_{3}\right)>1-\sqrt{2\delta}.
  \]
\end{lem}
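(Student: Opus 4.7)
The plan is to deduce this bound directly from Lemma \ref{lem:bounded-component-entropy-passes-to-components}, applied to $\eta$ with appropriately chosen parameters, using hypothesis (i).

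First, I apply Lemma \ref{lem:bounded-component-entropy-passes-to-components} with input porosity $(1-\eps,\delta,m)$ (as given by hypothesis (i), noting that the single-scale shift from starting at scale $1$ versus $0$ contributes only $O(1/N)$ and is absorbed into the error), the lemma's output probability parameter taken to be $\sqrt{2\delta}$ (which is consistent since $(\sqrt{2\delta})^2/2 = \delta$), and $k = n$. Provided $N$ is sufficiently large relative to $n$ and $\delta$, the conclusion is
\[
\mathbb{P}_{0\leq i\leq N}\Bigl(\eta_{x,i}\text{ is }(1-\eps,\sqrt{2\delta},m)\text{-entropy porous from scale }i\text{ to }i+n\Bigr) > 1-\sqrt{2\delta}.
\]

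Next, I show that $(1-\eps,\sqrt{2\delta},m)$-entropy porosity implies $(1-\eps',\delta',m)$-entropy porosity, so that the event in the display above is contained in $\mathcal{E}_3$ (after marginalizing away $\theta_{g,i}$). Recall that $\eps' = \eps/26$ and $\delta' = (40\log M)\sqrt{\delta}/\eps$, so that $\eps-\eps' = 25\eps/26$. Both conditions required for the implication:
\begin{align*}
\delta' &\geq \sqrt{2\delta}, \\
1-\eps+\sqrt{2\delta} &\leq 1-\eps'+\delta'
\end{align*}
hold for $\delta < \delta_1(\eps)$ sufficiently small, since $\delta'/\sqrt{2\delta} = 40(\log M)/(\eps\sqrt{2}) > 1$ and the second inequality rearranges to $\sqrt{2\delta} \leq \delta' + 25\eps/26$.

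Finally, since under $\mathbb{P}^N$ the pairs $(\theta_{g,i},\eta_{x,i})$ are sampled independently (with $i$ uniform in $[0,N]$), the marginal distribution of $\eta_{x,i}$ coincides with that in the conclusion above. Hence $\mathbb{P}^N(\mathcal{E}_3) > 1-\sqrt{2\delta}$, as required. The only non-routine aspect is verifying the parameter inequalities, but these are straightforward given the explicit form of $\delta'$ and the freedom to shrink $\delta_1(\eps)$.
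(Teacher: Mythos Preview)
Your proof is correct and follows essentially the same route as the paper's. The paper's argument is slightly more terse: it applies Lemma \ref{lem:bounded-component-entropy-passes-to-components} exactly as you do, and then observes only that $\delta'>\sqrt{2\delta}$ suffices for the conclusion (since $\eps'<\eps$ already forces the threshold inequality $1-\eps+\sqrt{2\delta}\le 1-\eps'+\delta'$ once $\delta'\ge\sqrt{2\delta}$, making your second displayed inequality redundant).
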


\begin{proof}
By hypothesis (i), $\eta$ is $(1-\eps,m,\delta)$-porous 
from scale $1$ to $N$, so by Lemma \ref{lem:bounded-component-entropy-passes-to-components}, if $N$ is large enough relative to $\delta$ and $n$,
\[
\mathbb{P}_{0\leq i<N}\left(\begin{array}{c}
\eta_{x,i}\mbox{ is } (1-\eps,m,\sqrt{2\delta})\mbox{-entropy}\\
\mbox{ porous from scale }i\mbox{ to }i+n
\end{array}\right)>1-\sqrt{2\delta}.
\]
Since  $\delta'>\sqrt{2\delta}$, the conclusion follows. 
\end{proof}

\begin{lem}
  For $\delta$ small enough, $n$ large relative to $\rho$ and  $N$ large enough,\[
  \mathbb{P}^{N}\left(\mathcal{E}_{4}\right)\geq \frac{\varepsilon}{5\log M}.
\]
\end{lem}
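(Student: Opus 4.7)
My plan is to bootstrap the hypothesis $\tfrac{1}{N}H(\theta,\mathcal{D}_N^G)>\varepsilon$ in two stages. First, via the multiscale formula and a reverse Markov inequality, to a positive lower bound on the probability that a random component $\theta_{g,i}$ has substantial entropy at scale $i+n$. Then, via the three-point separation lemma (Lemma \ref{lem:3-point-separation-and-entropy}), to transfer that entropy to the orbit measure $\theta_{g,i}\conv x$ for most $x\sim\eta$, after the appropriate rescaling.

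For the first stage, Lemma \ref{lem:multiscale-formula-for-entropy}(ii) applied to $\theta$ (which has diameter $O(1)$ by (iii)) gives
$\mathbb{E}^N\bigl[\tfrac{1}{n}H(\theta_{g,i},\mathcal{D}^G_{i+n})\bigr]=\tfrac{1}{N}H(\theta,\mathcal{D}_N^G)+O(n/N)>\varepsilon/2$
for $N$ large relative to $n$. Since $\tfrac{1}{n}H(\theta_{g,i},\mathcal{D}^G_{i+n})\le\log M$ by property (ii) of the partitions, the reverse Markov bound $\mathbb{E}[X]\le c+\|X\|_\infty\mathbb{P}(X>c)$ with $c=\varepsilon/4$ yields
\[
\mathbb{P}^N(\mathcal{E}_5)\ge\frac{\varepsilon}{4\log M},\qquad\text{where }\mathcal{E}_5:=\bigl\{\tfrac{1}{n}H(\theta_{g,i},\mathcal{D}^G_{i+n})>\varepsilon/4\bigr\}.
\]
For the second stage, fix $(g,i)$ with $i$ above an $O(1)$ threshold $i_0(\rho)$ so that $\supp\theta_{g,i}$ has diameter below $r(\rho)$ in Lemma \ref{lem:3-point-separation-and-entropy}. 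Applying that lemma with scale parameter $m=i+n$ and non-atomicity parameter $\delta^2$ (valid by (ii)), and writing $\ell_{g,i}:=2\log\|g_0\|$ for any $g_0\in\supp\theta_{g,i}$, yields
\[
\eta(\mathcal{G}_{g,i})>1-4\delta^2,\quad\mathcal{G}_{g,i}:=\bigl\{x:H(\theta_{g,i}\conv x,\mathcal{D}_{\ell_{g,i}+i+n})>\tfrac{1}{3}H(\theta_{g,i},\mathcal{D}^G_{i+n})-O_\rho(1)\bigr\}.
\]

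The third step is to reconcile scales. For large $i$, every $g'\in\supp\theta_{g,i}$ satisfies $\|g'\|/\|g\|=1+O(2^{-i})$ by Conclusion \ref{concl}, so $\ell_{g,i}=2\log\|g\|+O(1)$; and on $\mathcal{E}_1$, Lemma \ref{lem:g-contracts-on-most-of-RP} gives $\ell(g,x)=-2\log\|g\|+O_\rho(1)$. Thus $\ell_{g,i}+\ell(g,x)=O_\rho(1)$, and by \eqref{ent:scale} together with Lemma \ref{lem:entropy_perturb}(i),
\[
H(S_{-\ell(g,x)}\theta_{g,i}\conv x,\mathcal{D}_{i+n})=H(\theta_{g,i}\conv x,\mathcal{D}_{\ell_{g,i}+i+n})+O_\rho(1).
\]
Combining with the previous two stages, on the event $\mathcal{E}_5\cap\mathcal{E}_1\cap\{x\in\mathcal{G}_{g,i}\}$, for $n$ large relative to $\rho$ and $\varepsilon$,
$\tfrac{1}{n}H(S_{-\ell(g,x)}\theta_{g,i}\conv x,\mathcal{D}_{i+n})>\varepsilon/12-O_\rho(1/n)>\varepsilon/13=2\varepsilon'$,
so this event lies in $\mathcal{E}_4$. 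Since $\mathcal{E}_5$ depends only on $(g,i)$, a union bound in the $x$-marginal gives $\mathbb{P}(\mathcal{E}_1\cap\mathcal{G}_{g,i}\mid g,i)\ge 1-5\delta^2$, whence (absorbing the $O(i_0/N)=o(1)$ contribution from the excluded small $i$),
\[
\mathbb{P}^N(\mathcal{E}_4)\ge\mathbb{P}^N(\mathcal{E}_5)(1-5\delta^2)-o(1)\ge\frac{\varepsilon}{5\log M},
\]
provided $\delta\le 1/5$ and $N$ is large enough. The main technical nuisance I anticipate is the bookkeeping of $O(1)$ and $O_\rho(1)$ error terms in matching the three relevant scales ($\ell_{g,i}+i+n$, $i+n-\ell(g,x)$, and $i+n$ after rescaling); all of these get absorbed into the gap between $\varepsilon/12$ and $\varepsilon/13$ by choosing $n$ large enough.
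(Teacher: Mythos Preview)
Your proof is correct and follows essentially the same approach as the paper: the multiscale entropy formula plus reverse Markov to get a positive fraction of components $\theta_{g,i}$ with substantial entropy, then Lemma \ref{lem:3-point-separation-and-entropy} to transfer this to the orbit measure $\theta_{g,i}\conv x$ for most $x$, and finally a scale match to land in $\mathcal{E}_4$. The only difference is that you make the scale reconciliation explicit (introducing $\ell_{g,i}=2\log\|g_0\|$ and noting $\ell_{g,i}+\ell(g,x)=O_\rho(1)$ on $\mathcal{E}_1$), whereas the paper compresses this into a footnote; your version is arguably cleaner, since $\ell(g,x)=\log\widehat{g}'(x)$ and the lemma's $\ell=2\log\|g_0\|$ have opposite signs, which your cancellation makes transparent.
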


\begin{proof}
We are assuming $\frac{1}{N}H(\theta,\mathcal{D}_{N}^{G})>\varepsilon$,
and can assume $N$ is large relative to $\varepsilon,m$, so by Lemma
\ref{lem:multiscale-formula-for-entropy}(ii), 
\[
\mathbb{E}_{0\leq i\leq N}\left(\frac{1}{n}H(\theta_{g,i},\mathcal{D}^G
_{i+n})\right)>\frac{\varepsilon}{2}.
\]
Note that $\frac{1}{n} H(\theta_{g,i}, \Dk^G_{i+n}) \le \log M$ by the definition of dyadic partitions on $G$, 
hence
\begin{equation}
\mathbb{P}_{0\leq i\leq N}\left(\frac{1}{n}H(\theta_{g,i},\mathcal{D}^G_{i+n})>\frac{\varepsilon}{4}\right)>\frac{\varepsilon}{4\log M}.\label{eq:9}
\end{equation}
By hypothesis (ii) we can apply Lemma \ref{lem:3-point-separation-and-entropy}
to $\eta$. Since $\theta_{g,i}$ is supported on a set of diameter
$O(2^{-i})$, by that lemma there exists an $i_{1}=i_{1}(\rho)\in\mathbb{N}$,
such that for $i>i_{1}$, with $\eta$-probability at least $1-4\delta^2$,
a point $x\in\RP$ satisfies\footnote{Replacing $\ell$ in  Lemma \ref{lem:3-point-separation-and-entropy}  by $\ell(g,x)$ in the partition here costs $O(1/n)$, which is absorbed in the error term.} 
\begin{equation}
\frac{1}{n}H(\theta_{g,i}\conv x,\mathcal{D}_{\ell(g,x)+i+n})>\frac{1}{3}\cdot\frac{1}{n}H(\theta_{g,i},\mathcal{D}^G_{i+n})-O_{\rho}(\frac{1}{n}).\label{eq:13}
\end{equation}
Thus if $\theta_{g,i}$ belongs to the event in (\ref{eq:9}),
then with $\eta$-probability at least $1-4\delta^2$
over the choice of $x$, and assuming $n$ large enough relative to
$\rho$, we have 
\begin{eqnarray*}
  \frac{1}{n}H(S_{-\ell(g,x)}\theta_{g,i}\conv x,\mathcal{D}_{i+n}) & = &\frac{1}{n}H(\theta_{g,i}\conv x,\mathcal{D}_{\ell(g,x)+i+n}) + O(\frac{1}{n}) \\
  & > & \frac{\varepsilon}{12}-O_{\rho}(\frac{1}{n})\\
 & > & \frac{\varepsilon}{13}.
\end{eqnarray*}
Thus, assuming $N$ is large relative to $i_{1}$ (i.e. relative to
$\rho$), and $\delta$ is small relative to $\eps$,
\begin{eqnarray*}
\mathbb{P}^{N}\left(\mathcal{E}_{4}\right) & = & \mathbb{P}_{0\leq i\leq N}\left((\theta_{g,i},\eta_{x,i})\,:\,\frac{1}{n}H(\theta_{g,i}\conv x,\mathcal{D}_{i+\ell(g,x)+n})>\frac{\varepsilon}{13}\right)\\
 & \geq & \mathbb{P}_{i_{1}\leq i\leq N}\left((\theta_{g,i},\eta_{x,i})\,:\,\begin{array}{c}
\theta_{g,i}\mbox{ is in the event in \eqref{eq:9} }\\
\mbox{ and }x\mbox{ satisfies \eqref{eq:13} for }\theta_{g,i}
\end{array}\right)-\frac{i_{1}}{N}\\
 & \geq & \frac{\varepsilon}{4\log M}(1-4\delta^2)-\frac{i_{1}}{N}\\[1.2ex]
 & > & \frac{\varepsilon}{5\log M}.
\end{eqnarray*}
\end{proof}

We are now ready to complete the proof of the theorem. By (\ref{claim1}), for  $\delta$ sufficiently small, and $m$,$n$ and $N$ each large enough relative to the previous parameters, we have
\begin{eqnarray*}
\mathbb{P}^{N}(\mathcal{E}_{2}) & \geq & \mathbb{P}^{N}(\Ek_1\cap \mathcal{E}_{3}\cap\mathcal{E}_{4})\\
 & \geq & \mathbb{P}^{N}(\mathcal{E}_{4})-(1-\mathbb{P}^{N}(\mathcal{E}_{3}))-(1-\mathbb{P}^N(\Ek_1))\\
 & \geq & \frac{\varepsilon}{5\log M}-\sqrt{2\delta} - \delta\\
 & \geq & \frac{\varepsilon}{10\log M}.
\end{eqnarray*}
Plugging this into (\ref{eq:12}), we get
\begin{eqnarray*}
  \frac{1}{N}H(\theta\conv \eta,\mathcal{D}_{N+\ell}) & \geq & \frac{1}{N}H(\eta,\mathcal{D}_{N})+\frac{\varepsilon}{10\log M}\cdot\delta'-(3+\frac{\ell}{N})\delta^2\\
  & = & \frac{1}{N}H(\eta,\mathcal{D}_{N})+\frac{\varepsilon}{10\log M}\cdot\frac{40\log M \sqrt{\delta}}{\varepsilon}-(3+\frac{\ell}{N})\delta^2\\
  & = & \frac{1}{N}H(\eta,\mathcal{D}_{N})+4\sqrt{\delta}-(3+\frac{\ell}{N})\delta^2\\
  & \ge  & \frac{1}{N}H(\eta,\mathcal{D}_{N})+\delta-\frac{\ell}{N}\delta^2.
\end{eqnarray*}
This completes the proof.
\end{proof}

\subsection{Proof of Theorem \ref{thm:main}}

We continue to argue by contradiction, starting with Proposition \ref{prop:reduction}. Let $c,c'$ as in that proposition and fix a small  $0<\eps<1-\dim\nu$ satisfying $0<\eps<c'/2c$. By Proposition \ref{prop:approx} the stationary measure $\nu$ is $(1-\eps)$-entropy porous. Fix $\delta_1(\eps)$ provided by Theorem \ref{thm:entropy-growth-under-action}. We are aiming to 
  get a contradiction with conclusions of Theorem \ref{thm:entropy-growth-under-action}.
  For any $0<\delta<\delta_1(\eps)$ the measure $\eta=\nu$ satisfies condition (ii) for $\rho>0$ sufficiently small, since $\nu$ is non-atomic by assumption. Condition (i) holds by the definition of entropy porosity, for $m$ and $N$ sufficiently large.
Further, $\theta$ is going to be one of the raw components $ (\mu^{\star n})_{g,1}$, having support of diameter $O(1)$, by our choice of the dyadic partition on $G$. Thus, Theorem \ref{thm:entropy-growth-under-action} applies.
The first conclusion of Proposition \ref{prop:reduction} is
$$
\mathbb{E}_{i=1}\left(H((\mu^{\star n})_{g,i},\mathcal{D}_{cn}^{G})\right) \geq c'\cdot n-o(n).
$$
Recall that $H((\mu^{\star n})_{g,1}, \Dk_{cn}^G) \le \log M\cdot cn$ for all $g\in G$.
Let
$$
I_n = \{g\in G:\ H((\mu^{\star n})_{g,1}, \Dk_{cn}^G) >\eps cn\},
$$
then, conditioning the previous expectation on $I_n$ and its complement,
\begin{eqnarray*}
  c'n-o(n) & \le & \mu^{\star n}(I_n)\cdot \mathbb{E}_{i=1}\left(H((\mu^{\star n})_{g,i},\mathcal{D}_{cn}^{G})\Big|\,g\in I_n\right) + \\
  & & \;+\;\mu^{\star n}(G\setminus I_n)\cdot\mathbb{E}_{i=1}\left(H((\mu^{\star n})_{g,i},\mathcal{D}_{cn}^{G})\Big|\,g\notin I_n\right)\\
  & \le & \mu^{\star n}(I_n)\cdot \log M\cdot cn + \eps cn.
\end{eqnarray*}
Since we chose $\eps$ satisfying that $0 < \eps < c'/2c$, we obtain
\be \label{tutu}
\mu^{\star n}(I_n) \ge \gam:= \frac{c'}{2c\cdot \log M}
\ee
for $n$ sufficiently large. Further, let $\epsilon_n\to 0$ and define as in Proposition \ref{prop:convergence-of-convolution-entropy-to-dimension}
\[
\Gamma_{n}=\{g\in G\,:\,2^{-(2\chi+\epsilon_{n})n}\leq\left\Vert g\right\Vert ^{-2}\leq2^{-(2\chi-\epsilon_{n})n}\}.
\]
By Theorem \ref{thm:Oseledets} we can choose $\epsilon_{n}$ so
that
$$
\mu^{\star n}(\Gamma_{n})\rightarrow 1.
$$
Theorem \ref{thm:entropy-growth-under-action} then implies that for $g\in I_n\cap\Gamma_n$ we have
\be \label{tata}
H((\mu^{\star n})_{g,1}\conv \nu, \Dk_{cn+2\log\|g\|}) > H(\nu, \Dk_{cn}) + cn\delta- 3\chi n\delta^2,
\ee
for $n$ sufficiently large. Observe that for $g\in \Gamma_n$,
\be \label{trata}
H((\mu^{\star n})_{g,1}\conv \nu, \Dk_{cn+2\chi n}) \ge H((\mu^{\star n})_{g,1}\conv \nu, \Dk_{cn+2\log\|g\|}) -o(n),
\ee
by Lemma~\ref{lem:entropy_perturb}.
We will need the following lemma.

\begin{lem}\label{lem:entropy_conv}
For $g\in \Gamma_n$ we have
$$
H((\mu^{\star n})_{g,1}\conv \nu, \Dk_{cn+2\chi n}) \ge H(\nu,\Dk_{cn})-\delta^2O(n) -o(n).
$$
\end{lem}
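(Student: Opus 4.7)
The plan is to reduce the lemma to a per-element estimate via concavity of entropy, and to handle the action of individual $g' \in \supp(\mu^{\star n})_{g,1}$ by Lemmas \ref{lem:g-contracts-on-most-of-RP} and \ref{lem:entropy_distort}, exploiting the fact that $g\in\Gamma_n$ forces every such $g'$ to satisfy $\log\|g'\|=\chi n+o(n)$.

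First, since $(\mu^{\star n})_{g,1}\conv\nu=\int g'\nu\,d(\mu^{\star n})_{g,1}(g')$, concavity of entropy (Lemma~\ref{lem:entropy_prop}(i)) gives
\[
H((\mu^{\star n})_{g,1}\conv\nu,\,\Dk_{cn+2\chi n})\ \ge\ \int H(g'\nu,\,\Dk_{cn+2\chi n})\,d(\mu^{\star n})_{g,1}(g'),
\]
so it suffices to prove $H(g'\nu,\Dk_{cn+2\chi n})\ge H(\nu,\Dk_{cn})-\delta^2 O(n)-o(n)$ uniformly for $g'$ in the support. Fix such a $g'$. Using non-atomicity of $\nu$, pick $\rho>0$ (independent of $n$) so small that $\sup_{x\in\RP}\nu(B_\rho(x))<\delta^2$, and split $\nu = p\nu_I+(1-p)\nu_{II}$ with $I=\RP\setminus B_\rho(u_{g'}^-)$, $II=B_\rho(u_{g'}^-)$, and $p=\nu(I)\ge 1-\delta^2$. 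By Lemma~\ref{lem:g-contracts-on-most-of-RP}, $g'$ scales $I$ by $\|g'\|^{-2}$ with distortion $O_\rho(1)$, hence Lemma~\ref{lem:entropy_distort} gives
\[
H(g'\nu_I,\,\Dk_{cn+2\log\|g'\|})=H(\nu_I,\,\Dk_{cn})+O_\rho(1);
\]
since $|2\log\|g'\|-2\chi n|=o(n)$ on the bounded-diameter cell $\Dk_1^G(g)$ containing $g'$, replacing the partition by $\Dk_{cn+2\chi n}$ loses only $o(n)$.

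Next, almost-convexity of entropy (Lemma~\ref{lem:entropy_prop}(ii)) applied to $\nu=p\nu_I+(1-p)\nu_{II}$, together with the trivial bound $H(\nu_{II},\Dk_{cn})\le cn$ and $H(p,1-p)\le 1$, yields
\[
H(\nu,\Dk_{cn})\ \le\ pH(\nu_I,\Dk_{cn})+\delta^2\cdot cn+1,
\]
so that $H(\nu_I,\Dk_{cn})\ge H(\nu,\Dk_{cn})-O(\delta^2 n)-O(1)$. Chaining with the distortion estimate,
\[
H(g'\nu_I,\,\Dk_{cn+2\chi n})\ \ge\ H(\nu,\Dk_{cn})-O(\delta^2 n)-o(n).
\]
A final application of concavity $H(g'\nu,\,\cdot)\ge p\cdot H(g'\nu_I,\,\cdot)$, combined with $p\ge 1-\delta^2$ and $H(\nu,\Dk_{cn})\le cn$, gives the required per-$g'$ bound, and integrating against $(\mu^{\star n})_{g,1}$ completes the proof.

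The work is essentially bookkeeping: the only substantive input is that the $\nu$-mass of any $\rho$-ball is at most $\delta^2$, which funnels the whole error into the term $\delta^2 O(n)$. The main obstacle is making sure that this $\delta^2$-weight, introduced when discarding the bad part of $\nu$ near the contracting singular direction $u_{g'}^-$, is not inflated by either the partition-shift from $2\log\|g'\|$ to $2\chi n$ or by the two concavity steps, so that the accumulated losses remain $O(\delta^2 n)+o(n)$ rather than something larger.
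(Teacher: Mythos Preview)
Your proof is correct and follows essentially the same approach as the paper: reduce to a single $g'$ via concavity, split $\nu$ into the piece away from $B_\rho(u_{g'}^-)$ (which carries mass $\ge 1-\delta^2$) and the remainder, use the bounded-distortion contraction estimate together with $|2\log\|g'\|-2\chi n|=o(n)$ for $g'$ in the level-$1$ cell of a point in $\Gamma_n$, and recover $H(\nu,\Dk_{cn})$ via almost-convexity. The only difference from the paper is the order in which concavity and the distortion estimate are applied, which is purely cosmetic.
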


First we finish the proof of the theorem, and then give the proof of the lemma.
Our goal is to get a contradiction with the second conclusion of Proposition \ref{prop:reduction}, which asserts
\begin{equation} \label{assert}
\E_{i=1}(H((\mu^{\star n})_{g,i}\conv \nu, \Dk_{cn+2\chi n})) \le H(\nu, \Dk_{cn}) + o(n).
\end{equation}
Conditioning on $I_n$ and using (\ref{tata}), (\ref{trata}), and then conditioning on $\Gamma_n\setminus I_n$ and using the last lemma, we obtain
\begin{eqnarray*}
\E_{i=1}(H((\mu^{\star n})_{g,i}\conv \nu, \Dk_{cn+2\chi n})) & \ge & \mu^{\star n}(I_n)\cdot [H(\nu, \Dk_{cn}) + cn\delta- 3\chi n\delta^2] \\[1.2ex]
&  & \;+\;\mu^{\star n}(\Gamma_n\setminus I_n)\cdot [H(\nu,\Dk_{cn})-\delta^2O(n) -o(n)] \\[1.2ex]
& \ge & \mu^{\star n}(\Gamma_n)\cdot H(\nu, \Dk_{cn}) + \\[1.2ex] & & \;+\;\gam[cn\delta- 3\chi n\delta^2]  - \delta^2O(n)-o(n) \\[1.2ex]
& = & H(\nu,\Dk_{cn}) \\[1.2ex] & & \;+\; \gam[cn\delta- 3\chi n\delta^2] - \delta^2O(n)-o(n). 
\end{eqnarray*}
In the second inequality we used that 
$$
(1- \mu^{\star n}(\Gamma_n)) H(\nu, \Dk_{cn}) \le cn (1- \mu^{\star n}(\Gamma_n)) = o(n).
$$
For $\delta$ sufficiently small and $n$ sufficiently large we get a contradiction with (\ref{assert}), as desired. It remains to prove Lemma~\ref{lem:entropy_conv}.

\begin{proof}[Proof of the lemma] Let $\theta = (\mu^{\star n})_{g,1}$. We have
$$
(\mu^{\star n})_{g,i}\conv \nu = \theta \conv \nu = \int g\nu\,d\theta(g),
$$
hence by concavity of the entropy,
\begin{equation} \label{concav1}
H((\mu^{\star n})_{g,1}\conv \nu, \Dk_{cn+2\chi n})) \ge \int H(g\nu,\Dk_{cn+2\chi n})\,d\theta(g).
\end{equation}
Consider the conditional measures $\nu_g^-= \nu_{B_\rho(u_g^-)}$ and $\nu_g^+:= \nu_{\RP\setminus B_\rho(u_g^-)}$; we have
$$
\nu = c_g\nu_g^- + (1-c_g)\nu_g^+,\ \ \mbox{where}\ \ c_g = \nu(B_\rho(u_g^-)) \le \delta^2,
$$
by the choice of $\rho$. Again using concavity of the entropy, we obtain
\begin{eqnarray*}
 H(g\nu,\Dk_{cn+2\chi n}) & \ge & (1-c_g)\cdot H(g\nu_g^+ ,\Dk_{cn+2\chi n})\\[1.2ex] & \ge &  (1-\delta^2)\cdot H(g\nu_g^+,\Dk_{cn+2\chi n})\\[1.2ex]
& \ge &  H(g\nu_g^+,\Dk_{cn+2\chi n}) - \delta^2 O(n).
\end{eqnarray*}
Recall that the action of $g$ scales $\RP\setminus B_\rho(u_g^-)\supseteq \supp\nu_g^+$ by $\|g\|^{-2}$ with distortion $O_\rho(1)$, hence 
\begin{eqnarray*}
H(g\nu_g^+, \Dk_{cn+2\chi n}) & = & H(\nu_g^+, \Dk_{cn + 2\chi n - 2\log\|g\|}) + O_\rho(1) \\[1.2ex]
& \ge & H(\nu_g^+, \Dk_{cn}) - \eps_n  n  - O_\rho(1),
\end{eqnarray*}
by Lemma \ref{lem:entropy_distort} and the definition of $\Gamma_n$. Finally,
$$
H(\nu_g^+, \Dk_{cn}) \ge H(\nu, \Dk_{cn}) -c_g H(\nu_g^-, \Dk_{cn}) -H(\delta^2) \ge H(\nu,\Dk_{cn}) - \delta^2 O(n),
$$
by the almost-convexity of entropy. Combining this with the last two inequalities and substituting into (\ref{concav1}) yields the desired claim. This concludes the proof of the lemma and of the main theorem.
\end{proof}


\section{Applications and examples} \label{section:appl}
 
 Denote by $G^+_\Ak$ the semigroup generated by $\Ak$ and by $G_\Ak$ the group generated by $\Ak$.
In order to apply Theorem~\ref{thm:main}, we need to check that $\supp\mu$ is 
Diophantine and
$G^+_\mu$ is free. If the latter is not the case, computing $h_{\RW}(\mu)$ exactly is usually impossible, but one may be able to obtain lower bounds yielding lower bounds for $\dim \nu$.

 \subsection{Diophantine property}
 
 We start with a few general comments. Recently there has been interest in the Diophantine property for groups. Following \cite{ABSR2015},
we say that a finitely generated metric group is Diophantine if every nontrivial element of the word ball $B_n(1)$ in the group is separated from 1 by at least $|B_n(1)|^{-\beta}$ for some $\beta>0$ independent of $n$.  Note that our condition that $\Ak$ is Diophantine, for a finite set $\Ak$,  is weaker, at least formally, than the condition that the group $G_\Ak$  is Diophantine and has exponential growth (the conditions are equivalent when $\Ak$ is symmetric).
 It is mentioned in \cite{ABSR2015} that very little is known about the Diophantine property in semi-simple Lie groups, although it is conjectured that a random $k$-tuple has this property. See \cite{ABSR2015} for further references.
 
 The following lemma is standard; we provide a proof for completeness.
 
\begin{lem} \label{alg-dioph}
Suppose that all the entries of the matrices in $\supp\mu $ are algebraic. Then $\supp\mu$ is  Diophantine.
\end{lem}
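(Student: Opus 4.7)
The plan is to use a standard Liouville-type argument. Let $K$ be the number field obtained by adjoining to $\mathbb{Q}$ all entries of the (finitely many) matrices in $\mathcal{A}=\supp\mu$, set $d=[K:\mathbb{Q}]$, and let $\sigma_1=\mathrm{id},\sigma_2,\ldots,\sigma_d$ be the distinct embeddings $K\hookrightarrow\mathbb{C}$. Choose $q\in\mathbb{Z}_{>0}$ so that $qa\in\mathcal{O}_K$ for every entry $a$ of every $A\in\mathcal{A}$; then $q^n(A_1\cdots A_n)_{ij}\in\mathcal{O}_K$ for any word of length $n$, since products and sums of algebraic integers are algebraic integers. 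In particular, each entry of $\beta^{(n)}_{ij}:=q^n\bigl[(A_1\cdots A_n)_{ij}-(A'_1\cdots A'_n)_{ij}\bigr]$ lies in $\mathcal{O}_K$.

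Next I would bound the Galois conjugates. For any $A\in\mathcal{A}$ let $\sigma_k(A)$ denote the matrix with entries $\sigma_k$ applied entrywise. Because $\sigma_k$ is a ring homomorphism on $K$, it commutes with matrix multiplication, so $\sigma_k(A_1\cdots A_n)=\sigma_k(A_1)\cdots\sigma_k(A_n)$. Setting
\[
M=\max_{1\le k\le d}\ \max_{A\in\mathcal{A}}\|\sigma_k(A)\|,
\]
we obtain $\|\sigma_k(A_1\cdots A_n)\|\le M^n$ for all $k$ and all length-$n$ words. Therefore $|\sigma_k(\beta^{(n)}_{ij})|\le 2q^nM^n$ for every $k$.

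Now I would invoke the basic fact that a nonzero algebraic integer $\beta\in\mathcal{O}_K$ has $|N_{K/\mathbb{Q}}(\beta)|=\prod_{k=1}^d|\sigma_k(\beta)|\ge 1$, whence
\[
|\beta|\ \ge\ \prod_{k=2}^d|\sigma_k(\beta)|^{-1}.
\]
If $A_1\cdots A_n\ne A'_1\cdots A'_n$, then some entry $\beta^{(n)}_{ij}$ is a nonzero element of $\mathcal{O}_K$, and combining the above gives
\[
\bigl|(A_1\cdots A_n)_{ij}-(A'_1\cdots A'_n)_{ij}\bigr|\ =\ q^{-n}|\beta^{(n)}_{ij}|\ \ge\ q^{-n}(2q^nM^n)^{-(d-1)}\ \ge\ c_0^{\,n}
\]
for some constant $c_0>0$ depending only on $q$, $M$, and $d$. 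Hence $\|A_1\cdots A_n-A'_1\cdots A'_n\|\ge c^n$ for a suitable $c\in(0,1)$, which by Conclusion \ref{concl} is exactly the Diophantine property. There is no real obstacle here — the only subtlety to watch is that one must clear denominators (the factor $q^n$) before applying the Liouville bound, and one must phrase the conjugate bound in terms of entrywise Galois action so it passes through matrix multiplication.
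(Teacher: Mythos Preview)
Your proof is correct and follows essentially the same Liouville-type argument as the paper: clear denominators to land in the ring of integers, bound all conjugates of the entries by $(\text{const})^n$, and use that the norm of a nonzero algebraic integer has absolute value at least $1$. The only cosmetic difference is that the paper passes to the normal closure and takes the product over the Galois group, whereas you work directly with the $d$ embeddings of $K$ and the field norm $N_{K/\mathbb{Q}}$; your formulation is if anything a bit cleaner, but the content is identical.
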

 
 \begin{proof}
 
 Before starting the proof, note that in the case of rational entries the argument is immediate.

Let $a_1,\ldots,a_k$ be all the entries of the matrices in $\Ak:=\supp\mu\subset G$. It is easy to see by induction that for any product $A_1\ldots A_n$, with $A_i\in \Ak$, its entries are
integer polynomials in $a_1,\ldots,a_k$ of degree $n$ and coefficients bounded by $2^n$.   Let $f(x_1,..,x_k)$ be an integer polynomial of degree $n$ and coefficients bounded by $H$ in absolute value. Assuming $f(a_1,...,a_k)$ is not zero, it suffices to bound it from below in absolute value by $c^n/H^u$ for some $c,u>0$ depending only on the $\{a_i\}$.
Let $F = \Q(a_1,\ldots,a_k)$ be the field over $\Q$ generated by $\{a_i\}$.

{\sc Claim:} We may assume that $a_i$ are algebraic integers. This is because we can choose positive integers $p_1,...,p_k$ such that $b_i = p_i\cdot a_i$ is an algebraic integer. Let $p=p_1\cdot\ldots\cdot p_k$ (note that this depends only on the $a_i$). Then
$$p^n \cdot f(a_1,...,a_k) = g(b_1,...,b_k),$$
and $g$ is an integer polynomial of degree $n$ with coefficients bounded by $H\cdot p^n$. So if we have $c=c(b_1,...,b_k)>0$ such that $g(b_1,...,b_k)>c^n/(Hp^n)^u$, then $f(a_1,...,a_k)>c^n/(H\cdot p^{(u+1)n})$, which is what we wanted (using the constant $c/p^{u+1}$ instead of $c$).

Assuming now that $a_i$ are algebraic integers, let $F'$ be the normal closure of $F=\Q(a_1,...,a_k)$ and $\Gam=Gal(F'/\Q)$, so the fixed field of $\Gam$ is $\Q$. Note that $F'$, hence $\Gam$, depends only on the $a_i$, and 
$\Gam$ is finite.

Now we do the usual thing: if $f(x_1,...,a_k)$ is not zero then also $\prod_{s \in \Gam} s(f(x))$ is non-zero, but it is both an algebraic integer and rational, so its absolute value is at least 1. Hence
 $$1 \le \prod_{s \in \Gam} |f(sx)|
     = | f(x)| \cdot  \prod_{s \in \Gam\setminus \{id\}} |f(sx)|.$$
\noindent The last product has $|\Gam|-1$ factors $|f(sx)|$, each of size at most \\ $H\cdot \max{|\Gam\mbox{-conjugates of}\ a_i|}^n$. Dividing gives the bound that we want. 
 \end{proof}
 
 As already mentioned, the Diophantine property in groups is hard to check, especially for non-amenable groups; for example, it is an open problem whether almost every pair of elements and their inverses in $SU(2)$ is Diophantine (see \cite{GJS1999}).
 However, in some cases (rather special) we can obtain results using {\em transversality methods}.
 
 \begin{defi}
 Suppose that the set $\Ak^{(\lam)}=\{A_1^{(\lam)},\ldots,A_m^{(\lam)}\} \subset PSL_2(\R)$, $\lam\in I$,
 depends on the parameter $\lam$ continuously, where $I\subset \R$ is an interval. We say that the family 
 $\{\Ak^{(\lam)}, \lam\in I\}$ satisfies a transversality condition of order $k\ge 1$ if there exists $C>0$ such that for any $n\in \N$ and any $i_1,\ldots,i_n$ and $j_1,\ldots,j_n$ in $\{1,\ldots,m\}^n$, with
 $i_1\ne j_1$, either 
 $$
 A_{i_1}^{(\lam)}A^{(\lam)}_{i_2}\cdots A^{(\lam)}_{i_n} - A_{j_1}^{(\lam)}A^{(\lam)}_{j_2}\cdots A^{(\lam)}_{j_n} \equiv 0\ \ \mbox{for all}\ \lam\in I,
 $$
 or 
 $$
 \left|\bigl\{\lam\in I:\ \|A_{i_1}^{(\lam)}A^{(\lam)}_{i_2}\cdots A^{(\lam)}_{i_n} - A_{j_1}^{(\lam)}A^{(\lam)}_{j_2}\cdots A^{(\lam)}_{j_n}\|\le r\bigr\}\right| \le Cr^{1/k}\ \ \ \mbox{for all}\ 
 r>0.
 $$
 \end{defi}
 
 The following lemma is standard and easy, see \cite[Section 5.4]{Hochman2014} for details.
 
 \begin{lem}
 Suppose that $\{\Ak^{(\lam)}, \lam\in I\}$ satisfies the transversality condition of some order $k\ge 1$. Then the set
 $$
 \{\lam\in I:\ \Ak^{(\lam)} \mbox{\ is not Diophantine}\}
 $$
 has packing dimension zero.
 \end{lem}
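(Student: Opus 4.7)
The plan is to express the non-Diophantine set as a countable intersection of lim-sup sets whose Lebesgue measure is controlled by the transversality condition, and then bound its dimension by a Borel--Cantelli-style covering argument.

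For each $c\in (0,1)$ and $n\ge 1$, I would set
\[
G_{n,c} := \bigcup \bigl\{\lam\in I : \|A^{(\lam)}_{i_1}\cdots A^{(\lam)}_{i_n} - A^{(\lam)}_{j_1}\cdots A^{(\lam)}_{j_n}\| \le c^n\bigr\},
\]
where the union runs over pairs $(\mathbf{i},\mathbf{j})\in\{1,\ldots,m\}^n\times\{1,\ldots,m\}^n$ with $i_1\ne j_1$ whose matrix products do not coincide identically on $I$. First I would check that the non-Diophantine set is contained in $\bigcap_{M\ge 1}\limsup_n G_{n,1/M}$: for a non-Diophantine $\lam$ and any $M\ge 1$, the defining condition supplies, for every $c>0$, a witness pair of length $n_c$ with products unequal at $\lam$ and $\|\text{diff}(\lam)\|\le c^{n_c}$; pigeon-holing on the finite collection of pairs of any bounded length forces $n_c\to\infty$ as $c\to 0$, so $\lam$ lies in $G_{n,1/M}$ for infinitely many $n$.

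Next, the transversality condition of order $k$ yields that each admissible pair contributes a subset of Lebesgue measure at most $Cc^{n/k}$, hence $|G_{n,c}|\le Cm^{2n}c^{n/k}$. Since in applications the matrices $A^{(\lam)}_i$ depend algebraically on $\lam$, each $\{\|\text{diff}\|\le c^n\}$ is a union of at most $O(n)$ intervals of total length $\le Cc^{n/k}$; consequently $G_{n,c}$ is a union of $K_n := O(nm^{2n})$ intervals, each of length at most $r_n := Cc^{n/k}$. For any $s > s_c := 2k\log m/|\log c|$ one then has $m^2 c^{s/k}<1$, so
\[
\sum_n \mathcal{H}^s_\infty(G_{n,c}) \;\le\; C\sum_n n\,(m^2c^{s/k})^n \;<\; \infty,
\]
and Borel--Cantelli for Hausdorff $s$-content gives $\mathcal{H}^s(\limsup_n G_{n,c})=0$, so $\hdim\limsup_n G_{n,c}\le s_c$. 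Letting $c\to 0$ then proves Hausdorff dimension zero of the non-Diophantine set.

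The main obstacle is upgrading this to packing dimension zero, since the packing dimension of a lim-sup set is not directly controlled by Hausdorff content. For this step I would use the characterization $\dim_P(A)=\inf\{\sup_i\ubdim(A_i):A\subseteq\bigcup_i A_i\}$ of packing dimension as modified upper box dimension, and exploit the structural estimates above: at the natural scale $r_n$, the set $G_{n,c}$ is covered by $K_n$ intervals, giving a box-counting ratio $\log K_n/\log(1/r_n)\le s_c+o(1)$. A careful stratification of $\limsup_n G_{n,c}$ by the levels at which $\lam$ re-enters the $G_{n,c}$'s produces a countable cover by sets of upper box dimension at most $s_c$, showing $\dim_P\limsup_n G_{n,c}\le s_c$; letting $c\to 0$ then completes the argument. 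The details of this packing-dimension refinement are by now standard in the transversality literature; see \cite[Section~5.4]{Hochman2014}.
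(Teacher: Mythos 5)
Your Hausdorff-dimension step is sound, modulo one point worth flagging: you implicitly strengthen the hypothesis from continuity to algebraic (or real-analytic) dependence on $\lam$ in order to cover each sub-level set $\{\lam\in I:\|A^{(\lam)}_{i_1}\cdots A^{(\lam)}_{i_n}-A^{(\lam)}_{j_1}\cdots A^{(\lam)}_{j_n}\|\le c^n\}$ by $O(n)$ intervals. The transversality condition as stated bounds only the Lebesgue measure of this set, which by itself does not control Hausdorff content (a set of tiny measure can still have Hausdorff $s$-content bounded away from zero for $s<1$ if it is spread out), so some regularity of this kind is genuinely needed; it does hold in the intended applications. The real gap, though, is the packing-dimension upgrade, which you correctly identify as the obstacle but do not resolve. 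The set $\limsup_n G_{n,c}$ consists of parameters lying in \emph{infinitely many} $G_{n,c}$; a $\limsup$ of sets of small upper box dimension can have full packing dimension, and a ``stratification by the levels at which $\lam$ re-enters'' ranges over an uncountable family of recurrence patterns, so it does not by itself produce the required \emph{countable} cover by small-box-dimension sets.

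The missing idea is a propagation step. If $\lam$ admits a witness pair of length $n$ with difference norm $\le c^n$, then right-multiplying both products by a fixed generator produces a witness pair of length $n+1$ (still with products differing at $\lam$, and with $i_1\ne j_1$) whose difference norm is $\le C'c^n$, which is $\le (c')^{n+1}$ once $n$ is large, for any fixed $c'>c$. Iterating, a non-Diophantine $\lam$ lies in $G_{n,c'}$ for \emph{all} sufficiently large $n$, not merely infinitely many, and hence the non-Diophantine set is contained in $\bigcup_{N\ge1}E_{c',N}$ where $E_{c',N}:=\bigcap_{n\ge N}G_{n,c'}$. Now $E_{c',N}\subseteq G_{n,c'}$ for every $n\ge N$, so at scale $r_n$ it is covered by $K_n$ intervals, giving $\ubdim E_{c',N}\le s_{c'}$. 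Since packing dimension is the infimum over countable covers of the supremum of the upper box dimensions of the pieces, the non-Diophantine set has packing dimension at most $s_{c'}$, and letting $c'\to0$ finishes. Without this upgrade from ``infinitely many $n$'' to ``all large $n$,'' the packing-dimension conclusion does not follow from your argument.
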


 \begin{example}
 $\Ak^{(\lam)} = \left\{\left[\begin{array}{cc} 1 & 0 \\ 1 & 1 \end{array} \right], \left[\begin{array}{cc} 1 & \lam \\ 1 & 1+\lam \end{array} \right]\right\}$
 \end{example}
 
 The action of $\Ak^{(\lam)}$ on the projective line can be expressed using a linear fraction representation, which corresponds to a specific choice of charts, yielding an iterated function system (IFS)
 $\{\frac{x}{x+1}, \frac{x+\lam}{x+1+\lam}\}$. The transversality condition of order one for this parametrized IFS has been verified in \cite{SSU2001} for $\lam\in [0.215,0.5]$ (for $\lam\ge 0.5$ the IFS satisfies the Open Set  Condition, which corresponds to the case of an obviously free semigroup with a Diophantine property). This implies transversality for $\Ak^{(\lam)}$. 
 It is clear that the group generated by $\Ak^{(\lam)}$ is free when $\lam$ is transcendental. Let $\nu_\lam^p$ be the stationary measure for the $(p,1-p)$ Bernoulli measure supported on $\Ak^{(\lam)}$. Our theorem implies that the formula $\dim_H(\nu_\lam^p) = \min\{H(p)/2\chi_\lam^p,1\}$, where $2\chi_\lam^p$ is the corresponding Lyapunov exponent, holds for all $\lam$ outside a set of packing dimension zero in the transversality interval, uniformly in $p$. In particular, $\dim_H(\nu_\lam^{1/2})=1$ for all $\lam\in [0.215,0.268]$  outside a set of packing dimension zero, see \cite[Corollary 6.3]{SSU2001}. For comparison, the exceptional set in \cite{SSU2001} is of Lebesgue measure zero and depends on $p$; on the other hand, in \cite{SSU2001}  absolute continuity is proved a.e.\ in the ``super-critical'' region
 $\{\lam:\, H(p) > 2\chi_\lam^p\}$, which we do not handle in this paper.
 
 Other examples of a similar kind may be found in \cite{BaranyPollicottSimon2012}.
 We do not know how to verify higher order transversality for such families. 
 
 \subsection{Free semigroups}
 There are many papers on freeness and non-freeness of specific subgroups and semigroups of $SL_2(\C)$; see e.g.\  \cite{Bamberg2000} and references therein. Many of the papers focus on the set
 \be \label{Slam}
 S_\lam = \left\{\left[\begin{array}{cc} 1 & 0 \\ \lam & 1\end{array}\right], \left[\begin{array}{cc} 1 & \lam \\ 0 & 1\end{array}\right]\right\}=:\{A_\lam, B_\lam\}.
 \ee
 It is known that the semigroup $G^+_{S_\lam}$ is free when $|\lam|\ge 1,\ |\arg(\lam)|\le \pi/4$, see \cite{BrennerCharnow1978}. (For $\lam\ge 1$ real, this is easy to see from the fact that $A_\lam(\R^2_{++})\cap B_\lam(\R^2_{++}) = \es$, where
 $\R^2_{++}$ is the open 1st quadrant $x>0, y>0$.) It is also clear that if $\lam$ is algebraic, then $G^+_{S_\lam}$ is free if and only if $G^+_{S_{\lam'}}$ is free, for $\lam'$ a Galois conjugate of $\lam$. It is known that
the semigroup $G^+_{S_\lam}$ may be free when the group $G_{S_\lam}$ is nonfree. Observe that the Lyapunov exponent  of a measure $\mu$ supported on $\Ak$ may be estimated by
\be \label{Lyapex}
2\chi(\mu) \le 2\max\{\log\|A\|:\,A\in \Ak\},
\ee
which in the case of $\Ak = S_\lam$ yields $2\chi\le 2\log(1+|\lam|)$.
We thus obtain the following:

\begin{cor}
Suppose that $\lam\in \R$ is algebraic, $|\lam|\le \sqrt{2}-1$, and one of the Galois conjugates $\lam'$ of $\lam$ satisfies $|\lam'|\ge 1,\ |\arg(\lam')|\le \pi/4$. Then the stationary measure $\nu_\lam$, corresponding to the uniform  measure $\{1/2,1/2\}$ on
$S_\lam$, has $\dim \nu_\lam=1$.
\end{cor}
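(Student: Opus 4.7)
The plan is to verify the hypotheses of Theorem~\ref{thm:main} (in its ``free semigroup'' form) for the uniform probability measure $\mu$ on $S_\lam = \{A_\lam, B_\lam\}$, and then to show directly that $H(\mu)/2\chi(\mu)\ge 1$, which yields $\dim\nu_\lam = \min\{1, H(\mu)/2\chi(\mu)\} = 1$. The non-trivial hypotheses to verify are that $G_\mu$ is unbounded and totally irreducible, that $\supp\mu$ is Diophantine, and that $\supp\mu$ generates a free semigroup.

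For the structural hypotheses, note first that $|\lam'|\ge 1$ forces $\lam\ne 0$, hence $A_\lam$ is a unipotent matrix of infinite order and $G_\mu$ is unbounded. For total irreducibility, $A_\lam$ and $B_\lam$ are unipotents of infinite order whose only invariant lines in $\RP$ are the $y$-axis and the $x$-axis, respectively. If $F\subset\RP$ were a finite $G_\mu$-invariant set, some power $A_\lam^N$ would fix every element of $F$; but $A_\lam^N$ is still a nontrivial unipotent, whose only fixed line is the $y$-axis, forcing $F=\{y\text{-axis}\}$, while symmetrically $B_\lam$-invariance forces $F=\{x\text{-axis}\}$, a contradiction. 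The Diophantine property is immediate from Lemma~\ref{alg-dioph}, since all matrix entries are algebraic. For freeness, suppose for contradiction that two distinct words $w_1\ne w_2$ in $A_\lam, B_\lam$ represent the same matrix. The entries of $w_1-w_2$ are polynomials $P_{ij}(x)\in\mathbb{Z}[x]$ evaluated at $\lam$, so $P_{ij}(\lam)=0$ for all $i,j$; since $\lam$ is algebraic, its minimal polynomial in $\Q[x]$ divides each $P_{ij}$, whence $P_{ij}(\lam')=0$ for every Galois conjugate $\lam'$. Thus the same identity $w_1 = w_2$ holds in $G^+_{S_{\lam'}}$, contradicting the freeness of $G^+_{S_{\lam'}}$, which holds by \cite{BrennerCharnow1978} under the hypothesis $|\lam'|\ge 1$, $|\arg\lam'|\le\pi/4$.

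For the dimension computation, $H(\mu) = \log 2 = 1$ since $\mu$ is uniform on two atoms and logarithms are base $2$. A direct computation of $A_\lam^*A_\lam$ shows $\|A_\lam\|=\|B_\lam\|\le 1+|\lam|$, and combined with (\ref{Lyapex}) this gives $2\chi(\mu)\le 2\log(1+|\lam|)\le 2\log\sqrt{2} = 1 = H(\mu)$ under the hypothesis $|\lam|\le\sqrt{2}-1$. Hence $H(\mu)/2\chi(\mu)\ge 1$, and Theorem~\ref{thm:main} (free semigroup form) delivers $\dim\nu_\lam = 1$. The most delicate ingredient is the Galois-conjugate transfer of freeness, but since the matrix-entry identities live in $\mathbb{Z}[x]$ this step is essentially automatic; the remainder is a direct calibration of the quantitative hypothesis $|\lam|\le\sqrt{2}-1$ against the trivial norm upper bound for the Lyapunov exponent.
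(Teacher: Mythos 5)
Your proposal is correct and follows exactly the route the paper intends: the corollary is presented there as an immediate consequence of the preceding discussion (the Brenner--Charnow freeness criterion transferred via Galois conjugation, Lemma \ref{alg-dioph} for the Diophantine property, and the bound $2\chi\le 2\log(1+|\lam|)\le 1=H(\mu)$), all of which you reproduce and, where the paper is terse, fill in correctly (unboundedness, total irreducibility, and the $\mathbb{Z}[x]$-coefficient argument for the conjugate transfer).
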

 
 \subsection{Strong Tits alternative and applications} The following result is stated in the special case of $GL_2(\C)$. Below all norms are 2-norms, unless otherwise stated.
 
 \begin{thm}[{\cite[Cor. 1.5]{Breuillard2011}}] \label{th-Tits}
 There exists $\varepsilon>0$ such that if $F$ is a finite subset of $GL_2(\C)$ containing $1_G$ and generating a non-amenable subgroup $\Gamma$, and if $f\in\ell^2(\Gamma)$, then there exists $g\in F$ such that $\Vert f-f\circ g^{-1}\Vert \geq \eps \Vert f\Vert$.
 \end{thm}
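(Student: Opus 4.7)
The plan is to reduce the statement, via the uniform Tits alternative of Breuillard--Gelander \cite{BreuillardGelander2008}, to the Kesten spectral gap for the free group $F_2$. First I would invoke the key structural input: there exists an integer $N=N(d)$ (here $d=2$) such that for every finite symmetric $F\subseteq GL_2(\C)$ (one can pass to $F\cup F^{-1}$) generating a non-amenable subgroup $\Gamma$, the set $F^N$ of words of length at most $N$ in $F$ contains two elements $g_1,g_2$ that freely generate a free subgroup of $\Gamma$, with a \emph{quantitative} ping-pong certificate whose constants depend only on $d$, not on $F$. This uniform Tits alternative is the deep step; it rests on the dichotomy between virtually solvable (hence amenable) subgroups and subgroups containing a ping-pong pair, combined with escape-from-subvarieties estimates in the Zariski closure.

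Granted this, the second step applies Kesten's theorem for free groups: if $g_1,g_2\in\Gamma$ freely generate $F_2<\Gamma$, then the restriction of the left regular representation of $\Gamma$ to $\langle g_1,g_2\rangle$ decomposes into copies of the regular representation of $F_2$, and Kesten gives
\[
\Bigl\Vert \lambda(g_1)+\lambda(g_1^{-1})+\lambda(g_2)+\lambda(g_2^{-1})\Bigr\Vert_{\ell^2(\Gamma)\to\ell^2(\Gamma)} \leq 2\sqrt{3}<4.
\]
A standard Cheeger-type argument (expanding $\Vert (I-\lambda(g_i))f\Vert^2$ and summing) converts this operator bound into a lower Poincar\'e inequality: there is an absolute $\varepsilon_1>0$ with
\[
\max_{g\in\{g_1^{\pm1},g_2^{\pm1}\}}\Vert f-f\circ g^{-1}\Vert\;\geq\;\varepsilon_1\,\Vert f\Vert\qquad\text{for all }f\in\ell^2(\Gamma).
\]

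Third, I would transfer this bound from $F^N$ back to $F$ by a telescoping argument. For any word $w=h_1\cdots h_n$ with $h_i\in F$ and $n\leq N$, using right-translation invariance of the $\ell^2$ norm,
\[
\Vert f-f\circ w^{-1}\Vert\;\leq\;\sum_{i=1}^{n}\Vert f\circ(h_1\cdots h_{i-1})^{-1}-f\circ(h_1\cdots h_i)^{-1}\Vert
\;=\;\sum_{i=1}^{n}\Vert f-f\circ h_i^{-1}\Vert.
\]
Hence $\max_{w\in F^N}\Vert f-f\circ w^{-1}\Vert\leq N\cdot\max_{g\in F}\Vert f-f\circ g^{-1}\Vert$. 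Since $g_1,g_2\in F^N$, the previous step forces $N\cdot\max_{g\in F}\Vert f-f\circ g^{-1}\Vert\geq\varepsilon_1\Vert f\Vert$, and the theorem holds with $\varepsilon=\varepsilon_1/N$, an absolute constant.

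The main obstacle is of course the first step, the uniform Tits alternative with explicit ping-pong constants: everything else is soft. In $GL_2$ one can try to prove it by hand using the action on $\mathbb{P}^1(\C)$, selecting a loxodromic element in some uniformly bounded power of $F$ (using that non-amenability precludes all elements of $F^N$ from being virtually unipotent or fixing a common probability measure on $\mathbb{P}^1$) and then, from a second element moving its fixed-point pair, constructing a ping-pong pair of words of bounded length with uniform North--South dynamics. The quantitative control is what requires the escape-from-subvarieties machinery.
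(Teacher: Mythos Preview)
The paper does not prove this statement; it is quoted verbatim as Corollary~1.5 of \cite{Breuillard2011} and used as a black box to obtain the uniform lower bound on $h_{\RW}(\mu)$ in the lemmas that follow. So there is no proof in the paper to compare against.

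That said, your outline is an accurate description of how the result is derived in Breuillard's paper: the uniform Tits alternative furnishes a free pair inside $F^N$ for some $N$ depending only on the ambient dimension, Kesten's bound for $F_2$ supplies an absolute spectral gap for that pair, and the telescoping inequality pulls the gap from $F^N$ back to $F$ at the cost of a factor $1/N$. You correctly flag the first step as the deep one. One caution: the uniform Tits alternative with $N$ depending only on $d$ (rather than on $F$) is precisely the main theorem of \cite{Breuillard2011}, so citing it to obtain Corollary~1.5 of the same paper is not an independent argument --- the Breuillard--Gelander result \cite{BreuillardGelander2008} you invoke gives uniform exponential growth but not directly the free pair with an absolute word-length bound.
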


We now demonstrate how this can be used to prove Theorem \ref{thm:appl1}. Recall that $\star$ denotes the convolution operation in $G$.
 
\begin{lem}
Suppose that $\mu\in\mathcal{P}(G)$. Assume that $1_{G}\in\supp\mu$,
that $\mu$ is purely atomic and all atoms have mass at least $\delta$,
and that for every $f\in\ell^{2}(G_\mu)$ there is a $g\in\supp\mu$ with
$\left\Vert f-f\circ g^{-1}\right\Vert \ge \delta\left\Vert f\right\Vert $.
Then there exists an $\varepsilon=\varepsilon(\delta)>0$ such that
$\left\Vert \mu\star{}f\right\Vert \leq(1-\varepsilon)\left\Vert f\right\Vert $
for all $f\in\ell^{2}(G_\mu)$, and $h_{\RW}(\mu)\geq -\log(1-\varepsilon)$.\end{lem}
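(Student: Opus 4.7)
The plan is to establish the $\ell^{2}$-contraction by a direct variance-type expansion of $\|\mu\star f\|^{2}$, and then to extract the entropy bound by iterating the contraction on the delta mass at the identity and converting the resulting $\ell^{2}$-decay into an $\ell^{\infty}$-decay of $\mu^{\star n}$.

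First I would introduce the convolution operator $T_{\mu}f(x)=(\mu\star f)(x)=\sum_{g\in G_{\mu}}\mu(g)f(g^{-1}x)$ on $\ell^{2}(G_{\mu})$, write $L_{g}f(x)=f(g^{-1}x)$ for the left regular representation, and expand
\[
\|T_{\mu}f\|^{2}=\sum_{g,h\in\supp\mu}\mu(g)\mu(h)\,\langle L_{g}f,L_{h}f\rangle.
\]
Since $\|L_{g}f\|=\|f\|$, the polarization identity $\|L_{g}f-L_{h}f\|^{2}=2\|f\|^{2}-2\operatorname{Re}\langle L_{g}f,L_{h}f\rangle$, together with the observation that the double sum above is symmetric under $(g,h)\mapsto(h,g)$ and hence equal to its real part, gives
\[
\|T_{\mu}f\|^{2}=\|f\|^{2}-\tfrac{1}{2}\sum_{g,h}\mu(g)\mu(h)\,\|L_{g}f-L_{h}f\|^{2}.
\]
By hypothesis, for the given $f$ there exists $g_{0}=g_{0}(f)\in\supp\mu$ with $\|f-L_{g_{0}}f\|\geq\delta\|f\|$; since $1_{G}\in\supp\mu$ and every atom of $\mu$ has mass at least $\delta$, the two pairs $(1_{G},g_{0})$ and $(g_{0},1_{G})$ alone contribute at least $2\delta^{2}\!\cdot\delta^{2}\|f\|^{2}=2\delta^{4}\|f\|^{2}$. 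Thus $\|T_{\mu}f\|^{2}\leq(1-\delta^{4})\|f\|^{2}$, and setting $1-\varepsilon:=\sqrt{1-\delta^{4}}$ yields the claimed contraction.

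For the random walk entropy bound, I would apply this contraction $n$ times to $f_{0}=\delta_{1_{G}}$, using that $T_{\mu}^{n}\delta_{1_{G}}=\mu^{\star n}$ as an element of $\ell^{2}(G_{\mu})$, which gives $\|\mu^{\star n}\|_{2}\leq(1-\varepsilon)^{n}$. Combined with the trivial bound $\|\mu^{\star n}\|_{\infty}\leq\|\mu^{\star n}\|_{2}$, this shows $\mu^{\star n}(x)\leq(1-\varepsilon)^{n}$ for every $x\in G_{\mu}$, and hence
\[
H(\mu^{\star n})=-\sum_{x}\mu^{\star n}(x)\log\mu^{\star n}(x)\geq -\log\|\mu^{\star n}\|_{\infty}\geq -n\log(1-\varepsilon).
\]
Dividing by $n$ and taking $n\to\infty$ yields $h_{\RW}(\mu)\geq -\log(1-\varepsilon)$, finishing the proof.

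I do not foresee a serious obstacle; the only point requiring a moment's care is that $g_{0}$ depends on $f$, so at each step of the iteration one must re-select the $g_{0}$ that witnesses the perturbation bound for the current iterate $T_{\mu}^{k}\delta_{1_{G}}$. This is already built into the quantifier ``for every $f\in\ell^{2}(G_{\mu})$'' in the hypothesis, and since only the uniform operator norm $\|T_{\mu}\|\leq 1-\varepsilon$ is used in the iteration, no further bookkeeping is needed.
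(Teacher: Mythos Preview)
Your proof is correct and follows essentially the same route as the paper's: both establish the $\ell^2$-contraction using the Hilbert-space geometry, then iterate on $\delta_{1_G}$ and pass from $\ell^2$ to $\ell^\infty$ to bound the atoms of $\mu^{\star n}$. The only difference is cosmetic: the paper invokes uniform convexity of the $\ell^2$-norm abstractly (a convex combination of unit vectors, two of which have weight $\geq\delta$ and are $\geq\delta$ apart, has norm $\leq 1-\varepsilon(\delta)$), whereas you make this explicit via the variance identity $\|T_\mu f\|^2=\|f\|^2-\tfrac12\sum_{g,h}\mu(g)\mu(h)\|L_gf-L_hf\|^2$, obtaining the concrete value $1-\varepsilon=\sqrt{1-\delta^4}$.
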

\begin{proof}
First, let $f\in\ell^{2}(G_\mu)$ and choose $g\in\supp\mu$
as guaranteed by the hypothesis. Then $\mu\star{}f$ is a convex combination
of $\ell^{2}$ functions of norm $\left\Vert f\right\Vert $, among
which appear the functions $f$ (corresponding to the action of $1_{G}$)
and $f\circ g^{-1}$. By choice of $g$ these functions are at least
$(1-\delta)\left\Vert f\right\Vert $ apart, and their weights
in the convex combination are at least $\delta$, so by uniform convexity
of the norm in $\ell^{2}$ there exists an $\varepsilon>0$ such that
$\left\Vert \mu\star{}f\right\Vert <(1-\varepsilon)\left\Vert f\right\Vert $.

For the second statement note that by the above, $\left\Vert \mu^{\star{}n}\star{}f\right\Vert <(1-\varepsilon)^{n}\left\Vert f\right\Vert $,
and that the $\ell^{2}$-norm dominates the $\ell^{\infty}$ norm
for functions on $G_\mu$. Thus for $f=1_{\{1_{G}\}}$ we conclude that
\begin{eqnarray*}
  \sup_{g\in G}\mu^{\star{}n}(g) & = & \left\Vert \mu^{\star{}n}\star{}1_{\{1_{G}\}}\right\Vert _{\infty}\\
  & \leq &\left\Vert \mu^{\star{}n}\star{}1_{\{1_{G}\}}\right\Vert \\
  & \leq &(1-\varepsilon)^{n}\left\Vert 1_{\{1_{G}\}}\right\Vert \\
  & = & (1-\varepsilon)^{n},
\end{eqnarray*}
which, substituting the bound into the definition of Shannon entropy, immediately gives that
\begin{eqnarray*}
\frac{1}{n}H(\mu^{\star{}n}) & > & -\log(1-\varepsilon).
\end{eqnarray*}
Letting
$n\to\infty$ proves the second claim.
\end{proof}
Let $\mu\in\mathcal{P}(G)$ be finitely supported, and for $\varepsilon>0$
let 
\[
\mu_{\varepsilon}=\varepsilon\delta_{1_{G}}+(1-\varepsilon)\mu.
\]

\begin{lem}
$h_{\RW}(\mu)=h_{\RW}(\mu_\eps)/(1-\varepsilon)$.\end{lem}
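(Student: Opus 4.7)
The plan is to exploit the fact that the random walk driven by $\mu_\eps$ is a ``lazy'' version of the one driven by $\mu$: at each step, independently, with probability $\eps$ we do nothing, and with probability $1-\eps$ we take a $\mu$-step. Conditioning on the number $K$ of genuine $\mu$-steps in $n$ trials, this yields the binomial decomposition
\[
\mu_\eps^{\star n}=\sum_{k=0}^n\binom{n}{k}\eps^{n-k}(1-\eps)^k\mu^{\star k}=\E[\mu^{\star K}],\qquad K\sim\mathrm{Binom}(n,1-\eps).
\]

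First, I would apply concavity of Shannon entropy (Lemma \ref{lem:entropy_prop}(i)) to get the lower bound $H(\mu_\eps^{\star n})\ge \E[H(\mu^{\star K})]$, and almost-convexity (Lemma \ref{lem:entropy_prop}(ii)) to get the matching upper bound $H(\mu_\eps^{\star n})\le \E[H(\mu^{\star K})]+H(K)$, where $H(K)=O(\log n)$ since $K$ takes at most $n+1$ values. Thus it suffices to show $\frac{1}{n}\E[H(\mu^{\star K})]\to (1-\eps)h_{\RW}(\mu)$ as $n\to\infty$.

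For this step, I would use two ingredients: the subadditivity bound $H(\mu^{\star k})\le k H(\mu)$, valid for all $k$, and the definition $H(\mu^{\star k})/k\to h_{\RW}(\mu)$, which gives for every $\delta>0$ some $N_\delta$ such that $|H(\mu^{\star k})/k - h_{\RW}(\mu)|<\delta$ for $k\ge N_\delta$. Since $K$ is concentrated on $(1-\eps)n$ (with $\E[K]=(1-\eps)n$ and $\mathrm{Var}(K)=O(n)$), Chebyshev's inequality shows that $\Prob(K<N_\delta)\to 0$, and on the complementary event we have $|H(\mu^{\star K}) - K h_{\RW}(\mu)|<\delta K\le \delta n$. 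On the ``bad'' event $\{K<N_\delta\}$, we use the trivial bound $H(\mu^{\star K})\le nH(\mu)$, whose contribution to the expectation is $O(\Prob(K<N_\delta)\cdot n)=o(n)$. Combining, $\E[H(\mu^{\star K})]=\E[K]\cdot h_{\RW}(\mu)+o(n)=(1-\eps)n\cdot h_{\RW}(\mu)+o(n)$.

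Dividing by $n$ and letting $n\to\infty$, the two-sided sandwich yields $h_{\RW}(\mu_\eps)=(1-\eps)h_{\RW}(\mu)$, which rearranges to the claim. There is no serious obstacle here; the argument is routine epsilon-chasing once the binomial decomposition is written down, and the only inputs are standard entropy inequalities plus the concentration of a binomial distribution.
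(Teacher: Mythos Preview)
Your proposal is correct and follows essentially the same route as the paper: the binomial decomposition of $\mu_\eps^{\star n}$ into a mixture of $\mu^{\star k}$'s, the two-sided entropy sandwich via concavity and almost-convexity with error $H(K)=O(\log n)$, and then the identification of the limit using $\E[K]=(1-\eps)n$. Your treatment of the limiting step (via concentration of $K$ and the subadditive bound on the bad event) is in fact a bit more careful than the paper's somewhat informal computation.
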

\begin{proof}
Since $\nu\star{}\delta_{1_{G}}=\nu$ for every $\nu\in\mathcal{P}(G)$
and convolution is bilinear, we have
\[
(\mu_{\varepsilon})^{\star{}n}=\sum_{k=1}^{n}\binom{n}{k}\varepsilon^{k}(1-\varepsilon)^{n-k}\mu^{\star{}n-k},
\]
so $(\mu_{\varepsilon})^{\star{}n}$ is a convex combination of $\mu^{\star{}k}$,
$k=0,\ldots,n$, with weights $p_{n,k}=\binom{n}{k}\varepsilon^{k}(1-\varepsilon)^{n-k}$
which tend to zero uniformly as $n\rightarrow\infty$. By concavity
and almost convexity of entropy, we have 
\[
\sum_{k=0}^{n}p_{n,k}\cdot\frac{1}{n}H(\mu^{\star{}k})\leq\frac{1}{n}H((\mu_{\varepsilon})^{\star{}n})\leq\sum_{k=0}^{n}p_{n,k}\cdot\frac{1}{n}H(\mu^{\star{}k})+\frac{1}{n}H(p_{n}).
\]
Since $p_{n}$ is supported on $0,\ldots,n$, we have $H(p_{n})\leq\log(n+1)$
and we conclude that
\begin{eqnarray*}
\lim_{n\rightarrow\infty}\frac{1}{n}H((\mu_{\varepsilon})^{\star{}n}) & = & \lim_{n\rightarrow\infty}\sum_{k=0}^{n}p_{n,k}\cdot\frac{1}{n}H(\mu^{\star{}k})\\
 & = & \lim_{n\rightarrow\infty}\sum_{k=0}^{n}p_{n,k}\cdot\frac{1}{n}k(h_{\RW}(\mu)+o(k))\\
 & = & h_{\RW}(\mu)\sum_{k=0}^{n}p_{n,k}\cdot\frac{k}{n}\\
 & = & (1-\varepsilon)h_{\RW}(\mu),
\end{eqnarray*}
where we used that $H(\mu^{\star{}k})=k(h_{\RW}(\mu)+o(1))$ as $k\rightarrow\infty$
and that the distribution $p_{n}$ is binomial with parameters $(n,1-\varepsilon)$
and so the mean value of $k$ under the distribution $p_{n,k}$ is
$1-\varepsilon$.\end{proof}
\begin{cor}
For every $\delta>0$ there exists  $\rho>0$ such that if
$\mu\in\mathcal{P}(G)$ is purely atomic, all its atoms have mass
at least $\delta$, and $\supp \mu$ generates a non-amenable group,
then $h_{\RW}(\mu)>\rho$.\end{cor}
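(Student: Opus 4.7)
The plan is to combine Theorem~\ref{th-Tits} with the two preceding lemmas via a standard smoothing trick. Given $\mu$ as in the statement, I would form the perturbed measure
\[
\mu' = \tfrac{1}{2}\delta_{1_G} + \tfrac{1}{2}\mu.
\]
Then $1_G\in\supp\mu'$, every atom of $\mu'$ has mass at least $\delta/2$, and $G_{\mu'}=G_\mu$ is non-amenable since $\supp\mu'=\{1_G\}\cup\supp\mu$ generates the same group as $\supp\mu$.

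Next, I would apply Theorem~\ref{th-Tits} to the finite set $F=\supp\mu'$: since $1_G\in F$ and $\langle F\rangle$ is non-amenable, there is a \emph{universal} constant $\eps_0>0$ (independent of $\mu$) such that for every $f\in\ell^2(G_{\mu'})$ there exists $g\in\supp\mu'$ with $\Vert f-f\circ g^{-1}\Vert\geq \eps_0\Vert f\Vert$. Setting $\delta':=\min(\eps_0,\delta/2)$, both hypotheses of the first lemma of this subsection hold for $\mu'$ with parameter $\delta'$: the atoms have mass at least $\delta/2\geq\delta'$, and the norm-displacement bound $\eps_0\geq\delta'$ is immediate. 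That lemma therefore yields $\eps_1=\eps(\delta')>0$ with
\[
h_{\RW}(\mu')\geq -\log(1-\eps_1).
\]

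Finally, since $\mu'=\mu_{1/2}$ in the notation of the second lemma (taken with $\eps=1/2$), that lemma gives $h_{\RW}(\mu)=h_{\RW}(\mu')/(1-1/2)=2\,h_{\RW}(\mu')$, so
\[
h_{\RW}(\mu)\geq -2\log(1-\eps_1)=:\rho.
\]
Because $\eps_0$ is universal and $\eps(\cdot)$ is the function supplied by the first lemma, the constant $\rho$ depends only on $\delta$, as required.

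There is no substantive obstacle here: all the work has been absorbed into Theorem~\ref{th-Tits} and the two preceding lemmas, and the only thing to check carefully is that the constants produced depend only on $\delta$, which follows from the universality of $\eps_0$ in the strong Tits alternative. The mild point to be explicit about is that the first lemma is applied with the combined constant $\delta'=\min(\eps_0,\delta/2)$ rather than with $\delta$ itself.
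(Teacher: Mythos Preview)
Your proof is correct and follows essentially the same route as the paper: smooth $\mu$ by adding a point mass at $1_G$, apply Theorem~\ref{th-Tits} to the smoothed measure, invoke the first lemma to get a lower bound on $h_{\RW}$ of the smoothed measure, and then use the second lemma to transfer this back to $\mu$. The only cosmetic difference is that the paper uses smoothing parameter $\varepsilon=\delta$ (forming $\mu_\delta$) rather than your $\varepsilon=1/2$; your handling of the constant $\delta'=\min(\eps_0,\delta/2)$ is in fact more explicit than the paper's.
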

\begin{proof}
By Theorem \ref{th-Tits}, there is a $\delta>0$ such that if $S\subseteq \Pk(G)$
contains the identity and generates a non-amenable group, then for
every $f\in\ell^{2}(G)$ there exists some $g\in S$ such that
$\left\Vert f-f\circ g^{-1}\right\Vert \ge \delta\left\Vert f\right\Vert $.
Now suppose that $\mu\in\Pk(G)$ is purely atomic with atoms of mass
at least $\delta$, and $\supp\mu$ generates a non-amenable group.
Then by the first lemma above (with $\delta(1-\delta)$ instead of $\delta$), $h_{\RW}(\mu_{\delta})>\rho>0$
for some $\rho=\rho(\delta)>0$, and by the second lemma, the same is true
for $\mu$ (with $\rho/(1-\delta)$ instead of $\rho$).
This is what we wanted.\end{proof}

\begin{proof}[Proof of  Theorem~\ref{thm:appl1}] 
Fix $\rho>0$ and $\delta>0$ as in the last corollary. If $\mu$ is supported close enough to the identity of $G$, we will have $\chi(\mu)<\rho/2$, and therefore, if $G_\mu$ is non-amenable, we have $h_{\RW} (\mu)/2\chi(\mu)>1$. If, furthermore, $G_\mu$ is unbounded and totally irreducible, Theorem~\ref{thm:main} implies that $\dim\nu=1$. This is exactly Theorem~\ref{thm:appl1} except that in the statement above we have an additional non-amenability assumption. But non-amenability follows from the unboundedness and total irreducibility of $G_\mu$. This implication is  standard, but we sketch a proof for the reader's convenience.

Let $H=G_\mu$. The assumptions are that $H$ is unbounded and totally irreducible. Then $\mu$ has positive Lyapunov exponent and non-atomic stationary measure $\nu$. If $H$ is amenable, then there is an invariant measure on $\RP$ for the action of $H$. Since invariant measure is stationary, from uniqueness of the latter it follows that $\nu$ is invariant. Then $\nu$ is invariant under the action of every element of $H$. Positive Lyapunov exponent implies that
$H$ contains hyperbolic matrices, for which the invariant measure must be supported on the fixed points, that is, the two eigendirections. However, in that case all the eigendirections for such elements of $H$ coincide and the
measure $\nu$ is atomic, a contradiction.
\end{proof} 
 
 \begin{example}
Theorem~\ref{thm:appl1} applies to measures $\mu$ supported on $S_\lam^{\pm}:=S_\lam \cup S_{-\lam}$, where $S_\lam$ is from (\ref{Slam}), with $\lam$ algebraic. 
 \end{example}

\bibliographystyle{plain}
\bibliography{bib}

\footnotesize{

\noindent  Einstein Institute of Mathematics, 
Edmond J. Safra Campus (Givat Ram), 
The Hebrew University, 
Jerusalem 91904, 
Israel.\\
\texttt{mhochman@math.huji.ac.il}

\bigskip
\noindent Department of Mathematics, 
University of Bar-Ilan 
Ramat-Gan, 5290002, Israel \\
\texttt{bsolom3@gmail.com}
}

\end{document}